\documentclass[10.5pt,reqno]{amsart}
\usepackage{longtable} 
\usepackage{hyperref}
\usepackage[T1]{fontenc}
\usepackage[utf8]{inputenc}
\usepackage[english]{babel} 
\usepackage{textcomp}
\usepackage{dsfont}
\usepackage{latexsym}
\usepackage{amssymb}
\usepackage{amsthm}
\usepackage{amsmath}
\DeclareMathAlphabet{\mathpzc}{OT1}{pzc}{m}{en}
\usepackage{yfonts}
\usepackage{xfrac}
\usepackage{newlfont}
\usepackage{graphicx}
\usepackage{mathtools}
\usepackage{comment}
\usepackage{indentfirst}
\usepackage{braket}
\usepackage{mathrsfs}
\usepackage{xcolor}

\usepackage{etoolbox}

\usepackage{scalerel}[2014/03/10]
\usepackage[usestackEOL]{stackengine}
\newcommand{\dashint}{\,\ThisStyle{\ensurestackMath{%
			\stackinset{c}{.2\LMpt}{c}{.5\LMpt}{\SavedStyle-}{\SavedStyle\phantom{\int}}}%
		\setbox0=\hbox{$\SavedStyle\int\,$}\kern-\wd0}\int}

\newcommand{\Car}{\mathrm{C}}
\newcommand{\RC}{\mathrm{RC}}
\newcommand{\Samp}{\mathrm{S}}

\DeclareMathOperator{\card}{Card}

\DeclareMathOperator{\supp}{Supp}

\DeclareMathOperator{\ad}{ad}

\newcommand{\ee}{\mathrm{e}}

\newcommand{\loc}{\mathrm{loc}}
\newcommand{\vect}[1]{\mathbf{{#1}}}
\newcommand{\dd}{\mathrm{d}}

\DeclarePairedDelimiter{\abs}{\lvert}{\rvert}

\DeclarePairedDelimiter{\norm}{\lVert}{\rVert}

\let\originalleft\left
\let\originalright\right
\renewcommand{\left}{\mathopen{}\mathclose\bgroup\originalleft}
\renewcommand{\right}{\aftergroup\egroup\originalright}

\newcommand{\grado}{\Df}
\newcommand{\N}{\mathds{N}}
\newcommand{\Z}{\mathds{Z}}
\newcommand{\Q}{\mathds{Q}}

\newcommand{\C}{\mathds{C}}

\newcommand{\R}{\mathds{R}}

\newcommand{\gf}{\mathfrak{g}}

\newcommand{\Df}{\mathfrak{D}}

\newcommand{\Mf}{\mathfrak{M}}

\newcommand{\unif}{\mathrm{unif}}

\newcommand{\Bs}{\mathscr{B}}

\newcommand{\Dc}{\mathcal{D}}

\newcommand{\Ic}{\mathcal{I}}

\newcommand{\Lc}{\mathcal{L}}
\renewcommand{\Mc}{\mathcal{M}}

\newcommand{\Rc}{\mathcal{R}}
\newcommand{\Sc}{\mathcal{S}}

\newcommand{\Fs}{\mathscr{F}}

\newcommand{\meg}{\leqslant}
\newcommand{\Meg}{\geqslant}
\newcommand{\eps}{\varepsilon}
\renewcommand{\phi}{\varphi}
\newcommand{\mi}{\mu}

\newcommand{\leftexp}[2]{{\vphantom{#2}}^{#1}{#2}} 
\newcommand{\trasp}{\leftexp{t}}

\keywords{Lie groups, Besov spaces, Triebel--Lizorkin spaces, weighted subcoercive operators.}
\thanks{{\em Math Subject Classification 2020}: 46E36, 22E30.}
\thanks{The author is a member of the 	Gruppo Nazionale per l'Analisi
	Matematica, la Probabilit\`a e le	loro Applicazioni (GNAMPA) of
	the Istituto Nazionale di Alta Matematica (INdAM). The author was partially funded by the INdAM-GNAMPA Project CUP\_E5324001950001.
}

\begin{document}
	\title[Besov and Triebel--Lizorkin Spaces]{Besov and Triebel--Lizorkin Spaces on Filtered Lie Groups, II: Pointwise Multiplication, Localization, Differences}
	
	\author[M.\ Calzi]{Mattia Calzi} 
	\address{Dipartimento di Matematica, Universit\`a degli Studi di
		Milano, Via C. Saldini 50, 20133 Milano, Italy}
	\email{{\tt mattia.calzi@unimi.it}}
	
	\theoremstyle{definition}
	\newtheorem{deff}{Definition}[section]

	\newtheorem{oss}[deff]{Remark}
	
	\newtheorem{ass}[deff]{Assumptions}
	
	\newtheorem{nott}[deff]{Notation}

	\theoremstyle{plain}
	\newtheorem{teo}[deff]{Theorem}
	
	\newtheorem{lem}[deff]{Lemma}
	
	\newtheorem{prop}[deff]{Proposition}
	
	\newtheorem{cor}[deff]{Corollary}
	
	\begin{abstract}
		We continue to develop a theory of  Besov and Triebel--Lizorkin spaces associated with weighted subcoercive operators on a real connected Lie group, focusing on the algebra properties, pointwise multipliers, localized norms, and characterization by differences.
	\end{abstract}
	
	\maketitle
	
	\section{Introduction}
	
	Besov and Triebel--Lizorkin spaces form a large class of function spaces on the Euclidean spaces which provides a uniform, albeit somewhat technical, way to study simultaneously several classical function spaces, such as Sobolev spaces with integer regularity, fractional Sobolev spaces, both in the version of Sovolev--Slobodeckij--Gagliardo spaces and in the version of Bessel potential spaces, Lipschitz spaces, Hardy and BMO spaces, etc. These spaces have been extensively studied in the classical Euclidean setting (cf., e.g.,~\cite{TriebelFS,TriebelFS2,TriebelFS3}), but have also been extended to more general contexts, such as: open subsets of $\R^n$ (cf., e.g.,~\cite[Chapter 5]{TriebelFS2}); Riemannian manifolds with bounded geometry (cf., e.g.,~\cite[Chapter 7]{TriebelFS2}); Lie groups endowed with a left-invariant Riemannian (cf., e.g.,~\cite[Chapter 7]{TriebelFS2}) or sub-Riemannian structure (cf., e.g.,~\cite{BPV,BPV2,BPV3}); metric spaces endowed with suitable operators resembling a (sub-)Laplacian (cf., e.g.,~\cite{TriebelFS3,Hu}).\footnote{The literature on the subject is quite extensive and the above mentioned reference should only be intended as a very short list of examples, which is by no means complete.} 
	There are nonetheless some contexts where `measuring regularity' in a `Riemannian way,' that is, grouping together all differential operators of the same order, or, more generally, in a `sub-Riemannian way,' appears to be inconvenient since it either clashes with the underlying geometry of the space or with the structure of the differential operator at hand. For instance, let $G$ be a homogeneous group, that is, a simply connected nilpotent Lie group whose Lie algebra $\gf$ has a graduation $(\gf_\lambda)_{\lambda>0}$; in other words, $\gf=\bigoplus_{\lambda>0} \gf_\lambda$ and $[\gf_\lambda,\gf_\nu]\subseteq \gf_{\lambda+\mi}$ for every $\lambda,\mi>0$. Then, $G$ may be identified with $\gf$ by means of the exponential map and $\gf$ may be endowed with a family of automorphic dilations $(\delta_r)_{r>0}$ defined so that $\delta_r(X)=r^\lambda X$ for every $X\in \gf_\lambda$. Operators which are compatible with these dilations (that is, homogeneous operators) are consequently quite natural in this context and one is therefore led to consider (`Goodman type') Sobolev spaces of the form $\Set{f\in L^p(G)\colon\forall \alpha\; (d_\alpha\meg k \implies\vect X^\alpha f\in L^p(G))}$, where $\vect X^\alpha=X_1^{\alpha_1}\cdots X_n^{\alpha_n}$ for some homogeneous basis $(X_1,\dots, X_n)$ of $\gf$, and where $d_\alpha=\sum_j \alpha_j \deg(X_j)$. It turns out, however, that these spaces behave quite weirdly for general $k$ -- for instance, they do not interpolate as one may expect. In fact, a different class of `Bessel potential' Sobolev spaces exhibiting a more natural behviour was introduced in~\cite{FischerRuzhansky}, and was shown to coincide with the previous `Goodman type' Sobolev spaces only for specific values of $k$. It is now worthwhile remarking that, whereas the usual `Bessel potential' Sobolev spaces (and, more generally, several of the Besov and Triebel--Lizorkin spaces briefly mentioned above) are essentially constructed using a (sub-)Laplacian, these `homogeneous' Sobolev spaces were constructed using positive Rockland operators instead, that is, homogeneous and hypoelliptic left-invariant differential operators. A definition using second order differential operator would simply not be possible. 
	Notice, by the way, that replacing a second-order subelliptic differential operator with a higher-order one provides several technical difficulties, since the associated heat kernel cannot be positive, and there is no longer any finite speed property for the corresponding wave propagator -- tools which often lie at the core of several proofs in the literature.
	
	It is therefore natural to wonder whether there is a more general framework which allows to deal at the same time with sub-Laplacians and positive Rockland operators. As a matter of fact, ter Elst and Robinson showed that weighted subcoercive operators provide are quite reasonable and natural choice (cf.~\cite{ElstRobinson}). Indeed, given a group $G$ whose Lie algebra is endowed with a suitable increasing filtration $(\gf_\lambda)_{\lambda>0}$, it is possible to associate a homogeneous group $G_*$ (the `contraction') to $G$ in a natural way, and to associate to every left-invariant differential operator $\Lc$ of degree $d$ some homogeneous left-invariant differential operator $P$ of degree $d$ on $G_*$ (which plays the r\^ole of the `principal part' of $\Lc$). The operator $\Lc$ is then said to be weighted subcoercive if $P+P^*$ is a positive Rockland operator. Notice that this definition mimics closely that of elliptic operators, and that Rockland operators play the r\^ole of homogeneous elliptic operators. Weighted subcoercive operators then enjoy several useful properties. For example, they generate a heat semigroup $(\ee^{-t\Lc})$ whose convolution kernel satisfies suitable Gaussian estimates (even though the exponential decay depends on the degree $d$ and is milder than the classical one). If, in addition, $\Lc$ is formally self-adjoint, then the closure of $\Lc$ on the space of test functions is self-adjoint on $L^2$, hence generates a functional calculus which enjoys particularly interesting properties when $G$ has polynomial growth.
	As shown in~\cite{BCP}, using the heat semigroup associated with a weighted subcoercive operator allows one to define natural Besov and Triebel--Lizorkin  spaces  $B^{p,q}_\alpha$ and $F^{p,q}_\alpha$ on a general connected filtered Lie group. The resulting spaces then do not depend on the chosen operator. In~\cite{BCP} several properties of these spaces were proved, including:
	\begin{itemize}
		\item $B^{p,q}_\alpha$ and $F^{p,q}_\alpha$ are Banach spaces;
		
		\item the space $C^\infty_c(G)$ of test functions is dense in $B^{p,q}_\alpha$ and $F^{p,q}_\alpha$ for $p,q<\infty$;
		
		\item $B^{p',q'}_{-\alpha}$ and $F^{p',q'}_{-\alpha}$ may be canonically identified with the duals of $B^{p,q}_\alpha$ and $F^{p,q}_\alpha$, respectively, when $p,q<\infty$;
		
		\item $B^{p_1,q_1}_{\alpha_1}\subseteq B^{p_2,q_2}_{\alpha_2}$ when $p_1\meg p_2$, $\alpha_2 -Q_*/p_2\meg \alpha_1-Q_*/p_1$, and either $q_1\meg q_2$ or $\alpha_2 -Q_*/p_2< \alpha_1-Q_*/p_1$, where $Q_*$ denotes the homogeneous dimension of $G_*$;\footnote{This and the following fact are actually true when $G$ is endowed with a left Haar measure, and should be slightly modified in the general case.}
		
		\item   $F^{p_1,q_1}_{\alpha_1}\subseteq F^{p_2,q_2}_{\alpha_2}$ when $p_1\meg p_2$, $\alpha_2 -Q_*/p_2\meg \alpha_1-Q_*/p_1$, and either $q_1\meg q_2$ or $p_1<p_2$;
		
		\item $B^{p,q}_\alpha,F^{p,q}_\alpha \subseteq L^p$ when $\alpha>0$, and $F^{p,2}_0=L^p$ for $p\in (1,\infty)$;
		
		\item if $\omega\in \R$ is sufficiently large, then $(\Lc+\omega I)^{-\alpha'}$ induces canonical isomorphisms of $B^{p,q}_\alpha$ and $F^{p,q}_\alpha$ onto $B^{p,q}_{\alpha+\alpha'}$ and $F^{p,q}_{\alpha+\alpha'}$, respectively;
		
		\item if $(X_j)$ is a family of elements of $\gf$ such that the corresponding elements $Y_j$ of $\gf_*$ induce a basis of $\gf_*/[\gf_*,\gf_*]$, and if $\dd$ is the least common multiple of the $d_j=\deg(X_j)$, then $f\in B^{p,q}_\alpha$ (resp.\ $f\in F^{p,q}_\alpha$) if and only if $\ee^{-\Lc}f\in L^p$ and $X_j^{\dd/d_j}\in B^{p,q}_{\alpha-\dd}$ (resp.\ $X_j^{\dd/d_j}\in F^{p,q}_{\alpha-\dd}$) for every $j$. In particular $F^{p,2}_{k\dd}$ coincides with the above-mentioned `Goodman type' Sobolev spaces when $k\in\N$ and $p\in (1,\infty)$;
		  
		\item the spaces $B^{p,q}_\alpha$ and the spaces $F^{p,q}_\alpha$ interpolate as the classical ones.
	\end{itemize}
	
	The purpose of this paper is to prove several additional properties of these Besov and Triebel--Lizorkin spaces. In particular, we shall:
	\begin{itemize}
		\item prove that $B^{p,q}_\alpha\cap L^\infty$ and $F^{p,q}_\alpha\cap L^\infty$ are algebras under pointwise multiplication for every $\alpha>0$. In particular, $B^{p,q}_\alpha$ and $F^{p,q}_\alpha$ are algebras for every $\alpha>Q_*/p$;
		
		\item describe the spaces of pointwise multipliers of $B^{p,q}_\alpha$ and $F^{p,q}_\alpha$, for $\alpha>Q_*/p$;
		
		\item show that the spaces $F^{p,q}_\alpha$  enjoy a localization property as the classical ones;
		
		\item provide a `localized' norm for the spaces $B^{p,q}_\alpha$ and $F^{p,q}_\alpha$ with $\alpha>0$;
		
		\item provide an equivalent description of some Besov and Triebel--Lizorkin spaces in terms of (finite) differences.
	\end{itemize}
	
	Concerning the algebra properties, we shall basically follow the procedure of~\cite{BPV}. Analogously, we shall basically follow the procedures of~\cite{BPV2} to consider the characterization by differences and the procedures of~\cite{BPV3} to study pointwise multipliers. In addition to what was done on~\cite{BPV2},   we shall provide one of the two required norm inequalities   in full generality; we are able to prove the reverse inequality only for first order differences (as well as and second order differences for Besov spaces only). Concerning the description of pointwise multipliers, we shall actually improve the methods of~\cite{BPV3} and provide a characterization of the spaces of pointwise multipliers of the Besov spaces $B^{p,q}_\alpha$ for every $\alpha>Q_*/p$, and not only for $\alpha\in (Q_*/p,1)+\N$, when $p\meg q$, and for $\alpha\in (Q_*/p,1)$, when $p<q$.
	
	Let us also mention that several additional works  (cf.~\cite{Calzi3,Calzi4,Calzi5,CalziRizzo}) on this subject are in preparation. In particular, we plan to provide a definition for the spaces $F^{\infty,q}_\alpha$ following the approach by Frazier and Jawerth~\cite{FrazierJawerth}, and well as to study the full scale of Besov and Triebel--Lizorkin spaces (that is, for $p,q\in (0,\infty]$) in the more particular case in which $G$ has polynomial volume growth. In this latter context, we plan to study also additional properties, such as discretization, and to study the relationship between Triebel--Lizorkin spaces and local Hardy and bmo spaces.
	
	\smallskip
	
	Here is a plan of the paper. In Section~\ref{sec:2}, we shall collect several preliminary definitions and properties, mostly referring to~\cite{BCP} for proofs. In particular, we shall present a precise definition of Besov and Triebel--Lizorkin spaces in this context.
	In section~\ref{sec:3}, we shall prove the algebra properties of Besov and Triebel--Lizorkin spaces using the paraproducts introduced in~\cite{Feneuil}. In Section~\ref{sec:4}, we shall discuss the localization property of Triebel--Lizorkin spaces. In Sections~\ref{sec:5} and~\ref{sec:6}, we shall describe the spaces of pointwise multipliers of Triebel--Lizorkin spaces and of Besov spaces, respectively. Finally, in Section~\ref{sec:7} we shall provide some characterization of Besov and Triebel--Lizorkin spaces by differences.

	\section{Preliminaries}\label{sec:2}
	
	\subsection{Relatively Invariant Measures and Convolution}

	Throughout the paper, we shall denote with $G$ a connected (finite-dimensional, real) Lie group  with Lie algebra $\gf$. We shall endow $G$ with a non-trivial positive relatively invariant (Radon) measure $\beta$. Thus, there are two positive characters $\Delta_L$ and $\Delta_R$ of $G$ such that 
	\[
	\Delta_L(y)\int_G f(y x)\,\dd \beta(x) =  \int_G f(x)\,\dd \beta(x) = \Delta_R(y) \int_G f(x y )\,\dd \beta(x)
	\]
	for every $f\in L^1(\beta)$ and for every $y\in G$. In particular,
	\[
	\beta(x A)=\Delta_L(x) \beta(A) \qquad\text{and}\qquad \beta(A x)=\Delta_R(x) \beta(A)
	\]
	for every $\beta$-measurable subset $A$ of $G$ and for every $x\in G$. In addition,
	\[
	\int_G f(x^{-1}) \,\dd \beta(x)= \int_G f(x) \Delta_L(x^{-1})\Delta_R(x^{-1})\,\dd \beta(x)
	\]
	for every positive $\beta$-measurable function $f$ on $G$. We shall define $\beta_L\coloneqq \Delta_L^{-1}\cdot \beta$ and $\beta_R\coloneqq \Delta_R^{-1}\cdot \beta$, so that $\beta_L$ is a left Haar measure and $\beta_R$ is a right Haar measure.
	
	We now recall some facts about convolution. Given two convolvable\footnote{We shall not provide a precise definition of this concept, since this would drag us too far away from the main topic. } distributions $f,g$ on $G$, one has
	\[
	\langle f*g, \phi\rangle= \langle f\otimes g, (x,y)\mapsto \phi(xy)\rangle
	\]
	for every $\phi\in C^\infty_c(G)$.
	We shall identify each $f\in L^1_\loc(\beta)$ with $f\cdot \beta$, that is, the measure with density $f$ with respect to $\beta$. Given two convolvable functions $f,g$ such that $f*g$ is absolutely continuous with respect to $\beta$, we shall generally identify $f*g$ with its density with respect to $\beta$. Thus, under very mild conditions which will be always verified in the applications,
	\begin{equation}\label{eq:1}
	(f* g)(x)=\int_G f(x y^{-1}) g(y) \Delta_R(y^{-1})\,\dd \beta(y)= \int_G f(y) g(y^{-1}x)\Delta_L(y^{-1})\,\dd \beta(y).
	\end{equation} 
	
	Similar formulae apply when either $f$ or $g$ is a distribution (and the convolution is still a function).
	Observe that, if $f,g$ are convolvable distributions, $X$ is a left-invariant differential operator, and $Y$ is a right-invariant differential operator, then $Yf $ and $X g$ are convolvable and
	\[
	YX(f*g)=(Yf)*(Xg).
	\]
	
	If $f,g,h$ are distributions then, under some reasonable conditions that will always be satisfied in the applications, 
	\[
	\langle f * g, h \rangle =\langle f, h* (\Delta_R \Rc g)\rangle =\langle g, (\Delta_L \Rc f)* h\rangle,
	\]
	where $\langle \Rc f,\phi\rangle=\langle f,\check \phi \rangle$ for every $\phi\in C^\infty_c(G)$, and $\check \phi=\phi(\,\cdot\,^{-1})$. A word of caution here: $\Rc \beta= \Delta_L^{-1} \Delta_R^{-1} \beta$, so that $\Rc(f\cdot \beta) =(\Delta_L^{-1} \Delta_R^{-1}  \check f)\cdot \beta$. In other words, one must be careful not to confuse $\Rc f$ with $\check f$ when $f\in L^1_\loc(\beta)$.

	We now recall Young's inequality in this context. Take $p_1,p_2,p_3\in [1,\infty]$ so that $\frac{1}{p_1'}+\frac{1}{p_2'}=\frac{1}{p_3'}$. Then,
	\[
	\norm{(\Delta_L^{1/p_2'} f)*(\Delta_R^{1/p_1'}g)}_{L^{p_3}(\beta)}\meg \norm{f}_{L^{p_1}(\beta)}\norm{g}_{L^{p_2}(\beta)}
	\]
	or, equivalently,
	\[
	\norm{f*g}_{L^{p_3}(\beta)}\meg \norm{\Delta_L^{-1/p_2'}f}_{L^{p_1}(\beta)}\norm{\Delta_R^{-1/p_1'}g}_{L^{p_2}(\beta)}
	\]
	for every two positive $\beta$-measurable functions $f,g$ (for positive measurable functions, convolution may be defined by means of~\eqref{eq:1}). See~\cite[Lemma 2.1]{KR78} when $\Delta_L=1$. The general case follows easily.
	 
	\subsection{Differential Operators}
	
	\begin{deff}
		We shall generally identify the (complexification of the) enveloping algebra $U(G)$ of $\gf$ with the algebra of left-invariant differential operators. We shall fix a scalar product on $\gf$, and we shall endow $U(G)$ with the corresponding scalar product, namely the quotient of the natural scalar product on the (complexfication of the) tensor algebra over $\gf$.\footnote{The actual scalar product on $U(G)$ will essentially not matter in the sequel. This one is relatively convenient, since $\abs{X}=\abs{X^+}=\abs{\overline X}$ for every $X\in U(G)$.}
		
		Let $X$ be a left-invariant differential operator. We  denote with $X^R$ the right-invariant differential operator which induces the same point distribution as $X$ at $e$. In other words, $(X f)(e)=(X^R f)(e)$ for every $f\in C^\infty (G)$. We denote with $X^+$ the transpose of $X$ in the enveloping algebra $U(G)$. In other words, the mapping $X\mapsto X^+$ is the unique anti-automorphism of $U(G)$ (that is, such that $(XY)^+=Y^+ X^+$) which extends the automorphism $X\mapsto -X$ of $\gf$. 
		
		We denote with $X^\dag$ the formal transpose of $X$ (with respect to $\beta$), that is, the unique left-invariant differential operator such that
		\[
		\int_G (X f) g\,\dd \beta=\int_G f X^\dag g\,\dd \beta
		\]
		for every $f,g\in C^\infty_c(G)$. We denote with $X^*$ the formal adjoint of $X$, that is, $\overline X^\dag$. We define the formal transpose and the formal adjoint of right-invariant differential operators in a similar way. If $u$ is a distribution, we then define $X u$ so that
		\[
		\langle X u,\phi\rangle =\langle u, X^\dag \phi\rangle
		\]
		for every $\phi\in C^\infty_c(G)$. In this way, if $f\in C^\infty(G)$, then $(X f)\cdot \beta=X(f\cdot \beta)$.

		In order to simplify the notation, we write $X^{R\dag}$ instead of $(X^R)^\dag$, etc.
	\end{deff}
	
	Notice that $X f$ \emph{depends} on $\beta$ if $f$ is a distribution, whereas $X^\dag f$ does not. On the contrary, $X^\dag f$ \emph{depends} on $\beta$ if $f\in C^\infty(G)$, whereas $X f$ does not.  This unfavorable dichotomy disappears if $\beta$ is right-invariant. In the following proposition we collect some elementary facts; cf.~\cite[Proposition 2.2]{BCP} for a proof.
	
	\begin{prop}\label{prop:9}
		The following hold:
		\begin{enumerate}
			\item[\textnormal{(1)}] $X^\dag f= \Delta_R^{-1} X^+(\Delta_R f)$ and $X^{R\dag} f=\Delta_L^{-1} X^{+R}(\Delta_L f)$ for every $X\in U(G)$ and for every $f\in C^\infty(G)$;
			
			\item[\textnormal{(2)}] $X\delta_e= \Delta_R \Rc(X^\dag \delta_e)$  and $X^R\delta_e=  \Delta_L \Rc (X^\dag \delta_e)$   for every $X\in U(G)$;
			
			\item[\textnormal{(3)}] $X^+ f=(X^R \check f) \check{\,}$ for every $X\in U(G)$ and for every $f\in C^\infty(G)$; 
			\item[\textnormal{(4)}] $X\delta_e=X^{\dag R\dag}\delta_e$ for every $X\in U(G)$;
			
			\item[\textnormal{(5)}] $X\chi= (X\chi)(e)\chi$ for every $X\in U(G)$ and for every character $\chi\colon G\to \C\setminus \Set{0}$.
		\end{enumerate}
	\end{prop}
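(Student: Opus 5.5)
The plan is to reduce everything to the case $X\in\gf$ and then extend to all of $U(G)$ using that the maps involved are (anti-)multiplicative. Throughout, the relative invariance of $\beta$ and the definitions of $X^R$, $X^+$ and $\check{\ }$ are used directly.

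\textbf{Item (5).} Fix a character $\chi$. For $X\in\gf$ we have $\chi(x\exp(tX))=\chi(x)\chi(\exp(tX))$. Differentiating at $t=0$ gives $X\chi=(X\chi)(e)\chi$. The map $X\mapsto (X\chi)(e)$ is then a homomorphism of $U(G)$ into $\C$, so (5) follows for all of $U(G)$ by induction on products.

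\textbf{Item (3).} Let $X\in\gf$. Then
\[
(X^R\check f)(x)=\frac{\dd}{\dd t}\check f(\exp(tX)x)\Big|_{t=0}=\frac{\dd}{\dd t}f(x^{-1}\exp(-tX))\Big|_{t=0}=(-Xf)(x^{-1}),
\]
so $X^R\check f=(X^+f)\check{\ }$ on $\gf$. Both sides of (3) are anti-multiplicative in $X$. Indeed, $X\mapsto X^R$ is an anti-homomorphism from $U(G)$ to the right-invariant operators, since the right-invariant fields satisfy the opposite bracket relations. Composed with $X\mapsto X^+$, this makes $X\mapsto(\,\cdot\,)\mapsto(X^R(\cdot)\check{\ })\check{\ }$ a homomorphism consistent with $X^+$. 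Hence (3) extends to $U(G)$.

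\textbf{Item (1).} For $X\in\gf$, write $\int_G (Xf)g\,\dd\beta$. Right translation by $\exp(tX)$ multiplies $\beta$ by $\Delta_R(\exp(tX))^{-1}$ (up to the convention), so we get
\[
\int_G (Xf)g\,\dd\beta=-\int_G f\,Xg\,\dd\beta-(X\Delta_R)(e)\int_G fg\,\dd\beta .
\]
By (5), $-Xg-(X\Delta_R)(e)g=\Delta_R^{-1}X^+(\Delta_R g)$. Composition preserves the identity, since $X^\dagger$ and $X^+$ are both anti-multiplicative and conjugating by multiplication by $\Delta_R$ is multiplicative. The right-invariant statement is identical, with $\Delta_L$ and left translations.

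\textbf{Item (2).} Pair with $\phi$: $\langle X\delta_e,\phi\rangle=(X^\dagger\phi)(e)$. Using (1), this equals $(X^+(\Delta_R\phi))(e)$. Using (3), it equals $(X^R(\Delta_R\phi)\check{\ })(e)$. We then compare with $\langle \Delta_R\Rc(X^\dagger\delta_e),\phi\rangle=\langle X^\dagger\delta_e,(\Delta_R\phi)\check{\ }\rangle$. The point to track is that $X^R$ and $X$ agree at $e$. The second formula in (2) is analogous.

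\textbf{Item (4).} This follows by combining (1) and (2), or more directly by pairing both sides with test functions and evaluating at $e$. The one delicate step is the $\Delta_L\Delta_R$ correction in $\Rc\beta$, which has to be followed carefully.

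The main obstacle is bookkeeping: getting the signs and the placement of the modular characters right in (1) and (2). I would check each identity on $X\in\gf$ against the explicit translation formula before extending multiplicatively.
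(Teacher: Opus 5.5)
Your proof is correct. The paper does not prove this proposition itself (it refers to \cite[Proposition 2.2]{BCP}), and your strategy is the natural one: check each identity on $\gf$, using $\int_G F(x\exp(tX))\,\dd\beta(x)=\Delta_R(\exp(tX))^{-1}\int_G F\,\dd\beta$, its left analogue, or the character identity, and then extend to $U(G)$ by (anti-)multiplicativity.

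A few points are worth tightening.

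In (3), the map $X\mapsto\big(f\mapsto (X^R\check f)\check{\,}\big)$ is itself an anti-homomorphism, exactly like $X\mapsto X^+$. This is because $X\mapsto X^R$ is anti-multiplicative and conjugation by $f\mapsto\check f$ is multiplicative. So no further composition with $X\mapsto X^+$ is needed. Moreover, the paper defines $X^R$ through its point distribution at $e$, not through the universal property. The anti-multiplicativity of $X\mapsto X^R$ is therefore best justified by noting that $(X_1\cdots X_k f)(e)$ and $(X_k^R\cdots X_1^R f)(e)$ both equal $\partial_{t_1}\cdots\partial_{t_k} f(\exp(t_1X_1)\cdots\exp(t_kX_k))\vert_{t=0}$.

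In the right-invariant half of (1), you need the right-invariant form $X^R\chi=(X\chi)(e)\chi$ of (5). There, both $X\mapsto X^{R\dag}$ and $X\mapsto\Delta_L^{-1}X^{+R}\Delta_L$ are homomorphisms rather than anti-homomorphisms.

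In (2), you are implicitly using $X^{\dag\dag}=X$, which gives $\langle X^\dag\delta_e,\psi\rangle=(X\psi)(e)$.

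Finally, (4) follows directly from the definitions:
$\langle X^{\dag R\dag}\delta_e,\phi\rangle=(X^{\dag R}\phi)(e)=(X^\dag\phi)(e)=\langle X\delta_e,\phi\rangle$.
There is no $\Delta_L\Delta_R$ correction to track. That correction only arises when $\Rc$ acts on a function identified with $f\cdot\beta$, and this never happens in the proposition: in (2), $\Rc$ is applied only to distributions supported at $e$.
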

	
	Notice that, by (4), 
	\[
	\begin{split}
		(Xf)*g=(f*X\delta_e)*g=f*(X\delta_e*g)=f*(X^{\dag R \dag} \delta_e*g)= f*(X^{\dag R \dag} g)
	\end{split}
	\]
	under some reasonable conditions on $f$ and $g$ (which are needed to grant associativity of convolution).
	Notice that $X^{\dag R \dag}-X^R$ is a differential operator whose order is strictly less than the order of $X$ (and whose degree is strictly less than the degree of $X$, with the terminology of the following subsection).

	\subsection{Filtrations and Weighted Subcoercive Operators}
	
	Throughout the paper, $(\gf_\lambda)_{\lambda\Meg 0}$ will denote an increasing filtration of $\gf$ (that is, $[\gf_\lambda, \gf_\mi]\subseteq \gf_{\lambda+\mi}$ for every $\lambda, \mi\Meg 0$) such that $\gf_\lambda=0$ for every $\lambda<1$, $\bigcup_{\lambda\Meg 0} \gf_\lambda=\gf$, and $\bigcap_{\mi>\lambda} \gf_\mi=\gf_\lambda$ for every $\lambda\Meg 0$.\footnote{We consider the full range $\lambda\Meg 0$  for notational convenience, in analogy with the filtration $(U_\lambda)$, for which $U_\lambda\neq \Set{0}$ for every $\lambda\Meg 0$.}
	
	For every $X\in \gf$, we define  $\deg X\coloneqq \min\Set{\lambda\Meg0\colon X\in \gf_\lambda}$ and we call $\deg X$ the degree of $X$.
	Define, for every $\lambda>0$, $\gf_{\lambda^-}\coloneqq \bigcup_{\mi<\lambda} \gf_\mi$, $\gf_{*,\lambda}\coloneqq\gf_\lambda/\gf_{\lambda^-} $, and 
	\[
	\gf_*\coloneqq \bigoplus_{\lambda>0} \gf_{*,\lambda}.
	\]
	Define a Lie algebra structure on $\gf_*$ as follows: if $X=\sum_{\lambda>0} (X_\lambda+ \gf_{\lambda^-})$ and $Y= \sum_{\lambda>0} (Y_\lambda+ \gf_{\lambda^-})$ for some $(X_\lambda),(Y_\lambda)\in \prod_{\lambda>0} \gf_\lambda$, then
	\[
	[X,Y]\coloneqq \sum_{\lambda,\mi>0}\left(  [X_{\lambda},Y_{\mi}]+\gf_{(\lambda+\mi)^-}\right) .
	\]
	It is easily seen that $(\gf_{*,\lambda})_{\lambda>0}$ is a graduation of type $((0,+\infty),+)$ of $\gf_*$, that is, $[\gf_{*,\lambda}, \gf_{*,\mi}]\subseteq \gf_{*,\lambda+\mi}$ for every $\lambda,\mi>0$. One may then endow $\gf_*$ with the automorphic dilations $(\delta_r)_{r>0}$ defined so that $\delta_r(X)=r^\lambda X$ for every $X\in \gf_{*,\lambda}$ and for every $\lambda>0$. Thus, $\gf_*$ is the Lie algebra of some homogeneous group $G_*$, with homogeneous dimension $Q_*\coloneqq \sum_{\lambda>0} \dim \gf_{*,\lambda}$.
	\emph{We shall always assume, in the sequel, that $\Lambda\coloneqq \Set{\lambda \Meg 1\colon \gf_{*,\lambda}\neq 0}$ is contained in a one-dimensional $\Q$-vector space. In other words, setting $\dd_0\coloneqq \min \Lambda$, we assume that $\Lambda \subseteq \Q \dd_0$. This assumption guarantees that (positive) Rockland operators on $G_*$ do exist (and is essentially necessary).}
	
	We observe explicitly that we required $\gf_\lambda=\Set{0}$ for $\lambda<1$ in order for the control modulus $\abs{\,\cdot\,}_*$ (cf.~Definition~\ref{def:3} below) to induce a left-invariant \emph{distance} on $G$ (rather than a quasi-distance). There may be also good reasons to require $\gf_1\neq \Set{0}$ (cf.~Theorem~\ref{teo:11}). We preferred to avoid imposing this condition in order to keep a natural comparison with graded (or, more generally, homogeneous) groups: if $\gf$ has a graduation $(\tilde \gf_j)_{j\in \Z_+^*}$ (with integer degrees), then it is natural to set $\gf_\lambda=\bigoplus_{j=1}^{[\lambda]} \tilde \gf_j$ for every $\lambda \Meg 0$, but there is no guarantee that $\tilde \gf_1$ should be non-trivial.

	We now extend this filtration to the  enveloping algebra $U(G)$.	
	For every $\lambda\Meg 0$, define $U_\lambda$ as the vector space generated by the products of the form $X_1\cdots X_k$, for $k\Meg 0$, $X_1,\dots, X_k\in \gf$ and $\deg X_1+\cdots+ \deg X_k\meg \lambda$.\footnote{Thus, the identity operator, corresponding to the case $k=0$, belongs to all $U_\lambda$.} 
	Then, $(U_\lambda)$ is an increasing filtration of $U(G)$, that is, $U_\lambda, U_\mi\subseteq U_\lambda U_\mi\subseteq U_{\lambda+\mi}$ for every $\lambda, \mi\Meg 0$.
	For every $X\in U(G)$, we define $\deg X\coloneqq \min\Set{\lambda\Meg 0\colon X\in U_\lambda}$. Notice that $\gf\cap U_\lambda=\gf_\lambda$ for every $\lambda\Meg 0$ as a consequence of Proposition~\ref{prop:8} below (and its proof), so that this definition is consistent with the previous one.
	
	Define $U_{\lambda^-}\coloneqq \bigcup_{\mi<\lambda} U_\mi$ for $\lambda>0$, $U_{0^-}\coloneqq\Set{0}$, $U_{*,\lambda}\coloneqq U_\lambda/U_{\lambda^-}$ for every $\lambda\Meg 0$, and 
	\[
	U_*\coloneqq \bigoplus_{\lambda\Meg 0} U_{*,\lambda} .
	\] 
	We define an algebra structure on $U_*$ as follows:  if $X=\sum_{\lambda\Meg 0} (X_\lambda+ U_{\lambda^-})$ and $Y= \sum_{\lambda\Meg 0} (Y_\lambda+ U_{\lambda^-})$ for some $(X_\lambda),(Y_\lambda)\in \prod_{\lambda\Meg 0} U_\lambda$, then
	\[
	XY\coloneqq \sum_{\lambda,\mi\Meg 0}\left(  X_{\lambda}Y_{\mi}+U_{(\lambda+\mi)^-}\right) .
	\]
	It is easily seen that $U_*$ becomes a graded algebra of type $([0,+\infty),+)$ with this structure.
	
	\begin{prop}\label{prop:8}
		The canonical inclusions $\gf_\lambda \subseteq U_\lambda$, $\lambda> 0$, induce a linear mapping $\pi \colon \gf_*\to U_*$. The canonical extension $U(\pi)\colon U(G_*)\to U_*$ of $\pi$ is an  isomorphism of graded algebras.
	\end{prop}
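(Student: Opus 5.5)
The plan is to reduce the statement to a Poincaré--Birkhoff--Witt argument adapted to the filtration. \emph{Well-definedness of $\pi$.} For $\lambda>0$, the inclusion $\gf_\lambda\subseteq U_\lambda$ holds by definition of $U_\lambda$ (take $k=1$). Moreover $\gf_{\lambda^-}\subseteq U_{\lambda^-}$. Hence we get linear maps $\gf_{*,\lambda}=\gf_\lambda/\gf_{\lambda^-}\to U_\lambda/U_{\lambda^-}=U_{*,\lambda}$, and summing over $\lambda$ gives $\pi$. It is degree preserving. Next, $\pi$ respects brackets: for $X\in\gf_\lambda$ and $Y\in\gf_\mi$, the commutator $XY-YX=[X,Y]$ holds in $U(G)$. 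Modulo $U_{(\lambda+\mi)^-}$, this is exactly the image of the bracket in $\gf_*$ (if $[X,Y]\in\gf_{(\lambda+\mi)^-}$, both sides vanish). So $\pi$ is a Lie algebra homomorphism into the associative algebra $U_*$. By the universal property it extends to an algebra homomorphism $U(\pi)\colon U(G_*)\to U_*$, which preserves gradings because $\pi$ does.

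\emph{Choice of basis.} Choose a basis $X_1,\dots,X_n$ of $\gf$ adapted to the filtration. Since the set of jumps of $\lambda\mapsto\gf_\lambda$ is finite, we can take successive complements, so that for every $\lambda$ the $X_j$ with $\deg X_j\meg\lambda$ form a basis of $\gf_\lambda$. Let $d_j=\deg X_j$. The classes $Y_j$ of $X_j$ in $\gf_{*,d_j}$ then form a homogeneous basis of $\gf_*$. By PBW, the ordered monomials $\vect Y^\alpha$ form a basis of $U(G_*)$, with $\vect Y^\alpha$ of degree $d_\alpha=\sum_j\alpha_jd_j$. Also $U(\pi)(\vect Y^\alpha)$ is the class of $\vect X^\alpha$ in $U_{*,d_\alpha}$.

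\emph{Surjectivity.} I will show by induction that $U_\lambda$ is spanned by $\Set{\vect X^\alpha\colon d_\alpha\meg\lambda}$. A product $Z_1\cdots Z_k$ with $\sum\deg Z_i\meg\lambda$ is first expanded in the adapted basis, giving a combination of words $X_{j_1}\cdots X_{j_k}$ with $\sum d_{j_i}\meg\lambda$. To reorder a word, swap adjacent letters. Each swap produces a commutator term $[X_a,X_b]\in\gf_{d_a+d_b}$. Because the degrees lie in the discrete set $\Q\dd_0\cap[1,\infty)$ up to bounded size (here I use the assumption $\Lambda\subseteq\Q\dd_0$, or simply finiteness of the set of jumps), we can run an induction on the pair (length, number of inversions). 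In this way every word of total degree $\meg\lambda$ becomes a combination of ordered monomials of degree $\meg\lambda$. Hence the classes of $\vect X^\alpha$ with $d_\alpha=\lambda$ span $U_{*,\lambda}$.

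\emph{Injectivity.} This is the main obstacle. I must show that the $\vect X^\alpha$ with $d_\alpha=\lambda$ are linearly independent modulo $U_{\lambda^-}$. Since $U_{\lambda^-}$ is spanned by the $\vect X^\beta$ with $d_\beta<\lambda$ (from the surjectivity step, using the finitely many degrees below $\lambda$), this reduces to the linear independence of all ordered monomials in $U(G)$, which is classical PBW for $\gf$. Concretely, a relation $\sum_{d_\alpha=\lambda}c_\alpha\vect X^\alpha\in U_{\lambda^-}$ gives $\sum c_\alpha\vect X^\alpha=\sum_{d_\beta<\lambda}c'_\beta\vect X^\beta$, and PBW forces all $c_\alpha=0$. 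Thus $U(\pi)$ maps a basis of $U(G_*)$ bijectively onto a basis of $U_*$, so it is an isomorphism of graded algebras. As a byproduct, $\gf\cap U_\lambda=\gf_\lambda$.
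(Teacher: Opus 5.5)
Your proof is correct. You check that $\pi$ is a well-defined graded Lie homomorphism. You then use a filtration-adapted PBW basis to show that $U_\lambda$ is spanned by the ordered monomials $\vect X^\alpha$ with $d_\alpha\meg\lambda$, and PBW in $U(G)$ gives that the classes with $d_\alpha=\lambda$ form a basis of $U_{*,\lambda}$. The paper gives no argument of its own here and defers to \cite[Proposition 4.2]{BCP}; your route is the standard one and matches the paper's remark that $\gf\cap U_\lambda=\gf_\lambda$ follows from that proof.

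One point is superfluous: the appeal to $\Lambda\subseteq\Q\dd_0$ in the reordering step. Induction on (length, number of inversions) is well founded whatever the degrees are.
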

	Cf.~\cite[Proposition 4.2]{BCP} for a proof of this result.
	From now on, we shall identify $U_*$ and $U(G_*)$ by means of $U(\pi)$.

	\begin{deff}
		We say that a basis $(X_j)_{j\in J}$ of $\gf$ is compatible with the filtration $(\gf_\lambda)$ if $\gf_\lambda$ is generated by the $X_j$ with $\deg(X_j)\meg \lambda$ for every $\lambda \Meg 1$.
	\end{deff}
	
	By~\cite[Proposition 4.3]{BCP}, $(X_j)$ is a basis adapted to the filtration if and only if the $Y_j=X_j+\gf_{\deg(X_j)^-}$, $j\in J$, form a homogeneous basis of $\gf_*$.
	
	\begin{deff}		
		We say that a family $(X_j)_{j\in J}$ of elements of $\gf$ is a minimal basis if, setting $Y_j\coloneqq X_j+ \gf_{(\deg X_j)^-}$, the family $(Y_j)$ induces a (homogeneous) basis of $\gf_*/[\gf_*,\gf_*]$.
		We denote with $\dd$ the least common multiple of the degrees of the $X_j$, $j\in J$.\footnote{Notice that $\dd$ is well defined since by our assumption the $d_j$ are all integer multiples of some element of $(0,+\infty)$.}
		
		We say that $\Lc $ is weighted subcoercive if $ \Lc+\Lc^*+U_{\grado^-}$ is a positive Rockland operator on $G_*$.
	\end{deff}

	If $(X_j)$ is a minimal basis of $\gf$, then $(X_j)$ generates $\gf$ as a Lie algebra, and also generates the filtrations $(\gf_\lambda)$ and $U_\lambda$ (cf.~\cite[Proposition 4.4]{BCP}). In other words, $\gf_\lambda$  is the vector space generated by the vector fields of the form $\ad(X_{j_1})\cdots \ad(X_{j_{k-1}})X_{j_k}$, where $k\Meg 1$, $j_1,\dots, j_k\in J$, and $\deg(X_{j_1})+\cdots +\deg(X_{j_k})\meg \lambda$. Analogously, $U_\lambda$ is the vector space generated by the differential operators for the form $X_{j_1}\cdots X_{j_k}$, where $k\Meg 0$, $j_1,\dots, j_k\in J$, and $\deg(X_{j_1})+\cdots +\deg(X_{j_k})\meg \lambda$.
	In particular, $(X_j)$ is a `reduced weighted algebraic basis' of $\gf$, in the terminology of~\cite{ElstRobinson}.
	
	Observe that $\dd$ does \emph{not} depend on the choice of $(X_j)$, since it is the least common multiple of the degrees of the non-zero elements of $\gf_*/[\gf_*,\gf_*]$.

	\begin{prop}\label{prop:4}
		Let $(X_j)_{j\in J}$ be a minimal basis of $\gf$. Then, $\Lc\coloneqq \sum_{j\in J} (X_j^{k\dd/d_j})^\dag X^{k\dd/d_j}_j$, where $d_j=\deg X_j$ for every $j\in J$, is a real,  positive and formally self-adjoint  weighted subcoercive operator for every integer $k\Meg 1$.
	\end{prop}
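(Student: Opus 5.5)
Write $n_j\coloneqq k\dd/d_j$, so that $n_j$ is a positive integer, since $\dd$ is the least common multiple of the $d_j$. We must check four properties: $\Lc$ is real, positive, formally self-adjoint, and weighted subcoercive.

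\textbf{Reality and self-adjointness.} Each $X_j$ is a real vector field, so $\overline{X_j}=X_j$. The formal transpose of a real operator is real, because $X^\dag f=\Delta_R^{-1}X^+(\Delta_R f)$ by Proposition~\ref{prop:9}, and $\Delta_R$ is a positive character. Hence $(X_j^{n_j})^\dag=(X_j^{n_j})^*$, and each summand is $A_j^*A_j$ with $A_j\coloneqq X_j^{n_j}$. Thus $\Lc=\sum_j A_j^*A_j$ is real and formally self-adjoint. Moreover
\[
\int_G (\Lc f)\overline f\,\dd\beta=\sum_j\norm{A_j f}_{L^2(\beta)}^2\Meg 0
\]
for $f\in C^\infty_c(G)$, which gives positivity.

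\textbf{Weighted subcoercivity.} We need the class of $\Lc+\Lc^*=2\Lc$ modulo $U_{2k\dd^-}$ to be a positive Rockland operator on $G_*$. Set $D\coloneqq 2k\dd$.

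1. Since $A_j\in U_{k\dd}$, we have $\Lc\in U_D$.

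2. By Proposition~\ref{prop:9}(1), $X^\dag=\Delta_R^{-1}X^+\Delta_R$, and conjugation by the character $\Delta_R$ sends each $Y\in\gf$ to $Y+(Y\Delta_R)(e)$, using Proposition~\ref{prop:9}(5). Hence $X_j^\dag=-X_j+c_j$ with $c_j\in\R$. It follows that $(X_j^{n_j})^\dag=(-X_j+c_j)^{n_j}\equiv(-1)^{n_j}X_j^{n_j}$ modulo $U_{k\dd^-}$. The constant $c_j$ has degree $0<d_j$, so every term containing it has strictly smaller degree.

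3. Under the identification $U_*\cong U(G_*)$ of Proposition~\ref{prop:8}, the principal part of $\Lc$ is therefore
\[
P=\sum_j (-1)^{n_j}Y_j^{2n_j}=\sum_j (Y_j^{n_j})^*Y_j^{n_j},
\]
where the last adjoint is taken with respect to Haar measure on the nilpotent, hence unimodular, group $G_*$. So $P$ is homogeneous of degree $D$ and formally self-adjoint, and $2P$ is the class of $\Lc+\Lc^*$.

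4. To see that $P$ is Rockland, take a nontrivial irreducible unitary representation $\pi$ of $G_*$ and a smooth vector $v$ with $d\pi(P)v=0$. Then $0=\langle d\pi(P)v,v\rangle=\sum_j\norm{d\pi(Y_j)^{n_j}v}^2$, so $d\pi(Y_j)^{n_j}v=0$ for every $j$. Because $d\pi(Y_j)$ is skew-adjoint, this forces $d\pi(Y_j)v=0$ for every $j$. The $Y_j$ span $\gf_*$ modulo $[\gf_*,\gf_*]$, so they generate $\gf_*$ as a Lie algebra. Hence $d\pi(\gf_*)v=0$, and $v$ is a vector fixed by $G_*$. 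By irreducibility and nontriviality of $\pi$, this gives $v=0$. Positivity of $P$ was shown above.

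\textbf{Main obstacle.} The delicate point is step 2, the bookkeeping of degrees. One must ensure that lower-order terms, namely those involving the constants $c_j$ and commutators of the $X_j$, genuinely lie in $U_{D^-}$. One must also ensure that $X_j^{n_j}$ has degree exactly $k\dd$ with class $Y_j^{n_j}$. Both follow from the filtration property $U_\lambda U_\mu\subseteq U_{\lambda+\mu}$ and from Proposition~\ref{prop:8}.
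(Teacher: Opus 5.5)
Your proof is correct. The paper records Proposition~\ref{prop:4} among the preliminaries of Section~\ref{sec:2} without proof; these facts are taken over from the first part of the series, so there is no in-text argument to compare yours with. What you give is the natural verification. The $A_j^*A_j$ structure yields reality, positivity and formal self-adjointness. The isomorphism of Proposition~\ref{prop:8} identifies the principal part as $P=\sum_j(-1)^{n_j}Y_j^{2n_j}=\sum_j (Y_j^{n_j})^*Y_j^{n_j}$. The Rockland property then follows by testing on smooth vectors of non-trivial irreducible representations, where annihilation by the generators $Y_j$ propagates to all of $\gf_*$ because $d\pi$ is a Lie algebra homomorphism.

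A few points can be stated more sharply.

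\emph{The ``main obstacle'' you flag is milder than it looks.} No commutators ever occur, since each summand $(-X_j+c_j)^{n_j}X_j^{n_j}$ is a polynomial in the single element $X_j$. The only lower-order terms carry a positive power of $c_j$ and lie in $U_{2k\dd-d_j}\subseteq U_{2k\dd^-}$.

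\emph{The degree must be exact.} What you actually need is that $\deg\Lc$ equals $2k\dd$, so that $\grado=2k\dd$ in the definition of weighted subcoercivity; equivalently, the class of $\Lc$ in $U_{*,2k\dd}$ is non-zero. This follows from the injectivity of $U(\pi)$ once $P\neq 0$. That is clear because the $Y_j$ are linearly independent, or because a Rockland operator on the non-trivial group $G_*$ cannot vanish.

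\emph{Positivity of $P$.} The sentence ``positivity of $P$ was shown above'' should point explicitly either to the identity $P=\sum_j (Y_j^{n_j})^*Y_j^{n_j}$ with respect to Haar measure on $G_*$, or to $\langle d\pi(P)v,v\rangle=\sum_j\norm{d\pi(Y_j)^{n_j}v}^2$. The computation on $G$ does not apply to $P$ as it stands.

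\emph{The skew-symmetry step.} Set $T=d\pi(Y_j)$, which is skew-symmetric on the invariant domain of smooth vectors. The implication $T^{n_j}v=0\Rightarrow Tv=0$ is then the descent $\norm{T^{m-1}v}^2=-\langle T^{m-2}v,T^mv\rangle=0$ for $m\Meg 2$.
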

	
	Here, by `positive' we mean that $\int \Lc f \overline f \,\dd \beta\Meg 0$ for every $f\in C^\infty_c(G)$.
	
	Notice that, if $\gf_1$ is a vector subspace of $\gf$ which generates $\gf$ as a Lie algebra, and if $(\gf_\lambda)$ is the filtration generated by $\gf_1$ (that is, $\gf_\lambda$ is the vector space generated by the elements of $\gf_1$ -- if $\lambda\Meg 1$ -- and their commutators up to order $[\lambda]$), then any basis of $\gf_1$ is a minimal basis (and conversely), and the operator $\Lc$ defined above is a sub-Laplacian (with drift, unless $\beta$ is right-invariant). This observation provides the link with the theory developed in~\cite{BPV,BPV2,BPV3}.

	We shall now define a control modulus on $G$. 
	\begin{deff}\label{def:3}
		Given an absolutely continuous curve $\gamma\colon [0,1]\to G$, we  define the content of $\gamma$ as the greatest lower bound of the $\eps>0$ such that 	
		\[
		\abs{P_\lambda \dd L_{\gamma(t)}^{-1}\gamma'(t)} \meg \min(\eps, \eps^{\lambda}) 
		\]
		for almost every $t\in [0,1]$ and for every $\lambda>0$, where $P_\lambda$ is the orthogonal projector of $\gf$ onto $\gf_\lambda \ominus \gf_{\lambda^-}$ (this is non-zero only for finitely many $\lambda>0$) and $L_{\gamma(t)}$ is the left translation by $\gamma(t)$. 
		
		Given $x\in G$, we shall define $\abs{x}_*$  as the greatest lower bound of the contents of the absolutely continuous curves $\gamma\colon [0,1]\to G$ such that $\gamma(0)=e$ and $\gamma(1)=x$.
		
		We  endow $G$ with the left-invariant distance $d(x,y)\coloneqq \abs{y^{-1}x}_*$.
	\end{deff} 
	
	Choosing a different scalar product on $\gf$   gives rise to bi-Lipschitz equivalent control moduli. Equivalence at infinity follows from the fact that all these control moduli are `connected moduli,'\footnote{In fact, every $x\in G$ may be written as $x_1\cdots x_k$, where $\abs{x_j}_*\meg 1$ for $j=1,\dots, k$, and $k\meg \abs{x}_*+1$.}  whereas equivalence near $e$ follows from~\cite[Corollary 6.5]{ElstRobinson}.  
	Notice that here we are not requiring $\gamma$ to be `horizontal.' One may also require $\gamma$ to be horizontal with respect to some fixed weighted algebraic basis compatible with the filtration $(\gf_\lambda)$ (for instance, a minimal basis), in the terminology of~\cite{ElstRobinson}, and still get an equivalent control modulus. This would provide a better mean value theorem for the corresponding `horizontal gradient,' but we shall not need this kind of more precise estimates.

	It is known that 
	\[
	\beta(B(e,r))\asymp r^{Q_*}, \qquad r\to 0^+,
	\]
	while
	\[
	\beta(B(e,r))\meg \ee^{C r}
	\]
	for some constant $C>0$ and for every $r\Meg 1$ (cf., e.g.,~\cite[Section 4.3]{BCP}).

	\begin{deff}
		From now on, we shall fix a  weighted subcoercive operator $\Lc$ with degree $\grado$. We shall denote with $(h_t)_{t>0}$ the corresponding heat kernel. In other words, $\ee^{-t\Lc}f=f*h_t$ for every $f\in L^2(\beta)$, where $(\ee^{-t\Lc})_{t>0}$ is the semigroup generated by the closure of $\Lc$, with initial domain $C^\infty_c(G)$, in $L^2(\beta)$ (cf.~Theorem~\ref{teo:7} below).
		
		For every $\omega\in \R$, we shall set $\Lc_\omega \coloneqq \Lc+ \omega I$.
	\end{deff}
	
	Cf.~\cite[Theorems 4.8 and 5.4]{BCP} for a proof of the following result.
	
	\begin{teo}\label{teo:7}
		There is $\omega\in \R$ such that the following hold:
		\begin{enumerate} 
			\item[\textnormal{(1)}] the closure of $\Lc$, with initial domain $C^\infty_c(G)$,  generates a semigroup of operators of $L^2(\beta)$; if $\Lc=\Lc^*$, then $\Lc$ is essentially self-adjoint on $C^\infty_c(G)$;
			
			\item[\textnormal{(2)}] for every $\lambda,\lambda'\Meg 0$ there are $b,C>0$ such that  
			\[
			\abs{X Y^R h_t(x)}\meg C \abs{X}\abs{Y} t^{-(Q_* + \deg(X)+\deg(Y))/\grado} \ee^{\omega t} \ee^{- b (\abs{x}_*^{\grado}/t)^{1/(\grado-1)}}
			\]
			for every $X\in U_\lambda$, for every $Y\in U_{\lambda'}$, and for every $x\in G$;
		\end{enumerate}
	\end{teo}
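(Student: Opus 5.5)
The plan is to reduce to the setting where $\beta$ is a left Haar measure, and then invoke ter Elst and Robinson's theory of weighted subcoercive operators~\cite{ElstRobinson}. The only new work is keeping track of the modular functions $\Delta_L,\Delta_R$ and of the right-invariant derivatives.

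\emph{Reduction to Haar measure.} Multiplication by the character $\Delta_L^{1/2}$ is a unitary operator $U\colon L^2(\beta)\to L^2(\beta_L)$ (note $\beta_L=\Delta_L^{-1}\cdot\beta$, so $\Delta_L^{1/2}$ has the right power), and it maps $C^\infty_c(G)$ onto itself. By Proposition~\ref{prop:9}(5), for every $X\in\gf$ one has $X(\chi f)=\chi\,(X+(X\chi)(e))f$. Hence conjugating $\Lc$ by $U$ gives a left-invariant operator $\tilde\Lc$ with $\tilde\Lc-\Lc\in U_{\grado^-}$. Therefore $\tilde\Lc$ has the same principal part as $\Lc$ and is still weighted subcoercive. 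Formal adjoints transform compatibly under $U$, since $X^\dag$ with respect to $\beta$ and with respect to $\beta_L$ differ by the conjugation of Proposition~\ref{prop:9}(1). Consequently:
\begin{itemize}
\item generation of a semigroup transfers between $\Lc$ and $\tilde\Lc$;
\item essential self-adjointness on $C^\infty_c(G)$ transfers as well;
\item the convolution kernels satisfy $h_t=\Delta_L^{\pm1/2}\tilde h_t$ up to the bookkeeping of~\eqref{eq:1}.
\end{itemize}
Since $\Delta_L(x)\meg \ee^{c\abs{x}_*}$, the character factors only change $\omega$ and $b$. For derivatives, $X(\Delta_L^{1/2}\tilde h_t)$ and $Y^R(\Delta_L^{1/2}\tilde h_t)$ expand, again by Proposition~\ref{prop:9}(5), into finitely many terms of lower degree, which are bounded by the same estimate. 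So it suffices to treat a left Haar measure.

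\emph{Part (1).} The key input is the G\aa rding-type inequality for weighted subcoercive operators from~\cite{ElstRobinson}:
\[
\mathrm{Re}\,\langle \Lc f,f\rangle \Meg \mu\norm{f}_{\grado/2}^2-\omega\norm{f}_2^2,
\]
where $\norm{\,\cdot\,}_{\grado/2}$ is the weighted Sobolev norm built from $U_{\grado/2}$. It is obtained by a contraction argument from the positivity of the Rockland operator $P+P^*$ on $G_*$, with the error terms in $U_{\grado^-}$ absorbed by interpolation. From this inequality, $\Lc+\omega$ is accretive. Its adjoint satisfies the same inequality, which gives dense range of $\Lc+\omega'$ for large $\omega'$ by the standard weak-solution argument together with the regularity of weak solutions~\cite{ElstRobinson}. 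Lumer--Phillips then yields that the closure generates a semigroup (in fact a holomorphic one). If $\Lc=\Lc^*$, the operator is symmetric and bounded below, and the same dense-range statement gives essential self-adjointness.

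\emph{Part (2).} First treat $0<t\meg1$. Smoothing estimates $\norm{X\ee^{-t\Lc}}_{2\to2}\lesssim t^{-\deg X/\grado}$ follow from holomorphy and the G\aa rding inequality iterated along $U_\lambda$. Combined with a local Sobolev embedding of weighted order $>Q_*/2$, they give $L^2\to L^\infty$ bounds and hence the pointwise bound without exponential decay. Right-invariant derivatives are handled by writing $h_t=h_{t/2}*h_{t/2}$ and using $Y^R X(f*g)=(Y^Rf)*(Xg)$ together with Proposition~\ref{prop:9}(2),(4) to move each derivative onto a smooth factor, with lower-order corrections. For $t>1$, write $h_t=h_{t-1/2}*h_{1/2}$ and use $\norm{\ee^{-t\Lc}}\meg \ee^{\omega t}$; this produces the factor $\ee^{\omega t}$, and the polynomial factor in $t$ is absorbed into $\ee^{\omega t}$ after enlarging $\omega$.

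The Gaussian factor comes from Davies' perturbation method. Conjugate by $\ee^{\rho\psi}$, where $\psi$ ranges over functions with $\abs{X_j\psi}\meg 1$ for a minimal basis. The G\aa rding inequality for the perturbed operator gives $\norm{\ee^{\rho\psi}\ee^{-t\Lc}\ee^{-\rho\psi}}\meg \ee^{(\omega+c\rho^{\grado})t}$ for $\rho\Meg1$, since the terms of top degree carry the power $\rho^{\grado}$. Optimizing $\rho\asymp(\abs{x}_*/t)^{1/(\grado-1)}$ then gives the exponent $(\abs{x}_*^{\grado}/t)^{1/(\grado-1)}$. I expect this perturbation step to be the main obstacle. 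One must show that the perturbed form remains subcoercive uniformly in $\rho$ with the correct power $\rho^{\grado}$, and that suitable $\psi$ approximating $\abs{\,\cdot\,}_*$ exist; this requires the equivalence of control moduli recalled after Definition~\ref{def:3}.
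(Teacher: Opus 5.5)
\emph{Comparison with the paper.} The paper gives no argument for this theorem here. It cites \cite[Theorems 4.8 and 5.4]{BCP}, where ter Elst and Robinson's results are transferred to a general relatively invariant $\beta$, to right-invariant derivatives, and to the connected modulus $\abs{\,\cdot\,}_*$. Your conjugation by $\Delta_L^{1/2}$ and your outline of (1) follow that same route and are fine. The genuine gap is in (2), exactly at the Davies step you flagged: with the class of weights you chose, the method cannot produce the Gaussian factor in terms of $\abs{x}_*$.

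\emph{Why the unweighted Davies step fails.} Take $\abs{X_j\psi}\meg 1$ for all $j$. Your norm bound $\ee^{(\omega+c\rho^{\grado})t}$ is plausible for $\rho\Meg 1$, since a term $X_{j_1}\cdots X_{j_k}$ of degree $\meg\grado$ picks up at most $\rho^k\meg\rho^{\grado}$. The problem is the optimisation in $\rho$.
\begin{itemize}
\item Such $\psi$ are $1$-Lipschitz for the Carnot--Carath\'eodory distance $d_{\mathrm{CC}}$ of $(X_j)$ with \emph{unweighted} controls. So the optimisation only yields $\exp\bigl(-b(d_{\mathrm{CC}}(x)^{\grado}/t)^{1/(\grado-1)}\bigr)$.
\item Near $e$, $d_{\mathrm{CC}}$ is much smaller than $\abs{\,\cdot\,}_*$ along every direction of degree $>1$: $d_{\mathrm{CC}}(\exp(sX_j))\meg\abs{s}$, whereas $\abs{\exp(sX_j)}_*\approx\abs{s}^{1/d_j}$.
\item Hence no function in your class is comparable to $\abs{\,\cdot\,}_*$ near $e$, where $\abs{\,\cdot\,}_*$ is only H\"older for $d_{\mathrm{CC}}$. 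The equivalence of control moduli does not rescue this.
\end{itemize}
Concretely, on $\R^2$ take $X_1=\partial_1$ ($d_1=1$), $X_2=\partial_2$ ($d_2=2$), $\Lc=\partial_1^4-\partial_2^2$, so $\grado=4$. At $x=(0,Mt^{1/2})$ your exponent is $\approx M^{4/3}t^{1/3}\to0$ as $t\to0^+$, while (2) requires $\approx M^{2/3}$.

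\emph{What is needed.} Your scheme is adequate when all $d_j=1$ (the sub-Laplacian-type case), but not in general. The perturbation must be adapted to the degrees:
\begin{itemize}
\item Use $\rho$-dependent classes with $\abs{X_j\psi}\meg\rho^{d_j-1}$, together with correspondingly scaled bounds on the higher derivatives of $\psi$ that appear after conjugation.
\item Then $X_j$ is perturbed by $O(\rho^{d_j})$, and every term of degree $\meg\grado$ by $O(\rho^{\grado})$ for $\rho\Meg1$.
\item Moreover $\sup_\psi\rho(\psi(x)-\psi(e))\gtrsim\rho\abs{x}_*$ once $\rho\abs{x}_*$ is large. Indeed, a curve on $[0,1]$ with $\sum_j\rho^{d_j}\abs{c_j}\meg L$, $L\Meg1$, $\rho\Meg 1$, has $\abs{c_j}\meg L\rho^{-d_j}\meg\min(L/\rho,(L/\rho)^{d_j})$. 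So its content is at most $L/\rho$.
\item The regime where $\rho\abs{x}_*$ is bounded is absorbed by $\ee^{\omega t}$. This gives the correct exponent, including the connected behaviour at infinity.
\end{itemize}

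\emph{A smaller point on right-invariant derivatives.} Writing $h_t=h_{t/2}*h_{t/2}$ and using Proposition~\ref{prop:9}(2),(4) does not by itself remove $Y^R$. The identity $XY^Rh_t=(Y^Rh_{t/2})*(Xh_{t/2})$ still requires a bound for $Y^Rh_{t/2}$. The missing ingredient is that $\Delta_L^{-1}\check h_t$ is the kernel of $\ee^{-t\Lc^\dag}$, where $\Lc^\dag$ is again weighted subcoercive. Proposition~\ref{prop:9}(3),(5) then convert $Y^Rh_t$ into reflected, character-twisted left-invariant derivatives of that kernel, which the left-invariant case controls.
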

	 
	\subsection{Gaussian Estimates}

	\begin{deff}
		In order to simplify the notation, given a measure space $(X,\mi)$, $p\in (0,\infty]$, and a $\mi$-measurable function $f$ on $X$, we shall also write $\norm{f(x)}_{L^p_x(\mi)}$ instead of $\norm{f}_{L^p(\mi)}$ (we are therefore allowing the possibility $\norm{f}_{L^p(\mi)}=+\infty$). This will be particularly useful in the presence of nested $L^p$-norms.
	\end{deff}
	
	\begin{deff}
		For every $b,t>0$ and for every $d>1$, define 
		\[
		p_{b,t,d}\colon x\mapsto t^{-Q_*/d} \ee^{-b   (\abs{x}_*^{d}/t)^{1/(d-1)}}
		\]
		and
		\[
		T_{b,t,d}f\coloneqq \abs{f}*p_{b,t,d}
		\]
		for every $\beta$-measurable function $f$ on $G$. 
		We shall simply write $T_{b,t}$ and $p_{b,t}$ instead of $T_{b,t,\grado}$ and $p_{b,t,\grado}$, respectively.
	\end{deff}
	
	We now collect some elementary facts. Cf.~\cite[Lemmas 5.5, 5.6, 5.7, 5.8]{BCP} for the relative proofs.
	
	\begin{lem}\label{lem:18}\label{lem:32}\label{lem:3}\label{cor:3}\label{lem:4}\label{lem:4bis}
		Take $b, b'>0$, $\rho\Meg 0$, and $d'\Meg d>1$. Then, the following hold:
		\begin{itemize}
			\item if $b\Meg b'$, then $		p_{b,t,d}\meg \ee^{b'} p_{b',t^{d'/d},d'}$	for every $t>0$;
			
			\item for every $x,y\in G$ and for every $\beta$-measurable function $f$ on $G$,
			\[
			\ee^{-2^{1/(d-1)}b(\abs{y^{-1}x}^d/t)^{1/(d-1)}}(T_{2^{1/(d-1)}b,t,d} f)(y)\meg (T_{b,t,d} f)(x)\meg \ee^{2^{1/(d-1)}b(\abs{y^{-1}x}^d/t)^{1/(d-1)}} (T_{2^{-1/(d-1)}b,t,d} f)(y);
			\]
			
			\item there is a constant $C>0$ such that
			\[
			\norm*{ p_{b,t,d}(x)\ee^{\rho \abs{x}_*}}_{L^p_x(\beta)}\meg C t^{-Q_*/(d p')}\ee^{C t}
			\]
			for every $t>0$ and for every $p\in [1,\infty]$;
			
			\item the mapping $(0,+\infty)\ni t \mapsto \ee^{\rho\abs{\,\cdot\,}_*}p_{t,b,d}\in L^p( \beta)$ is continuous for every $p\in [1,\infty]$;
			
			\item for every $b''\in (0,2^{-1/(d-1)}\min(b,b'))$  there is a constant $C>0$ such that
			\[
			\int_G p_{b,t,d}(x y^{-1}) p_{b',t',d}(y) \ee^{\rho\abs{y}_*}\,\dd \beta(y) \meg C \ee^{C (t+t')} p_{ b'',t+t',d}(x)
			\]
			for every $x\in G$ and for every $t,t'>0$;
			
			\item for every $b''> 2^{1/(d-1)}\max(b,b')$ there is a constant $C>0$ such that
			\[
			\int_G p_{b,t,d}(x y^{-1}) p_{b',t',d}(y) \ee^{-\rho\abs{y}_*}\,\dd \beta(y) \Meg C \ee^{-C(t+t')} p_{ b'',t+t',d}(x)
			\]
			for every $x\in G$ and for every $t,t'>0$.
		\end{itemize}
	\end{lem}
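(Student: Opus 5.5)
The six assertions of the lemma are independent, so I would prove them one at a time. The tools are: the triangle inequality for $\abs{\cdot}_*$, which is a genuine left-invariant distance; the volume estimates $\beta(B(e,r))\asymp r^{Q_*}$ as $r\to0^+$ and $\beta(B(e,r))\meg \ee^{Cr}$ for $r\Meg1$; and the elementary inequality $(a+b)^{d/(d-1)}\meg 2^{1/(d-1)}(a^{d/(d-1)}+b^{d/(d-1)})$, which follows from the convexity of $s\mapsto s^{d/(d-1)}$.

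\textbf{First item.} I set $s=(\abs{x}_*^d/t)^{1/(d-1)}$ and compare both sides directly. On the left, $t^{-Q_*/d}=(t^{d'/d})^{-Q_*/d'}$, so the prefactors agree. For the exponents, the relevant exponent on the right is $(\abs{x}_*^{d'}/t^{d'/d})^{1/(d'-1)}=s^{(d-1)d'/(d(d'-1))}$, and $\theta=(d-1)d'/(d(d'-1))\meg 1$. I then split according to whether $s\Meg1$ or $s<1$. If $s\Meg1$, then $s^\theta\meg s$ and $b\Meg b'$ give $\ee^{-bs}\meg \ee^{-b's^\theta}$. If $s<1$, the term $\ee^{b'}$ absorbs everything, since $\ee^{-bs}\meg 1\meg \ee^{b'}\ee^{-b's^\theta}$.

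\textbf{Second item.} I write $(T_{b,t,d}f)(x)=\int\abs{f(y')}\,p_{b,t,d}(y'^{-1}x)\,\dd\beta_L(y')$ and compare pointwise $p_{b,t,d}(y'^{-1}x)$ with $p_{b,t,d}(y'^{-1}y)$. By the triangle inequality, $\abs{y'^{-1}y}_*\meg\abs{y'^{-1}x}_*+\abs{x^{-1}y}_*$. Raising to $d/(d-1)$ and using convexity gives $\ee^{-2^{-1/(d-1)}b u}\meg \ee^{2^{1/(d-1)}\cdots}\ee^{-b\cdots}$ in the appropriate form. This yields both inequalities after swapping the roles of $x$ and $y$. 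I need to track the constants carefully so that exactly $2^{\pm1/(d-1)}b$ appear, writing $b=2^{-1/(d-1)}(2^{1/(d-1)}b)$.

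\textbf{Third and fourth items.} For the $L^p$ bound, I integrate over dyadic shells $\{2^k t^{1/d}\meg \abs{x}_*<2^{k+1}t^{1/d}\}$ when $t\meg1$, using the polynomial volume bound for small radii and the exponential bound for large radii. The exponent $\ee^{-b(\abs{x}_*^d/t)^{1/(d-1)}}$ is superlinear in $\abs{x}_*$ because $d/(d-1)>1$, so it beats $\ee^{(\rho+C)\abs{x}_*}$. When $t\Meg1$, the maximum of $\rho r-b(r^d/t)^{1/(d-1)}$ over $r$ is of order $t$, which produces the factor $\ee^{Ct}$. I would then interpolate between the cases $p=1$ and $p=\infty$.

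Continuity in $t$ follows from dominated convergence on compact subintervals of $(0,\infty)$, using a uniform majorant obtained from the third item with slightly smaller $b$. For $p=\infty$ one needs uniform continuity instead, which holds because the tails are uniformly small.

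\textbf{Last two items.} These are the main obstacle. For the upper bound I would use convexity again: for $\lambda=t/(t+t')$, by Hölder,
\[
\frac{(a+c)^{d/(d-1)}}{(t+t')^{1/(d-1)}}\meg \frac{a^{d/(d-1)}}{t^{1/(d-1)}}+\frac{c^{d/(d-1)}}{t'^{1/(d-1)}}.
\]
Taking $a=\abs{xy^{-1}}_*$ and $c=\abs{y}_*$ gives $p_{b,t,d}(xy^{-1})p_{b',t',d}(y)\meg$ (prefactors) $\cdot\,\ee^{-b''(\abs{x}_*^d/(t+t'))^{1/(d-1)}}\cdot \ee^{-\eps(\cdots)}$. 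Here a small leftover $\eps$ of one of the Gaussians is kept to carry out the integration. The remaining integral and the prefactor $t^{-Q_*/d}t'^{-Q_*/d}$ are controlled using the third item applied to the leftover. The delicate point is recovering $(t+t')^{-Q_*/d}$ rather than $\min(t,t')^{-Q_*/d}$. For this I would split according to which of $t,t'$ is larger and integrate out the kernel with the smaller time, whose $L^1$ norm is $O(\ee^{Ct})$.

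The lower bound is analogous. I would restrict the integral to $y$ on a near-geodesic from $e$ to $x$ at the proportional point, within a ball of radius comparable to $\min(t,t')^{1/d}$. There both factors are bounded below via the reverse triangle inequality, and the volume of that ball supplies the needed power of $t$.
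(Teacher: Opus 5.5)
Your arguments for the first five items work. In the fifth, note that the substitution $z=xy^{-1}$ brings in the factor $\Delta_R(y)\Delta_L(z)^{-1}$, which the leftover Gaussians absorb via Lemma~\ref{lem:31}; you in fact obtain every $b''<\min(b,b')$ there.

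\textbf{The gap is in the last item.} You centre the small ball at a point $y_0$ ``on a near-geodesic from $e$ to $x$ at the proportional point'', i.e.\ with $\abs{y_0}_*\approx \frac{t'}{t+t'}\abs{x}_*$ and $\abs{xy_0^{-1}}_*\approx\frac{t}{t+t'}\abs{x}_*$. Such points need not exist, because $\abs{\,\cdot\,}_*$ is in general not a length metric. Concretely, take $G=\R$ with $\gf_\lambda=\Set{0}$ for $\lambda<2$ and $\gf_\lambda=\R$ for $\lambda\Meg 2$; this is admissible, since $\gf_1=\Set{0}$ is explicitly allowed. Then $\abs{x}_*=\abs{x}^{1/2}$ for $\abs{x}\meg 1$. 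Take $t=t'$ and $0<\abs{x}\meg 1$. Any $y$ with $\abs{y}_*,\abs{x-y}_*\meg (1+\eta)\abs{x}_*/2$ would satisfy $\abs{x}\meg \abs{y}+\abs{x-y}\meg (1+\eta)^2\abs{x}/2<\abs{x}$ as soon as $(1+\eta)^2<2$, which is impossible. So the step fails exactly in the regime $\abs{x}_*\gg (t+t')^{1/d}$ that determines $b''$. Also, lower bounds on the two kernels require upper bounds on distances, so the tool is the ordinary triangle inequality, not the reverse one.

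\textbf{How to fix it.} Centre the ball at the endpoint attached to the smaller time, and pay with $t+t'\meg 2\max(t,t')$; this is precisely the source of the factor $2^{1/(d-1)}$ in the hypothesis. Set $\gamma\coloneqq d/(d-1)$, so that $(\abs{z}_*^d/s)^{1/(d-1)}=\abs{z}_*^\gamma/s^{\gamma-1}$. Use the refined convexity bound $(a+c)^\gamma\meg (1+\delta)^{\gamma-1}a^\gamma+(1+\delta^{-1})^{\gamma-1}c^\gamma$; the crude constant $2^{\gamma-1}$ would cost a second factor $2^{1/(d-1)}$.

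If $t'\meg t$, integrate only over $B(e,r)$ with $r=\min(t',1)^{1/d}$. There:
\begin{itemize}
\item $p_{b',t',d}(y)\Meg \ee^{-b'}t'^{-Q_*/d}$ and $\ee^{-\rho\abs{y}_*}\Meg\ee^{-\rho}$;
\item $\abs{xy^{-1}}_*\meg\abs{x}_*+r$, with $r^\gamma\meg t^{\gamma-1}$, and $t\Meg (t+t')/2$.
\end{itemize}
Hence $p_{b,t,d}(xy^{-1})\Meg C_\delta\, t^{-Q_*/d}\ee^{-(1+\delta)^{\gamma-1}2^{1/(d-1)}b(\abs{x}_*^d/(t+t'))^{1/(d-1)}}$. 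Meanwhile $\beta(B(e,r))\Meg C r^{Q_*}$ compensates $t'^{-Q_*/d}$ up to a factor $\ee^{-Ct'}$.

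If $t<t'$, integrate over the right translate $B(e,r)x$ with $r=\min(t,1)^{1/d}$. Then $\abs{xy^{-1}}_*<r$ and $\abs{y}_*<\abs{x}_*+r$, and one argues as before. Two extra factors appear: $\ee^{-\rho\abs{x}_*}$, and $\beta(B(e,r)x)=\Delta_R(x)\beta(B(e,r))\Meg c^{-1}\ee^{-c\abs{x}_*}\beta(B(e,r))$. Both are absorbed by an arbitrarily small part of the Gaussian in $x$, at the cost of $\ee^{-C(t+t')}$. This works because $\sup_{s\Meg 0}\big(As-\delta' s^\gamma/T^{\gamma-1}\big)$ is a constant multiple of $T$.

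Choosing $\delta,\delta'$ small then yields every $b''>2^{1/(d-1)}\max(b,b')$, and no geodesic property of $\abs{\,\cdot\,}_*$ is needed.
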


	We shall also need the following elementary lemma (cf.~\cite[Lemma 4.14]{BCP}).
	
	\begin{lem}\label{lem:31}
		Let $\chi\colon G\to \C\setminus \Set{0}$ be a character of $G$. Then, there is $c>0$ such that $\abs{\chi(x)}\meg \ee^{c(\abs{x}_*+1)}$ for every $x\in G$.
	\end{lem}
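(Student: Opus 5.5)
The claim is that every character $\chi$ of $G$ grows at most exponentially in the control modulus. The plan is to reduce to a local bound near $e$ and then propagate it using the connectedness property of $\abs{\,\cdot\,}_*$ recalled after Definition~\ref{def:3}. The key point is that $\abs{\chi}$ is a continuous homomorphism into the multiplicative group $(0,+\infty)$, so $\log\abs{\chi}$ is a continuous additive homomorphism $G\to\R$. Hence it suffices to bound $\abs{\chi}$ on a single ball.

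\emph{Local bound.} The closed ball $\overline{B(e,1)}=\Set{x\colon \abs{x}_*\meg 1}$ is compact. This holds because $d$ is a left-invariant distance inducing the manifold topology, which is a consequence of the local equivalence with the Euclidean structure in~\cite[Corollary 6.5]{ElstRobinson}; balls of small radius are therefore relatively compact. For radius $1$, every point is a finite product of points in a small compact ball, so $\overline{B(e,1)}$ is contained in a finite product of compact sets and is again compact. Since $\abs{\chi}$ is continuous, $c_0\coloneqq \max_{\abs{x}_*\meg 1}\log\abs{\chi(x)}$ is finite, and $c_0\Meg 0$ because $\chi(e)=1$.

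\emph{Propagation.} Given $x\in G$, the footnote after Definition~\ref{def:3} lets us write $x=x_1\cdots x_k$ with $\abs{x_j}_*\meg 1$ and $k\meg \abs{x}_*+1$. Then
\[
\abs{\chi(x)}=\prod_{j=1}^k\abs{\chi(x_j)}\meg \ee^{c_0 k}\meg \ee^{c_0(\abs{x}_*+1)}.
\]
Taking $c=\max(c_0,1)>0$ gives the statement.

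\emph{Main obstacle.} The only delicate step is justifying the footnote's decomposition and the compactness of the unit ball. If the footnote is not accepted as given, argue as follows. Take a curve $\gamma$ from $e$ to $x$ with content $\eps<\abs{x}_*+\eta$ and split $[0,1]$ into $k=\lceil \eps\rceil$ subintervals of equal length. Reparametrizing $\gamma$ on each subinterval multiplies the derivative by $k$. The left-translated pieces $\gamma(t_{j-1})^{-1}\gamma$ are then curves from $e$ whose content is at most $1$. This uses that $\min(\eps,\eps^\lambda)/k\meg 1$ for $\lambda\Meg1$ when $\eps\meg k$, which is exactly where the assumption that $\gf_\lambda=0$ for $\lambda<1$ enters. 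This gives $k\meg\abs{x}_*+1$ up to $\eta$.
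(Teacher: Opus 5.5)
Your proof is correct. It is essentially the intended argument: bound $\abs{\chi}$ on the compact unit ball, then apply the connected-modulus decomposition $x=x_1\cdots x_k$ with $k\meg \abs{x}_*+1$ recorded in the footnote after Definition~\ref{def:3}. The paper itself only cites \cite[Lemma 4.14]{BCP} for this lemma, and I cannot check that source's proof directly.

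Your fallback paragraph has two small slips, neither of which affects the argument. First, rescaling a subinterval of length $1/k$ to $[0,1]$ divides the derivative by $k$, which is what your inequality actually uses. Second, $\min(\eps,\eps^\lambda)\meg\eps\meg k$ holds for every $\lambda>0$, so the hypothesis $\gf_\lambda=\Set{0}$ for $\lambda<1$ is not used there; it is needed instead for $d$ to satisfy the triangle inequality.
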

	
	We shall mainly apply this lemma to the characters $\Delta_L$ and $\Delta_R$.

	\begin{deff}
		Take $ b>0$, $\kappa>0$, and $d>1$. Then, we define  
		\[
		T^*_{b,\kappa,d} f(x) \coloneqq  \sup_{t\in (0,\kappa]}  (T_{b,t,d} f)(x)
		\]
		for every $\beta$-measurable function $f$ on $G$ and for every $x\in G$. 
		
		We shall generally omit $d$ if it is $\grado$.
	\end{deff}

 	Cf.~\cite[Lemmas 5.10 and 5.11]{BCP} for a proof of the following result.
	
	\begin{lem}\label{lem:5}
		Take $b>0$, $\kappa > 0$, $d>1$, $p,q\in (1,\infty)$, and a $\sigma$-finite measure space $(Y,\Mf, \mi)$. Then there is a constant $C>0$ such that
		\[
		\norm*{ (T^*_{b,\kappa, d} f(y,\,\cdot\,))(x)}_{L^{q,p}_{y,x}(\mi,\beta)}\meg C\norm*{f}_{L^{q,p}(\mi,\beta)}
		\]
		for every   $f\in L^{q,p}(\mi,\beta)$.  
	\end{lem}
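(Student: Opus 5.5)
The plan is to dominate the maximal operator by a Hardy--Littlewood maximal function on balls of radius at most a fixed size, and then to apply a vector-valued (Fefferman--Stein type) maximal inequality in the mixed norm $L^{q,p}(\mi,\beta)$. Since $T^*_{b,\kappa,d}$ acts on the variable $x\in G$ alone, with $y\in Y$ as a parameter, it suffices to show two things: a pointwise bound $T^*_{b,\kappa,d} g\meg C M_R g$ for a suitable local maximal operator $M_R$, and the boundedness of $M_R$ on $L^p(\beta;L^q(\mi))$. Here the norm is read as $L^q$ in $y$ inside $L^p$ in $x$, in keeping with the notation $L^{q,p}_{y,x}$.

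\emph{Step 1: pointwise domination.} Fix $t\in(0,\kappa]$. Write
\[
(\abs{g}*p_{b,t,d})(x)=\int_G \abs{g(z)}\,p_{b,t,d}(z^{-1}x)\,\Delta_L(z^{-1})\,\dd\beta(z).
\]
Decompose $G$ into dyadic annuli $A_k=\Set{z\colon 2^{k-1}t^{1/d}\meg \abs{z^{-1}x}_*<2^k t^{1/d}}$ for $k\Meg 1$, together with the ball of radius $t^{1/d}$. On $A_k$ we have $p_{b,t,d}\meg t^{-Q_*/d}\ee^{-b' 2^{kd/(d-1)}}$. By Lemma~\ref{lem:31}, $\Delta_L(z^{-1})\meg C\Delta_L(x^{-1})\ee^{c\abs{z^{-1}x}_*}$.

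For balls of radius $r=2^k t^{1/d}\meg 1$, the local doubling $\beta(B(x,r))\asymp\Delta_L(x) r^{Q_*}$ gives
\[
t^{-Q_*/d}\int_{B(x,r)}\abs{g}\,\dd\beta\meg C\,2^{kQ_*}\,\Delta_L(x)\,M_1 g(x),
\]
where $M_1 g(x)\coloneqq\sup_{r\meg 1}\beta(B(x,r))^{-1}\int_{B(x,r)}\abs{g}\,\dd\beta$. Since $2^{kQ_*}\ee^{-b'2^{kd/(d-1)}}$ is summable, these annuli contribute $C M_1 g(x)$, uniformly in $t\meg\kappa$.

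For the annuli with $r>1$, the volume bound $\beta(B(e,r))\meg \ee^{Cr}$ produces only an exponential loss $\ee^{C2^kt^{1/d}}$. The superexponential decay $\ee^{-b'(2^kt^{1/d})^{d/(d-1)}t^{-1/(d-1)}}$ (note $d/(d-1)>1$) dominates this loss. I will therefore bound the tail part by a convolution $\abs{g}*K$ with a fixed kernel $K(x)=C\ee^{-c\abs{x}_*^{d/(d-1)}}\chi_{\abs{x}_*\Meg 1}$, independent of $t$. To organise this I will instead cover $B(x,r)$ by unit balls and use the bound $\beta(B(x,r))/\beta(B(x,1))\meg\ee^{Cr}$, which follows from Lemma~\ref{lem:18}-type estimates. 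The kernel $K$ lies in every weighted $L^1$ space, because the exponential volume growth is beaten by $\ee^{-c\abs{x}_*^{d/(d-1)}}$ (third bullet of Lemma~\ref{lem:18}).

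\emph{Step 2: vector-valued bounds.} The tail part $g\mapsto\abs{g}*K$ is a positive convolution with an integrable kernel, including the modular weights. Minkowski's inequality in $L^q(\mi)$ followed by Young's inequality therefore gives boundedness on $L^{q,p}$ directly, with no maximal function needed. For $M_1$, the restriction to $r\meg 1$ places us in a space of homogeneous type: $G$ is covered by uniformly locally doubling pieces, with $\beta$ replaced locally by $\beta_L$ up to the character $\Delta_L$, which is essentially constant on unit balls. The Fefferman--Stein vector-valued inequality $\norm{\norm{M_1 g(y,\cdot)}_{L^q(\mi)}}_{L^p}\meg C\norm{g}_{L^{q,p}}$ for $1<p,q<\infty$ then holds, by the standard Calderón--Zygmund argument valid for local maximal functions on locally doubling spaces.

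\emph{Main obstacle.} The delicate point is the local Fefferman--Stein step on a non-unimodular group with relatively invariant measure. I would handle it by noting that $\beta(B(x,r))$ is comparable to $\Delta_L(x)\beta_L(B(e,r))$ and that $\beta_L$ is doubling for $r\meg 2$, uniformly in $x$. I would then localise the Calderón--Zygmund decomposition via a bounded-overlap covering of $G$ by unit balls, which exists by left invariance of $d$ and $\beta_L$. Uniformity of all constants in $t\in(0,\kappa]$ must be tracked throughout, and this is where the exponent $d>1$ is used.
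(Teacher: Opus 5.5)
Your argument is correct. The paper gives no proof of this lemma and defers to \cite[Lemmas 5.10 and 5.11]{BCP}, and your route is presumably the one those two lemmas combine, though I cannot check that here: you dominate $T^*_{b,\kappa,d}$ pointwise, uniformly in $t\in(0,\kappa]$, by a local Hardy--Littlewood maximal operator plus a convolution with a fixed superexponentially decaying kernel, then use a local Fefferman--Stein inequality on the locally doubling space $(G,d,\beta_L)$ for the first term and Minkowski plus Young for the second.

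One piece of bookkeeping near radius $1$ needs tidying. The annuli with $2^k t^{1/d}\in(1,2)$, and for $t>1$ the central ball $B(x,t^{1/d})$, meet $B(x,1)$, where your kernel $K$ vanishes. It is cleaner to split according to whether $\abs{z^{-1}x}_*<1$, handled by $M_1$, or $\abs{z^{-1}x}_*\Meg 1$, where $p_{b,t,d}(z^{-1}x)\meg C\ee^{-c\abs{z^{-1}x}_*^{d/(d-1)}}$ uniformly in $t\in(0,\kappa]$.
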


	\begin{lem}\label{lem:2} 
		There is $\omega\in \R$ such that the following hold.
		Take $p\in (1,\infty)$, $q\in [1,\infty]$, $b>0$, $d>1$,   a $\sigma$-finite measure space $(Y,\Mf, \mi)$,  and a $\mi$-measurable function $\kappa\colon Y\to (0,+\infty)$. 
		Then there is   $C>0$ such that, for every $(\mi\otimes \beta)$-measurable function $f$ on $Y\times G$,
		\[
		\norm*{   \ee^{-\omega\kappa(y)} [T_{b,\kappa(y),d} f(y,\,\cdot\,)](x)  }_{L^{q,p}_{y,x}(\mi,\beta)}\meg  C \norm{f}_{L^{q,p}(\mi,\beta)}.
		\]
	\end{lem}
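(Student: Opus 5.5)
The plan is to prove the estimate fibrewise in $y$ and then integrate in $y$. Fix $y\in Y$ and set $t\coloneqq\kappa(y)$. If I can show that
\[
\norm{T_{b,t,d} g}_{L^p(\beta)}\meg C\,\ee^{\omega t}\norm{g}_{L^p(\beta)}
\]
for every $t>0$ and every $\beta$-measurable $g$, with $C$ independent of $t$, then applying it to $g=f(y,\,\cdot\,)$, multiplying by $\ee^{-\omega\kappa(y)}$ and taking the $L^q_y(\mi)$-norm gives the claim at once. Measurability of $(y,x)\mapsto [T_{b,\kappa(y),d}f(y,\,\cdot\,)](x)$ follows from Tonelli's theorem and the measurability of $\kappa$. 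So the whole lemma reduces to an $L^p$ operator bound for $T_{b,t,d}$ with growth $\ee^{\omega t}$, where the rate $\omega$ must be chosen before $b$, $d$, $p$ and $q$.

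\textbf{Fibrewise bound.} I would apply Young's inequality with $p_1=p_3=p$ and $p_2=1$, which gives
\[
\norm{\abs{g}*p_{b,t,d}}_{L^p(\beta)}\meg \norm{g}_{L^p(\beta)}\,\norm{\Delta_R^{-1/p'}p_{b,t,d}}_{L^1(\beta)}.
\]
By Lemma~\ref{lem:31} applied to $\Delta_R^{-1}$, there is $c>0$ with $\Delta_R^{-1/p'}(x)\meg \ee^{c(\abs{x}_*+1)}$, and $c$ does not depend on $p$ because $1/p'\meg1$. The third item of Lemma~\ref{lem:18} with $\rho=c$ then bounds $\norm{\ee^{c\abs{x}_*}p_{b,t,d}(x)}_{L^1_x(\beta)}$ by $C\ee^{Ct}$. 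This proves the lemma with $\omega=C$.

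\textbf{Removing the dependence of $\omega$.} The constant $C$ above depends on $b$ and $d$, so the remaining work is to make the exponential rate independent of them. First I would use the scaling identity
\[
p_{b,t,d}=b^{Q_*(d-1)/d}\,p_{1,\,b^{1-d}t,\,d},
\]
which reduces everything to $b=1$ at the price of replacing $t$ by $b^{1-d}t$. Next I would use the first item of Lemma~\ref{lem:18} to compare $p_{1,\cdot,d}$ with $p_{1,\cdot,\grado'}$ for a single large reference exponent $\grado'$. The rate $\omega$ would then be defined from this reference kernel together with the volume growth $\beta(B(e,r))\meg\ee^{Cr}$ and the constant of Lemma~\ref{lem:31}, all of which are fixed before $b,d,p,q$. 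The estimate I would aim for is $\int_G \ee^{c\abs{x}_*}p_{b,t,d}(x)\,\dd\beta(x)\meg C_{b,d}\,\ee^{\omega t}$, with all dependence on $b,d$ confined to the multiplicative constant. To get it, I would split the integral at $\abs{x}_*\simeq t$ and use the volume bound $\ee^{Cr}$ on dyadic shells.

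\textbf{Main obstacle.} This last step is where I expect the difficulty. Writing $r=st$, the exponent on the shell of radius $r$ is $t\bigl((C+c)s-b\,s^{d/(d-1)}\bigr)$. This is linear in $t$, with a coefficient of order $(C+c)^d b^{1-d}$, which blows up as $b\to0$. So a naive shell count does not give a rate independent of $b$. What I would try first is to absorb the excess into the constant for $t$ in a bounded range, and for large $t$ to exploit the gap between $\grado$ and the reference exponent in the first item of Lemma~\ref{lem:18}. If this fails on groups of exponential growth, the test function $f=\chi_{B(e,1)}$ shows that $\norm{T_{b,t,d}f}_p$ genuinely grows like $\ee^{K(b,d)t}$ there. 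In that case the argument can only deliver the first two steps, namely the bound with $\omega=C$ depending on $b$ and $d$.
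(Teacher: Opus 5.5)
Your reduction in the first paragraph bounds the wrong mixed norm. In this paper the first exponent and variable are the inner ones, so $L^{q,p}_{y,x}(\mi,\beta)$ means $\norm{\norm{\,\cdot\,}_{L^q_y(\mi)}}_{L^p_x(\beta)}$. Three things confirm this reading:
the Besov norm is written $L^{p,q}_{x,t}(\beta,\mi)$ and the Triebel--Lizorkin norm $L^{q,p}_{t,x}(\mi,\beta)$ in Proposition~\ref{prop:34};
Lemma~\ref{lem:2} is invoked exactly inside $\norm{\norm{\,\cdot\,}_{L^q_s(\mi_1)}}_{L^p_x(\beta)}$ in the proofs of Theorem~\ref{teo:6} and Proposition~\ref{prop:32d}, whereas the Besov estimates of Theorem~\ref{teo:20} use only Young's inequality;
and the hypothesis $p\in(1,\infty)$, which your argument never uses, points the same way.

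Your fibrewise bound gives the easy $L^q_y(L^p_x)$ estimate, which agrees with the lemma only when $q=p$. For $q\neq p$ the operator $T_{b,\kappa(y),d}$ changes with $y$, and the lemma contains a genuine maximal theorem. For $q=\infty$, $Y=(0,1]$, $\mi=\mi_1$, $\kappa(t)=t$ and $f(t,x)=g(x)$ it reads $\norm{\sup_{0<t\meg 1}T_{b,t,d}\,g}_{L^p(\beta)}\meg C\norm{g}_{L^p(\beta)}$. No bound on the individual operators $T_{b,t,d}$ can yield this: the kernel $\sup_{0<t\meg 1}p_{b,t,d}$ is comparable to $\abs{x}_*^{-Q_*}$ near $e$, so Young's inequality is useless.

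What is missing is a vector-valued local maximal inequality.
On $\Set{\kappa\meg 1}$ you have $T_{b,\kappa(y),d}f(y,\,\cdot\,)\meg T^*_{b,1,d}f(y,\,\cdot\,)$, so Lemma~\ref{lem:5} settles $1<q<\infty$.
For $q=\infty$, use $T^*_{b,1,d}f(y,\,\cdot\,)\meg T^*_{b,1,d}\bigl(\sup_{y'}\abs{f(y',\,\cdot\,)}\bigr)$ and the scalar maximal theorem.
For $q=1$, test against functions in $L^{p'}(\beta)$ independent of $y$ and transpose. The transposed kernels are of the same type up to modular factors, which Lemma~\ref{lem:3} absorbs for $t\meg 1$, so you are reduced to the scalar maximal theorem in $L^{p'}(\beta)$.
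Only on $\Set{\kappa>1}$ does an argument like yours work. There $\ee^{-\omega\kappa(y)}p_{b,\kappa(y),d}\meg k_\omega\coloneqq\sup_{t\Meg 1}\ee^{-\omega t}p_{b,t,d}$, a single kernel independent of $y$. Minkowski's inequality in $y$ followed by Young's inequality in $x$ then gives the bound as soon as $\ee^{c\abs{\,\cdot\,}_*}k_\omega\in L^1(\beta)$.

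Your doubt about the choice of $\omega$ is justified, and you may state it as a fact. Suppose $\beta(B(e,r))\Meg c\,\ee^{\lambda r}$ for large $r$. Test with $Y=\N$ (counting measure), $\kappa(n)=n$ and $f=\chi_{\Set{N}}\otimes\chi_{B(e,1)}$. Then $\ee^{-\omega N}\norm{T_{b,N,d}\chi_{B(e,1)}}_{L^p(\beta)}$ is at least $\ee^{(\theta-\omega+o(1))N}$, with $\theta=\theta(\lambda,b,d,p)\to+\infty$ as $b\to 0^+$. So $\omega$ has to be allowed to depend on $b$ and $d$; this is harmless in the paper, where $\kappa$ is always bounded.

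With that reading, the large-time part above is precisely where $\omega$ is fixed. The quantity $\inf_{t>0}\bigl(\omega t+b(r^d/t)^{1/(d-1)}\bigr)$ is a constant multiple of $\omega^{1/d}b^{1-1/d}r$. Hence $k_\omega$ decays exponentially in $\abs{x}_*$, at a rate that beats both the volume growth and the bound of Lemma~\ref{lem:31} once $\omega$ is large in terms of $b$ and $d$. Your fallback, a rate depending on $b$ and $d$, is therefore the right form of the exponential part. As written, however, it is attached to the wrong mixed norm, and the maximal-function part of the lemma is missing altogether.
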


\subsection{The `Schwartz Space'}

\begin{deff}\label{def:1}
	We define $\Sc(G)$ as the space of $f\in C^\infty(G)$ such that the seminorms $\norm{\ee^{c \abs{\,\cdot\,}_*} X^R f}_{L^1(\beta)}$, $c>0$, $X\in U(G)$, are finite, endowed with the corresponding topology.
	
	We denote with $\Sc'(G)$ the dual of $\Sc(G)$, endowed with the topology of uniform convergence on the bounded subsets of $\Sc(G)$.
\end{deff}

Cf.~\cite[Theorem 4.16, Corollary 4.17, and Lemma 5.13]{BCP} for a proof of the following results.
	
\begin{teo}\label{teo:8}
	The following hold:
	\begin{enumerate}
		\item[\textnormal{(1)}] $\Sc(G)$ is a nuclear Fréchet space;
		
		\item[\textnormal{(2)}] $\Sc(G)$ is reflexive;
		
		\item[\textnormal{(3)}] the bounded subsets of $\Sc(G)$ are relatively compact;
		
		\item[\textnormal{(4)}] $\Sc(G)$ is a Fréchet $*$-algebra under convolution and under pointwise multiplication;
		
		\item[\textnormal{(5)}] for every $p\in [1,\infty]$, $\Sc(G)$ is the space of $f\in C^\infty(G)$ such that the seminorms  $\norm{\ee^{c \abs{\,\cdot\,}_*} X Y^R f}_{L^p(\beta)}$, $c>0$, $X,Y\in U(G)$ (resp.\ $\norm{\ee^{c \abs{\,\cdot\,}_*} X   f}_{L^p(\beta)}$, $c>0$, $X\in U(G)$; $\norm{\ee^{c \abs{\,\cdot\,}_*} X^R f}_{L^p(\beta)}$, $c>0$, $X\in U(G)$), are finite, and has the corresponding topology;
		
		\item[\textnormal{(6)}] 	$\Sc'(G)$ is a complete, reflexive, bornological, and nuclear space.  The bounded subsets of $\Sc'(G)$ are relatively compact.
	\end{enumerate} 
\end{teo}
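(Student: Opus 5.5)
The plan is to prove (5) first, since it underlies everything else. Then deduce (1) from a Hilbertian description of $\Sc(G)$, and obtain (2), (3), (6) from abstract functional analysis. Item (4) is a direct computation.

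\emph{Step 1: item (5).} Three ingredients are needed.
\begin{itemize}
\item A local Sobolev inequality on the unit ball: for $N$ large,
\[
\abs{f(x)}\meg C\sum_{\deg X\meg N}\norm{X^R f}_{L^1(B(x,1))},
\]
uniformly in $x$. This is a left translate of the corresponding inequality at $e$; the characters $\Delta_L,\Delta_R$ that appear are absorbed by Lemma~\ref{lem:31}.
\item The Lipschitz bound $\abs{\abs{x}_*-\abs{y}_*}\meg 1$ on $B(x,1)$. It lets $\ee^{c\abs{\,\cdot\,}_*}$ pass inside the local norm at the cost of a constant, and so controls $L^\infty$ weighted norms by $L^1$ ones.
\item Exponential volume growth, $\beta(B(e,r))\meg \ee^{Cr}$. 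It gives the converse inequality $\norm{\ee^{c\abs{\,\cdot\,}_*}g}_{L^1}\meg C\norm{\ee^{(c+C')\abs{\,\cdot\,}_*}g}_{L^\infty}$.
\end{itemize}
Interpolating between $L^1$ and $L^\infty$ then handles every $p$, at the price of enlarging $c$. Since all $c>0$ are allowed, this loss is harmless.

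To pass between left- and right-invariant operators, write $(X^R f)(x)=((\mathrm{Ad}(x^{-1})X)f)(x)$. One has $\norm{\mathrm{Ad}(x^{-1})}\meg \ee^{C(\abs{x}_*+1)}$ on each $U_\lambda$, because $\abs{\,\cdot\,}_*$ is a connected modulus and $\mathrm{Ad}$ is bounded on $B(e,1)$. So $X^R f$ is a combination of the $X_\alpha f$ with coefficients of exponential growth, and conversely. Again the growth is absorbed by increasing $c$, and the mixed seminorms $X Y^R$ are handled the same way.

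\emph{Step 2: item (4).}
\begin{itemize}
\item Pointwise multiplication: use the Leibniz rule for $X^R$ together with $\ee^{c\abs{x}_*}\meg \ee^{c\abs{x}_*}\cdot \ee^{c\abs{x}_*}$. Put one factor in $L^1$ and the other in $L^\infty$, which is allowed by (5).
\item Convolution: use $Y^R X(f*g)=(Y^R f)*(Xg)$ and $\abs{x}_*\meg\abs{xy^{-1}}_*+\abs{y}_*$, so the weight splits between the two factors. Then apply Young's inequality with $p_1=p_2=p_3=1$; here no modular factors appear because $1/p_1'=1/p_2'=0$. Associativity is clear.
\item Involution: it is essentially $f\mapsto\overline{\Delta\,\check f}$ with $\Delta=\Delta_L^{-1}\Delta_R^{-1}$, and this character factor is again bounded by Lemma~\ref{lem:31}. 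By Proposition~\ref{prop:9}(3), $X^+ f=(X^R\check f)\check{\,}$, so the involution exchanges the left and right seminorm systems. These systems are equivalent by (5), and $\abs{x^{-1}}_*=\abs{x}_*$.
\end{itemize}

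\emph{Step 3: item (1).} This is the main obstacle. Metrizability and completeness are routine. There is a countable defining family of seminorms, namely $c\in\N$ and $X$ ranging over a basis of $U_N$, $N\in\N$. A Cauchy sequence converges in $C^\infty$ and in every weighted $L^1$ norm, so the limit lies in $\Sc(G)$.

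For nuclearity, I would fix a positive self-adjoint weighted subcoercive $\Lc$ as in Proposition~\ref{prop:4}, with $\omega$ chosen as in Theorem~\ref{teo:7}. Consider the Hilbert norms
\[
\norm{f}_k\coloneqq\norm{\ee^{k\abs{\,\cdot\,}_*}\Lc_\omega^k f}_{L^2(\beta)}.
\]
First show, using (5) with $p=2$ together with commutator estimates for $\ee^{k\abs{\,\cdot\,}_*}$, that these norms define the topology of $\Sc(G)$. Here $\abs{\,\cdot\,}_*$ should be replaced by a smooth equivalent modulus so that derivatives of the weight are controlled.

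Next show that the canonical map $H_{k'}\to H_k$ is Hilbert--Schmidt for $k'$ much larger than $k$. This reduces to proving that
\[
\ee^{k\abs{\,\cdot\,}_*}\,\Lc_\omega^{k-k'}\,\ee^{-k'\abs{\,\cdot\,}_*}
\]
has an $L^2(G\times G)$ kernel. Write $\Lc_\omega^{-m}$ as $\Gamma(m)^{-1}\int_0^\infty t^{m-1}\ee^{-\omega t}h_t\,\dd t$, with $\omega$ larger than the growth constant of Theorem~\ref{teo:7}. The Gaussian bounds of Theorem~\ref{teo:7}(2), combined with Lemma~\ref{lem:18} and exponential volume growth, should give square integrability once $m>Q_*/(2\grado)$ and $k'-k$ is large. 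Hence $\Sc(G)$ is a projective limit of Hilbert spaces with Hilbert--Schmidt linking maps, and is therefore nuclear.

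\emph{Step 4: items (2), (3), (6).} A nuclear Fréchet space is Montel, which gives (2) and (3). For (6), the strong dual of a Fréchet--Montel space is complete, reflexive and bornological; it is nuclear because it is the strong dual of a nuclear Fréchet space; and it is again Montel, so its bounded sets are relatively compact. All of these are standard results (Grothendieck, Schaefer), so no further work should be needed here.
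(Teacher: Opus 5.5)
The paper does not prove this statement; it quotes \cite{BCP}, so I judge your argument on its own terms. Steps 1, 2 and 4 are sound, up to harmless imprecisions. For instance, $\mathrm{Ad}(x^{-1})$ preserves the filtration of $U(G)$ by order, not the $U_\lambda$; since every $X\in U(G)$ is allowed in (5), this costs nothing. The gap is in the nuclearity argument of Step 3. Since you derive (2), (3) and (6) from nuclearity, the gap propagates to them.

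\textbf{The gap.} You fix $\omega$ once and put the weight \emph{outside} $\Lc_\omega^k$. Then both claims of Step 3 fail once $k$ is large compared with $\omega$. Already $G=\R^2$, $\Lc=-\Delta$ (so that $\abs{\,\cdot\,}_*$ is the Euclidean norm), $\rho(x)=\abs{x}+O(1)$ shows this.

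First, the Hilbert--Schmidt reduction. The kernel $K_m$ of $\Lc_\omega^{-m}$ decays only like $\ee^{-a_\omega\abs{x}}$, at a rate depending on $\omega$ but not on $m$. Here $a_\omega=\sqrt\omega$; in general, Laplace's method applied to $\int_0^\infty t^{m-1}\ee^{-\omega t}h_t\,\dd t$ gives nothing better. So for $k\Meg a_\omega$ the function $x\mapsto \ee^{k\abs{x}}K_m(x-y)$ is not square integrable, however large $k'$ is. In fact $\ee^{k\abs{\,\cdot\,}}\Lc_\omega^{k-k'}\ee^{-k'\abs{\,\cdot\,}}$ is not even bounded on $L^2$.

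Second, the norm equivalence. The commutator $[\ee^{k\rho},\Lc_\omega^k]$ has lower degree but coefficients of size $\sim k$, which cannot be absorbed. Indeed the fixed-weight estimate it would give is false. For $k\Meg\sqrt\omega$,
\[
u(x)=\ee^{-k x_1+i\sqrt{k^2-\omega}\,x_2}
\]
solves $(\omega-\Delta)u=0$. Put $f_n=u\,\phi(x_1/n,x_2/\sqrt n)$ with $\phi\in C^\infty_c((1,2)\times(-1,1))$. On $\supp f_n$ one has $\ee^{k\abs{x}}\asymp \ee^{kx_1}$, and every term of $(\omega-\Delta)^k f_n$ carries a derivative of the cut-off. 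Hence $\norm{\ee^{k\rho}\Lc_\omega^k f_n}_{L^2}\lesssim n^{-1/2}\norm{\ee^{k\rho}f_n}_{L^2}$.

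Whether your norms define the topology at all is therefore a delicate question; on $\R^n$ one would need analytic continuation on the Fourier side. The linking maps only live on the closure of $\ee^{k'\rho}\Lc_\omega^{k'}\Sc(G)$, and controlling them there is as hard as the norm equivalence itself.

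\textbf{How to repair it.} Put the weight \emph{inside}: $\norm{f}_k\coloneqq\norm{\Lc_\omega^k(\ee^{k\rho}f)}_{L^2(\beta)}$, with $\rho$ a smooth modulus with bounded left-invariant derivatives.
\begin{itemize}
\item The equivalence with the topology now follows by induction on the degree from the unweighted estimates $\norm{Yv}_{L^2}\lesssim\norm{\Lc_\omega^k v}_{L^2}$ for $\deg Y\meg k\grado$, together with (5).
\item With $m=k'-k$, the linking map is $\Lc_\omega^k\ee^{-m\rho}\Lc_\omega^{-k'}=\sum_Y b_Y\,\ee^{-m\rho}\,Y\Lc_\omega^{-k'}$, where the $b_Y$ are bounded.
\item Its Hilbert--Schmidt norm factors, up to characters, as $\norm{\ee^{-m\rho}}_{L^2}\norm{YK_{k'}}_{L^2}$. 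This is finite for $m$ large and a single large $\omega$.
\end{itemize}
Letting $\omega=\omega_k$ grow with $k$ would also work.

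More simply, you can skip Hilbert norms altogether. By (5),
\[
\norm{\ee^{c\abs{\,\cdot\,}_*}X^R f}_{L^1(\beta)}=\int_G \abs{\langle f,u_x\rangle}\,\ee^{-c'\abs{x}_*}\,\dd\beta(x),\qquad \langle f,u_x\rangle\coloneqq \ee^{(c+c')\abs{x}_*}(X^Rf)(x).
\]
Here the $u_x$ lie in the polar of the continuous seminorm $\norm{\ee^{(c+c')\abs{\,\cdot\,}_*}X^R\,\cdot\,}_{L^\infty(\beta)}$, and $\ee^{-c'\abs{\,\cdot\,}_*}\cdot\beta$ is a finite measure for $c'$ large. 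So the canonical maps are absolutely summing, and Pietsch's criterion gives nuclearity directly.
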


\begin{lem}\label{lem:9}
	Take $F\in \Set{\Sc(G), \Sc'(G)}$ and $f\in F$. Then, $f*h_t$ is well-defined for every $t>0$. In addition, the mapping $[0,+\infty)\ni t\mapsto f*h_t\in F$ is of class $C^\infty$, and $\frac{\dd^k}{\dd t^k} (f*h_t)=\Lc^k (f*h_t)$   for every $k\in\N$ (with the convention $h_0=\delta_e$). In particular, for every $t>0$, and for every $m\in\N$,
	\[
	f=\sum_{k=0}^m \frac{1}{k!} W_t^{(k)}f + \frac{1}{m!} \int_0^t W_s^{(m+1)} f  \,\frac{\dd s}{s}.
	\]
\end{lem}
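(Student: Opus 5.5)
The proof will first show that the heat kernel is a Schwartz function with controlled dependence on $t$. Convolution with $\Sc(G)$ and $\Sc'(G)$ then follows from the algebra structure of Theorem~\ref{teo:8} and from duality. The Taylor formula is then the one-variable Taylor formula with integral remainder applied to $g(s)\coloneqq f*h_s$ on $[0,t]$.

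\emph{Step 1: $h_t\in\Sc(G)$, locally uniformly in $t$.} Take $X,Y\in U(G)$ and $c>0$. By Theorem~\ref{teo:7}(2) and the first item of Lemma~\ref{lem:18},
\[
\abs{e^{c\abs{x}_*}XY^R h_t(x)}\meg C\abs{X}\abs{Y}t^{-(\deg X+\deg Y)/\grado}e^{\omega t}\,e^{c\abs{x}_*}p_{b,t}(x).
\]
By the third item of Lemma~\ref{lem:18} with $p=1$, the right-hand side has $L^1(\beta)$-norm at most $C' t^{-(\deg X+\deg Y)/\grado}e^{C't}$. With Theorem~\ref{teo:8}(5), this gives $h_t\in\Sc(G)$, with every seminorm bounded on compact subsets of $(0,+\infty)$. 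Lemma~\ref{lem:31} shows that multiplying by $\Delta_L^{\pm1}$ or $\Delta_R^{\pm1}$ only changes $c$. Hence $\Delta_R\Rc h_t$ and $\Delta_L\Rc h_t$ are also in $\Sc(G)$ with the same kind of bounds.

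\emph{Step 2: definition of $f*h_t$ and smoothness for $t>0$.} For $f\in\Sc(G)$, $f*h_t$ is defined by Theorem~\ref{teo:8}(4). For $f\in\Sc'(G)$, we set
\[
\langle f*h_t,\phi\rangle\coloneqq\langle f,\phi*(\Delta_R\Rc h_t)\rangle,
\]
which is meaningful by Step 1 and Theorem~\ref{teo:8}(4). We show that $(0,\infty)\ni t\mapsto h_t\in\Sc(G)$ is $C^\infty$ with $\partial_t^k h_t=(-\Lc)^k h_t$ (the sign being fixed by the convention of the statement). The semigroup law gives $h_{t+s}-h_t=h_t*(h_s-\delta_e)$, so the pointwise identity $\partial_t h_t=-\Lc h_t$ holds. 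For convergence of the difference quotients in every seminorm, we write them as $\frac1s\int_t^{t+s}(-\Lc h_\tau-(-\Lc h_t))\,d\tau$. We then bound $\Lc$-derivatives of $h_\tau$ uniformly for $\tau$ near $t$ by Step 1, and use the continuity in Lemma~\ref{lem:18}(4), applied to $e^{\rho\abs{\,\cdot\,}_*}XY^R h_\tau$. Continuity of convolution (and of its transpose) then gives smoothness of $t\mapsto f*h_t$ in $F$ for $t>0$, with $\frac{d^k}{dt^k}(f*h_t)=\Lc^k(f*h_t)$ in the sign convention of the statement. Here we use that $\Lc$, being left-invariant, commutes with right convolution: $\Lc(f*h_t)=f*\Lc h_t$.

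\emph{Step 3 (main obstacle): behaviour at $t=0$.} This is the delicate point, since the seminorms of $h_t$ blow up as $t\to0^+$. We take $\phi\in\Sc(G)$ and seminorms of the form $\norm{e^{c\abs{\,\cdot\,}_*}X^R\,\cdot\,}_{L^1(\beta)}$. Since $X^R$ commutes with right convolution and $e^{c\abs{xy}_*}\meg e^{c\abs{x}_*}e^{c\abs{y}_*}$, we get
\[
\norm{e^{c\abs{\,\cdot\,}_*}X^R(\phi*h_s)}_{L^1(\beta)}\meg\norm{e^{c\abs{\,\cdot\,}_*}X^R\phi}_{L^1}\norm{e^{c'\abs{\,\cdot\,}_*}h_s}_{L^1}\meg C\norm{e^{c\abs{\,\cdot\,}_*}X^R\phi}_{L^1}
\]
uniformly for $s\in(0,1]$. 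Here $c'$ absorbs the modular factors of \eqref{eq:1} via Lemma~\ref{lem:31}, and the last bound is Lemma~\ref{lem:18} with $p=1$ and no derivatives, hence no negative power of $s$. So the operators $\phi\mapsto\phi*h_s$, $s\in(0,1]$, form an equicontinuous family on $\Sc(G)$. By Step 2,
\[
\phi*h_t-\phi*h_{t'}=\int_{t'}^t(\mp\Lc\phi)*h_s\,ds .
\]
Equicontinuity then shows that $t\mapsto\phi*h_t$ is Cauchy as $t\to0^+$, hence convergent since $\Sc(G)$ is Fréchet. The limit is $\phi$, as one checks against test functions (approximate identity). Applying the same argument to $\Lc^k\phi$ gives $C^\infty$ regularity on $[0,\infty)$. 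For $f\in\Sc'(G)$ we transpose. The transposed family $\phi\mapsto\phi*(\Delta_R\Rc h_s)$ is equicontinuous by the same estimate. Bounded subsets of $\Sc(G)$ are relatively compact (Theorem~\ref{teo:8}(3)), so pointwise convergence upgrades to uniform convergence on bounded sets. This gives convergence in the strong topology of $\Sc'(G)$.

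\emph{Step 4: the Taylor formula.} With $W^{(k)}_s f\coloneqq(-s)^k g^{(k)}(s)$, i.e.\ the appropriately signed $(s\Lc)^k(f*h_s)$, we apply Taylor's formula with integral remainder, centred at $t$, to the $C^\infty$ map $g\colon[0,t]\to F$ and evaluate it at $0$. This gives
\[
f=g(0)=\sum_{k=0}^m\frac{(-t)^k}{k!}g^{(k)}(t)+\frac1{m!}\int_0^t(-s)^m g^{(m+1)}(s)\,ds,
\]
which is the stated identity once the remainder integrand is written as $W^{(m+1)}_s f\,\frac{ds}{s}$. The integral is a Riemann integral in the complete locally convex space $F$.
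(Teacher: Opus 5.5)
Your argument is correct in substance and takes the natural route. The paper gives no proof of its own here; it only cites \cite[Theorem 4.16, Corollary 4.17, and Lemma 5.13]{BCP}. Your ingredients---Gaussian bounds giving $h_t\in\Sc(G)$ locally uniformly in $t>0$, equicontinuity of $\phi\mapsto\phi*h_s$ on $\Sc(G)$ for $s\in(0,1]$, transposition to $\Sc'(G)$, and Taylor's formula---are all supplied by Theorems~\ref{teo:7} and~\ref{teo:8} and Lemmas~\ref{lem:18} and~\ref{lem:31}.

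\emph{Sign slip in Step~4.} Taylor's formula centred at $t$ and evaluated at $0$ has remainder $\frac{1}{m!}\int_t^0(-s)^m g^{(m+1)}(s)\,\dd s=-\frac{1}{m!}\int_0^t(-s)^m g^{(m+1)}(s)\,\dd s$. Your display therefore has the wrong sign. Only the corrected expression equals $\frac{1}{m!}\int_0^t W^{(m+1)}_s f\,\frac{\dd s}{s}$ with your $W^{(m+1)}_s f=(-s)^{m+1}g^{(m+1)}(s)$. Relatedly, the correct identity is $\frac{\dd^k}{\dd t^k}(f*h_t)=(-\Lc)^k(f*h_t)$; the $\Lc^k$ in the printed statement is a sign misprint, and you rightly work with $-\Lc$.

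\emph{Unjustified step in Step~3.} The formula $\phi*h_t-\phi*h_{t'}=\int_{t'}^t(\mp\Lc\phi)*h_s\,\dd s$ does not follow from Step~2. Step~2 only gives $-\int_{t'}^t\Lc(\phi*h_s)\,\dd s=-\int_{t'}^t\phi*\Lc h_s\,\dd s$. With $\Lc$ on $h_s$ the argument fails near $0$, since the available bound on $\norm{\ee^{c\abs{\,\cdot\,}_*}\Lc h_s}_{L^1(\beta)}$ is only $O(s^{-1})$. Moving $\Lc$ onto $\phi$ amounts to the commutation of $\ee^{-s\Lc}$ with its generator, and you can handle it in either of two ways.
\begin{itemize}
\item Justify it. On $C^\infty_c(G)$ the semigroup commutes with the closure of $\Lc$, which gives $\Lc h_s=\Lc^{\dag R\dag}h_s$. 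Then $(\Lc\phi)*h_s=\phi*\Lc h_s$ for $\phi\in\Sc(G)$ by the identity recorded after Proposition~\ref{prop:9}.
\item Bypass it, which is simpler. By Theorem~\ref{teo:8}(5), $\Lc$ is continuous on $\Sc(G)$. Your own estimate shows $\Set{\phi*h_s\colon s\in(0,1]}$ is bounded in $\Sc(G)$, hence so is $\Set{\Lc(\phi*h_s)\colon s\in(0,1]}$, and this is all the Cauchy argument needs. Continuity of $\Lc^k$ on $\Sc(G)$ then also gives continuity of the derivatives at $t=0$.
\end{itemize}
For the transposed family used for $\Sc'(G)$ the issue does not arise. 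Transposing $\Lc(\psi*h_s)=\psi*\Lc h_s$ gives $\phi*(\Delta_R\Rc\Lc h_s)=(\Lc^\dag\phi)*(\Delta_R\Rc h_s)$, so the derivative lands on $\phi$ automatically.

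\emph{Misapplied lemma in Step~2.} The fourth item of Lemma~\ref{lem:18} concerns the majorants $p_{b,t}$, not $h_t$. The continuity of $\tau\mapsto\ee^{\rho\abs{\,\cdot\,}_*}XY^R\Lc h_\tau$ in $L^1(\beta)$ should instead come from dominated convergence. This works because the Gaussian bound of Theorem~\ref{teo:7} is uniform for $\tau$ in compact subsets of $(0,+\infty)$.
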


\subsection{Supplementary Lemmas}
 
\begin{deff}
	We denote with $\Mc_\Car$ (`Carleson measures') the space of positive Radon measures $\mi$ on $(0,+\infty)$ with bounded support such that there is a constant $C>0$ such that
	\[
	\mi((r,2r])\meg C
	\]
	for every $r>0$.
	
	We denote with $\Mc_\RC$  (`reverse Carleson measures') the space of positive Radon measures $\mi$ on $(0,+\infty)$ such that there are constants $\eps,C\in (0,1)$   such that
	\[
	\mi((\eps r, r])\Meg C
	\]
	for every $r\in (0,\eps]$.
	
	We define $\Mc_\Samp\coloneqq \Mc_\Car\cap \Mc_\RC$  (`sampling measures').
	
	Finally, for every $\kappa>0$ we define $\mi_\kappa$ as the Radon measure on $(0,+\infty)$  supported in $(0,\kappa]$ and such that $\dd\mi_\kappa(t)=\frac{\dd t}{t}$ on $(0,\kappa]$.
\end{deff}

Cf.~\cite[Lemmas 6.4 and 6.6]{BCP} for a proof of the following results.

\begin{lem}\label{lem:25}
	Take $\eta,\gamma>0$,  $\mi,\nu\in \Mc_\Car$. Then, there is a constant $C>0$ such that
	\[
	\norm*{\int_0^\infty \frac{s^\gamma t^{\eta}}{(s+t)^{\eta+\gamma}} \abs{f(t)}\,\dd \nu(t)  }_{L^q_s(\mi)} \meg C \norm{f}_{L^q(\nu)}
	\]
	for every  $q\in [1,\infty]$ and for every   $\nu$-measurable function $f$.
\end{lem}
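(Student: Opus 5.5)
The plan is to reduce the estimate to the continuous Schur test (or, for $q=\infty$, the corresponding sup bound), discretizing the measures dyadically. Set $K(s,t)\coloneqq s^\gamma t^\eta/(s+t)^{\eta+\gamma}$. Since $\mi,\nu\in\Mc_\Car$, both have bounded support, say in $(0,R]$, and give mass at most $C$ to each dyadic interval $(r,2r]$. I shall prove
\[
\sup_{s>0}\int_0^\infty K(s,t)\,\dd\nu(t)<\infty\qquad\text{and}\qquad \sup_{t>0}\int_0^\infty K(s,t)\,\dd\mi(s)<\infty .
\]
Granting these, the case $q=\infty$ follows at once from the first bound. For $q=1$ one uses Fubini and the second bound. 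For $1<q<\infty$, H\"older's inequality with respect to the measure $K(s,t)\,\dd\nu(t)$ gives
\[
\left(\int K(s,t)\abs{f(t)}\,\dd\nu(t)\right)^q\meg \left(\int K(s,t)\,\dd\nu(t)\right)^{q-1}\int K(s,t)\abs{f(t)}^q\,\dd\nu(t).
\]
Integrating in $\dd\mi(s)$ and applying Fubini together with the second bound then yields the claim. The resulting constant is $C_1^{1-1/q}C_2^{1/q}\meg\max(C_1,C_2)$, so it is uniform in $q\in[1,\infty]$, as the statement requires.

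To prove the uniform bounds, fix $s>0$ and split $(0,\infty)$ into the dyadic intervals $I_k\coloneqq (2^k s,2^{k+1}s]$, $k\in\Z$. On $I_k$ we have $K(s,t)\meg C 2^{k\eta}$ when $k<0$, because there $s+t\asymp s$ and $t\meg 2^{k+1}s$. When $k\Meg 0$ we have $K(s,t)\meg C2^{-k\gamma}$, because there $s+t\asymp t$. Each $I_k$ is an interval of the form $(r,2r]$, so $\nu(I_k)\meg C$. Summing the two geometric series, which converge since $\eta,\gamma>0$, bounds $\int K(s,t)\,\dd\nu(t)$ by a constant independent of $s$. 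The bound in $t$ with respect to $\mi$ is obtained symmetrically by splitting into the intervals $(2^k t,2^{k+1}t]$.

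Note that the bounded support of the measures is not actually needed; only the uniform dyadic bound is used. I expect the sole point requiring care to be measurability and Fubini in the H\"older step. This is harmless here, since all functions involved are positive and the measures are Radon and $\sigma$-finite. I therefore do not anticipate a genuine obstacle: the argument is a standard Schur test in which the Carleson condition stands in for the doubling behaviour of $\dd t/t$.
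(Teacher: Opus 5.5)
Your proof is correct. This paper gives no proof of its own here and defers to \cite{BCP}, but your route (the Schur test, with the bounds $\sup_s\int K(s,t)\,\dd\nu(t)<\infty$ and $\sup_t\int K(s,t)\,\dd\mu(s)<\infty$ obtained from the dyadic splitting and the Carleson condition alone) is the same Schur-plus-dyadic-annuli argument the paper runs in Lemma~\ref{lem:29}, and your constant $C_1^{1-1/q}C_2^{1/q}\meg\max(C_1,C_2)$ is indeed uniform in $q$.
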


\begin{lem}\label{lem:25bis}
	Take $\eta,\gamma>0$ and  let $\mi$ be a Haar measure on $(0,+\infty)$. Take $\nu\in \Mc_\Car$, and assume that either $\eta\Meg 1$ or $\nu\in L^\infty(\mi)\cdot \mi$.	
	Then, there is a constant $C>0$ such that
	\[
	\norm*{\int_0^\infty t^\eta s^\gamma \abs{f(s+t)}\,\dd \mi(t)  }_{L^q_s(\nu)} \meg C \norm{ t^{\gamma+\eta}f(t)}_{L^q_t(\mi)}
	\]
	for every  $q\in [1,\infty]$ and for every  $\mi$-measurable function $f$.
\end{lem}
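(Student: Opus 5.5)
Throughout, write $\dd\mi(t)=\frac{\dd t}{t}$; multiplying $\mi$ by a positive constant only changes the final constant. Set $g(u)\coloneqq u^{\gamma+\eta}\abs{f(u)}$, so that the right-hand side is $C\norm{g}_{L^q(\mi)}$. The plan is to rewrite the left-hand side as a multiplicative convolution acting on $g$ and then to use Young's inequality on the group $((0,+\infty),\cdot)$. The change of variables $u=s+t$ gives
\[
\int_0^\infty t^\eta s^\gamma \abs{f(s+t)}\,\frac{\dd t}{t}= \int_s^\infty \Big(\frac{s}{u}\Big)^{\gamma}\Big(1-\frac{s}{u}\Big)^{\eta-1} g(u)\,\frac{\dd u}{u}\eqqcolon (K*g)(s),
\]
where $K(x)\coloneqq x^\gamma (1-x)^{\eta-1}\chi_{(0,1)}(x)$ and $*$ is convolution on $((0,+\infty),\cdot)$ with respect to $\mi$. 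Since $\gamma,\eta>0$, we have $\int_0^1 x^{\gamma-1}(1-x)^{\eta-1}\,\dd x<\infty$, that is, $K\in L^1(\mi)$. Young's inequality then gives $\norm{K*g}_{L^q(\mi)}\meg \norm{K}_{L^1(\mi)}\norm{g}_{L^q(\mi)}$ for every $q\in[1,\infty]$.

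\emph{Case $\nu\in L^\infty(\mi)\cdot\mi$.} Here $\nu\meg c\,\mi$ for some $c>0$, hence $\norm{K*g}_{L^q(\nu)}\meg c^{1/q}\norm{K*g}_{L^q(\mi)}$ for $q<\infty$. For $q=\infty$ the $L^\infty(\nu)$ norm is dominated by the pointwise supremum. In both cases the claim follows from the previous paragraph.

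\emph{Case $\eta\Meg 1$.} Here $(1-x)^{\eta-1}\meg 1$, so $K*g\meg F$ with
\[
F(s)\coloneqq\int_s^\infty (s/u)^\gamma g(u)\,\frac{\dd u}{u}.
\]
For $q=\infty$ we have $F(s)\meg \gamma^{-1}\norm{g}_{L^\infty(\mi)}$ for every $s$, and we are done. For $q<\infty$ the difficulty is that $\nu$ may be singular (for instance a sum of point masses), so we cannot compare $L^q(\nu)$ and $L^q(\mi)$ directly. The key observation, and the expected main obstacle, is that $F$ is essentially constant on dyadic intervals. Indeed, if $s\in(2^k,2^{k+1}]$ and $s'\in(2^{k-1},2^k]$, then $u>s$ implies $u>s'$ and $s/u\meg 4 s'/u$, so $F(s)\meg 4^\gamma F(s')$. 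This yields
\[
\int_{(2^k,2^{k+1}]} F^q\,\dd\nu\meg \nu((2^k,2^{k+1}])\, 4^{\gamma q}\inf_{(2^{k-1},2^k]}F^q\meg \frac{C\,4^{\gamma q}}{\log 2}\int_{(2^{k-1},2^k]}F^q\,\dd\mi ,
\]
using the Carleson condition $\nu((r,2r])\meg C$ together with $\mi((2^{k-1},2^k])=\log 2$.

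Summing over $k\in\Z$ gives $\norm{F}_{L^q(\nu)}\meg C'\norm{F}_{L^q(\mi)}$. Finally, $F=K_1*g$ with $K_1(x)=x^\gamma\chi_{(0,1)}(x)\in L^1(\mi)$, so Young's inequality gives $\norm{F}_{L^q(\mi)}\meg \gamma^{-1}\norm{g}_{L^q(\mi)}$, which concludes the proof. The dyadic comparison is the only point where the Carleson hypothesis on $\nu$ and the monotonicity coming from $\eta\Meg1$ are genuinely needed. The bounded support of $\nu$ plays no role.
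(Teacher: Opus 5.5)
The paper does not prove this lemma itself; it quotes it from \cite[Lemma 6.6]{BCP}, so there is no in-text argument to compare yours against. Your proof is correct and self-contained.

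The substitution $u=s+t$ turns the left-hand side into the multiplicative convolution $K*g$, with $K(x)=x^\gamma(1-x)^{\eta-1}\chi_{(0,1)}(x)$. Since $K\in L^1(\mi)$, Young's inequality on $((0,+\infty),\cdot)$ settles the case of $\nu$ with bounded density. For $\eta\Meg 1$, you dominate $K*g$ by $F$, whose kernel $x^\gamma\chi_{(0,1)}(x)$ gives $\sup_{(2^k,2^{k+1}]}F\meg 4^\gamma\inf_{(2^{k-1},2^k]}F$. That comparison is exactly what lets you replace the possibly singular Carleson measure $\nu$ by $\mi$.

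Two small points are worth stating explicitly.
\begin{enumerate}
\item The lemma asks for one constant valid for all $q\in[1,\infty]$, and your constants are indeed uniform. This holds because $(C_\nu/\log 2)^{1/q}\meg \max\{1,C_\nu/\log 2\}$ and $c^{1/q}\meg\max\{1,c\}$. Likewise, rescaling $\mi$ by a factor $c'$ changes the ratio of the two sides by $c'^{\,1-1/q}\meg\max\{1,c'\}$.
\item Since $\nu$ may charge $\mi$-null sets, your pointwise use of $\sup$ and $\inf$ on dyadic intervals needs the left-hand side to be defined for every $s$. It is: changing $f$ on a $\mi$-null set changes $t\mapsto f(s+t)$ only on a null set, so the value at each $s$ does not depend on the representative of $f$. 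It is also Borel in $s$, by Tonelli after passing to a Borel representative of $f$. One sentence saying this closes the argument.
\end{enumerate}

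Your proof also shows why the hypothesis is needed. If $\eta<1$, $\nu=\delta_{s_0}\in\Mc_\Car$ and $q=1$, the estimate would require $\sup_{u>s_0}K(s_0/u)<\infty$. This fails because $(1-x)^{\eta-1}$ blows up as $x\to 1^-$.
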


\subsection{Besov and Triebel-Lizorkin Spaces}

\begin{deff}\label{def:2}
	Define, for every $k\in \N$, for every $t>0$, and for every $f\in \Sc'(G)$,
	\[
	W_t^{(k)} f\coloneqq (t \Lc)^k \ee^{-t \Lc} f.
	\]
	In addition, define,  for every $\alpha\in\R$ and for every $\eps\in (0,1]$,
	\[
	W_{t,\eps}^{(\alpha),*} f\coloneqq t^\alpha \max_{s\in [\eps t,t/\eps ]} \max_{\substack{\deg(X)+\deg(Y)\meg \alpha \grado \\ \abs{X} ,\abs{Y}\meg 1}} \abs{X \ee^{-s \Lc} Y f }.
	\]
\end{deff}

\begin{deff}
	For every $\alpha \in \R$, for every $m\in\N$ with $m>\alpha/\grado$, for every $\mi\in \Mc_\Samp$ and for every $p,q\in [1,\infty]$,  define
	\[
	\Bs^{p,q}_{\alpha,m,\mi}(f)\coloneqq \norm{   t^{-\alpha/\grado} \norm{ W_t^{(m)} f }_{L^p(\beta)}  }_{L^q_t(\mi)},
	\]
	and, if $p\in (1,\infty)$,
	\[
	\Fs^{p,q}_{\alpha,m,\mi}(f)\coloneqq \norm*{ \norm{ t^{-\alpha/\grado}  W_t^{(m)} f(x)  }_{L^q_t(\mi_1)}}_{L^p_x(\beta)} ,
	\]
	Then, define, for every $f\in \Sc'(G)$,
	\[
	\norm{f}_{B^{p,q}_\alpha(\beta)}\coloneqq \norm{\ee^{-\Lc} f}_{L^p(\beta)}+ \Bs^{p,q}_{\alpha,([\alpha/\grado]+1)_+,\mi_1}(f),
	\]
	and, if $p\in (1,\infty)$,
	\[
	\norm{f}_{F^{p,q}_\alpha(\beta)}\coloneqq \norm{\ee^{-\Lc} f}_{L^p(\beta)}+ \Fs^{p,q}_{\alpha,([\alpha/\grado]+1)_+,\mi_1}(f).
	\] 
	We define $B^{p,q}_\alpha(\beta)$ and $F^{p,q}_\alpha(\beta)$ accordingly. We define $\mathring B^{p,q}_\alpha(\beta)$ and $\mathring F^{p,q}_\alpha(\beta)$ as the closures of $\Sc(G)$ in $B^{p,q}_\alpha(\beta)$ and $F^{p,q}_\alpha(\beta)$, respectively.
\end{deff}

By~\cite[Proposition 6.9]{BCP}, the norms $\norm{\ee^{-\Lc} \,\cdot\,}_{L^p(\beta)}+\Bs^{p,q}_{\alpha,m,\mi}$ are all equivalent on $\Sc'(G)$, as well as the norms $\norm{\ee^{-\Lc} \,\cdot\,}_{L^p(\beta)}+\Fs^{p,q}_{\alpha,m,\mi}$. Hence, they may be used to define the spaces $B^{p,q}_\alpha(\beta)$ and $F^{p,q}_\alpha(\beta)$, respectively; the same holds if we replace $W^{(m)}$ with $W^{(m),*}$ in the definition of $\Bs$ and $\Fs$.
 
Let us also recall the following simple result (cf.~\cite[Proposition 7.4]{BCP}) which will be often useful to reduce to the case in which $\beta$ is left invariant.

\begin{prop}\label{prop:5}
	Suppose $\alpha\in \R$ and $p,q\in [1,\infty]$. Then 
	\begin{enumerate}
		\item[\textnormal{(i)}]  $B^{p,q}_\alpha(\beta)= \Delta_L^{-1/p} B^{p,q}_\alpha(\beta_L)$, and
		\item[\textnormal{(ii)}]  $F^{p,q}_\alpha(\beta)= \Delta_L^{-1/p} F^{p,q}_\alpha(\beta_L)$  if $p\in (1,\infty)$.
	\end{enumerate}
\end{prop}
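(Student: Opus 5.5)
The plan is to reduce everything to a conjugation of $\Lc$ by a character, and then invoke the independence of the spaces from the chosen weighted subcoercive operator (\cite{BCP}). I treat (i); (ii) is identical with $\Fs$ in place of $\Bs$.

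\emph{Step 1: the $L^p$ parts.} Since $\beta=\Delta_L\beta_L$, for every measurable $g$
\[
\norm{\Delta_L^{-1/p}g}_{L^p(\beta)}^p=\int_G \abs{g}^p\Delta_L^{-1}\Delta_L\,\dd\beta_L=\norm{g}_{L^p(\beta_L)}^p .
\]
Hence $M\colon g\mapsto \Delta_L^{-1/p}g$ is an isometry of $L^p(\beta_L)$ onto $L^p(\beta)$, and the same holds for the mixed norms $L^{q,p}_{t,x}(\mi,\beta_L)\to L^{q,p}_{t,x}(\mi,\beta)$ used in $\Bs$ and $\Fs$. It therefore suffices to show that $M$ intertwines the Littlewood--Paley pieces $W^{(m)}_t$ computed with respect to $\beta$ with analogous pieces computed with respect to $\beta_L$ for an admissible operator.

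\emph{Step 2: conjugation by a character.} Set $\chi\coloneqq\Delta_L^{-1/p}$, a positive character. By Proposition~\ref{prop:9}(5), $X(\chi f)=\chi\,(X+X\chi(e))f$ for $X\in\gf$, so for every $X\in U(G)$ the operator $X_\chi\coloneqq\chi^{-1}X\chi$ is again left-invariant. It is obtained from $X$ by the algebra automorphism $Y\mapsto Y+\dd\chi(Y)$ of $U(G)$. This automorphism only adds terms of strictly lower degree, so $X_\chi$ has the same image in $U_*$ as $X$. Consequently $\Lc'\coloneqq\chi^{-1}\Lc\chi$ is weighted subcoercive with the same principal part and degree $\grado$.

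\emph{Step 3: transfer of the heat semigroup.} Multiplication by $\chi$ preserves $\Sc(G)$ and $\Sc'(G)$ by Lemma~\ref{lem:31}, since the weights $\ee^{c\abs{\,\cdot\,}_*}$ absorb $\chi^{\pm1}$. I would then prove that, on $\Sc'(G)$,
\[
\ee^{-t\Lc}(\chi g)=\chi\,\ee^{-t\Lc'}g ,
\]
where the left-hand side is taken with the $\beta$-identification and the right-hand side with the $\beta_L$-identification. To do this, both sides are checked to be the smooth solution of $\partial_t u=-\Lc u$ with initial datum $\chi g$ in the sense of Lemma~\ref{lem:9}. Uniqueness should follow by pairing with $\Sc(G)$ and using the corresponding statement for the transposed semigroups. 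It follows that $W^{(m)}_t(\chi g)=\chi\,W'^{(m)}_t g$, and by Step 1
\[
\norm{\chi g}_{B^{p,q}_\alpha(\beta)}=\norm{\ee^{-\Lc'}g}_{L^p(\beta_L)}+\Bs'^{\,p,q}_{\alpha,m,\mi_1}(g).
\]
The right-hand side is the $B^{p,q}_\alpha(\beta_L)$ norm built with $\Lc'$, which is equivalent to the one built with $\Lc$ by the independence of the spaces from the operator (\cite{BCP}). This gives (i).

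\emph{Main obstacle.} The delicate point is Step 3: keeping track of how the identification of functions with distributions changes with the measure. A density $f$ with respect to $\beta$ is the density $f\Delta_L$ with respect to $\beta_L$, and the transposes $X^\dag$ differ accordingly. One must verify that the distributional action of $\Lc$ defined through $\beta$ corresponds exactly to $\Lc'$ (and not to a further conjugate) in the $\beta_L$ picture. I would first check this on $C^\infty_c(G)$, where $\Lc$ acts on functions independently of the measure, and then extend to $\Sc'(G)$ by density and duality. If the exponent came out differently, say $\Delta_L^{1-1/p}$ instead of $\Delta_L^{-1/p}$, the argument would be unchanged, because any character conjugate of $\Lc$ is admissible by Step 2.
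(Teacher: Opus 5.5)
The paper does not prove this statement; it only recalls it from \cite[Proposition 7.4]{BCP}, so there is no in-text argument to compare yours with. Your argument is correct, and its route is the natural one. It combines three ingredients: the isometry of Step~1; the fact that $\chi^{-1}\Lc\chi-\Lc$ has degree $<\grado$; and the independence of the spaces from the operator. On the second ingredient, changing the measure only conjugates $\overline{\Lc}^{+}$ by a character, so weighted subcoercivity of $\Lc'$ also holds with respect to $\beta_L$. The third ingredient is \cite[Proposition 6.9]{BCP}, which the paper invokes in the same way in the proof of Proposition~\ref{prop:32a}, so nothing is circular.

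Step~3 can be made lighter than a duality argument in $\Sc'(G)$.
\begin{itemize}
\item By~\eqref{eq:1}, the convolution of densities is $\int_G f(y)\,g(y^{-1}x)\,\dd\beta_L(y)$ for both measures.
\item The semigroups generated by $\Lc$ in $L^2(\beta)$ and in $L^2(\beta_L)$ agree on $\Sc(G)$. This follows from uniqueness for the abstract Cauchy problem, since $\Sc(G)$ lies in both domains and $t\mapsto \ee^{-t\Lc}f\in\Sc(G)$ is $C^1$. Hence $h_t$ is the same function for $\beta$ and $\beta_L$.
\item The same argument shows that $\chi^{-1}h_t$ is the heat kernel of $\Lc'$.
\item The identity $W^{(m)}_t(\chi g)=\chi\,W'^{(m)}_t g$, including the switch between the $\beta$- and $\beta_L$-identifications, then follows from $\chi\,(u*v)=(\chi u)*(\chi v)$. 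This holds for any character $\chi$, any $u\in\Sc'(G)$ and any $v\in\Sc(G)$.
\end{itemize}

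You should also settle your closing hedge rather than leave it open. Your method would indeed be unchanged, but the two exponents correspond to two different identities between subsets of $\Sc'(G)$.
\begin{itemize}
\item Regard each space as a space of generalized functions relative to its own measure. This is the convention in which your Steps~1 and~3 are written. Under it the bookkeeping gives exactly $\Delta_L^{-1/p}$.
\item Read both sides literally as subsets of $\Sc'(G)$, with the distributional product and the identifications $f\leftrightarrow f\cdot\beta$ and $f\leftrightarrow f\cdot\beta_L$. Then $f\cdot\beta=(\Delta_L f)\cdot\beta_L$ turns the same computation into $B^{p,q}_\alpha(\beta)=\Delta_L^{1-1/p}B^{p,q}_\alpha(\beta_L)$. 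The identity with $\Delta_L^{-1/p}$ is then false in general when $\Delta_L\neq 1$.
\end{itemize}
So the generalized-function convention is the only one in which the stated exponent is right, and you should say so explicitly instead of hedging.
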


We shall also recall the following `Sobolev embeddings' (cf.~\cite[Theorem 8.2 and Proposition 8.3]{BCP}).

\begin{prop}\label{prop:2}
	Suppose $p_1,p_2,q_1,q_2\in [1,\infty]$ and $\alpha_1,\alpha_2\in \R$. Then, the following continuous embeddings hold.
	\begin{enumerate}
		\item[\textnormal{(1$\:$)}] $B^{p_1,q_1}_{\alpha_1}(\beta)\subseteq B^{p_1,q_2}_{\alpha_2}(\beta)$ if either $\alpha_2<\alpha_1$, or $\alpha_2=\alpha_1$ and $ q_1\meg q_2$.
		
		\item[\textnormal{(1$'$)}] $F^{p_1,q_1}_{\alpha_1}(\beta)\subseteq F^{p_1,q_2}_{\alpha_2}(\beta)$ if $p_1\in (1,\infty)$ and either $\alpha_2<\alpha_1$ or $\alpha_2=\alpha_1$ and $ q_1\meg q_2$.
		
		\item[\textnormal{(2$\:$)}] $\Delta_L^{1/p_1-1/p_2} B^{p_1,q_1}_{\alpha_1}(\beta)\subseteq B^{p_2,q_1}_{\alpha_2}(\beta)$ if $p_1\meg p_2$ and $\alpha_1-\frac{Q_*}{p_1}\Meg\alpha_2-\frac{Q_*}{p_2} $.
		
		\item[\textnormal{(2$'$)}] $\Delta_L^{1/p_1-1/p_2} F^{p_1,q_1}_{\alpha_1}(\beta  )\subseteq F^{p_2,q_2}_{\alpha_2}(\beta )$  if $1<p_1< p_2<\infty$ and $\alpha_1-\frac{Q_*}{p_1}\Meg\alpha_2-\frac{Q_*}{p_2} $.
		\item[\textnormal{(3$\:$)}] $B^{p_1,1}_{0}(\beta)\subseteq L^{p_1}(\beta)$, and if $p_1<p_2$ and $\alpha_1> \frac{Q_*}{p_1}-\frac{Q_*}{p_2} $, then $\Delta_L^{1/p_1-1/p_2}B^{p_1,q_1}_{\alpha_1}(\beta )\subseteq   L^{p_2}(\beta )$.
		
		\item[\textnormal{(3$'$)}]  	$\Delta_L^{1/p_1-1/p_2}F^{p_1,q_1}_{\alpha_1}(\beta )\subseteq L^{p_2}(\beta )$ if $1<p_1< p_2$ and $\alpha_1\Meg  \frac{Q_*}{p_1}-\frac{Q_*}{p_2} $, and either $p_2<\infty$ or $\alpha_1>  \frac{Q_*}{p_1}-\frac{Q_*}{p_2} $.
	\end{enumerate}
	In addition,
	\[
	F^{p,2}_0(\beta)=L^p(\beta)
	\]
	for every $p\in (1,\infty)$.
\end{prop}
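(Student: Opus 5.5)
By Proposition~\ref{prop:5}, multiplication by $\Delta_L^{-1/p}$ carries the spaces relative to $\beta_L$ onto those relative to $\beta$. The factor $\Delta_L^{1/p_1-1/p_2}$ in (2), (2$'$), (3), (3$'$) is exactly what this change of measure produces. So I would first reduce to the case in which $\beta$ is a left Haar measure, where $\Delta_L=1$. Throughout I would use the equivalence of the norms built from different $m>\alpha/\grado$ and different $\mi\in\Mc_\Samp$, as recalled after the definition.

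Items (1) and (1$'$) are essentially measure-theoretic. I would discretize $\mi_1$ dyadically, replacing $t\in(2^{-j-1},2^{-j}]$ by a maximal function over a slightly larger window; the $W^{(m),*}$ version of the norms allows this. This reduces everything to the embeddings $\ell^{q_1}\subseteq\ell^{q_2}$ for $q_1\meg q_2$. When $\alpha_2<\alpha_1$, the factor $t^{(\alpha_1-\alpha_2)/\grado}$ is summable over the dyadic scales, so Hölder's inequality removes the restriction on $q_1,q_2$. For (2), I would use the Gaussian bound of Theorem~\ref{teo:7} and the $L^p$-norm bound on $p_{b,t}$ from Lemma~\ref{lem:4} in Young's inequality, to get
\[
\norm{g*h_t}_{L^{p_2}}\meg C t^{-\frac{Q_*}{\grado}(\frac1{p_1}-\frac1{p_2})}\ee^{\omega t}\norm{g}_{L^{p_1}} .
\]
Writing $W^{(m)}_t f=2^m W^{(m)}_{t/2}f*h_{t/2}$ then yields $\Bs^{p_2,q}_{\alpha_2}\lesssim\Bs^{p_1,q}_{\alpha_1}$ in the critical case $\alpha_1-\alpha_2=\frac{Q_*}{p_1}-\frac{Q_*}{p_2}$, and the same estimate at $t=1$ controls the low-frequency term $\ee^{-\Lc}f$. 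The non-critical case then follows from (1).

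For (2$'$) I would run a Jawerth/Hedberg-type pointwise argument. At a fixed $x$ and small $t$, Theorem~\ref{teo:7} gives
\[
t^{-\alpha_2/\grado}\abs{W^{(m)}_t f(x)}\lesssim t^{(\alpha_1-\alpha_2)/\grado}\,T^*_{b,1}\Bigl(\norm{s^{-\alpha_1/\grado}W^{(m),*}_s f}_{L^{q_1}_s}\Bigr)(x).
\]
For larger $t$, the $L^{p_1}\to L^\infty$ bound above gives $t^{-\alpha_2/\grado}\abs{W^{(m)}_t f(x)}\lesssim t^{-\alpha_1/\grado-... }$ more precisely a factor $t^{(\alpha_1-\alpha_2)/\grado-Q_*/(p_1\grado)}$ times $\norm{f}_{F^{p_1,q_1}_{\alpha_1}}$. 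I would split the $t$-integral at the threshold where the two bounds balance. This gives the pointwise estimate $(\text{maximal function})^{p_1/p_2}\norm{f}^{1-p_1/p_2}$, whose $L^{p_2}$-norm is then controlled by Lemma~\ref{lem:5}; since $p_1<p_2$, $q_2$ plays no role.

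For (3), Lemma~\ref{lem:9} with $m=0$ gives
\[
f=\ee^{-\Lc}f+\int_0^1 W^{(1)}_s f\,\frac{\dd s}{s} ,
\]
so $\norm f_{L^p}\meg\norm{\ee^{-\Lc}f}_{L^p}+\Bs^{p,1}_{0,1,\mi_1}(f)$, which is $B^{p,1}_0\subseteq L^p$. The general case of (3) follows by combining this with (2), using $\alpha_1-Q_*/p_1>-Q_*/p_2$ and (1) to reach $B^{p_2,1}_0$. For (3$'$) with $p_2<\infty$, I would use (2$'$) to land in $F^{p_2,2}_0$ and then the identity $F^{p_2,2}_0=L^{p_2}$; for $p_2=\infty$, I would pass through Besov spaces and (3).

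The main obstacle is the identity $F^{p,2}_0=L^p$, a local Littlewood–Paley theorem on a group that may have exponential volume growth and for an operator that is not positivity preserving. For the inclusion $L^p\subseteq F^{p,2}_0$, I would view $f\mapsto(W^{(1)}_t f)_{t\in(0,1]}$ as an $L^2(\mi_1)$-valued singular integral. It is bounded on $L^2$, at least when $\Lc$ is self-adjoint, by spectral theory; the general case would be reduced to the self-adjoint operator of Proposition~\ref{prop:4} through the independence of the spaces from $\Lc$. Its kernel satisfies local Calderón–Zygmund estimates by Theorem~\ref{teo:7}. Since only $t\meg1$ is involved, the kernel decays like $\ee^{-b\abs{x}_*^{\grado/(\grado-1)}}$ at infinity, which beats the volume growth $\ee^{Cr}$, so a local Calderón–Zygmund theory on balls of radius at most $1$ applies. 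The reverse inclusion $F^{p,2}_0\subseteq L^p$ would then follow by duality, through the reproducing formula of Lemma~\ref{lem:9} applied to the pairing with $g\in L^{p'}$.
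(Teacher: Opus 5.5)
The paper does not prove Proposition~\ref{prop:2}. It is quoted from \cite[Theorem 8.2 and Proposition 8.3]{BCP}, so there is no in-paper argument to compare yours with. Judged on its own, your outline is correct and is the standard Euclidean template, transplanted soundly:
\begin{itemize}
\item The reduction to $\beta_L$ via Proposition~\ref{prop:5} produces exactly the factors $\Delta_L^{1/p_1-1/p_2}$.
\item The discretization of $\mi_1$, combined with the $W^{(m),*}$ form of the norms, gives (1) and (1$'$).
\item Young's inequality with $\norm{p_{b,t}}_{L^r}\lesssim t^{-Q_*/(\grado r')}$ and $1/r'=1/p_1-1/p_2$ gives (2).
\item The Hedberg splitting for (2$'$) is set at $\tau^{Q_*/(\grado p_1)}\approx \norm{f}/G(x)$. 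It yields the pointwise bound $G^{p_1/p_2}\norm{f}^{1-p_1/p_2}$ for all $q_1,q_2$.
\item (3) and (3$'$) then follow exactly as you describe.
\end{itemize}

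A few steps need more than you wrote, though none is a real obstruction.

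First, the estimate $\norm{\ee^{-\Lc}f}_{L^{p_2}}\lesssim\norm{\ee^{-\Lc/2}f}_{L^{p_1}}$ only helps once you know that $\norm{\ee^{-\Lc/2}f}_{L^{p_1}}$ is controlled by $\norm{f}_{B^{p_1,q_1}_{\alpha_1}(\beta)}$. This needs a short argument, for instance:
\begin{itemize}
\item start from $\ee^{-\Lc/2}f=\ee^{-\Lc}f+\int_{1/2}^1\Lc\ee^{-u\Lc}f\,\dd u$;
\item iterate until the power of $\Lc$ reaches $m$;
\item bound the resulting terms by $\norm{\ee^{-\Lc}f}_{L^{p_1}}$ and $\sup_{u\in[1/2,1]}\norm{W^{(m)}_u f}_{L^{p_1}}$.
\end{itemize}

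Second, in the duality step for $F^{p,2}_0\subseteq L^p$ you must use Lemma~\ref{lem:9} with $m=1$, not $m=0$. The point is that the integrand $W^{(2)}_s=4W^{(1)}_{s/2}W^{(1)}_{s/2}$ can be split between $f$ and $g$ and then handled by Cauchy--Schwarz in $s$ and H\"older in $x$. This splitting requires the operator of Proposition~\ref{prop:4} to be self-adjoint with respect to the measure used in the pairing, which is $\beta_L$ after your reduction. With $m=0$ the pairing does not factor into two square functions.

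Third, the local Calder\'on--Zygmund theory has to be set up explicitly:
\begin{itemize}
\item Split off the part of the $L^2(\mi_1)$-valued kernel outside $B(e,1)$. It is integrable against $\ee^{c\abs{\,\cdot\,}_*}$, so Young's inequality disposes of it.
\item For the local part, verify H\"ormander's condition in both variables. This uses both $X h_t$ and $X^R h_t$ from Theorem~\ref{teo:7}(2), plus a mean-value estimate as in Lemma~\ref{lem:30}.
\item Use the uniform local doubling $\beta_L(B(x,r))\asymp r^{Q_*}$ for $r\meg 1$, and localize the Calder\'on--Zygmund decomposition to unit balls of a lattice.
\end{itemize}

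Finally, the maximal operator $T^*_{b,1}$ in your small-$t$ bound for (2$'$) is superfluous, though harmless. The quantity $\sup_t t^{-\alpha_1/\grado}\abs{W^{(m)}_t f(x)}$ is already controlled pointwise by $\norm{s^{-\alpha_1/\grado}W^{(m),*}_{s,\eps}f(x)}_{L^{q_1}_s(\mi_1)}$.
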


\subsection{`Goodman' Sobolev Spaces}

\begin{deff}
	Suppose  $\alpha\Meg0$ and $p\in [1,\infty]$. We define $L^p_0(\beta)$ as the closure of $C_c(G)$ in $L^p(\beta)$. Then, we define the Sobolev space $W^{\alpha,p}(\beta)$ (resp.\ $W^{\alpha,p}_0(\beta)$) as the space of $f\in L^p(\beta)$ (resp.\ $f\in L^p_0(\beta)$) such that $X f\in L^p(\beta)$ (resp.\ $X f\in L^p_0(\beta)$) for every $X\in U_\alpha$, endowed with the norm 
	\[
	f\mapsto \max_{\substack{X\in U_\alpha, \;  \abs{X}\meg 1}} \norm{X f}_{L^p(\beta)}.
	\]
	We shall also define $W^{\infty,p}(\beta)\coloneqq \bigcap_{\alpha \Meg 0} W^{\alpha,p}(\beta)$, endowed with the corresponding topology.
\end{deff}
Observe that by definition $L^p_0(\beta)= L^p(\beta)$ if $p<\infty$, while $L^\infty_0(\beta) =  C_0(G)$.  

Cf.~\cite[Proposition 9.2]{BCP} for a proof of the following result.

\begin{prop}\label{prop:7}
	Suppose $\alpha\Meg 0$, $\alpha'\in \R$ such that $([\alpha'/\grado]+1)_+\meg \alpha/\grado$, and $p,q\in [1,\infty]$. Then, the following hold.
	\begin{enumerate}
		\item[\textnormal{(i)}] $W^{\alpha,p}(\beta)$ is a Banach space.
		
		\item[\textnormal{(ii)}]  $W^{\alpha,p}_0(\beta)$ is the closure of $C^\infty_c(G)$  in $W^{\alpha,p}(\beta)$.
		
		\item[\textnormal{(iii)}]  $W^{\alpha,p}(\beta)\subseteq B^{p,q}_{\alpha'}(\beta)$ continuously.
		
		\item[\textnormal{(iv)}]  If $p\in (1,\infty)$, then $W^{\alpha,p}(\beta)\subseteq F^{p,q}_{\alpha'}(\beta)$ continuously.
	\end{enumerate}
	In particular, $\Sc(G)\subseteq B^{p,q}_\alpha(\beta)$, and if $p\in(1,\infty)$ then $\Sc(G)\subseteq F^{p,q}_\alpha(\beta)$, continuously.
\end{prop}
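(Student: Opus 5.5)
The plan is to treat the four items separately: (i) and (ii) are soft functional-analytic statements, while (iii) and (iv) follow from writing $W^{(m)}_t f$ as a heat-semigroup average of $\Lc^m f$ and then using the Gaussian bounds of Theorem~\ref{teo:7}.

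For (i), take a Cauchy sequence $(f_n)$ in $W^{\alpha,p}(\beta)$. Since $U_\alpha$ is finite-dimensional, the norm is a maximum over a compact set of $X$'s and is equivalent to $\sum_j\norm{X_j f}_{L^p(\beta)}$ for a basis $(X_j)$ of $U_\alpha$. Hence each $(X_j f_n)$ is Cauchy in $L^p(\beta)$, and converges to some $g_j$ there; in particular $f_n\to f$ for some $f\in L^p(\beta)$. Convergence in $L^p(\beta)$ implies convergence in $\Dc'(G)$, and each $X$ is continuous on distributions. Therefore $X_jf=\lim X_jf_n=g_j$, so $f\in W^{\alpha,p}(\beta)$ and $f_n\to f$ in norm.

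For (ii), one inclusion is immediate. $C^\infty_c(G)\subseteq W^{\alpha,p}_0(\beta)$, and $W^{\alpha,p}_0(\beta)$ is closed, being the preimage of the closed subspaces $L^p_0(\beta)$ under the continuous maps $f\mapsto Xf$. For the converse, take $f\in W^{\alpha,p}_0(\beta)$ and argue in two steps.
\begin{itemize}
\item \emph{Truncation.} Set $\psi_R\coloneqq \chi_{B(e,R)}*\phi$ for a fixed $\phi\in C^\infty_c(G)$ with $\int\phi\,\dd\beta_L$ normalised. Then $X\psi_R=\chi_{B(e,R)}*X\phi$ is bounded uniformly in $R$ for every $X\in U(G)$, and $\psi_R\to 1$ locally uniformly. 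Expanding $X(\psi_R f)$ by the Leibniz rule gives $\psi_R Xf$ plus terms $(X'\psi_R)(X''f)$ with $\deg X'>0$ and $\deg X''\meg\deg X$. In these extra terms $X'\psi_R$ is supported where $\abs{x}_*\Meg R-c$, so they tend to $0$ in $L^p(\beta)$. This uses $X''f\in L^p_0(\beta)$, which matters precisely when $p=\infty$.
\item \emph{Mollification.} Replace $\psi_R f$ by $\phi_\eps*(\psi_R f)$, with $(\phi_\eps)$ an approximate identity in $C^\infty_c(G)$. Left convolution commutes with left-invariant $X$, so $X(\phi_\eps*g)=\phi_\eps*Xg$. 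Approximate identities converge in $L^p(\beta)$ for $p<\infty$ and in $C_0(G)$, with local boundedness of $\Delta_L,\Delta_R$ taking care of the non-invariance of $\beta$. The result is compactly supported and smooth.
\end{itemize}

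For (iii) and (iv), set $m\coloneqq([\alpha'/\grado]+1)_+$, so that $m\meg\alpha/\grado$ and $\Lc^m\in U_{m\grado}\subseteq U_\alpha$; hence $g\coloneqq\Lc^m f\in L^p(\beta)$ with $\norm{g}_{L^p(\beta)}\meg C\norm{f}_{W^{\alpha,p}(\beta)}$. Then $W_t^{(m)}f=t^m\, g*h_t$. By Theorem~\ref{teo:7}, $\abs{g*h_t}\meg C\ee^{\omega t}\,T_{b,t}(\Delta\cdot g)$ for a suitable power $\Delta$ of the modular characters, which is controlled through Lemma~\ref{lem:31} and Lemma~\ref{lem:18}.
\begin{itemize}
\item For (iii), Young's inequality gives $\norm{W^{(m)}_tf}_{L^p(\beta)}\meg Ct^m\norm{g}_{L^p(\beta)}$ for $t\meg1$. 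Since $m>\alpha'/\grado$, the quantity $\norm{t^{m-\alpha'/\grado}}_{L^q(\mi_1)}$ is finite, and $\norm{\ee^{-\Lc}f}_{L^p(\beta)}\meg C\norm{f}_{L^p(\beta)}$ by the same Young estimate.
\item For (iv), bound pointwise $\abs{W^{(m)}_tf(x)}\meg Ct^m T^*_{b,1}g'(x)$ for $t\meg1$, where $g'$ is $g$ times the character factor. Take the $L^q(\mi_1)$-norm in $t$, which again is finite because $m>\alpha'/\grado$, and then apply Lemma~\ref{lem:5} with trivial $Y$ to bound the $L^p(\beta)$-norm of the maximal function.
\end{itemize}

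The final assertion follows because $\Sc(G)\subseteq W^{\alpha,p}(\beta)$ continuously, by Theorem~\ref{teo:8}(5). I expect the main obstacle to be the bookkeeping of the characters $\Delta_L,\Delta_R$ in the Young and maximal estimates. If this becomes awkward, I will first reduce to a left-invariant $\beta$ via Proposition~\ref{prop:5}, noting that multiplication by $\Delta_L^{1/p}$ preserves $W^{\alpha,p}$ by Proposition~\ref{prop:9}(5). The other delicate point is the case $p=\infty$ in (ii), where truncation must use the $C_0$ hypothesis rather than $L^p$ tails.
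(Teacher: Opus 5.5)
The paper gives no proof of this statement; it only refers to \cite[Proposition 9.2]{BCP}. Your argument is correct and follows the natural route. For (i) you use continuity of $X$ on $\Dc'(G)$. For (ii) you truncate by $\chi_{B(e,R)}*\phi$ and then mollify on the left, using the $C_0$ hypothesis when $p=\infty$. For (iii)--(iv) you write $W^{(m)}_t f=t^m(\Lc^m f)*h_t$ and combine the Gaussian bounds with Young's inequality and Lemma~\ref{lem:5}. Three points should be tightened.

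\emph{The identity $W^{(m)}_t f=t^m(\Lc^m f)*h_t$ for $f\in L^p(\beta)$ needs justification.} This matters especially for $p=\infty$, where you cannot argue by density. By definition $W^{(m)}_t f=t^m\Lc^m(f*h_t)=t^m f*(\Lc^m h_t)$. On the other hand, $(\Lc^m f)*h_t=f*((\Lc^m)^{\dagger R\dagger}h_t)$ by the remark after Proposition~\ref{prop:9}. So you need $(\Lc^m)^{\dagger R\dagger}h_t=\Lc^m h_t$. To see this, convolve with $\phi\in C^\infty_c(G)$ and use that the $L^2$ semigroup commutes with the closure of $\Lc^m$ on $C^\infty_c(G)$; then let $\phi$ run through an approximate identity.

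\emph{No modular character belongs on $g$.} Since $T_{b,t}$ is defined through the same convolution, $\abs{g*h_t}\meg C\ee^{\omega t}T_{b,t}g$ holds directly. The only character that appears is $\Delta_R^{-1/p'}$, on the kernel side of Young's inequality, and Lemmas~\ref{lem:31} and~\ref{lem:3} absorb it. In your (iv), with $g'$ equal to $g$ times a character, Lemma~\ref{lem:5} would only bound things by the $L^p(\beta)$-norm of that product. That is not controlled by $\norm{g}_{L^p(\beta)}$ when the character is unbounded. Simply drop the factor.

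\emph{The final assertion needs a larger Sobolev index.} Hypothesis $([\alpha'/\grado]+1)_+\meg\alpha/\grado$ forces the Besov/Triebel--Lizorkin index to lie strictly below the Sobolev one. So you need $\Sc(G)\subseteq W^{\tilde\alpha,p}(\beta)$ for some $\tilde\alpha\Meg\grado([\alpha/\grado]+1)_+$, not $\Sc(G)\subseteq W^{\alpha,p}(\beta)$. You then apply (iii)/(iv) with $(\tilde\alpha,\alpha)$ in place of $(\alpha,\alpha')$. Theorem~\ref{teo:8}(5) provides this, since it gives $\Sc(G)\subseteq W^{\infty,p}(\beta)$ continuously.
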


\section{Algebra Properties}\label{sec:3}
 
In this section we provide some results on the continuity of pointwise multiplication of elements of Besov and Triebel--Lizorkin spaces. The natural continuation of this programme is the study of the spaces of pointwise multipliers of such spaces. For Triebel--Lizorkin spaces, it will suffice to have a suitable notion of localization. For Besov spaces, we shall have to work harder and (essentially) repeat the proof of Theorem~\ref{teo:6}.

We observe explicitly that, in order to pursue this programme, it is essential to have some analogue of the case $p_1=p_3=p$ and $p_2=p_4=\infty$ in (ii) of Theorem~\ref{teo:6}. The `correct' analogue may be proved using the appropriate definition of the space $F^{\infty,q}_\alpha(\beta)$, but we  shall rely on the simpler Remark~\ref{oss:4} below, which replaces $F^{\infty,q}_\alpha(\beta)$ with the smaller space $B^{\infty,q}_\alpha(\beta)$. We shall deal with the spaces $F^{\infty,q}_\alpha(\beta)$ in a future work.

\begin{teo}\label{teo:6}
	Take $\alpha> 0$ and $p,p_1,p_2,p_3,p_4,q\in [1,\infty]$ such that $\frac{1}{p_1}+\frac{1}{p_2}=\frac{1}{p_3}+\frac{1}{p_4}=\frac{1}{p}$. Then, there is a constant $C>0$ such that the following hold:
	\begin{enumerate}
		\item[\textnormal{(i)}]   for every $f\in B^{p_1,q}_\alpha(\beta)\cap L^{p_3}(\beta)$ and for every $g\in B^{p_4,q}_\alpha(\beta)\cap L^{p_2}(\beta)$,
		\[
		\norm{f g}_{B^{p,q}_\alpha(\beta)}\meg C \norm{f}_{B^{p_1,q}_\alpha(\beta)}\norm{g}_{L^{p_2}(\beta)}+ C \norm{f}_{L^{p_3}(\beta)} \norm{g}_{B^{p_4,q}_\alpha(\beta)};
		\]
		
		\item[\textnormal{(ii)}] if $p,p_1,p_4\in (1,\infty)$ and $p_2,p_3\in (1,\infty]$,  then for every $f\in F^{p_1,q}_\alpha(\beta)\cap L^{p_3}(\beta)$ and for every $g\in F^{p_4,q}_\alpha(\beta)\cap L^{p_2}(\beta)$,
		\begin{equation}\label{eq:7}
		\norm{f g}_{F^{p,q}_\alpha(\beta)}\meg C \norm{f}_{F^{p_1,q}_\alpha(\beta)}\norm{g}_{L^{p_2}(\beta)}+ C \norm{f}_{L^{p_3}(\beta)} \norm{g}_{F^{p_4,q}_\alpha(\beta)}.
		\end{equation}
	\end{enumerate}
\end{teo}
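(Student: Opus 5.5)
We follow the paraproduct strategy of~\cite{BPV}, adapted via the heat-semigroup paraproducts of~\cite{Feneuil}. First we reduce to $\beta$ left invariant by Proposition~\ref{prop:5}. Indeed, $\Delta_L^{-1/p}=\Delta_L^{-1/p_1}\Delta_L^{-1/p_2}=\Delta_L^{-1/p_3}\Delta_L^{-1/p_4}$, and multiplication by $\Delta_L^{-1/r}$ is an isometry of $L^r(\beta_L)$ onto $L^r(\beta)$. We may also assume $f,g$ smooth with the relevant decay, e.g.\ in $\Sc(G)$, and then conclude by density or by a Fatou-type argument using the weak continuity of $W_t^{(m)}$ on $\Sc'(G)$.

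Fix an integer $m>\alpha/\grado$ and a large $M\gg m$. By Lemma~\ref{lem:9} (applied with $m$ replaced by suitable orders and rearranged) we may write
\[
f=\ee^{-\Lc}f+c\int_0^1 \psi_t(\Lc) f\,\frac{\dd t}{t},
\]
where $\psi_t(\Lc)=(t\Lc)^M\ee^{-t\Lc}$ up to a constant. The boundary term $\ee^{-\Lc}f$ is harmless: it lies in $W^{\infty,p_3}$ with norm controlled by $\norm{f}_{L^{p_3}}$, so its products with $g$ can be treated like the low-frequency term below. Inserting the reproducing formula for both $f$ and $g$ and splitting according to whether $s\meg t$ or $t<s$, we write $fg$ as a sum of paraproducts:
\begin{itemize}
\item $\Pi_g f=\int_0^1 (\psi_t(\Lc)f)\,(\ee^{-t\Lc}g)\,\frac{\dd t}{t}$, the term where $g$ is at lower frequency;
\item the symmetric term $\Pi_f g$;
\item a remainder in which the two frequencies are comparable.
\end{itemize}

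For each piece we estimate $W^{(m)}_u(\cdot)$, either directly or through the maximal version $W^{(\alpha),*}$. Consider a typical term $W_u^{(m)}[(\psi_t(\Lc)f)(\ee^{-t\Lc}g)]$ and distinguish two regimes.

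When $u\meg t$, we expand $\Lc^m$ of the product by the Leibniz rule in terms of $X,Y\in U(G)$ with $\deg X+\deg Y\meg m\grado$. Theorem~\ref{teo:7} and Lemma~\ref{lem:18} give
\[
\abs{X\psi_t(\Lc)f}\lesssim t^{-\deg X/\grado}\,T_{b,t}(\psi_{t/2}(\Lc)f),
\qquad
\abs{Y\ee^{-t\Lc}g}\lesssim t^{-\deg Y/\grado}\,T_{b,t}g .
\]
Then $u^m t^{-m}$ produces the kernel factor $(u/t)^{m}$, and the heat semigroup $\ee^{-u\Lc}$ is controlled by $T_{b,u}$.

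When $u>t$, we instead move powers of $\Lc$ onto the product factors from the $\psi_t$ side, using $\psi_t(\Lc)=(t\Lc)^{M}\cdots$ and writing $\Lc^{M}$ in terms of left-invariant operators. Integrating by parts through the convolution (with the transpose formulas of Proposition~\ref{prop:9}) gains a factor $(t/u)^{M'}$.

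After this, the quantity $u^{-\alpha/\grado}\abs{W_u^{(m)}(fg)}$ is dominated by an integral operator in $t$ with kernel of the form $\frac{u^\gamma t^\eta}{(u+t)^{\gamma+\eta}}$, with $\eta>\alpha/\grado$ and $\gamma>0$, acting on
\[
t^{-\alpha/\grado}\,T^*_{b,\kappa}(\psi_t f)\cdot T^*_{b,\kappa}g .
\]
It is here that $\alpha>0$ is needed, so that the exponents are positive on both sides. Lemma~\ref{lem:25} then handles the $L^q_u$ norm.

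For Besov spaces we apply H\"older in $x$ with exponents $p_1,p_2$, together with $\norm{T_{b,t}g}_{L^{p_2}}\lesssim\norm{g}_{L^{p_2}}$, which follows from Lemma~\ref{lem:18} and Young's inequality. For Triebel--Lizorkin spaces we take the $L^q$ norm pointwise in $x$ first, bound $T^*g$ via Lemma~\ref{lem:5} in $L^{p_2}$ (this needs $p_2>1$, which is why $p_2,p_3\in(1,\infty]$), and use the vector-valued maximal estimate of Lemma~\ref{lem:5} for the $f$ factor in $L^{p_1}(L^q)$. Finally, H\"older in $x$ gives the first term of~\eqref{eq:7}; the term $\Pi_f g$ gives the second symmetrically, and the remainder is bounded by either one.

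The main obstacle will be the regime $u>t$ together with the comparable-frequency term. Here the product of two high-frequency pieces does not inherit cancellation, since $\Lc$ is not a derivation and has higher order. The plan is to take $M$ large and absorb the derivatives coming from the Leibniz expansion onto the factor at frequency $t$, using the Gaussian derivative bounds of Theorem~\ref{teo:7}. The price is the power $(u/t)^{\deg/\grado}$, which is compensated by the factor $t^{\alpha/\grado}$ from the norm, precisely because $\alpha>0$. The exponential factors $\ee^{\omega t}$ and $\ee^{C t}$ remain bounded because $t,u\meg 1$.
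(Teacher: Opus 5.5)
The gap is in the regime $u>t$. You rightly flag it as the crux, and the failure there is not a technicality.

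\textbf{Why the claimed gain fails.} Integrating by parts does not produce a factor $(t/u)^{M'}$. Write $\psi_t(\Lc)f=t^M\Lc^M\Phi$ with $\Phi=\ee^{-t\Lc}f$, and iterate $(X\Phi)\Gamma=X(\Phi\Gamma)-\Phi\,X\Gamma$ with $\Gamma=\ee^{-t\Lc}g$. Only the derivatives that end up outside the product can be moved onto the kernel $\Lc^m h_u$, costing $u^{-\deg X/\grado}$ instead of $t^{-\deg X/\grado}$. The others land on $\Gamma$, which is smooth only at scale $t$. For example, the term in which all derivatives land on $\Gamma$ carries $t^M\cdot t^{-M}=1$, so there is no gain.

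In fact the domination you claim, with $\eta>\alpha/\grado$, is false. Take $G=\R$, $\Lc=-\frac{\dd^2}{\dd x^2}$, and $f=g=\cos(N\,\cdot\,)\phi$ with $0\neq\phi\in C^\infty_c(\R)$. Since $fg=\frac12\phi^2+\frac12\phi^2\cos(2N\,\cdot\,)$, the quantity $\norm{W^{(m)}_u(fg)}_{L^p}$ stays bounded below for $u\in[1/2,1]$ as $N\to\infty$. Your bound instead gives $O(N^{-2(\eta-\alpha/\grado)})$, because $\psi_t(\Lc)f$ is concentrated at $t\approx N^{-2}$. The order-one contribution comes from the interaction at $t\approx N^{-2}$, whose product has genuine zero-frequency content, so there is no cancellation to exploit.

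The fallback in your last paragraph fails too. Applying the Leibniz rule to $\Lc^m$ of the product when $u>t$ costs $(u/t)^m$. This cannot be offset by $(t/u)^{\alpha/\grado}$, since $m>\alpha/\grado$.

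\textbf{The missing observation.} For $u\Meg t$ no cancellation is needed at all. There you use the Gaussian bounds without differentiating the product, $\abs{W^{(m)}_u F}\lesssim T_{b,u}\abs{F}$. For $u<t$, your Leibniz argument gives the factor $(u/t)^m$. Altogether $\abs{W^{(m)}_u F_t}\lesssim \frac{u^m}{(u+t)^m}\,T_{b,u}[\cdots]$, and
\[
u^{-\alpha/\grado}\frac{u^m}{(u+t)^m}=\frac{u^{m-\alpha/\grado}\,t^{\alpha/\grado}}{(u+t)^{m}}\;t^{-\alpha/\grado}
\]
is the kernel of Lemma~\ref{lem:25} with $\gamma=m-\alpha/\grado$ and $\eta=\alpha/\grado$. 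Positivity of this $\eta$ is exactly where $\alpha>0$ is used, as in \textsc{step I} of the paper. With your claimed $\eta>\alpha/\grado$, the hypothesis $\alpha>0$ would never be needed, which is another sign that the gain cannot hold.

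\textbf{Your decomposition after the repair.} With this fix your bilinear decomposition works. It is the case $W_3=I$ of Lemma~\ref{lem:28b}:
\[
(S_{t'}f)(S_{t'}g)-(S_1f)(S_1g)=\int_{t'}^1\big[(\psi_tf)(S_tg)+(S_tf)(\psi_tg)\big]\,\frac{\dd t}{t},\qquad S_t=\sum_{k<M}\frac{1}{k!}W^{(k)}_t,\quad \psi_t=\frac{1}{(M-1)!}W^{(M)}_t .
\]
So the boundary term is $S_1f$, not $\ee^{-\Lc}f$. No resonant term is needed, since each piece contains a factor $W^{(M)}_t$.

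The paper instead uses Feneuil's trilinear decomposition (Corollary~\ref{lem:28}) with an outer smoothing $W_3$. In $\Pi_fg$ and $\Pi_gf$ the factor $(s/(s+t))^m$ then comes from the semigroup law alone. The Leibniz rule is needed only for $\Pi(f,g)$, where the alternative $\deg X_j\meg m\grado/2$ or $\deg Y_j\meg m\grado/2$, with $m=2[\alpha/\grado]+2$, decides which factor carries the $F^{p,q}_\alpha$-norm. Your route avoids the $\Pi(f,g)$ term; the paper's avoids differentiating products inside the paraproducts.

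\textbf{Two smaller points.}
\begin{enumerate}
\item The reduction to $\beta_L$ is not legitimate when $p_1\neq p_3$. The substitution $f=\Delta_L^{-1/p_1}f_L$, $g=\Delta_L^{-1/p_2}g_L$ matches the first term on the right-hand side. The second term would require $f=\Delta_L^{-1/p_3}f'_L$, $g=\Delta_L^{-1/p_4}g'_L$, and the discrepancy $\Delta_L^{1/p_1-1/p_3}$ is unbounded in general. The paper does not reduce, since Young's inequality and Lemmas~\ref{lem:2} and~\ref{lem:5} hold for any $\beta$.
\item The vector-valued maximal inequality of Lemma~\ref{lem:5} is stated for $q\in(1,\infty)$ and fails for $q=1$, which the statement allows. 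So the operators $T_{b,t}$ acting on the factor carrying the $F^{p,q}_\alpha$-norm must be kept at the matched scale $t$ and handled by Lemma~\ref{lem:2}. The maximal operator should be used only for the scalar bound $\norm{T^*_{b,1}g}_{L^{p_2}(\beta)}\lesssim\norm{g}_{L^{p_2}(\beta)}$.
\end{enumerate}
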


This result extends~\cite[Theorem 7.1]{BPV} to the case of general filtrations and general $\alpha\in \R$. The proof is similar (except for \textsc{step II}, which is  simpler). In a sense, the most natural cases correspond to the choices $p_1=p_4=p$ (so that $p_2=p_3=\infty$) and $p_1=p_3=p$ (so that $p_2=p_4=\infty$). The first choice shows that $B^{p,q}_\alpha(\beta)\cap L^\infty(\beta)$ and $F^{p,q}_\alpha(\beta)\cap L^\infty(\beta)$ are Banach algebras under pointwise multiplication, whereas the second choice shows that $B^{\infty,q}_\alpha(\beta)$ acts continuously on $B^{p,q}_\alpha(\beta)$ by pointwise multiplication (the analogous assertion for Triebel--Lizorkin spaces will be proved in a future work since it requires to deal with the spaces $F^{\infty,q}_\alpha(\beta)$). 

We begin with the following result, which is a generalization of~\cite[Proposition 5.2]{Feneuil}. We state it in a quite abstract and general way for future reference, but the proof is (formally) unchanged. We mention, though, the we do not take $t'=0$ as in~\cite[Proposition 5.2]{Feneuil}, since in its main application, Corollary~\ref{lem:28} (but also in the cited reference), the mapping $t \mapsto W'(t)v$ is in general \emph{not} $\mi$-integrable on $(0,t]$ for any $t>0$, unless we take $v$ in a suitable vector subspace. Equality may actually hold interpreting the integral as a limit $\lim\limits_{t'\to 0^+} \int_{(t',t]}$ in a possibly weaker topology.

Let us also mention that there is no substantial difference in considering a diffuse measure $\mi$ instead of a Haar measure on $(0,+\infty)$. Since, however, in a future work we shall need to deal  with the discrete case (that is, $\mi=\sum_{j\in\N} \delta_{\eps^j}$, $\eps\in (0,1)$), we shall consider here the case of a general Radon measure $\mi$, even though it makes the proof longer and more involved.

\begin{lem}\label{lem:28b}
	Let $B_1,B_2,B_3$ be three Banach spaces, $\cdot\,\colon B_1\times B_2\to B_3$ a continuous bilinear mapping, and $\mi$  a   Radon measure on $(0,+\infty)$.  Let, for $j=1,2,3$, $W_j,W'_j\colon (0,+\infty)\to \Lc(B_j)$ be two mappings such that, for every $v\in B_j$, the mapping $t\mapsto  W_j'(t) v$ is locally $\mi$-integrable, and  
	\[
	W_j(t') v= W_j(t)v+\int_{(t',t]} W'_j(s)v\,\dd \mi(s)
	\]
	for every $t,t'\in \supp \mi$ with $t'<t$. We define $W^-_j(t)\coloneqq W(t)+\mi(\Set{t})W'(t)$ for every $t>0$ and for $j=1,2,3$.
	Then, for every $v\in B_1$, for every $w\in B_2$, and for every $t,t'\in \supp\mi$ with $t'<t$,
	\[
	W_3(t') (W_1(t')v  \cdot W_2(t') w )=\Pi^{(t',t)}_v w+\Pi^{(t',t)}_w v+\Pi^{(t',t)}(v,w)+\widetilde\Pi^{(t',t)}(v,w)+W_3(t)[(W_1(t) v )\cdot (W_2(t) w)] ,
	\]
	where
	\[
	\begin{aligned}
		\Pi^{(t',t)}_v w&=  \int_{(t',t]} W_3(s)[(W'_1(s) v)\cdot (W_2(s) w)]\,\dd \mi(s),\\
		\Pi^{(t',t)}_w v&=   \int_{(t',t]} W_3(s)[(W_1(s) v)\cdot (W'_2(s) w)]\,\dd \mi(s),\\
	\end{aligned}
	\]
	and
	\[
	\begin{split}
		\Pi^{(t',t)}(v,w)&=  \int_{(t',t]} W'_3(s) [(W_1^-(s) v) \cdot (W_2^-(s) w)]\,\dd \mi(s),\\
		\widetilde\Pi^{(t',t)}(v,w)&=  \int_{(t',t]} W_3(s) [(W_1'(s) v) \cdot (W_2'(s) w)] \mi(\Set{s})\,\dd \mi(s).
	\end{split}
	\]
\end{lem}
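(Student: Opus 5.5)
The identity is a Leibniz rule for functions of bounded variation with respect to $\mi$, applied twice: first to the bilinear product $\cdot$, then to the pairing $(T,x)\mapsto Tx$ between $\Lc(B_3)$ and $B_3$. For brevity I write $u_1(s)\coloneqq W_1(s)v$, $u_2(s)\coloneqq W_2(s)w$, and $u_j'(s)\coloneqq W'_j(s)v$ (resp.\ $W_2'(s)w$). By hypothesis $u_j(r)=u_j(t)+\int_{(r,t]}u_j'\,\dd\mi$ for $r\in[t',t]\cap\supp\mi$. In particular $u_j$ is bounded on $(t',t]$, being a constant plus the integral of a $\mi$-integrable function over a set of finite measure. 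Letting $r\to s^-$ shows that the left limit is $u_j(s)+\mi(\Set{s})u_j'(s)=W_j^-(s)$.

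\textbf{Step 1 (product of two factors).} Set $\Phi(s)\coloneqq u_1(s)\cdot u_2(s)\in B_3$ and
\[
\Psi(s)\coloneqq u_1'(s)\cdot u_2(s)+u_1(s)\cdot u_2'(s)+\mi(\Set{s})\,u_1'(s)\cdot u_2'(s).
\]
I claim that $\Phi(t')=\Phi(t)+\int_{(t',t]}\Psi\,\dd\mi$. To see this, insert the integral representations of $u_1(t')$ and $u_2(t')$ and expand by bilinearity. This leaves a double integral $\iint_{(t',t]^2}u_1'(s)\cdot u_2'(r)\,\dd\mi(s)\,\dd\mi(r)$, which I split into the regions $r>s$, $r<s$ and the diagonal $r=s$.
\begin{itemize}
\item On $r>s$, integrating in $r$ first gives $u_1'(s)\cdot(u_2(s)-u_2(t))$.
\item Symmetrically, the region $r<s$ gives $(u_1(r)-u_1(t))\cdot u_2'(r)$.
\item The diagonal has $\mi\otimes\mi$-measure concentrated on the countably many atoms, so it contributes $\int\mi(\Set{s})\,u_1'(s)\cdot u_2'(s)\,\dd\mi(s)$.
\end{itemize}
The terms involving $u_j(t)$ cancel against the cross terms of the expansion, and what remains is exactly $\int_{(t',t]}\Psi\,\dd\mi$. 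A direct computation also gives the useful identity
\[
W_1^-(s)v\cdot W_2^-(s)w=\Phi(s)+\mi(\Set{s})\Psi(s).
\]

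\textbf{Step 2 (applying $W_3$).} Write $W_3(t')\Phi(t')=W_3(t')\Phi(t)+W_3(t')\int_{(t',t]}\Psi(r)\,\dd\mi(r)$.
\begin{itemize}
\item For the first term use $W_3(t')\Phi(t)=W_3(t)\Phi(t)+\int_{(t',t]}W_3'(s)\Phi(t)\,\dd\mi(s)$.
\item For the second term, move $W_3(t')$ inside the Bochner integral (it is a bounded operator). Then expand $W_3(t')\Psi(r)=W_3(r)\Psi(r)+\int_{(t',r]}W_3'(s)\Psi(r)\,\dd\mi(s)$ for $\mi$-a.e.\ $r\in\supp\mi$.
\item Exchange the order of integration over $\Set{t'<s\meg r\meg t}$. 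The inner integral becomes $\int_{[s,t]}\Psi\,\dd\mi=\Phi(s)-\Phi(t)+\mi(\Set{s})\Psi(s)$, by Step 1 together with the atom at $s$.
\item By the identity at the end of Step 1, this equals $W_1^-(s)v\cdot W_2^-(s)w-\Phi(t)$. Adding the term $W_3'(s)\Phi(t)$ from the first bullet yields $\int_{(t',t]}W_3'(s)[W_1^-(s)v\cdot W_2^-(s)w]\,\dd\mi(s)=\Pi^{(t',t)}(v,w)$.
\end{itemize}
Finally, $\int_{(t',t]}W_3(r)\Psi(r)\,\dd\mi(r)$ splits, according to the three terms of $\Psi$, into $\Pi_v^{(t',t)}w+\Pi_w^{(t',t)}v+\widetilde\Pi^{(t',t)}(v,w)$. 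This gives the stated formula.

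\textbf{Main obstacle: justifying the two uses of Fubini.} For $\cdot$ in Step 1 this is harmless, since $\norm{u_1'(s)\cdot u_2'(r)}\meg C\norm{u_1'(s)}\norm{u_2'(r)}$ and each factor is $\mi$-integrable on $(t',t]$. In Step 2, however, the hypotheses give only strong integrability of $s\mapsto W_3'(s)x$ for each fixed $x$, with no operator-norm control, so Fubini for $W_3'(s)\Psi(r)$ is not automatic. My first approach would be a density/linearity argument. If $\Psi$ is replaced by a simple function $\sum_k\chi_{A_k}x_k$, the exchange reduces to finitely many scalar-measure computations, each valid by the strong integrability of $W_3'(\cdot)x_k$. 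I would then pass to the limit in $L^1(\mi;B_3)$.

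For this limit I need $\sup_{s\in(t',r]}\norm{\int_{(s,r]}W_3'\,\dd\mi}$ to be bounded. This follows from the uniform boundedness principle, since $\int_{(s,r]}W_3'(\sigma)x\,\dd\mi(\sigma)=W_3(s)x-W_3(r)x$ with $W_3(\cdot)x$ bounded for each $x$.

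If that route fails, I would try instead to prove Step 2 directly on a partition $t'=s_0<\dots<s_N=t$ by the telescoping discrete Leibniz rule, and then refine the partition.
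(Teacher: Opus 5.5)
Your proof is correct, but it organizes the computation differently from the paper.

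\textbf{Comparison of routes.} The paper substitutes all three integral representations into $W_3(t')(W_1(t')v\cdot W_2(t')w)$ at once, which produces single, double and triple integrals. It then splits $(t',t]^3$ by the ordering of $(s_1,s_2,s_3)$ (strict minimum, two equal coordinates below the third, full diagonal), and splits $(t',t]^2$ similarly. Each piece is collapsed with $\int_{(s,t]}W'\,\mathrm{d}\mu=W(s)-W(t)$ plus atom corrections, and a long bookkeeping shows that every term containing $t$ cancels. You instead iterate a two-factor Leibniz rule: first for $\Phi=u_1\cdot u_2$, then for the pairing of $W_3(\cdot)$ with $\Phi(\cdot)$, using a single exchange of integrals over $\{t'<s\le r\le t\}$. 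This avoids the triple integral and the cancellation table. It also shows where $W_j^-$ comes from: the closed interval $[s,t]$ produced by the exchange contributes the atom $\mu(\{s\})\Psi(s)$, and $W_1^-(s)v\cdot W_2^-(s)w=\Phi(s)+\mu(\{s\})\Psi(s)$. The paper's brute-force expansion treats the three factors symmetrically and needs no intermediate identity, but it is considerably longer.

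\textbf{The Fubini step.} The bound you propose for this step does not suffice. A uniform bound on $\|\int_{(s,r]}W_3'\,\mathrm{d}\mu\|_{\mathcal{L}(B_3)}$ controls only the side $\int_{(t',t]}[W_3(t')-W_3(r)]\Psi(r)\,\mathrm{d}\mu(r)$. It does not let you pass to the limit in $\int_{(t',t]}W_3'(s)\bigl[\int_{[s,t]}\Psi_n\,\mathrm{d}\mu\bigr]\,\mathrm{d}\mu(s)$, where $W_3'(s)$ acts on an $s$-dependent vector. Nor does it show that $\Pi^{(t',t)}(v,w)$ exists as a Bochner integral.

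What you need is the estimate $\int_{(t',t]}\|W_3'(s)x\|\,\mathrm{d}\mu(s)\le K\|x\|$ for all $x\in B_3$. It follows from the closed graph theorem applied to $x\mapsto W_3'(\cdot)x\in L^1(\mu|_{(t',t]};B_3)$: a sequence converging in $L^1$ has an a.e.\ convergent subsequence, and each $W_3'(s)$ is bounded. With this estimate, Tonelli gives $\iint_{t'<s\le r\le t}\|W_3'(s)\Psi(r)\|\,\mathrm{d}(\mu\otimes\mu)(s,r)\le K\|\Psi\|_{L^1(\mu|_{(t',t]})}$. Joint strong measurability follows by approximating $\Psi$ with simple functions. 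So Fubini applies directly, and no density argument is needed.

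The paper performs the analogous exchanges without comment, for instance for the double integrals of $W'(s)[(W(t)v)(W'(s')w)]$ in~\eqref{eq:15}. The same estimate justifies them. In the application, Corollary~\ref{lem:28}, the operators $W_j'(s)$ are in any case uniformly bounded for $s\in(0,1]$.
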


Notice that, if we redefine $W_j$ so that $W_j(t')=W_j(t)v+\int_{(t',t]} W'_j(s)v\,\dd \mi(s)$ for every $0<t'\meg t$, then $W^-_j(t)=\lim\limits_{t'\to t^-} W_j(t')$ (pointwise convergence). In addition, the assumptions on $W_j$ essentially mean that $W_j$ (redefined as above) is a right-continuous (for the topology of pointwise convergence on $\Lc(B_j)$) function of locally bounded variation with distributional derivative   $-W_j'\cdot \mi$. The function $W_j^-$ is then the associated left-continuous function of locally bounded variation.

\begin{proof}
	In order to simplify the notation, we shall simply write $v w$ instead of $v\cdot w$ and $W$ and $W'$ instead of $W_j$ and $W'_j$, respectively, for $v\in B_1$, $w\in B_2$, and $j=1,2,3$. We shall also simply write $\mi_t$ instead of $\mi(\Set{t})$ for every $t>0$. Observe that $t \mapsto W(t)v\in B_j$ is $\mi$-measurable and locally bounded for every $v\in B_j$, since it coincides with the locally bounded and right-continuous (hence Borel measurable) function $t\mapsto W(t_0)v-\int_{(t_0,t]} W'(s) v\,\dd \mi(s)$ on $\supp \mi\cap ( t_0,+\infty)$ for every $t_0>0$.
	
	Now, observe that 
	\begin{equation}\label{eq:15}
	\begin{split}
		&W(t')(W(t')v W(t')w)= W(t)(W(t')v W(t')w) +  \int_{(t',t]} W'(s) (W(t')v W(t')w)\,\dd \mi(s)\\
		&\qquad=  W(t)( ( W(t)v) ( W(t)w)) +  W(t)\Big( (W(t) v )\int_{(t',t]} W'(s) w\,\dd \mi (s)\Big) \\
		&\qquad\qquad +W(t)\Big( \int_{(t',t]} W'(s) v\,\dd \mi (s) (W(t) w)  \Big)+ W(t)\Big(\int_{(t',t]} W'(s) v\,\dd \mi (s)\int_{(t',t]} W'(s) w\,\dd \mi (s)  \Big)  \\
		&\qquad\qquad+  \int_{(t',t]} W'(s) ((W(t) v) (W(t) w))\,\dd \mi(s)+ \int_{(t',t]} W'(s) \Big((W(t) v)  \int_{(t',t]} W'(s') w\,\dd \mi(s')\Big)\,\dd \mi(s)\\
		&\qquad\qquad+ \int_{(t',t]} W'(s) \Big( \int_{(t',t]} W'(s') (v )\,\dd \mi(s') (W(t) w)\Big)\,\dd \mi(s)\\
		&\qquad \qquad+ \int_{(t',t]} W'(s) \Big( \int_{(t',t]} W'(s') v\,\dd \mi(s') \int_{(t',t]} W'(s') w\,\dd \mi(s')\Big)\,\dd \mi(s)\\
		&\qquad=W(t)( ( W(t)v) ( W(t)w)) +  \int_{(t',t]} W(t)  [(W(t) v )(W'(s) w)]\,\dd \mi (s)  + \int_{(t',t]} W(t) [(W'(s) v)(W(t) w)] \,\dd \mi (s) \\
		&\qquad\qquad + \int_{(t',t]^2} W(t) [(W'(s) v) (W'(s') w)]\,\dd (\mi\otimes \mi )(s,s')    +  \int_{(t',t]} W'(s) ((W(t) v) (W(t) w))\,\dd \mi(s)\\
		&\qquad\qquad+ \int_{(t',t]^2} W'(s) [(W(t) v)   ( W'(s') w)]\,\dd (\mi\otimes \mi)(s,s') + \int_{(t',t]^2} W'(s)[ (  W'(s') v ) (W(t) w)] \,\dd( \mi\otimes \mi)(s,s')\\
		&\qquad\qquad+ \int_{(t',t]^3} W'(s)  ( (W'(s') v)(W'(s'') w))\,\dd( \mi\otimes \mi\otimes \mi)(s,s',s'').
	\end{split}
	\end{equation}
	In order to proceed, we split   $(t',t]^3$ as $A_1\cup A_2\cup A_3\cup B_1\cup B_2\cup B_3 \cup C$ (disjoint union), where 
	\[
	\begin{aligned}
	A_j&\coloneqq\Set{(s_1,s_2,s_3)\in (t',t]\colon s_j<s_{\sigma(j+1)},s_{\sigma(j+2)} },\\
	B_j&\coloneqq \Set{(s_1,s_2,s_3)\in (t',t]\colon s_{\sigma(j+1)}=s_{\sigma(j+2)}<s_{j} },\\
	C&\coloneqq\Set{(s_1,s_2,s_3)\in (t',t]\colon s_1=s_2=s_3},
	\end{aligned}
	\]
	and where $\sigma\colon \Z\to \Set{1,2,3}$ is defined so that $n-\sigma(n)\in 3 \Z$ for every $n\in\Z$.
	Taking into account the fact that $\int_{(s,t]} W'(s')\,\dd \mi(s')= W(s)-W(t)$ for every $s\in (t',t]\cap \supp \mi$, we then deduce that
	\[
	\begin{split}
		&\int_{(t',t]^3} W'(s_1)  ( (W'(s_2) v)(W'(s_3) w))\,\dd( \mi\otimes \mi\otimes \mi)(s_1,s_2,s_3) \\
		&\qquad=\int_{(t',t]} W'(s)[ (W(s) v-W(t) v  )(W(s) w-W(t)w)] \,\dd \mi(s) \\
		&\qquad\qquad + \int_{(t',t]}  (W(s)-W(t))   [(W'(s) v)(W(s) w- W(t) w)] \,\dd \mi(s)\\
		&\qquad\qquad+\int_{(t',t]} (W(s)-W(t))  [(W(s) v-W(t)v)(W'(s) w)]\,\dd \mi(s)  \\
		&\qquad \qquad+ \int_{(t',t]} \mi_s (W(s)-W(t))((W'(s) v)(W'(s)w))\,\dd \mi(s)\\
		&\qquad \qquad + \int_{(t',t]} \mi_s W'(s) [((W(s)-W(t)) v)(W'(s)w)]\,\dd \mi(s)\\
		&\qquad \qquad + \int_{(t',t]} \mi_s W'(s) [(W'(s)v)((W(s)-W(t)) w)]\,\dd \mi(s)\\
		&\qquad \qquad + \int_{(t',t]} \mi_s^2 W'(s)((W'(s) v)(W'(s) w))\,\dd \mi(s).
	\end{split}
	\]
	In a similar way, one sees that
	\[
	\begin{split}
		&\int_{(t',t]^2} W(t) [(W'(s) v )(W'(s') w)]\,\dd (\mi\otimes \mi )(s,s')  = \int_{(t',t]} W(t) [((W(s)-W(t)) v )(W'(s) w)]\,\dd \mi(s)\\
		&\qquad \qquad + \int_{(t',t]} W(t) [(W'(s) v)((W(s)-W(t)) w )]\,\dd \mi(s)+ \int_{(t',t]} \mi_s W(t)[(W'(s) v )(W'(s) w)]\,\dd \mi(s),
	\end{split}
	\]
	\[
	\begin{split}
		&\int_{(t',t]^2} W'(s) [(W(t) v)    (W'(s') w)]\,\dd (\mi\otimes \mi)(s,s')= \int_{(t',t]} (W(s)-W(t)) [(W(t) v)   ( W'(s) w)]\,\dd \mi(s)\\
		&\qquad \qquad + \int_{(t',t]} W'(s) [(W(t) v)    (W(s)-W(t)) w]\,\dd \mi(s)+\int_{(t',t]} \mi_s W'(s) [(W(t) v)    (W'(s) w)]\,\dd \mi(s),
	\end{split}
	\]
	and
	\[
	\begin{split}
		&\int_{(t',t]^2} W'(s)[   (W'(s') v ) (W(t) w)] \,\dd( \mi\otimes \mi)(s,s')=\int_{(t',t]} (W(s)-W(t))[   (W'(s) v ) (W(t) w)]   \,\dd \mi(s)\\
		&\qquad \qquad+ \int_{(t',t]} W'(s)[   ((W(s)-W(t)) v ) (W(t) w)]   \,\dd \mi(s)+ \int_{(t',t]}\mi_s W'(s)[   (W'(s) v ) (W(t) w)]  \,\dd \mi(s).
	\end{split}
	\]
	In order to perform the remaining computations  in a streamlined way, we write $\mi^k(s,s',t)$ instead of $\int_{(t',t]}\mi_s^k W(s)((W'(s)v) (W(t)w))\,\dd \mi(s)$, $k=0,1,2$, with similar notation for the other combinations of $t,s,s'$. Then, the above computations show that\footnote{We consider first the three single integrals in in~\eqref{eq:15}, then the  triple integral, and finally the three double integrals.}
	\[
	\begin{split}
		&W(t')(W(t')v W(t')w)-W(t)(W(t)v W(t)w)=(t,t,s')+(t,s',t)+(s',t,t)+(s',s,s)-(s',t,s)\\
		&\qquad-(s',s,t)+(s',t,t)+(s,s',s)-(t,s',s)-(s,s',t)+(t,s',t)+(s,s,s')-(t,s,s')-(s,t,s')\\
		&\qquad+(t,t,s')+\mi(s,s',s')-\mi(t,s',s')+\mi(s',s,s')-\mi(s',t,s')+\mi(s',s',s)-\mi(s',s',t)+\mi^2(s',s',s')\\
		&\qquad+(t,s,s')-(t,t,s')+(t,s',s)-(t,s',t)+\mi(t,s',s')+(s,t,s')-(t,t,s')+(s',t,s)-(s',t,t)\\
		&\qquad+\mi(s',t,s')+(s,s',t)-(t,s',t)+(s',s,t)-(s',t,t)+\mi(s',s',t)\\
		&\quad= (s',s,s)+(s,s',s)+(s,s,s')+\mi(s,s',s')+\mi(s',s,s')+\mi(s',s',s)+\mi^2(s',s',s')
	\end{split}
	\]
	since all terms where some $t$ occurs cancel out.
	The assertion follows observing that $\Pi^{(t',t)}(v,w)=(s',s,s)+\mi(s',s',s)+\mi(s',s,s')+\mi^2(s',s',s')$.
\end{proof}

\begin{cor}\label{lem:28}
	Take $m\in\N$, $p,q\in [1,\infty]$ such that $\frac{1}{p}+\frac 1 q \meg 1$, $\rho\Meg 0$, $f\in \ee^{\rho\abs{\,\cdot\,}_*} L^p(\beta)$, and $g\in \ee^{\rho\abs{\,\cdot\,}_*} L^q(\beta)$. Then
	\[
	f g=\lim_{t'\to 0^+}(\Pi_f^{(t')} g+\Pi^{(t')}_g f)+\Pi(f,g)+\sum_{h,k,n=0}^{m} \frac{1}{h! k! n!} W_1^{(h)}[(W^{(k)}_1 f)(W^{(n)}_1 g)],
	\]
	where
	\[
	\begin{aligned}
	\Pi_f^{(t')} g&=\sum_{h,k=0}^{m}\frac{1}{m! h! k!}  \int_{t'}^1 W^{(h)}_t [(W^{(m+1)}_t f)(W^{(k)}_t g)]\,\frac{\dd t}{t},\\
	\Pi_g^{(t')} f&=\sum_{h,k=0}^{m}\frac{1}{m! h! k!}  \int_{t'}^1 W^{(h)}_t [(W^{(k)}_t f)(W^{(m+1)}_t g)]\,\frac{\dd t}{t},
	\end{aligned}
	\]
	and
	\[
	\Pi(f,g)=\sum_{h,k=0}^{m}\frac{1}{m! h! k!} \lim_{t'\to 0^+}\int_{t'}^1 W^{(m+1)}_t [(W^{(h)}_t f)(W^{(k)}_t g)]\,\frac{\dd t}{t},
	\]
	where the limits are taken in $\Sc'(G)$.
\end{cor}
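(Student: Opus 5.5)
The plan is to specialize Lemma~\ref{lem:28b} to the heat semigroup, using the Taylor expansion of Lemma~\ref{lem:9}, and then let $t'\to 0^+$.

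\textbf{Setting up Lemma~\ref{lem:28b}.} Take $\mi=\mi_1$, that is, $\frac{\dd t}{t}$ on $(0,1]$, and set
\[
W_j(t)\coloneqq \sum_{k=0}^m \frac{1}{k!}W^{(k)}_t
\qquad\text{and}\qquad
W'_j(t)\coloneqq\frac{1}{m!}W^{(m+1)}_t
\]
for $j=1,2,3$. A direct differentiation gives
\[
\frac{\dd}{\dd t}\sum_{k\meg m}\frac{(t\Lc)^k}{k!}\ee^{-t\Lc}=-\frac{1}{m!\,t}\,(t\Lc)^{m+1}\ee^{-t\Lc}.
\]
Hence $W(t')v=W(t)v+\int_{t'}^t W'(s)v\,\frac{\dd s}{s}$ for $0<t'<t\meg 1$, which is exactly the remainder formula of Lemma~\ref{lem:9}.

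For the Banach spaces, choose $r$ with $\frac1r=\frac1p+\frac1q\meg 1$ and take
\[
B_1=\ee^{\rho\abs{\cdot}_*}L^p(\beta),\qquad B_2=\ee^{\rho\abs{\cdot}_*}L^q(\beta),\qquad B_3=\ee^{2\rho\abs{\cdot}_*}L^r(\beta).
\]
The pointwise product is continuous $B_1\times B_2\to B_3$ because $\abs{\cdot}_*\Meg 0$.

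To check that each $W_t^{(k)}$, $t\in(0,1]$, is bounded on these weighted spaces, uniformly in $t$, I will argue as follows. Theorem~\ref{teo:7} gives $\abs{(t\Lc)^k h_t}\meg C p_{b,t}$. Lemma~\ref{lem:31} controls $\Delta_L,\Delta_R$, and the triangle inequality gives $\ee^{\rho\abs{x}_*}\meg \ee^{\rho\abs{y}_*}\ee^{\rho\abs{y^{-1}x}_*}$. Then Young's inequality together with Lemma~\ref{lem:4} (the weighted $L^1$ bound on $p_{b,t}\ee^{\rho'\abs{\cdot}_*}$) gives the required uniform bounds. Strong measurability of $t\mapsto W'(t)v$ on $[t',1]$ follows from the continuity statement in Lemma~\ref{lem:4}.

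Since $\mi$ is diffuse, $W^-_j=W_j$ and $\widetilde\Pi\equiv 0$. Lemma~\ref{lem:28b} with $t=1$ then gives, for every $t'\in(0,1)$,
\[
W(t')[(W(t')f)(W(t')g)]=\Pi^{(t')}_fg+\Pi^{(t')}_gf+\Pi^{(t',1)}(f,g)+W(1)[(W(1)f)(W(1)g)].
\]
Expanding the sums yields exactly the displayed terms: the last term is the finite triple sum over $h,k,n$.

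\textbf{Passing to the limit.} It remains to show that the left-hand side tends to $fg$ in $\Sc'(G)$ as $t'\to0^+$. Granting this, the existence of $\lim_{t'\to0^+}\Pi^{(t',1)}(f,g)$ (that is, $\Pi(f,g)$) and the stated identity follow at once if we group the terms as in the statement. The identity then expresses $\Pi(f,g)+\lim_{t'}(\Pi^{(t')}_fg+\Pi^{(t')}_gf)$, so only the sum of limits is asserted; if needed, $\lim_{t'}(\Pi^{(t')}_fg+\Pi^{(t')}_gf)$ is to be read together with $\Pi(f,g)$ in this sense.

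Since every $\phi\in\Sc(G)$ satisfies $\ee^{c\abs{\cdot}_*}\phi\in L^\infty$ for all $c$ (Theorem~\ref{teo:8}), it suffices to prove two things:
\begin{itemize}
\item $W(t')f\to f$ in $L^p_\loc$-type norms with exponential weights, e.g.\ in $\ee^{\rho'\abs{\cdot}_*}L^{\tilde p}(\beta)$ restricted to compacts, with $\rho'>\rho$ and $\tilde p<\infty$, plus uniform bounds at infinity;
\item the analogous statement for $g$, and then the corresponding convergence of the outer operator $W(t')$ on $B_3$.
\end{itemize}

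\textbf{Main obstacle.} The main obstacle is the case $p=\infty$ or $q=\infty$, where $W(t')f\to f$ fails in norm. I plan to handle it by localizing. Write $f=\chi f+(1-\chi)f$ with $\chi\in C_c(G)$. Then $W(t')(\chi f)\to\chi f$ in $L^{s}$ for every $s<\infty$ by the standard approximate-identity argument (density of $C_c$ and uniform bounds on $L^s$). Meanwhile, $W(t')((1-\chi)f)$ restricted to a smaller compact is controlled by the Gaussian tails $p_{b,t'}$ and tends to $(1-\chi)f=0$ there.

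On compacts this gives $W(t')f\to f$ in every $L^{s}_\loc$ with $s<\infty$, $s\meg p$, together with uniform weighted bounds from the previous step. Then $(W(t')f)(W(t')g)\to fg$ in $L^1_\loc$ with uniform exponential control, using $\frac1p+\frac1q\meg1$. Finally, $W(t')$ applied to this product converges to $fg$ in $\Sc'(G)$ by duality: $W(t')^\dag\phi\to\phi$ in weighted $L^\infty$ for $\phi\in\Sc(G)$.
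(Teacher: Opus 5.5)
Your application of Lemma~\ref{lem:28b} is sound: $\mi_1$ is diffuse, so $W^-_j=W_j$ and $\widetilde\Pi^{(t',1)}=0$, the identity holds for each $t'\in(0,1)$, and the terms match the statement. Your outline for showing $W(t')[(W(t')f)(W(t')g)]\to fg$ in $\Sc'(G)$ is also plausible. Taking $B_3=\ee^{2\rho\abs{\,\cdot\,}_*}L^r(\beta)$ and localizing is a harmless variant of the paper's route, which reduces to $q=p'$ and then uses convergence in measure and dominated convergence at the endpoint.

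The gap is your claim that, once the left-hand side converges, the existence of $\lim_{t'\to0^+}\Pi^{(t',1)}(f,g)$ follows at once. It does not. Convergence of the left-hand side only gives convergence of the total $\Pi^{(t')}_fg+\Pi^{(t')}_gf+\Pi^{(t',1)}(f,g)$, and says nothing about how that total splits. The statement does assert that $\Pi(f,g)$ exists as a limit in its own right. The remark after it stresses that only the individual limits of $\Pi^{(t')}_fg$ and $\Pi^{(t')}_gf$ are in doubt, and the proof of Theorem~\ref{teo:6} treats $\Pi(f,g)$ as a separate object. Your fallback of reading the identity as a statement about the sum of all three pieces weakens the result rather than proving it. Some extra decay is genuinely needed here: the integrand of $\Pi^{(t',1)}(f,g)$ is only bounded in $B_3$ for $t\in(0,1]$, while $\frac{\dd t}{t}$ is not integrable at $0$.

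The missing idea is the extra factor of $t$ in $W^{(m+1)}_t=t\Lc W^{(m)}_t$. Move $\Lc$ onto the test function and use the uniform bounds on the $W^{(j)}_t$ in $\Lc(B_j)$ that you already established. For $t\in(0,1]$ and $\phi\in\Sc(G)$ this gives
\[
\abs{\langle W^{(m+1)}_t[(W^{(h)}_t f)(W^{(k)}_t g)]\vert \phi\rangle}= t\abs{\langle W^{(m)}_t[(W^{(h)}_t f)(W^{(k)}_t g)]\vert \Lc^*\phi\rangle}\meg C t\norm{\ee^{-\rho\abs{\,\cdot\,}_*}f}_{L^p(\beta)}\norm{\ee^{-\rho\abs{\,\cdot\,}_*}g}_{L^q(\beta)}\norm{\ee^{2\rho\abs{\,\cdot\,}_*}\Lc^*\phi}_{L^{r'}(\beta)}.
\]
So the integrand is $O(t)$ against $\frac{\dd t}{t}$, uniformly for $\phi$ in bounded subsets of $\Sc(G)$. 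Hence $\Pi^{(t',1)}(f,g)$ converges in $\Sc'(G)$ as $t'\to0^+$. Only then, combined with the convergence of the left-hand side, do you obtain that $\Pi^{(t')}_fg+\Pi^{(t')}_gf$ converges. This gives the identity with the terms grouped as in the statement.
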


It is unclear to us whether the limits $\lim\limits_{t'\to 0^+} \Pi_f^{(t')} g$ and $\lim\limits_{t'\to 0^+}\Pi^{(t')}_g f$ exist `individually' in the generality of the statement (the issues arise when the terms with $h=0$ are considered). 
In addition, we observe explicitly that we considered the general case $\rho\neq 0$ only because it makes the proof more natural.

\begin{proof}
	Observe first that $p'\meg q$ and that there is $\rho'>0$ such that $\ee^{-\rho'\abs{\,\cdot\,}_*}\in L^1(\beta)$. By H\"older's inequality, this implies that $\ee^{\rho\abs{\,\cdot\,}_*}L^q(\beta)\subseteq \ee^{(\rho+\rho')\abs{\,\cdot\,}_*}L^{p'}(\beta)$, so that, up to replacing $\rho$ with $\rho+\rho'$, we may assume that $q=p'$.
	We wish to apply Lemma~\ref{lem:28b} with $\mi=\mi_1$, $B_1=\ee^{\rho\abs{\,\cdot\,}_*}L^p(\beta)$, $B_2=\ee^{\rho\abs{\,\cdot\,}_*}L^{p'}(\beta)$, $B_3=\ee^{2\rho\abs{\,\cdot\,}_*}L^1(\beta) $,  $W'_j(t)=\frac{1}{m!}W^{(m+1)}_t$, and $W_j(t)=\sum_{k=0}^{m} \frac{1}{k!}W^{(k)}_t$ ($j=1,2,3$). Observe first that, by Young's inequality, convolution induces a continuous bilinear mapping $\ee^{k\rho\abs{\,\cdot\,}_*}L^r(\beta)\times \Delta_R^{1/r'}\ee^{-k\rho\abs{\,\cdot\,}_*}L^1(\beta)\to \ee^{k\rho\abs{\,\cdot\,}_*}L^r(\beta)$ for every $r\in [1,\infty]$ and for every $k\Meg 0$, so that  by Theorem~\ref{teo:7} and Lemma~\ref{lem:3} we see that $W_j(t),W'_j(t)\in \Lc(B_j)$ for $j=1,2,3$. In addition, using the continuity of the mapping $(0,+\infty)\ni t\mapsto \Delta_R^c \ee^{k\rho\abs{\,\cdot\,}_*}\Lc^{k}h_t\in L^1(\beta)$ for every $c\in \R$ and for  $k=1,2$ (which is essentially a consequence of Lemma~\ref{cor:3}), we see that the mapping $(0,+\infty)\ni t\mapsto W'_j(t) f\in B_j$ is continuous (and bounded), hence locally $\mi_1$-integrable for every $f\in B_j$ and for every $j=1,2,3$. Furthermore, by Lemma~\ref{lem:9}
	\[
	f=W_j(t) f+ \int_0^t W'_j(s) f\,\dd \mi_1(s)
	\]
	for every $f\in \Sc'(G)$ and for every $t\in (0,1]$, so that
	\[
	W_j(t')f= W_j(t)f+\int_{t'}^t W'_j(s) f\,\dd \mi_1(s)
	\]
	for every $0<t'<t\meg 1$. The same relation therefore holds, in $B_j$, for every  $f\in B_j$. In order to conclude, we have to show that $W_3(t)(W_1(t)f W_2(t) g)$ converges to $f g$ in $\Sc'(G)$ (actually, in $B_3$) and that $\Pi^{(t,1)}(f,g)$, with the notation of Lemma~\ref{lem:28b},  has a limit in $\Sc'(G)$ for $t\to 0^+$. The second assertion follows easily once one observes that the previous arguments show that there is a constant $C>0$ such that, for every $\phi\in \Sc(G)$,
	\[
	\begin{split}
		\abs{\langle W^{(m+1)}_t [(W^{(h)}_t f)(W^{(k)}_t g)]\vert \phi\rangle}&=t\abs{\langle W^{(m)}_t [(W^{(h)}_t f)(W^{(k)}_t g)]\vert \Lc^*\phi\rangle}\\
			&\meg t C \norm{\ee^{-\rho\abs{\,\cdot\,}_*}f}_{L^p(\beta)}\norm{\ee^{-\rho\abs{\,\cdot\,}_*}g}_{L^{p'}(\beta)} \norm{\ee^{2\rho \abs{\,\cdot\,}_*}\Lc^* \phi}_{L^\infty(\beta)}
	\end{split}
	\]
	for every $h,k\meg m$ and for every $t\in (0,1]$.
	The first assertion is also clear when $p\in (1,\infty)$, since in this case $W_1(t)f\to f$ in $B_1$ and $W_2(t) g\to g$ in $B_2$ for $t\to 0^+$: in fact, the assertion holds for $f,g\in \Sc(G)$ by Lemma~\ref{lem:9}, and then follows by density and equicontinuity in the general case. We may then reduce to the case $p=1$. Then, $W_1(t) f\to f$ in $B_1$, as one sees arguing as before. In addition, $W_2(t) g\to g$ locally in measure (since one has convergence, for example, in $\ee^{(\rho+\rho')\abs{\,\cdot\,}_*}L^1(\beta)$ by the previous remarks). In addition, the $W_2(t) g$ are uniformly bounded in $B_2$, by the previous remarks, for $t\in (0,1]$. It then follows that $  f W_2(t) g$ converges to $f g$ in $B_3$ by dominated convergence. Since clearly $(W_1(t)f -f) W_2(t) g\to 0$ in $B_3$, this proves that $W_1(t) f W_2(t)g$ converges to $fg$ in $B_3$. Since the $W_3(t)$ are equicontinuous on $B_3$ (for $t\in (0,1]$) and converge pointwise to the identity on $B_3$ for $t\to 0^+$, we may then conclude that $W_3(t)(W_1(t) f W_2(t)g)$ converges to $fg $ in $B_3$. The proof is complete.
\end{proof}

\begin{proof}[Proof of Theorem~\ref{teo:6}.]
	We only prove (ii). The proof of (i) is similar and slightly simpler.
	
	\textsc{Step I}  Set $m\coloneqq 2[\alpha/\grado]+2$; we apply Corollary~\ref{lem:28} with $m=m-1$. By the Gaussian estimates  (cf.~Theorem~\ref{teo:7}), there are $b,C_1>0$ such that $\abs{X\Lc^k h_t}\meg C_1\abs{X} t^{-k-\deg(X)/\grado} p_{b,t}$ for every $k=0,\dots, 2m$, for every $X\in U_m$, and for every $t\in (0,3]$. Let us first estimate $\Fs^{p,q}_{\alpha,m}(\Pi^{(t')}_f g)$. Observe that
	\[
	\begin{split}
	\abs{W^{(m)}_s W^{(h)}_t[(W^{(m)}_t f)(W^{(k)}_t g)]}&= s^m\abs{(t \Lc)^h \ee^{-(t/2) \Lc} \Lc^m  \ee^{-(s+t/2)\Lc} [(W^{(m)}_t f)(W^{(k)}_t g)] }  \\
		&\meg 2^h C_1 s^{m}  T_{b,t/2} \Lc^{m} \ee^{-(t/2+s)\Lc}[(W^{(m)}_t f)(W^{(k)}_t g)]\\
		&\meg 2^h C_1^2 s^{m}(s+t/2)^{-m}  T_{b, t/2} T_{b,t/2+s}   [(W^{(m)}_t f)(W^{(k)}_t g)]
	\end{split}
	\]
	for every $h,k=0,\dots, m-1$.
	In addition, by Lemma~\ref{lem:4} there are $C_2,b' >0$ such that
	\[
	\abs{W^{(m)}_s W^{(h)}_t[(W^{(m)}_t f)(W^{(k)}_t g)]}\meg C_2 s^{m}(s+t)^{-m}  T_{b', s} T_{b',t}  [(W^{(m)}_t f)(W^{(k)}_t g)]
	\]
	for every $h,k=0,\dots, m-1$,
	so that by Fubini's theorem and Lemmas~\ref{lem:2} and~\ref{lem:25} there is a constant $C_3>0$ such that
	\[
	\begin{split}
		\Fs^{p,q}_{\alpha,m}(\Pi^{(t')}_f g)&\meg \sum_{h,k=0}^{m-1} \norm*{\norm*{s^{-\alpha/\grado}  \int_0^1 \abs{(W^{(m)}_s W^{(h)}_t [(W^{(m)}_t f)(W^{(k)}_t g)])(x)}\,\frac{\dd t}{t}  }_{L^q_s(\mi_1)}  }_{L^p_x(\beta)}\\
			&\meg  m C_2\sum_{ k=0}^{m-1} \norm*{\norm*{s^{m-\alpha/\grado} \left(T_{b',s} \int_0^1  (s+t)^{-m}   T_{b',t}     [(W^{(m)}_t f)(W^{(k)}_t g)]\,\dd\mi_1(t) \right)(x) }_{L^q_s(\mi_1)}  }_{L^p_x(\beta)}\\
			&\meg C_3\sum_{ k=0}^{m-1} \norm*{\norm*{s^{m-\alpha/\grado}  \int_0^1  (s+t)^{-m} ( T_{b',t}    [(W^{(m)}_t f)(W^{(k)}_t g)])(x)\,\dd\mi_1(t)  }_{L^q_s(\mi_1)}  }_{L^p_x(\beta)}\\
			&\meg C_3^2\sum_{ k=0}^{m-1} \norm*{\norm*{t^{ -\alpha/\grado}    (T_{b',t}    [(W^{(m)}_t f)(W^{(k)}_t g)])(x)  }_{L^q_t(\mi_1)}  }_{L^p_x(\beta)}\\
			&\meg C_3^3\sum_{ k=0}^{m-1} \norm*{\norm*{t^{ -\alpha/\grado}     (W^{(m)}_t f)(x)(W^{(k)}_t g)(x)  }_{L^q_t(\mi_1)}  }_{L^p_x(\beta)}\\
			&\meg mC_1 C_3^3  \norm*{(T^*_{b,1} g)(x)\norm*{t^{ -\alpha/\grado}     (W^{(m)}_t f)(x)   }_{L^q_t(\mi_1)}  }_{L^p_x(\beta)}\\
			&\meg mC_1 C_3^3  \norm*{ T^*_{b,1} g }_{L^{p_2}(\beta)}  \Fs^{p_1,q}_{\alpha,m}(f) ,
	\end{split}
	\]
	which may be dealt with by means of Lemma~\ref{lem:5}.
	In a similar way, one may estimate $\Fs^{p,q}_{\alpha,m}(\Pi_g^{(t')} f)$ in terms of $\norm{f}_{L^{p_3}(\beta)} \Fs^{p_4,q}_{\alpha,m}(g)$ for every $t'\in (0,1)$.
	
	\textsc{Step II} We now estimate $\Pi(f,g)$. Observe first that there are two finite families $(X_j)_{j\in J}$ and $(Y_j)_{j\in J}$ of left-invariant differential operators on $G$ such that
	\[
	\Lc^{m}(\phi \psi)=\sum_{j \in J} (X_j \phi)(Y_j \psi)
	\]
	for every $\phi,\psi\in C^\infty(G)$, and
	such that $\deg(X_j)+\deg(Y_j)\meg m\grado $ for every $j\in J$.
	Then, observe that
	\[
	\begin{split}
	&\abs{W_s^{(m)} W^{(m)}_t[(W^{(h)}_t f)(W^{(k)}_t g)]}= (s t)^{m} \abs{ \Lc^{m}\ee^{-(s+t)\Lc} \Lc^m[(W^{(h)}_t f)(W^{(k)}_t g)]}\\
		&\qquad \meg C_1 s^{m}(s+t)^{-m} t^{m } \sum_{j\in J}T_{b,s+t} [ (X_j W^{(h)}_t f)(Y_j W^{(k)}_t g)].
	\end{split}
	\]
	Set 
	\[
	F^{(j,h,k)}_t(f,g)\coloneqq t^{m}   \abs{  (X_j W^{(h)}_t f)(Y_j W^{(k)}_t g)}
	\]
	for every $j\in J$, and $F_t^{(h,k)}(f,g)\coloneqq \sum_{j\in J} F_t^{(j,h,k)}(f,g)$.
	In addition, observe that by Lemma~\ref{lem:4bis} there are $C_4,b''>0$ such that
	\[
	\abs{W_s^{(m)} W^{(m)}_t[(W^{(h)}_t f)(W^{(k)}_t g)]}\meg C_4 s^{m}(s+t)^{-m}   \sum_{j\in J} T_{b'',s} T_{b'',t} F_t^{(j,h,k)}(f,g).
	\]
	 Then, by Lemmas~\ref{lem:2} and~\ref{lem:25}, we see that there is a constant $C_5>1$ such that
	\[
	\begin{split}
		\Fs^{p,q}_\alpha(\Pi(f,g))&\meg \sum_{h,k=0}^{m-1}    \norm*{\norm*{ s^{-\alpha/\grado}\int_0^1\abs{(W_s^{(m)} W^{(m)}_t[(W^{(h)}_t f)(W^{(k)}_t g)])(x)}\,\dd \mi_1(t)}_{L^q_s(\mi_1)}}_{L^p_x(\beta)}   \\
			&\meg C_4 \sum_{h,k=0}^{m-1}     \norm*{\norm*{ s^{m-\alpha/\grado} \left( T_{b'',s}\int_0^1  (s+t)^{-m}   T_{b'',t}F^{(h,k)}_t(f,g) \,\dd \mi_1(t)\right) (x)}_{L^q_s(\mi_1)}}_{L^p_x(\beta)} \\
			&\meg C_5\sum_{h,k=0}^{m-1}    \norm*{\norm*{ s^{m-\alpha/\grado}\int_0^1  (s+t)^{-m}    T_{b'',t}F^{(h,k)}_t(f,g)(x)\,\dd \mi_1(t)}_{L^q_s(\mi_1)}}_{L^p_x(\beta)} \\
			&\meg C_5^2\sum_{h,k=0}^{m-1}  \sum_{j\in J}  \norm*{\norm*{ t^{-\alpha/\grado}    F^{(j,h,k)}_t(f,g)(x) }_{L^q_t(\mi_1)}}_{L^p_x(\beta)} .
	\end{split}
	\]
	Now, take $j\in J$, and observe that either $\deg(X_j) \meg m\grado/2$ or $\deg(Y_j)\meg m\grado/2$. If $\deg(X_j)\meg m\grado/2$, then 
	\[
	\begin{split}
	F^{(j,h,k)}_t(f,g) 
		&\meg C_1 \abs{X_j}\abs{Y_j}( T_{b,t} f) (W^{(m-\deg(X_j)/\grado+k),*}_{t,1} g) 
	\end{split}
	\]
	so that	by Lemmas~\ref{lem:5} and~\cite[Proposition 6.9]{BCP} there is a constant $C_6>0$ such that
	\[
	\begin{split}
	&\norm*{\norm*{ t^{-\alpha/\grado}    F^{(j,h,k)}_t(f,g)(x) }_{L^q_t(\mi_1)}}_{L^p_x(\beta)}\\ &\qquad\meg C_1 \abs{X_j}\abs{Y_j} \norm*{ (T^*_{b,1} f)(x)  \norm*{ t^{-\alpha/\grado}   (W^{(m-\deg(X_j)/\grado+k),*}_{t,1} g)(x)  }_{L^q_t(\mi_1)}}_{L^p_x(\beta)}  \\
		&\qquad\meg C_1 \abs{X_j}\abs{Y_j} \norm*{ T^*_{b,1} f }_{L^{p_3}(\beta)} \norm*{\norm*{ t^{-\alpha/\grado}   (W^{(m-\deg(X_j)/\grado+k),*}_{t,1} g)(x)  }_{L^q_t(\mi_1)}}_{L^{p_4}_x(\beta)}  \\
		&\qquad\meg C_6\norm*{ f }_{L^{p_3}(\beta)}\norm{g}_{F^{p_4,q}_{\alpha}(\beta)} 
	\end{split} 
	\]
	since $ m-\deg(X_j)/\grado+k\Meg m/2>\alpha/\grado$.
	If, otherwise, $\deg(Y_j)\meg m \grado/2$, then arguing as before we see that
	\[
	\norm*{\norm*{ t^{-\alpha/\grado}    F^{(j,h,k)}_t(f,g)(x) }_{L^q_t(\mi_1)}}_{L^p_x(\beta)}\meg C_6\norm*{ f }_{F^{p_1,q}_\alpha(\beta)}\norm{g}_{L^{p_2}(\beta)} .
	\]
	 
	 \textsc{Step III}  We now complete the proof. Observe first that there is a constant $C_7>0$ such that
	 \[
	 \norm{f g}_{L^p(\beta)}\meg \norm{f}_{L^{p_1}(\beta)}\norm{g}_{L^{p_2}(\beta)}\meg C_7 \norm{f}_{F^{p_1,q}_\alpha(\beta)} \norm{g}_{L^{p_2}(\beta)}
	 \]
	 since  $\alpha>0$ (cf.~\cite[Proposition 6.9]{BCP}). By~\textsc{step I} and~\textsc{step II}, it only remains to estimate 
	 \[
	 \Fs^{p,q}_{\alpha,m}( W^{(h)}_1[ (W^{(k)}_1 f)(W^{(n)}_1 g) ] )
	 \]
	 for every $h,k,n=0,\dots, m-1$. Then, observe that
	 \[
	 \abs{W^{(m)}_t W^{(h)}_1[ (W^{(k)}_1 f)(W^{(n)}_1 g) ]}\meg C_1 t^m T_{b,1+t} [ (W^{(k)}_1 f)(W^{(n)}_1 g) ]\meg C_1 t^m 2^{Q_*/\grado}T_{b,2} [ (W^{(k)}_1 f)(W^{(n)}_1 g) ]
	 \]
	 for every $t\in (0,1]$, so that  there is a constant $C_8>1$ such that
	 \[
	 \begin{split}
	 \Fs^{p,q}_{\alpha,m}( W^{(h)}_1[ (W^{(k)}_1 f)(W^{(n)}_1 g) ] )&\meg C_1 2^{Q_*/\grado} \norm{t^{m-\alpha/\grado}}_{L^q_t(\mi_1)} \norm{T_{b,2} [ (W^{(k)}_1 f)(W^{(n)}_1 g) ]}_{L^p(\beta)}\\
	 	&\meg C_8   \norm{(W^{(k)}_1 f)(W^{(n)}_1 g) }_{L^p(\beta)}\\
	 	&\meg C_8 \norm{W^{(k)}_1 f}_{L^{p_1}(\beta)} \norm{W^{(n)}_1 g}_{L^{p_2}(\beta)}\\
	 	&\meg C_8^2 \norm{ f}_{L^{p_1}(\beta)} \norm{ g}_{L^{p_2}(\beta)}\\
	 	&\meg C_8^3\norm{ f}_{F^{p_1,q}_\alpha(\beta)} \norm{ g}_{L^{p_2}(\beta)},
	 \end{split}
	 \]
	 whence the result.	 
\end{proof}

\begin{cor}\label{cor:20}
	Take $p,q\in [1,\infty]$ and $\alpha> Q_*/p$. Then $B^{p,q}_\alpha(\beta_L)$ and, if $p<\infty$, $B^{p,1}_{Q_*/p}(\beta)$, are algebras under pointwise multiplication. If  $p\in (1,\infty)$, then the same holds for $F^{p,q}_\alpha(\beta_L)$.
\end{cor}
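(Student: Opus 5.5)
The plan is to combine Theorem~\ref{teo:6} with the Sobolev embeddings of Proposition~\ref{prop:2}, specialised to $p_1=p_4=p$ and $p_2=p_3=\infty$. With $\beta=\beta_L$ we have $\Delta_L=1$, so the embeddings carry no weights. In particular (3) gives $B^{p,q}_\alpha(\beta_L)\subseteq L^\infty(\beta_L)$ whenever $\alpha>Q_*/p=Q_*/p-Q_*/\infty$ and $p<\infty$. The case $p=\infty$ needs separate care. There $\alpha>0$, and $B^{\infty,q}_\alpha\subseteq B^{\infty,1}_{\alpha'}$ for $0<\alpha'<\alpha$ by (1). This in turn embeds into $B^{\infty,1}_0\subseteq L^\infty$ by (1) and the first part of (3).

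The Besov case then goes as follows. For $f,g\in B^{p,q}_\alpha(\beta_L)$, Theorem~\ref{teo:6}(i) with $p_1=p_4=p$ and $p_2=p_3=\infty$ gives
\[
\norm{fg}_{B^{p,q}_\alpha(\beta_L)}\meg C\norm{f}_{B^{p,q}_\alpha(\beta_L)}\norm{g}_{L^\infty(\beta_L)}+C\norm{f}_{L^\infty(\beta_L)}\norm{g}_{B^{p,q}_\alpha(\beta_L)}.
\]
By the embedding this is at most $C'\norm{f}_{B^{p,q}_\alpha}\norm{g}_{B^{p,q}_\alpha}$. Here $\alpha>0$ holds, as Theorem~\ref{teo:6} requires. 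The Triebel--Lizorkin case, with $p\in(1,\infty)$, uses (ii) of Theorem~\ref{teo:6} with the same exponents; this is allowed since $p_2=p_3=\infty\in(1,\infty]$. It also uses (3$'$): $F^{p,q}_\alpha(\beta_L)\subseteq L^\infty$ because $\alpha>Q_*/p$ strictly.

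The endpoint $B^{p,1}_{Q_*/p}(\beta)$, with $p<\infty$ and a general $\beta$, has two ingredients. First, for $p_1=p<p_2=\infty$, the inclusion $B^{p,1}_{Q_*/p}\subseteq B^{\infty,1}_0$ follows from (2), weighted by $\Delta_L^{1/p}$. Then $B^{\infty,1}_0\subseteq L^\infty$ follows from (3). So $\Delta_L^{1/p}B^{p,1}_{Q_*/p}(\beta)\subseteq L^\infty(\beta)$.

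The weight is the main obstacle, because multiplication does not commute with the factor $\Delta_L^{-1/p}$. I would work via Proposition~\ref{prop:5}: it suffices to show that $\Delta_L^{-1/p}B^{p,1}_{Q_*/p}(\beta_L)$ is closed under products. Write $f=\Delta_L^{-1/p}u$ and $g=\Delta_L^{-1/p}v$, so that $fg=\Delta_L^{-1/p}(u\cdot \Delta_L^{-1/p}v)$. One must then show that $u\cdot(\Delta_L^{-1/p}v)\in B^{p,1}_{Q_*/p}(\beta_L)$.

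For this I would apply Theorem~\ref{teo:6}(i) on $\beta_L$ with $p_1=p_4=p$, $p_2=p_3=\infty$. Two bounds are needed. The first is $u\in L^\infty$, which follows from the unweighted embedding on $\beta_L$. The second is $\Delta_L^{-1/p}v\in B^{p,1}_{Q_*/p}(\beta_L)\cap L^\infty$, using the weighted embedding above; this requires checking carefully which side of the $\Delta_L$ factor each norm sits on. If this bookkeeping fails, I would instead apply Theorem~\ref{teo:6} directly with measure $\beta$. In that case I would verify that $B^{p,1}_{Q_*/p}(\beta)\subseteq L^\infty$ by tracking the $\Delta_L^{1/p}$ factor through the local estimates, using Lemma~\ref{lem:31} to control $\Delta_L$ on unit balls.
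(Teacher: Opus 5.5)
Your argument for $B^{p,q}_\alpha(\beta_L)$ and $F^{p,q}_\alpha(\beta_L)$ is correct and is exactly the paper's argument. It combines Theorem~\ref{teo:6} with $p_1=p_4=p$, $p_2=p_3=\infty$ and the $L^\infty$ embeddings of Proposition~\ref{prop:2}, which carry no weight because $\Delta_L=1$ for $\beta_L$.

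The gap is in the endpoint case. There you try to prove the literal statement for a general $\beta$, and neither of your routes can succeed, because that version is false as soon as $\Delta_L\not\equiv 1$. In that case there are $x_n\in G$ with $\Delta_L(x_n)\to 0$ (powers of a single element). Fix a non-zero $\phi\in C^\infty_c(G)$ and set $f_n\coloneqq \Delta_L^{-1/p}\phi(x_n^{-1}\,\cdot\,)$. By Proposition~\ref{prop:5} and the left invariance of $\beta_L$ and of the norm of $B^{p,1}_{Q_*/p}(\beta_L)$, the $f_n$ are bounded in $B^{p,1}_{Q_*/p}(\beta)$, while $\norm{f_n}_{L^\infty(\beta)}\to\infty$. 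Hence $B^{p,1}_{Q_*/p}(\beta)\not\subseteq L^\infty(\beta)$.

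This kills your second route; Lemma~\ref{lem:31} only bounds the growth of $\Delta_L$ and cannot compensate. Your first route needs $\Delta_L^{-1/p}v\in L^\infty$ for every $v\in B^{p,1}_{Q_*/p}(\beta_L)$, which is the same false inclusion.

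Even the algebra property fails. Take $G=\R$ with $\beta=\ee^{x}\,\dd x$, so that $\Delta_L(x)=\ee^{x}$, $\beta_L=\dd x$, and $Q_*=1$. Choose $\phi\Meg 0$ with $\phi=1$ on $[-1,1]$, and set $f\coloneqq \ee^{-x/p}\sum_{n\Meg 1}\ee^{-n/(4p)}\phi(x+n)$. Then $f\in B^{p,1}_{1/p}(\beta)$, because the series $\sum_n \ee^{-n/(4p)}\phi(\,\cdot\,+n)$ converges absolutely in $B^{p,1}_{1/p}(\dd x)$. However, $\norm{f^2}_{L^p(\beta)}^p\Meg \sum_{n\Meg 1}\ee^{-n/2}\int_{-n-1/2}^{-n+1/2}\ee^{-x}\,\dd x=\infty$. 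So $f^2\notin L^p(\beta)\supseteq B^{p,1}_{1/p}(\beta)$.

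The missing observation is that the $(\beta)$ in the endpoint clause can only be a slip for $(\beta_L)$. This matches the first clause and the way the endpoint case is used in Theorem~\ref{teo:21}, after reducing to $\beta=\beta_L$. With that reading no weight bookkeeping is needed. Your own two ingredients give $B^{p,1}_{Q_*/p}(\beta_L)\subseteq B^{\infty,1}_0(\beta_L)\subseteq L^\infty$: first Proposition~\ref{prop:2}(2) with $p_1=p$, $p_2=\infty$, $\alpha_1=Q_*/p$, $\alpha_2=0$, $q_1=1$, then the first part of (3). Theorem~\ref{teo:6}(i) with $p_1=p_4=p$, $p_2=p_3=\infty$ then applies, since $\alpha=Q_*/p>0$ for $p<\infty$. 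This is precisely the content of the paper's one-line proof.
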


[Cf.~\cite[Corollary 7.2]{BPV}]

\begin{proof}
	This follows from Theorem~\ref{teo:6} and~Proposition~\ref{prop:2}.
\end{proof}

\begin{oss}\label{oss:4}
	With the notation of Theorem~\ref{teo:6}, if we take $p=p_1=p_3\in (1,\infty)$ and $p_2=\infty$, then the proof of Theorem~\ref{teo:6} shows that~\eqref{eq:7} still holds, provided that $\norm{g}_{F^{p_4,q}_\alpha(\beta)}$ is replaced by
	\[
	\norm{g}_{L^\infty(\beta)}+\norm*{\norm{ t^{-\alpha/\grado} (W^{(m)}_t f)(x) }_{L^q_t(\mi_1)}  }_{L^\infty_x(\beta)},
	\]
	where $m=2([\alpha/\grado]+1)$. Even though it is well known (cf., e.g.,~\cite[Remark 2.3.1/4]{Triebel}) that the above expression does \emph{not} lead to a good definition of a space $F^{\infty,q}_\alpha(\beta)$, it is still easy to show that it is smaller than $\norm{g}_{L^\infty(\beta)}+\Bs^{\infty,q}_{\alpha,m}(g)$. In particular, this proves that the bilinear mapping
	\[
	F^{p,q}_\alpha(\beta)\times B^{\infty,q}_\alpha(\beta)\ni (f,g)\mapsto f g \in F^{p,q}_\alpha(\beta)
	\]
	is (well defined and) continuous for every $p\in (1,\infty)$, for every $q\in [1,\infty]$, and for every $\alpha>0$. 
\end{oss}

\begin{oss}
	The results of this section may be suitably extended to the spaces $\mathring B^{p,q}_\alpha(\beta)$ and $\mathring F^{p,q}_\alpha(\beta)$. It suffices to observe that they clearly hold on $\Sc(G)$ and to conclude by density and continuity.
\end{oss}

 \section{Localization}\label{sec:4}

 The following result shows how we may perturb  the convolutors $(t\Lc)^m h_t$ without affecting the definition of Besov and Triebel--Lizorkin spaces, under certain circumstances. The advertized `localization' actually occurs when $\phi$ is compactly supported. Notice that the assumptions on $\phi$ are always (trivially) satisfied with any $\eta$ if $\phi$ is compactly supported and equals $1$ in a neighbourhood of $e$. This result will be particularly useful to characterize pointwise multipliers, since it provides norms which have `a local character,' in a certain sense.
 As an application, we present the localization properties of Triebel--Lizorkin spaces. 
 
 \begin{prop}\label{prop:34}
 	Let $\phi$ be a function of class $C^\infty$ on $G$, and assume that there are $c,\eta>0$  such that  $\abs{\phi(x)-1}\meg c\abs{x}_*^\eta \ee^{c\abs{x}_*}$ for every $x\in G$.
 	Take $p,q\in [1,\infty]$ and $\alpha\in [0,\eta)$. In addition, take $m\in \N$ such that $m>\alpha/\grado$, and take $\mi\in \Mc_\Samp$. Then, there is a constant $C>0$ such that the following hold:
 	\begin{enumerate}
 		\item[\textnormal{(i)}] if either $\alpha>0$ or $q=1$, then, for every $f\in \Sc'(G)$,
 		\[
 		\frac{1}{C}\norm{f}_{B^{p,q}_\alpha(\beta)}\meg \norm{f}_{L^p(\beta)}+ \norm{ t^{m-\alpha/\grado} [f*(\phi \Lc^m h_t)](x) }_{L^{p,q}_{x,t}(\beta,\mi)}\meg C \norm{f}_{B^{p,q}_\alpha(\beta)};
 		\]
 		
 		\item[\textnormal{(ii)}] if $p\in(1,\infty)$ and either $\alpha>0$ or $q\meg 2$, then, for every $f\in \Sc'(G)$,
 		\[
 		\frac{1}{C}\norm{f}_{F^{p,q}_\alpha(\beta)}\meg \norm{f}_{L^p(\beta)}+ \norm{ t^{m-\alpha/\grado} [f*(\phi \Lc^m h_t)](x) }_{L^{q,p}_{t,x}(\mi,\beta)}\meg C \norm{f}_{F^{p,q}_\alpha(\beta)}.
 		\]
 	\end{enumerate}
 \end{prop}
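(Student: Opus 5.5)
We write $f*(\phi\Lc^m h_t)=f*\Lc^m h_t+f*((\phi-1)\Lc^m h_t)$. The first term, multiplied by $t^m$, is $W^{(m)}_t f$. The claim therefore reduces to two facts. First, the unperturbed quantity $\norm{f}_{L^p(\beta)}+\Bs^{p,q}_{\alpha,m,\mi}(f)$ (resp.\ with $\Fs$) is equivalent to $\norm{f}_{B^{p,q}_\alpha(\beta)}$ (resp.\ $\norm{f}_{F^{p,q}_\alpha(\beta)}$). Second, the error term $E_t f\coloneqq t^m f*((\phi-1)\Lc^m h_t)$ satisfies
\[
\norm{t^{-\alpha/\grado}E_t f(x)}_{L^{p,q}_{x,t}(\beta,\mi)}\meg C\norm{f}_{L^p(\beta)},
\]
and analogously in $L^{q,p}_{t,x}$. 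Once this bound holds, the $L^p$-term absorbs the perturbation in both directions.

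For the first fact, one needs $\norm{f}_{L^p}\meg C\norm{f}_{B^{p,q}_\alpha}$. This follows from Proposition~\ref{prop:2} (3), since $B^{p,q}_\alpha\subseteq B^{p,1}_0\subseteq L^p$ when $\alpha>0$ or $q=1$. Similarly, $F^{p,q}_\alpha\subseteq F^{p,2}_0=L^p$ when $\alpha>0$ or $q\meg2$, by (1$'$). Conversely, $\norm{\ee^{-\Lc}f}_{L^p}\meg C\norm{f}_{L^p}$ by Young's inequality and the Gaussian bounds (Theorem~\ref{teo:7}, Lemma~\ref{lem:3}, Lemma~\ref{lem:31} for the modular factors). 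Finally, the independence of $m$ and $\mi\in\Mc_\Samp$ is~\cite[Proposition 6.9]{BCP}. The standing hypotheses on $\alpha,q$ are used exactly here.

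For the error term, Theorem~\ref{teo:7} and the hypothesis on $\phi$ give
\[
t^m\abs{(\phi-1)\Lc^m h_t}(x)\meg C\abs{x}_*^\eta\ee^{c\abs{x}_*}\ee^{\omega t}p_{b,t}(x).
\]
Since $\abs{x}_*^\eta\ee^{-\frac b2(\abs{x}_*^\grado/t)^{1/(\grado-1)}}\meg C t^{\eta/\grado}\ee^{C\abs{x}_*}$, we get $t^m\abs{(\phi-1)\Lc^m h_t}\meg C t^{\eta/\grado}\ee^{C\abs{x}_*}p_{b/2,t}$ for $t$ bounded. Since $\mi$ is a Carleson measure, it has bounded support, so we may take $t\in(0,\kappa]$. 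Hence $\abs{E_tf}\meg Ct^{\eta/\grado}(\abs{f}*(\ee^{C\abs{\cdot}_*}p_{b/2,t}))$.

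By Lemma~\ref{lem:3} and Young's inequality (handling $\Delta_R$ via Lemma~\ref{lem:31}), $\norm{E_tf}_{L^p}\meg Ct^{\eta/\grado}\norm{f}_{L^p}$. Then
\[
\norm{t^{(\eta-\alpha)/\grado}}_{L^q(\mi)}<\infty,
\]
because $\eta>\alpha$ and $\mi((r,2r])\meg C$ with bounded support. This handles the Besov case. For the $F$-case, the exponential factor must be absorbed into a Gaussian. Using Lemma~\ref{lem:4bis} (the first bullet, or the pointwise comparison $\ee^{C\abs{x}_*}p_{b/2,t}\meg C'p_{b/4,t}$ for $t\meg\kappa$), we bound $\abs{E_tf}(x)\meg Ct^{\eta/\grado}T_{b/4,t}f(x)\meg Ct^{\eta/\grado}T^*_{b/4,\kappa}f(x)$. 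Taking the $L^q(\mi)$ norm in $t$ yields a finite constant times $T^*f(x)$, and Lemma~\ref{lem:5} (with trivial $Y$) gives an $L^p$-bound for $p\in(1,\infty)$.

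The main obstacle is this pointwise absorption of $\abs{x}_*^\eta\ee^{c\abs{x}_*}$ into the heat kernel, uniformly for $t\in(0,\kappa]$. The $\abs{x}_*^\eta$ factor must be traded for $t^{\eta/\grado}$ without losing the Gaussian, which requires splitting off half of the exponent $b$. It is also this gain of $t^{\eta/\grado}$ with $\eta>\alpha$ that makes the error summable against $t^{-\alpha/\grado}\,\dd\mi$.
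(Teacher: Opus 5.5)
Your proposal is correct and follows essentially the paper's own argument. You split off the error $t^m f*((\phi-1)\Lc^m h_t)$, trade $\abs{x}_*^\eta$ for $t^{\eta/\grado}$ at the cost of part of the Gaussian exponent, and use the embeddings into $L^p$ together with Young's inequality for the Besov case. The only difference is cosmetic: in the Triebel--Lizorkin case you dominate by the maximal operator $T^*_{b/4,\kappa}$ and invoke Lemma~\ref{lem:5}, whereas the paper applies Lemma~\ref{lem:2} directly to $t^{(\eta-\alpha)/\grado}T_{b/3,t}f$; both work.
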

 
 Cf.~\cite{TriebelFS2} for more general versions of this result in the classical (and also in the Riemannian) case. 
 We observe explicitly that, in the classical case, suitably `localized' (quasi-)norms are available without any restrictions on $\alpha$. Our techniques, however, are more elementary and cannot achieve such more general statements.
 
 Notice that we could also prove analogous results with $f*(\phi  \Lc^m h_t)$ replaced by 
 \[
 \sup_{s\in [\eps t, t/\eps]} \sup_{\substack{\abs{X}\meg 1\\ \deg(X)\meg m\grado}} (s/t)^m\abs{f*(\phi X h_s)}
 \]
 for any fixed $\eps\in (0,1)$. In order to get the appropriate `localized' version of $W^{(m),*}_{t,\eps}$ (with the differential operators $Y$  applied to $f$ and not to $h_t$ by means of suitable right invariant differential operators), one should impose more restrictive assumptions on $\phi$.
 
 \begin{proof}
 	Observe first that, by~\cite[Propositions 6.9]{BCP} and~Proposition~\ref{prop:2}, $B^{p,q}_\alpha(\beta),F^{p,q}_\alpha(\beta)\subseteq L^p(\beta)$ under the stated assumptions. 	
 	Then, observe that by Theorem~\ref{teo:7} we may find $C_1,b>0$  such that
 	\[
 	\abs{(1-\phi)(x)(\Lc^m h_t)(x) }\meg C_1 \frac{\abs{x}_*^\eta}{t^{Q_*/\grado}} \ee^{c\abs{x}_* - b (\abs{x}_*^{\grado}/t )^{1/(\grado-1)}} 
 	\]
 	for every $x\in G$ and for every $t\in (0,\kappa]$, where $\kappa=\sup \supp(\mi)$. By Lemma~\ref{lem:3}, we may then find a constant $C_2>0$ such that
 	\[
 	\abs{(1-\phi)(x)(\Lc^m h_t)(x) }\meg C_2 \frac{\abs{x}_*^\eta}{t^{Q_*/\grado}} \ee^{  - (b/2) (\abs{x}_*^{\grado}/t )^{1/(\grado-1)}} 
 	\]
 	for every $x\in G$ and for every $t\in (0,\kappa]$, so that we may find a constant $C_3>0$ such that
 	\[
 	\abs{(1-\phi)(x)(\Lc^m h_t)(x) }\meg C_3 \frac{t^{\eta/\grado}}{t^{Q_*/\grado}} \ee^{  - (b/3) (\abs{x}_*^{\grado}/t )^{1/(\grado-1)}} =C_3 t^{\eta/\grado} p_{b/3,t}(x)
 	\]
 	for every $x\in G$ and for every $t\in (0,\kappa]$.
 	Consequently,
 	\[
 	\norm{W^{(m)}_t f- f*(\phi (t\Lc)^m h_t)  }_{L^p(\beta)} \meg C_3 t^{\eta/\grado}\norm{f}_{L^p(\beta)} \norm{p_{b/3,t}\Delta_R^{-1/p'}}_{L^1(\beta)}
 	\]
 	by Young's inequality. This gives (i) thanks to Lemma~\ref{lem:3}. Assertion (ii) is proved similarly, using the bound
 	\[
 	\abs{W^{(m)}_t f- f*(\phi (t\Lc)^m h_t)  }  \meg C_3 t^{\eta/\grado} T_{b/3,t} f
 	\]
 	and applying Lemma~\ref{lem:2}.
 \end{proof}
 
 As an application, we prove localization properties for Triebel--Lizorkin spaces (and also for the Besov spaces with $p=q$, which do not fall into the former category when $p=1$ or $p=\infty$ for technical reasons).
 
 In order to properly discuss the localization properties of the Triebel--Lizorkin spaces, we shall need to introduce `lattices', that is, uniformly spaced families of points which are also sufficiently `dense.'
 
 \begin{deff}
 	Take $\delta>0$ and $R\Meg 2$. We say that a family $(x_j)_{j\in J}$ of elements of $G$ is a $(\delta,R)$-lattice if the $B(x_j,\delta)$ are pairwise disjoint, while the $B(x_j,R\delta)$ cover $G$.
 \end{deff}

 It is clear that any maximal $2\delta$-separated family of elements of $G$ is a $(\delta,2)$-lattice, so that existence of $(\delta,R)$-lattices is ensured. It is nonetheless convenient, for technical reasons, to allow for `coarser' lattices (i.e., for the case $R>2$). 
 
 \begin{lem}\label{lem:50}
 	Take $\delta_0>0$ and $R_0\Meg 2$. Then, there is $N\in\N$ such that
 	\[
 	1\meg \sum_{j\in J} \chi_{B(x_j,R\delta)}\meg N
 	\]
 	for every $(\delta,R)$-lattice $(x_j)_{j\in J}$ on $G$ with $\delta\meg \delta_0$ and $2\meg R\meg R_0$. In particular, it is possible to find a partition $J_1,\dots, J_N$ of $J$ such that $d(x_j,x_{j'})\Meg  R\delta$ for every two distinct $j,j'\in J_h$, and for every $h=1,\dots, N$.
 \end{lem}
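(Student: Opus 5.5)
The plan is a volume-counting argument. It uses the local doubling of $\beta$ on balls of radius bounded by a fixed constant, together with left invariance of the distance and the fact that $\beta(xA)=\Delta_L(x)\beta(A)$.

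First I reduce to left Haar measure. The statement concerns only the metric and the lattice, so I may replace $\beta$ by $\beta_L$. For $\beta_L$ every ball satisfies $\beta_L(B(x,r))=\beta_L(B(e,r))$. By the asymptotics $\beta(B(e,r))\asymp r^{Q_*}$ for $r\to 0^+$, these hold for $\beta_L$ as well, since $\Delta_L$ is bounded above and below near $e$. Combined with continuity and positivity of $r\mapsto\beta_L(B(e,r))$ on compact subsets of $(0,\infty)$, this gives a constant $C_0$ with
\[
\frac{\beta_L(B(e,2R_0 r))}{\beta_L(B(e,r))}\meg C_0
\]
for all $r\in(0,\delta_0]$. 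Iterating the doubling a bounded number of times gives the same kind of bound for any fixed ratio.

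The lower bound $1\meg\sum_j\chi_{B(x_j,R\delta)}$ is just the covering property. For the upper bound, fix $x\in G$ and let $J_x=\Set{j\colon x\in B(x_j,R\delta)}$. For $j\in J_x$ the balls $B(x_j,\delta)$ are pairwise disjoint. Each of them is contained in $B(x,(R+1)\delta)\subseteq B(x,2R_0\delta)$. Hence
\[
\card(J_x)\,\beta_L(B(e,\delta))\meg \beta_L(B(e,2R_0\delta)),
\]
so $\card(J_x)\meg C_0$, uniformly in $\delta\meg\delta_0$ and $R\in[2,R_0]$. This gives $N$; I may need to enlarge it for the next step.

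For the partition, I consider the graph on $J$ in which $j\sim j'$ when $j\neq j'$ and $d(x_j,x_{j'})<R\delta$. If $j'\sim j$, then $B(x_{j'},\delta)\subseteq B(x_j,(R+1)\delta)$. By the same disjointness and volume argument, each vertex has degree at most $N'-1$, where
\[
N'\coloneqq \sup_{r\meg\delta_0}\frac{\beta_L(B(e,(R_0+1)r))}{\beta_L(B(e,r))}<\infty.
\]
A graph of degree at most $N'-1$ admits a proper colouring with $N'$ colours by the greedy algorithm. For countable $J$ this can be done by induction. In general one can use Zorn's lemma, or the de Bruijn--Erdős theorem applied to finite subgraphs. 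The colour classes give $J_1,\dots,J_{N'}$ with the required separation, and I take $N=\max(C_0,N')$. Empty classes are allowed.

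The only delicate point is the uniform local doubling for small radii. Here I rely on the known small-scale asymptotics of ball volumes. The restriction $\delta\meg\delta_0$ is exactly what avoids the exponential growth at large scales.
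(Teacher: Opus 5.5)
Your proof is correct and is essentially the paper's argument. The overlap bound is the same volume count for $\beta_L$ using local doubling. Your greedy colouring of the graph $d(x_j,x_{j'})<R\delta$ is a vertex-by-vertex version of the paper's construction of $J_1,\dots,J_N$ as successive maximal $R\delta$-separated subsets. You can even read the degree bound directly off the overlap bound at $x_j$, since $j$ together with its neighbours is exactly the set of $j'$ with $x_j\in B(x_{j'},R\delta)$.

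One minor point: you invoke continuity of $r\mapsto\beta_L(B(e,r))$, which you do not justify, and you do not need it. Monotonicity, finiteness and positivity on compact subintervals of $(0,+\infty)$ already give the uniform doubling bound for $r\le\delta_0$.
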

 
 Notice that every $(\delta,R)$-lattice is also a $(\delta',R')$-lattice for every $\delta'\meg \delta$ and for every $R'\Meg R\delta/\delta'$, so that the above assertion gives information also on the possible overlaps of balls centred at the $x_j$ and of arbitrarily large (but fixed) radii.
 
 \begin{proof}
 	Take $x\in G$, and let $J'_x$ be the set of $j\in J$ such that $x\in B(x_j,R\delta)$. Since the $B(x_j,\delta)$ are pairwise disjoint, we   infer that
 	\[
 	\card(J'_x) \beta_L(B(e,\delta))= \sum_{j\in J'_x} \beta_L(B(x_j,\delta))\meg \beta_L(B(x,(R+1)\delta))\meg\beta_L(B(e,(R_0+1)\delta)).
 	\]
 	Since $\beta_L$ is locally doubling, there is $N\in\N$ such that $\beta_L(B(e,(R_0+1)\delta))\meg N\beta_L(B(e,\delta))$ for every $\delta\in (0,\delta_0]$, so that $\card(J'_x)\meg N$. The first assertion follows.
 	Now, define by induction a sequence $(J_h)$ of subsets of $J$ as follows. Assuming that the $J_{h'}$, $h'<h$, have been constructed, define $J_h$ as a maximal subset of $J\setminus \big(\bigcup_{h'<h} J_{h'}\big)$ such that $d(x_j,x_{j'})\Meg   R\delta$ for every two distinct $j,j'\in J_h$.   Assume by contradiction that there is $j_{N+1}\in J_{N+1}$. By the maximality in the construction of the $J_1,\dots, J_N$, this means that there is $j_h\in J_h$, $h=1,\dots,N$, such that $d(x_{j_{N+1}},x_{j_h})<R\delta$. Consequently, $\sum_{j\in J} \chi_{B(x_j,R\delta)}(x_{j_{N+1}})\Meg N+1$, which is a contradiction. Then, $J_{N+1}=\emptyset$. By the definition of $J_{N+1}$, this means that $J\setminus \big(\bigcup_{h=1}^N J_{h}\big)=\emptyset$, whence the conclusion.
 \end{proof}

 \begin{prop}\label{prop:30}
 	Take $p\in (1,\infty)$, $q\in [1,\infty]$,  and $\alpha\in \R$. Take $\delta>0$, $R\Meg 2$, and a $(\delta,R)$-lattice $(x_j)_{j\in J}$ on $G$.  
 	Take two bounded families $(\phi_j)_{j\in J}$ and $(\psi_j)_{j\in J}$ of elements of $C^\infty_c(G)$. 
 	Then, the continuous linear mappings\footnote{Here, $\Dc'(G)$ denotes the space of distributions on $G$, that is, the continuous dual of the space $C^\infty_c(G)$ of smooth compactly supported functions.}
 	\[
 	\Ic_{(\phi_j)}\colon \Dc'(G)\ni u\mapsto (\phi_j(x_j^{-1}\,\cdot\,) u)\in \Dc'(G)^J
 	\]
 	and
 	\[
 	\Rc_{(\psi_j)}\colon \Dc'(G)^J \ni  (u_j) \mapsto \sum_{j\in J} \psi_j(x_j^{-1}\,\cdot\,) u_j \in \Dc'(G)
 	\]
 	induce continuous linear mappings\footnote{Here, $\ell^q_0$ denotes the closure in $\ell^q$ of the space of sequences with finite support, so that $\ell^q_0=\ell^q$ unless $q=\infty$.}
 	\[
 	\begin{aligned} 
 		F^{p,q}_\alpha(\beta)&\to \ell^p(J;F^{p,q}_\alpha(\beta)), & \mathring F^{p,q}_\alpha(\beta)&\to \ell^p(J;\mathring F^{p,q}_\alpha(\beta)),\\
 		B^{q,q}_\alpha(\beta)&\to \ell^q(J;B^{q,q}_\alpha(\beta)), & \mathring B^{q,q}_\alpha(\beta)&\to \ell^q_0(J;\mathring B^{q,q}_\alpha(\beta)),
 	\end{aligned}
 	\]
 	and
 	\[
 	\begin{aligned}
 		\ell^p(J;F^{p,q}_\alpha(\beta))&\to F^{p,q}_\alpha(\beta),& \ell^p(J;\mathring F^{p,q}_\alpha(\beta))&\to \mathring F^{p,q}_\alpha(\beta),\\
 		\ell^q(J;B^{q,q}_\alpha(\beta))&\to B^{q,q}_\alpha(\beta), &\ell^q_0(J;\mathring B^{q,q}_\alpha(\beta))&\to \mathring B^{q,q}_\alpha(\beta),
 	\end{aligned}
 	\] 
 	respectively.
 	If, in addition, $\sum_j (\phi_j\psi_j)(x_j^{-1}\,\cdot\,)=1$, then $\Rc_{(\psi_j)} \Ic_{(\phi_j)}=I$.
 \end{prop}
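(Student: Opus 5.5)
Since $\Ic_{(\phi_j)}$ and $\Rc_{(\psi_j)}$ are formally adjoint (up to swapping the families), the plan is to prove the two boundedness statements for $\Ic$ and $\Rc$ directly, first for $\alpha$ large, then extend to all $\alpha\in\R$ via duality and the lifting isomorphisms $\Lc_\omega^{-\alpha'}$. The identity $\Rc_{(\psi_j)}\Ic_{(\phi_j)}=I$ when $\sum_j(\phi_j\psi_j)(x_j^{-1}\,\cdot\,)=1$ is immediate from the definitions, since the sums are locally finite by Lemma~\ref{lem:50}. This uses that the supports of the $\phi_j,\psi_j$ lie in a fixed ball $B(e,R'\delta)$, and by the remark after Lemma~\ref{lem:50} the translated supports overlap at most $N$ times.

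\textbf{Case of large $\alpha$.} I would start with $\alpha>0$ and use the localized norms of Proposition~\ref{prop:34}, with a cutoff $\phi\in C^\infty_c(G)$ equal to $1$ near $e$ and $\eta>\alpha$ arbitrary. For $\Ic$, apply Theorem~\ref{teo:6}(ii) with $p_1=p$, $p_2=\infty$, $p_3=p$, $p_4=\infty$, i.e. Remark~\ref{oss:4}. This gives $\norm{\phi_j(x_j^{-1}\,\cdot\,)u}_{F^{p,q}_\alpha}\le C\norm{u}_{F^{p,q}_\alpha}$, but it is not summable in $j$. So I would instead use the localized functional
\[
\norm{u}_{L^p(\beta)}+\norm{t^{m-\alpha/\grado}u*(\phi\Lc^m h_t)}_{L^{q,p}_{t,x}},
\]
in which $u*(\phi\Lc^m h_t)$ on $B(x_j,R'\delta)$ depends only on $u$ on a slightly larger ball $B(x_j,R''\delta)$. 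Then $\sum_j\norm{\phi_j(x_j^{-1}\cdot)u}^p_{F}$ should be bounded by $\sum_j$ of the localized norm of $u$ on $B(x_j,R''\delta)$, plus commutator terms involving derivatives of $\phi_j$ times lower-order pieces of $u$. The finite overlap from Lemma~\ref{lem:50} then sums these to $N\norm{u}^p_{F}$.

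To make the "depends only on $u$ near $x_j$" step rigorous, I would prove a local version of the estimate in Theorem~\ref{teo:6}. In the paraproduct decomposition of Corollary~\ref{lem:28}, replace $h_t$ by $\phi h_t$, with the error controlled as in the proof of Proposition~\ref{prop:34}; the Gaussian tails give $t^{\eta/\grado}T_{b,t}$-type errors, which are harmless. For $\Rc$, I would use the same localized norm. For $x$ in a ball, only $\le N$ indices $j$ contribute to $\bigl(\sum_j\psi_j(x_j^{-1}\cdot)u_j\bigr)*(\phi\Lc^mh_t)(x)$, so Hölder over at most $N$ terms gives a pointwise bound by $N^{1/p'}\bigl(\sum_j|(\psi_ju_j)*(\phi\Lc^mh_t)(x)|^p\bigr)^{1/p}$. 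Integrating and applying the multiplier bound of Remark~\ref{oss:4} to each $\psi_ju_j$ then finishes this case. The Besov case $B^{q,q}$ is identical, since there $L^{q,q}_{t,x}=L^{q,q}_{x,t}$ commutes the sum over $j$ past the $t$-norm.

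\textbf{Extension to all $\alpha$.} For general $\alpha$, including $\alpha\le0$, I would use that $(\Lc+\omega)^{k}$ with $k\in\N$ maps $F^{p,q}_{\alpha+k\grado}$ isomorphically onto $F^{p,q}_\alpha$, so $u=\Lc_\omega^k v$ with $v\in F^{p,q}_{\alpha+k\grado}$. Then, by the Leibniz rule,
\[
\phi_j(x_j^{-1}\cdot)\Lc_\omega^k v=\sum X_{j,i}\bigl[(Y_{j,i}\phi_j)(x_j^{-1}\cdot)v\bigr]
\]
with bounded families of cutoffs. Since left-invariant $X$ of degree $\le k\grado$ map $F_{\alpha+k\grado}\to F_\alpha$ boundedly, this reduces the problem to the large-$\alpha$ case with new bounded families of cutoffs. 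The same argument applies to $\Rc$. Alternatively, for $q>1$, duality (the $\ell^{p'}$–$\ell^p$ pairing and the identification of $F^{p',q'}_{-\alpha}$ as a dual space) swaps $\Ic$ and $\Rc$, but the Leibniz reduction avoids the restriction $q<\infty$. The $\mathring{}$ versions follow by density of $\Sc(G)$ (or of $C_c^\infty(G)$), since $\Ic$ and $\Rc$ preserve compactly supported smooth functions and finitely supported sequences respectively.

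\textbf{Main obstacle.} The main obstacle is the summability in $j$ in the large-$\alpha$ case. It requires a genuinely local version of the multiplier estimate, in which the right-hand side involves only $u$ near $x_j$, rather than simply invoking Remark~\ref{oss:4}. I would try first to obtain it from Proposition~\ref{prop:34} combined with the Leibniz expansion $\Lc^m(\phi u)=\sum(X\phi)(Yu)$ used in Step II of Theorem~\ref{teo:6}.
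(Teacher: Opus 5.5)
Your argument for $\Rc_{(\psi_j)}$ when $\alpha>0$ is essentially the paper's. For $\Ic_{(\phi_j)}$, however, there is a genuine gap. The key bound $\sum_j\norm{\widetilde\phi_j u}^p_{F^{p,q}_\alpha(\beta)}\meg C\norm{u}^p_{F^{p,q}_\alpha(\beta)}$, with $\widetilde\phi_j=\phi_j(x_j^{-1}\,\cdot\,)$, is deferred to a ``local'' version of Theorem~\ref{teo:6} (truncated paraproducts from Corollary~\ref{lem:28} plus commutator terms). You only outline it and yourself flag it as the main obstacle. It is not routine: comparing the localized norm of $\widetilde\phi_j u$ with a localized norm of $u$ near $x_j$ produces commutators $(\widetilde\phi_j u)*\eta_t-\widetilde\phi_j\,(u*\eta_t)$, which in general gain only a fixed power of $t$. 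For general $\alpha$ this would require an induction through lower-order local norms and a truncated re-run of the whole proof of Theorem~\ref{teo:6}. As it stands, the boundedness of $\Ic_{(\phi_j)}$ on $F^{p,q}_\alpha(\beta)$ (and on $B^{q,q}_\alpha(\beta)$) is not established, and your extension of $\Ic_{(\phi_j)}$ to $\alpha\meg 0$ inherits the gap.

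The missing idea is that no local multiplier estimate is needed, only a regrouping of the indices.

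Choose $R'$ with $\supp\phi_j\subseteq B(e,R')$ for all $j$. Use Lemma~\ref{lem:50} to split $J=J_1\cup\dots\cup J_N$ so that $d(x_j,x_{j'})\Meg 2(R'+1)$ for distinct $j,j'$ in the same $J_h$. Use the localized kernels $\eta_t=\eta\,(t\Lc)^m h_t$ of Proposition~\ref{prop:34} with $\supp\eta\subseteq B(e,1/2)$.

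Then, for fixed $h$, the functions $(\widetilde\phi_j u)*\eta_t$, $j\in J_h$, $t\in(0,1]$, are supported in the pairwise disjoint balls $B(x_j,R'+1/2)$, so at each $x$ at most one of them is non-zero. Hence, pointwise in $x$,
$\norm{\norm{t^{-\alpha/\grado}((\widetilde\phi_j u)*\eta_t)(x)}_{L^q_t(\mi_1)}}_{\ell^p_j(J_h)}=\norm{t^{-\alpha/\grado}((\widetilde\phi^{(h)} u)*\eta_t)(x)}_{L^q_t(\mi_1)}$ with $\widetilde\phi^{(h)}\coloneqq\sum_{j\in J_h}\widetilde\phi_j$. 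Exchanging $\ell^p_j$ with $L^p_x$ shows that the $\ell^p(J_h)$-sum of the localized norms of the $\widetilde\phi_j u$ is just the localized norm of the \emph{single} product $\widetilde\phi^{(h)}u$.

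Since $\widetilde\phi^{(h)}$ is a sum of uniformly bounded translates with disjoint supports, it belongs to $W^{\infty,\infty}(\beta)\subseteq B^{\infty,q}_\alpha(\beta)$ (Proposition~\ref{prop:7}). The \emph{global} estimate of Remark~\ref{oss:4} then gives $\norm{\widetilde\phi^{(h)}u}_{F^{p,q}_\alpha(\beta)}\meg C\norm{u}_{F^{p,q}_\alpha(\beta)}$, and summing over the $N$ classes concludes.

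With this in place, your extension to $\alpha\meg 0$ through $u=\Lc_\omega^k v$ and the identity $\phi\,Pv=\sum_i P_i[(Q_i\phi)v]$ is a valid alternative to the paper's route. The paper uses duality with $\mathring F^{p',q'}_{-\alpha}(\beta)$ via \cite[Theorem 9.4]{BCP} and Lemma~\ref{lem:54}, with transposition exchanging $\Ic$ and $\Rc$, and complex interpolation for $\alpha=0$. Your route treats $\alpha=0$ and all $q$ at once. It needs only the lifting isomorphism and the fact that left-invariant operators in $U_{k\grado}$ map $F^{p,q}_{\alpha+k\grado}(\beta)$ boundedly into $F^{p,q}_\alpha(\beta)$. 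But it merely transfers the case $\alpha>0$ and cannot replace it.
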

 
 This extends~\cite[Proposition 3.2]{BPV3} to the case of general filtrations and general $\alpha$. The proof is different and simpler.  
 
 For lack of a reference, we state and prove the following simple result before we pass to the proof. We state it a little more generally than we need for future reference.
 We state a definition first.
 
 \begin{deff}
 	Let $(A_j)_{j\in J}$ be a family of quasi-Banach space, take $p\in (0,\infty]$. We define $\ell^p((A_j))$ as the space of families $(x_j)\in \prod_j A_j$ such that $\left(\sum_{j} \norm{x_j}_{A_j}^p\right)^{1/p}<\infty$ if $p<\infty$, or $\sup_j \norm{x_j}_{A_j}<\infty$ if $p=\infty$, endowed with the corresponding quasi-norm. 
 	We define $\ell^p_0((A_j))$ as the closure of the space of families with finite support in $\ell^p((A_j))$, so that $\ell^p_0((A_j))=\ell^p((A_j))$ if $p<\infty$, while $\ell^\infty_0((A_j))$ is the space of families $(x_j)$ such that $\norm{x_j}\to 0$ along the filter of the complements of finite subsets (of $J$).
 \end{deff}

 \begin{lem}\label{lem:54}
 	Let $(A_j)_{j\in J}$ be a family of quasi-Banach space and take $p\in (0,\infty]$. Then, the bilinear mapping
 	\[
 	B\colon \ell^p((A_j))\times \ell^{p'}((A'_j))\ni ((x_j),(y_j))\mapsto \sum_j x_j y_j\in \C
 	\]
 	is well defined and continuous (with norm $1$), and induces an isometric isomorphism of $\ell^{p'}((A'_j))$ onto the dual of $\ell^p_0((A_j))$.
 \end{lem}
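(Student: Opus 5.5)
The argument runs on the usual lines for $\ell^p$ duality, with the quasi-norm case handled separately since there no Hahn--Banach argument is available. Here $A'_j$ is the (topological) dual of $A_j$ and $x_jy_j$ means $\langle y_j,x_j\rangle$. When $p<1$ the exponent $p'$ is understood as $\infty$, in line with the convention that the dual of $\ell^p$ is $\ell^\infty$ for $p\meg 1$.

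\textbf{Boundedness of $B$.} By definition of the dual quasi-norm, $\abs{\langle y_j,x_j\rangle}\meg \norm{x_j}_{A_j}\norm{y_j}_{A'_j}$. Hölder's inequality (or, for $p\meg 1$, the embedding $\ell^p\subseteq\ell^1$ together with a supremum bound) then gives
\[
\sum_j\abs{\langle y_j,x_j\rangle}\meg \norm{(x_j)}_{\ell^p}\norm{(y_j)}_{\ell^{p'}}.
\]
So the series converges absolutely and $B$ has norm at most $1$. Consequently $(y_j)\mapsto B(\,\cdot\,,(y_j))$ is a contraction $\ell^{p'}((A'_j))\to \ell^p_0((A_j))'$.

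\textbf{Isometry.} Fix $(y_j)$ and $\eps>0$. For each $j$, choose $x_j$ with $\norm{x_j}_{A_j}\meg 1$ and $\langle y_j,x_j\rangle$ real with $\langle y_j,x_j\rangle\Meg(1-\eps)\norm{y_j}$; this is possible from the definition of the dual quasi-norm as a supremum over the unit ball. Next choose scalars $a_j\Meg 0$ with finite support that almost norm $(\norm{y_j})$ in the scalar duality $\ell^p_0$–$\ell^{p'}$. For $p\meg1$, take a single index where $\norm{y_j}$ is nearly maximal. For $1<p<\infty$, take $a_j=\norm{y_j}^{p'-1}$ truncated to a finite set. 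For $p=\infty$, take $a_j=1$ on a finite set. The finite family $(a_jx_j)$ then shows that the functional has norm at least $(1-\eps)$ times $\norm{(y_j)}_{\ell^{p'}}$, which gives the isometry.

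\textbf{Surjectivity.} Let $\Lambda\in\ell^p_0((A_j))'$. Define $y_j\coloneqq\Lambda\circ\iota_j$, where $\iota_j\colon A_j\to\ell^p_0((A_j))$ is the canonical injection. Each $\iota_j$ is isometric and continuous, so $y_j\in A'_j$. On finitely supported families, $\Lambda((x_j))=\sum_j\langle y_j,x_j\rangle$ by linearity. Applying the norming construction of the previous step to finite subsets $F\subseteq J$ gives
\[
\norm{(y_j)_{j\in F}}_{\ell^{p'}}\meg\norm{\Lambda}
\]
uniformly in $F$, hence $(y_j)\in\ell^{p'}((A'_j))$. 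Finitely supported families are dense in $\ell^p_0$, and both $\Lambda$ and $B(\,\cdot\,,(y_j))$ are continuous there, so they coincide. The closure $\ell^p_0$, rather than $\ell^p$, is needed exactly here when $p=\infty$.

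\textbf{Main obstacle.} The only delicate point is the quasi-Banach case, where the $A'_j$ may be trivial and no Hahn--Banach theorem is available. The plan avoids this: every step uses only the definition of $A'_j$ and of its norm as a supremum, and never needs to extend functionals. One must also check that the scalar norming step is valid for $p<1$, where the correct dual is $\ell^\infty$. This holds because $\norm{x}_{\ell^p}\Meg\norm{x}_{\ell^\infty}$ and a single-index family is optimal.
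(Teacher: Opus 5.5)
Your proof is correct and takes essentially the same route as the paper. Boundedness comes from H\"older's inequality via $\ell^p\subseteq\ell^{\max(p,1)}$. You define $y_j$ by restricting the functional to the $j$-th coordinate, and test against finitely supported families $\norm{y_j}^{p'-1}x_j$ (the paper writes the exponent as $p'/p$, which is the same) to get $\norm{(y_j)}_{\ell^{p'}}\le\norm{L}$. The only difference is cosmetic: you prove isometry and surjectivity separately, while the paper gets both at once from that single norm bound combined with the H\"older estimate.
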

 
 \begin{proof}
 	Since clearly $\ell^p((A_j))\subseteq \ell^{\max(p,1)}((A_j))$, the first assertion follows from H\"older's inequality. To complete the proof, it will suffice to take $L\in (\ell^p_0((A_j)))'$ and to show that $L=B(\,\cdot\,,(y_j))$ for a unique $(y_j)\in \ell^{p'}((A'_j))$, and that $\norm{(y_j)}_{\ell^{p'}((A'_j))}\meg \norm{L}_{(\ell^p_0((A_j)))'}$. To this aim, we define $y_j\in A'_j$ so that $\langle y_j,x \rangle\coloneqq L( x e_j)$ for every $x\in A_j$ and for every $j\in J$, where $e_j(j')=\delta_{j,j'}$ for every $j,j'\in J$. Since $\abs{\langle y_j,x \rangle}\meg \norm{L}_{(\ell^p_0((A_j)))'}\norm{x}_{A_j}$ for every $x\in A_j$, we see that $y_j\in A_j'$ and that $\norm{y_j}_{A_j'}\meg \norm{L}_{(\ell^p_0((A_j)))'}$. In addition, clearly $L((x_j))=\sum_j  L(x_j e_j)=\sum_j \langle y_j, x_j\rangle=B((y_j),(x_j))$ for every $(x_j)$ with finite support, so that we only need to show that $\norm{(y_j)}_{\ell^{p'}((A_j'))}\meg \norm{L}_{(\ell^p_0((A_j)))'}$.
 	If $p\meg 1$, this follows from the previous estimates. If $p>1$, then take $\eta\in (0,1)$ and $x_{j,\eta}\in A_j$ such that $\norm{x_{j,\eta}}_{A_j}\meg 1$ and $\langle y_j, x_{j,\eta}\rangle\Meg \eta\norm{y_j}_{A'_j}$. Take a finite subset $J'$ of $J$, and define $x'_{j,\eta,J'}=\chi_{J'}(j) \norm{y_j}_{A'_j}^{p'/p} x_{j,\eta}$ for every $j\in J$, so that $\norm{(x'_{j,\eta,J'})}_{\ell^{p}((A_j))}\meg \norm{(\chi_{J'}(j)y_{j})}_{\ell^{p'}((A'_j))}^{p'/p}$ and $B((y_j),(x'_{j,\eta,J'}))\Meg \eta \sum_{j\in J'} \norm{y_j}_{A'_j}^{p'/p+1}=\eta\sum_{ j\in J' }\norm{y_j}_{A'_j}^{p'}$. Hence, $\norm{L}_{(\ell^p_0((A_j)))'}\Meg \eta\big( \sum_{ j\in J' }\norm{y_j}_{A'_j}^{p'}\big) ^{1/p'}$, whence the conclusion by the arbitrariness of $J'$ and $\eta$.
 \end{proof}

 \begin{proof}[Proof of Proposition~\ref{prop:30}.]
 	The last assertion is obvious. We prove  the remaining assertions only for Triebel--Lizorkin spaces. 
 	In order to simplify the notation, we set $\widetilde \phi_j\coloneqq \phi_j(x_j^{-1}\,\cdot\,)$ and $\widetilde \psi_j\coloneqq \psi_j(x_j^{-1}\,\cdot\,)$ for every $j\in \N$. 
 	
 	\textsc{Step I} Assume first that $\alpha>0$. Fix $m\coloneqq [\alpha/\grado]+1$ and take $\eta\in C^\infty_c(G)$ such that $\chi_{B(e,1/4)}\meg \eta \meg \chi_{B(e,1/2)}$, so that Proposition~\ref{prop:34} shows that there is a constant $C_1>0$ such that
 	\[
 	\frac{1}{C_1}\norm{f}_{F^{p,q}_\alpha(\beta)}\meg \norm{f}_{L^p(\beta)}+ \norm*{ \norm{ t^{-\alpha/\grado} (f*\eta_t )(x)  }_{L^q_t(\mi_1)}  }_{L^p_x(\beta)}\meg C_1 \norm{f}_{F^{p,q}_\alpha(\beta)}
 	\]
 	for every $f\in \Sc'(G)$, where $\eta_t=\eta (t \Lc)^m h_t$ for every $t\in (0,1]$. In addition, take $R' $  so large that the $\phi_j$ and the $\psi_j$ are supported in $B(e,R' )$, and observe that by Lemma~\ref{lem:50} there is a partition $J_1,\dots, J_N$ of $J$ such that $d(x_j,x_{j'})\Meg 2(R'+1)$ for every $j,j'\in J_h$, $j\neq j'$, and for every $h=1,\dots, N$. 
 	Then, observe that
 	\[
 	\begin{split}
 		\norm*{\norm{\widetilde \phi_j f}_{L^p(\beta)}}_{\ell^p_j(J)}&=\norm*{\norm{\widetilde \phi_j f}_{\ell^p_j(J)}}_{L^p(\beta)}\\
 			&\meg \sup_{j} \norm{\phi_j}_{L^\infty(\beta)} N^{1/p}\norm{f}_{L^p(\beta)}
 	\end{split}
 	\]
 	and that
 	\[
 	\begin{split}
 		\norm*{\norm*{ \norm{ t^{-\alpha/\grado} ((\widetilde \phi_j f)*\eta_t )(x)  }_{L^q_t(\mi_1)}  }_{L^p_x(\beta)}}_{\ell^p_j(J)}&\meg \sum_{h=1}^N\norm*{\norm*{ \norm{ t^{-\alpha/\grado} ((\widetilde \phi_j f)*\eta_t )(x)  }_{L^q_t(\mi_1)}  }_{L^p_x(\beta)}}_{\ell^p_j(J_h)} \\
 		&= \sum_{h=1}^N \norm*{\norm*{ \norm{ t^{-\alpha/\grado} ((\widetilde \phi_j f)*\eta_t )(x)  }_{L^q_t(\mi_1)}}_{\ell^p_j(J_h)}  }_{L^p_x(\beta)}\\
 		&=\sum_{h=1}^N\norm*{\norm*{ \norm{ t^{-\alpha/\grado} ((\widetilde \phi_j f)*\eta_t )(x)  }_{L^q_t(\mi_1)}}_{\ell^q_j(J_h)}  }_{L^p_x(\beta)}\\
 		&=\sum_{h=1}^N\norm*{\norm*{ \norm{ t^{-\alpha/\grado} ((\widetilde \phi_j f)*\eta_t )(x)  }_{\ell^q_j(J_h)} }_{L^q_t(\mi_1)} }_{L^p_x(\beta)}\\
 		&=\sum_{h=1}^N\norm*{\norm*{  t^{-\alpha/\grado} \sum_{j\in J_h}((\widetilde \phi_j f)*\eta_t )(x)  }_{L^q_t(\mi_1)} }_{L^p_x(\beta)}\\
 		&\meg C_1\sum_{h=1}^N\norm*{  \sum_{j\in J_h}\widetilde \phi_j  f }_{F^{p,q}_\alpha (\beta)}
 	\end{split}
 	\]
 	where the second and the fourth equality follow from the fact that, for every $x\in G$ and for every $h=1,\dots,N$, there is at most one $j\in J_h$ such that $ ((\widetilde \phi_j f)*\eta_t )(x)\neq0 $ for some $t\in (0,1]$. The continuity of $\Ic_{(\phi_j)}\colon F^{p,q}_\alpha(\beta)\to \ell^p(J; F^{p,q}_\alpha(\beta))$ then follows from Remark~\ref{oss:4} and Proposition~\ref{prop:7}, since $\sum_{j\in J_h}\widetilde \phi_j $ clearly belongs to $W^{\infty,\infty}(\beta)$ for every $h=1,\dots, N$. 
 	Since  $\Ic_{(\phi_j)}(C^\infty_c(G))\subseteq C^\infty_c(G)^{(J)}$, it is also clear that $\Ic_{(\phi_j)}$ maps $\mathring F^{p,q}_\alpha(\beta)$ into $ \ell^p(J; \mathring F^{p,q}_\alpha(\beta))$ continuously.
 	
 	Next, take $(f_j)\in  \ell^p(J;   F^{p,q}_\alpha(\beta))$, set $f\coloneqq \sum_j \widetilde \psi_j f_j$, and observe that
 	\[
 	\begin{split}
 		\norm{f}_{L^p(\beta)}&\meg N^{1/p'}\norm*{\norm{(\widetilde \psi_j f_j)(x) }_{\ell^p_j(J)}   }_{L^p_x(\beta)}\\
 			&\meg N^{1/p'} \sup_j \norm{\psi_j}_{L^\infty(\beta)} \norm*{\norm{  f_j(x) }_{\ell^p_j(J)}   }_{L^p_x(\beta)}\\
 			&=N^{1/p'} \sup_j \norm{\psi_j}_{L^\infty(\beta)} \norm*{\norm{  f_j }_{L^p (\beta)} }_{\ell^p_j(J)}  
 	\end{split}
 	\]
 	and that 
 	\[
 	\begin{split}
 		\norm*{ \norm{ t^{-\alpha/\grado}  (f*\eta_t )(x)  }_{L^q_t(\mi_1)}  }_{L^p_x(\beta)}&\meg \norm*{\sum_j \norm{ t^{-\alpha/\grado}  [(\widetilde \psi_j f_j)*\eta_t](x)  }_{L^q_t(\mi_1)}  }_{L^p_x(\beta)}\\
 		&\meg N^{1/p'}\norm*{\norm*{ \norm{ t^{-\alpha/\grado}  [(\widetilde \psi_j f_j)*\eta_t ](x)  }_{L^q_t(\mi_1)}  }_{\ell^p_j(J)}}_{L^p_x(\beta)}\\
 		&= N^{1/p'}\norm*{\norm*{ \norm{ t^{-\alpha/\grado}  [(\widetilde \psi_j f_j)*\eta_t ](x)  }_{L^q_t(\mi_1)}  }_{L^p_x(\beta)}}_{\ell^p_j(J)}\\
 		&\meg C_1 N^{1/p'}\norm*{\norm{  \widetilde \psi_j f_j  }_{F^{p,q}_\alpha(\beta)}}_{\ell^p_j(J)}.
 	\end{split}
 	\]
 	The continuity of $\Rc_{(\psi_j)}\colon \ell^p(J; F^{p,q}_\alpha(\beta))\to  F^{p,q}_\alpha(\beta) $ then follows again from Remark~\ref{oss:4} and Proposition~\ref{prop:7}, since the $\widetilde \psi_j$ are uniformly bounded in $W^{\infty,\infty}(\beta)$. Since  $\Rc_{(\psi_j)}(C^\infty_c(G)^{(J)})\subseteq C^\infty_c(G)$, it is also clear that $\Rc_{(\psi_j)}$ maps $\ell^p(J; \mathring F^{p,q}_\alpha(\beta))$ into $  \mathring F^{p,q}_\alpha(\beta) $ continuously.

 	\textsc{Step II} Take $\alpha<0$. Observe that $F^{p,q}_\alpha(\beta)=\mathring F^{p',q'}_{-\alpha}(\beta)'$ and that $\ell^p(F^{p,q}_\alpha(\beta))=\ell^{p'}(\mathring F^{p',q'}_{-\alpha}(\beta))'$ canonically, thanks to~\cite[Theorem 9.4]{BCP} and Lemma~\ref{lem:54}. Since $\trasp \Ic_{(\psi_j)}=\Rc_{(\psi_j)}$ and $\trasp \Rc_{(\phi_j)}=\Ic_{(\phi_j)}$ (with respect to the aforementioned canonical isomorphisms), by means of (and arguing as in)~\textsc{step I} we see  that the assertion holds for $\alpha<0$.
 	
 	\textsc{Step III} The continuity of $\Ic_{(\phi_j)}\colon \mathring F^{p,q}_0(\beta)\to \ell^p(J; \mathring F^{p,q}_0(\beta))$ and $\Rc_{(\psi_j)}\colon \ell^p(J; \mathring F^{p,q}_0(\beta))\to \mathring F^{p,q}_0(\beta)$ follows from~\textsc{steps I} and~\textsc{II} by complex interpolation, thanks to~\cite[Remark 10.2]{BCP}. By duality, as in~\textsc{step II}, one may then show the continuity of $\Ic_{(\phi_j)}\colon   F^{p,q}_0(\beta)\to \ell^p(J;   F^{p,q}_0(\beta))$ and $\Rc_{(\psi_j)}\colon \ell^p(J;   F^{p,q}_0(\beta))\to   F^{p,q}_0(\beta)$.	
 \end{proof}

 \section{Pointwise Multipliers of Triebel--Lizorkin Spaces}\label{sec:5}
 
 In this section we characterize the space of pointwise multipliers of the Triebel--Lizorkin space $F^{p,q}_\alpha(\beta)$ for $\alpha>Q_*/p$. Since the spaces $F^{p,q}_\alpha(\beta)$ `localize' well (cf.~Proposition~\ref{prop:30}), the corresponding space of pointwise multipliers turns out to be the space of tempered distributions which stay locally in $F^{p,q}_\alpha(\beta_L)$ with uniformly bounded norms.
 We begin with a technical lemma which is necessary to define the space of such distributions.
 
 \begin{lem}\label{lem:51}
 	Take $p\in (1,\infty)$, $q\in [1,\infty]$, and $\alpha>0$. Take two   $\eta_1,\eta_2\in C^\infty_c(G)$ with $\eta_2\neq 0$.  
 	Then, there is a constant $C>0$ such that 
 	\[
 	\norm{\eta_1(x\,\cdot\,) f}_{F^{p,q}_\alpha(\beta)}\meg C  \norm{f}_{F^{p,q}_\alpha(\beta)}
 	\]
 	for every $x\in G$ and for every $f\in \Sc'(G)$, and such that
 	\[
 	\sup_{x\in G}  \norm{\eta_1(x\,\cdot\,) f}_{F^{p,q}_\alpha(\beta)}\meg C\sup_{x\in G}  \norm{\eta_2(x\,\cdot\,) f}_{F^{p,q}_\alpha(\beta)} 
 	\]
 	for every $f\in \Sc'(G)$.
 \end{lem}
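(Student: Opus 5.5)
The first estimate is a uniform pointwise-multiplier bound, and I will derive it from Remark~\ref{oss:4} together with Proposition~\ref{prop:7}. By Remark~\ref{oss:4}, multiplication by $g$ maps $F^{p,q}_\alpha(\beta)$ into itself with operator norm at most a constant times $\norm{g}_{B^{\infty,q}_\alpha(\beta)}$; this uses $\alpha>0$ and $p\in(1,\infty)$. By Proposition~\ref{prop:7}(iii), $W^{\alpha',\infty}(\beta)\subseteq B^{\infty,q}_\alpha(\beta)$ continuously for some $\alpha'$ with $([\alpha/\grado]+1)\grado\meg\alpha'$. So it suffices to bound $\norm{\eta_1(x\,\cdot\,)}_{W^{\alpha',\infty}(\beta)}$ uniformly in $x$. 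This is immediate from left invariance: for $X\in U(G)$ one has $X[\eta_1(x\,\cdot\,)]=(X\eta_1)(x\,\cdot\,)$, so $\norm{X[\eta_1(x\,\cdot\,)]}_{L^\infty}=\norm{X\eta_1}_{L^\infty}$, which is independent of $x$. This gives the first inequality.

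For the second inequality I will cover $\supp\eta_1$ by translates of a set where $\eta_2$ does not vanish. Since $\eta_2\neq0$, there are $y_0\in G$ and $r>0$ with $\abs{\eta_2}\Meg c>0$ on $B(y_0,2r)$. Replacing $\eta_2$ by $\eta_2(y_0\,\cdot\,)$ (harmless, because the supremum runs over all $x\in G$), I may assume $y_0=e$. Because $\supp\eta_1$ is compact, I will choose finitely many points $z_1,\dots,z_n$ with $\supp\eta_1\subseteq\bigcup_k z_k B(e,r)$. Next take $\theta\in C^\infty_c(B(e,2r))$ with $\theta=1$ on $B(e,r)$, and set $\zeta\coloneqq\theta/\eta_2\in C^\infty_c(G)$, so that $\theta=\zeta\eta_2$. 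Then $\sum_k\theta(z_k^{-1}\,\cdot\,)\Meg1$ on $\supp\eta_1$. Defining $\chi_k\coloneqq\eta_1\,\theta(z_k^{-1}\,\cdot\,)/\sum_{k'}\theta(z_{k'}^{-1}\,\cdot\,)$ gives $\chi_k\in C^\infty_c(G)$ with $\eta_1=\sum_k\chi_k$. The denominator is smooth and $\Meg1$ near $\supp\eta_1$; to make the quotient globally defined I will replace the denominator by a function equal to it near $\supp\eta_1$ and bounded below by $1$ everywhere.

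Each summand then factors as
\[
\chi_k=\omega_k\,\eta_2(z_k^{-1}\,\cdot\,),\qquad \omega_k\coloneqq \chi_k\,\zeta(z_k^{-1}\,\cdot\,)/\theta(z_k^{-1}\,\cdot\,),
\]
with $\omega_k$ smooth and compactly supported. To justify this cleanly, I will instead define $\chi_k\coloneqq\eta_1\,\theta'(z_k^{-1}\,\cdot\,)/D$, where $\theta'\in C^\infty_c(B(e,r))$ and the cover is chosen so that the $\theta'(z_k^{-1}\,\cdot\,)$ already sum to something positive on $\supp\eta_1$. Then $\theta=1$ on $\supp\theta'$, and $\omega_k=\eta_1\,\theta'(z_k^{-1}\,\cdot\,)\,\zeta(z_k^{-1}\,\cdot\,)/D$ is smooth. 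Translating, $\eta_1(x\,\cdot\,)f=\sum_k\omega_k(x\,\cdot\,)\,\eta_2(z_k^{-1}x\,\cdot\,)f$. The first part of the lemma, applied with $\omega_k$ in place of $\eta_1$, then yields
\[
\norm{\eta_1(x\,\cdot\,)f}_{F^{p,q}_\alpha(\beta)}\meg C\sum_k\norm{\eta_2(z_k^{-1}x\,\cdot\,)f}_{F^{p,q}_\alpha(\beta)}\meg Cn\sup_{y\in G}\norm{\eta_2(y\,\cdot\,)f}_{F^{p,q}_\alpha(\beta)}.
\]

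The main technical point is the first step: checking that Remark~\ref{oss:4} really gives a bound in terms of the $B^{\infty,q}_\alpha$ norm for arbitrary $f\in\Sc'(G)$ and not only on some smaller class, and that the $W^{\alpha',\infty}$ embedding of Proposition~\ref{prop:7} applies with the stated $\alpha'$. If $\norm{f}_{F^{p,q}_\alpha}=\infty$ the inequality is trivial, so I only need $f\in F^{p,q}_\alpha(\beta)\subseteq L^p(\beta)$, which is the setting of Theorem~\ref{teo:6}. The remaining steps are elementary partition-of-unity bookkeeping.
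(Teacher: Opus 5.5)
Your proposal is correct and follows the paper's proof. The first bound comes from Remark~\ref{oss:4} together with the uniform boundedness of the translates $\eta_1(x\,\cdot\,)$ in $B^{\infty,q}_\alpha(\beta)$, which you justify explicitly via Proposition~\ref{prop:7} and left invariance. The second bound comes from writing $\eta_1$ as a finite sum of compactly supported smooth multiples of left translates of $\eta_2$ and applying the first bound to each term. The paper's preliminary reduction to $\beta=\beta_L$ via Proposition~\ref{prop:5} is harmless but unnecessary in your version, since Remark~\ref{oss:4} and Proposition~\ref{prop:7} are stated for general $\beta$.
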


 \begin{proof} 
 	Notice first that we may reduce to the case $\beta=\beta_L$, thanks to Proposition~\ref{prop:5}.	
 	
 	\textsc{Step I}	The first assertion follows easily from Remark~\ref{oss:4}, since the $\eta_1(x\,\cdot\,)$ are uniformly bounded in $B^{\infty,q}_\alpha(\beta)$.
 	
 	\textsc{Step II}  Observe that there are $c>0$, $x_0\in G$, and $r>0$ such that $\abs{\eta_2(x)}\Meg c$ for every $x\in B(x_0,r)$. In addition, there is a finite subset $J$ of $G$ such that $\supp{\eta_1}\subseteq J B(x_0,r)$. Consequently, there is a family $(\psi_j)_{j\in J}$ of elements of $C^\infty_c(G)$ such that $\eta_1=\sum_{j\in J} \psi_j \eta_2(j^{-1}\,\cdot\,)$. Then, by~\textsc{step I} we see that there is a constant $C_2>0$ such that
 	\[
 	\norm{\eta_1(x\,\cdot\,) f}_{F^{p,q}_\alpha(\beta)}\meg C_2\sum_{j\in J} \norm{\eta_2(j^{-1}x\,\cdot\,) f}_{F^{p,q}_\alpha(\beta)}\meg C_2 \card(J) \sup_{x'} \norm{\eta_2(x'\,\cdot\,) f}_{F^{p,q}_\alpha(\beta)}
 	\]
 	for every $x\in G$.  
 	The assertion follows. 
 \end{proof}

 \begin{deff}
 	Take $p\in (1,\infty)$, $q\in [1,\infty]$, and $\alpha>0$. We define $F^{p,q}_{\alpha,\unif}(\beta)$ as the space of $f\in \Sc'(G)$ such that $\sup_{x\in G} \norm{\eta(x\,\cdot\,) f}_{F^{p,q}_\alpha(\beta)}$ is finite 
 	for some non-zero $\eta\in C^\infty_c(G)$, endowed with the corresponding topology.
 \end{deff}

 \begin{deff}
 	Take $p\in (1,\infty)$, $q\in [1,\infty]$, and $\alpha\in \R$. We define $\Mc( F^{p,q}_\alpha(\beta))$ as the space of $u\in \Sc'(G)$ such that the mapping  $\Sc(G)\ni f\mapsto u f\in \Sc'(G)$ induces a continuous linear mapping $\mathring F^{p,q}_\alpha(\beta)\to F^{p,q}_\alpha(\beta)$, endowed with the corresponding topology.
 \end{deff}
 
 \begin{teo}\label{teo:14}
 	Take $p\in (1,\infty)$, $q\in [1,\infty]$, and  $\alpha>Q_*/p$. Then, $\Mc(F^{p,q}_\alpha(\beta))= F^{p,q}_{\alpha,\unif}(\beta_L)$. In addition, the canonical   bilinear mapping\footnote{Notice that $F^{p,q}_\alpha(\beta)\subseteq L^p(\beta)$, and that $F^{p,q}_{\alpha,\unif}(\beta_L)\subseteq L^\infty (\beta)$ by Proposition~\ref{prop:2}, so that $u f$ may be defined unambiguously as an element of $L^p (\beta)$.}
 	\[
 	F^{p,q}_\alpha(\beta)\times F^{p,q}_{\alpha,\unif}(\beta_L)\ni (f,u)\mapsto u f\in F^{p,q}_\alpha(\beta)
 	\] 
 	is continuous.
 \end{teo}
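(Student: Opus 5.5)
The proof has two inclusions plus the continuity claim, and the main tool is the localization result, Proposition~\ref{prop:30}. By Proposition~\ref{prop:5}, $F^{p,q}_\alpha(\beta)=\Delta_L^{-1/p}F^{p,q}_\alpha(\beta_L)$. Multiplication by $u$ commutes with multiplication by $\Delta_L^{-1/p}$, so $\Mc(F^{p,q}_\alpha(\beta))=\Mc(F^{p,q}_\alpha(\beta_L))$. We may therefore assume $\beta=\beta_L$. Fix a $(\delta,2)$-lattice $(x_j)_{j\in J}$ with $\delta$ small, and functions $\phi\in C^\infty_c(G)$ and $\psi\in C^\infty_c(G)$ such that $\sum_j \phi(x_j^{-1}\,\cdot\,)=1$ and $\psi=1$ on $\supp\phi$. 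For instance, take $\phi_j=\theta(x_j^{-1}\cdot)/\sum_k\theta(x_k^{-1}\cdot)$. The denominator is bounded above and below, and its translates are uniformly bounded in $C^\infty$ by left-invariance, so these $\phi_j$ form a bounded family in the sense of Proposition~\ref{prop:30} (with $\phi$ replaced by $\phi_j$).

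\emph{The inclusion $F^{p,q}_{\alpha,\unif}(\beta_L)\subseteq\Mc$, with the continuity estimate.} Take $f\in F^{p,q}_\alpha(\beta_L)$ and $u\in F^{p,q}_{\alpha,\unif}(\beta_L)$. Write
\[
uf=\sum_j \bigl(\psi(x_j^{-1}\,\cdot\,)u\bigr)\bigl(\phi_j(x_j^{-1}\,\cdot\,)f\bigr).
\]
Since $\alpha>Q_*/p$, Corollary~\ref{cor:20} gives that $F^{p,q}_\alpha(\beta_L)$ is an algebra. Hence each term has $F^{p,q}_\alpha$-norm at most $C\norm{\psi(x_j^{-1}\cdot)u}_{F^{p,q}_\alpha}\norm{\phi_j(x_j^{-1}\cdot) f}_{F^{p,q}_\alpha}$. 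By Lemma~\ref{lem:51}, the first factor is bounded by $C\norm{u}_{F^{p,q}_{\alpha,\unif}}$. Next we apply $\Rc_{(\tilde\psi)}$ with some $\tilde\psi\in C^\infty_c$ equal to $1$ on $\supp\psi\phi$, so that the sum equals $\Rc_{(\tilde\psi)}$ applied to the family of products. Together with $\Ic_{(\phi_j)}$ from Proposition~\ref{prop:30}, this gives
\[
\norm{uf}_{F^{p,q}_\alpha}\meg C\norm{u}_{\unif}\,\norm{(\phi_j(x_j^{-1}\cdot)f)_j}_{\ell^p(F^{p,q}_\alpha)}\meg C'\norm{u}_{\unif}\norm{f}_{F^{p,q}_\alpha}.
\]
The sum is first taken over finite subfamilies of $J$, and one passes to the limit in $\Sc'(G)$ using $uf\in L^p$, which holds because $u\in L^\infty$ by Proposition~\ref{prop:2}. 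This proves the bilinear continuity, and in particular $u\in\Mc$.

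\emph{The converse inclusion.} Let $u\in\Mc(F^{p,q}_\alpha(\beta_L))$ and fix $\eta\in C^\infty_c(G)$, $\eta\neq 0$. Choose $\eta'\in C^\infty_c(G)$ with $\eta'=1$ on $\supp\eta$. For each $x\in G$,
\[
\eta(x\,\cdot\,)u=u\cdot\eta(x\,\cdot\,)\eta'(x\,\cdot\,)=u\cdot\eta'(x\,\cdot\,)\cdot\eta(x\,\cdot\,).
\]
The functions $\eta(x\,\cdot\,)$ belong to $\Sc(G)$. Because $\beta_L$ is left invariant and the spaces defined through $\Lc$ are left-translation invariant (convolution on the right commutes with left translation), their norms $\norm{\eta(x\,\cdot\,)}_{F^{p,q}_\alpha(\beta_L)}=\norm{\eta}_{F^{p,q}_\alpha(\beta_L)}$ are independent of $x$. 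Hence
\[
\norm{\eta(x\,\cdot\,)u}_{F^{p,q}_\alpha(\beta_L)}\meg\norm{u}_{\Mc}\,\norm{\eta}_{F^{p,q}_\alpha(\beta_L)}
\]
uniformly in $x$, so $u\in F^{p,q}_{\alpha,\unif}(\beta_L)$, with continuous inclusion. Some care is needed to interpret $\eta(x\cdot)u$ as $u$ applied to an element of $\Sc(G)$, which is exactly what the definition of $\Mc$ controls. The multiplier space is defined with $\mathring F$ as the domain, but this causes no problem since $\eta(x\cdot)\in\Sc(G)\subseteq\mathring F$.

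\emph{Main obstacle.} The delicate step is the first inclusion: justifying the decomposition and the summation in $\Sc'(G)$, and checking that the constants in Proposition~\ref{prop:30} and Lemma~\ref{lem:51} are uniform over the lattice. I would handle convergence by working first with $f\in\Sc(G)$, where only finitely many lattice points meet $\supp f$ up to rapidly decaying tails. In fact $f$ need not be compactly supported, so I would rather take $f\in C^\infty_c(G)$, which is dense by \cite{BCP} when $q<\infty$, and conclude by density. For $q=\infty$, I would use Fatou-type lower semicontinuity of the norm under $\Sc'$ convergence, applied to partial sums over finite subsets of $J$.
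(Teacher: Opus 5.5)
Your proposal is correct and follows essentially the same route as the paper. The paper reduces to $\beta=\beta_L$ via Proposition~\ref{prop:5}, then proves one inclusion by writing $uf=\sum_j \widetilde\phi_j f\,\psi(x_j^{-1}\,\cdot\,)u$ and combining Proposition~\ref{prop:30} with the algebra property of Corollary~\ref{cor:20}, and proves the other by left-invariance of $\norm{\,\cdot\,}_{F^{p,q}_\alpha(\beta_L)}$ applied to $\eta(x\,\cdot\,)\in C^\infty_c(G)\subseteq \mathring F^{p,q}_\alpha(\beta_L)$. The density/Fatou discussion in your last paragraph is unnecessary, because the sum is locally finite in $\Dc'(G)$ and Proposition~\ref{prop:30} already applies to every element of $F^{p,q}_\alpha(\beta_L)$.
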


 This result extends~\cite[Theorem 1]{BPV3} to the case of general filtrations.
 
 \begin{proof}
 	By Proposition~\ref{prop:5}, we may reduce to the case $\beta=\beta_L$.
 	Take a $(1,2)$-lattice $(x_j)_{j\in J}$ on $G$ and a bounded family $(\phi_j)_{j\in J}$ of elements of $C^\infty_c(G)$ such that $\sum_j \widetilde \phi_j=1$, where $\widetilde \phi_j=\phi_j(x_j^{-1}\,\cdot\,)$ for every $j\in J$. In addition, take $\psi\in C^\infty_c(G)$ such that $\psi=1$ on $B(e,R+1)$, where $R>0$ is chosen so that  the $\phi_j$ are supported in $B(e,R)$.
 	Observe that, by Proposition~\ref{prop:30} and Corollary~\ref{cor:20}, there is a constant $C_1>1$ such that
 	\[
 	\begin{split}
 	\norm{f g}_{F^{p,q}_\alpha(\beta)}&\meg C_1 \norm*{\norm{ \widetilde \phi_j f g}_{F^{p,q}_\alpha(\beta)}}_{\ell^p_j(J)}\\
 		&\meg C_1 \norm*{\norm{ \widetilde \phi_j f \psi(x_j^{-1}\,\cdot\,) g}_{F^{p,q}_\alpha(\beta)}}_{\ell^p_j(J)}\\
 		&\meg C_1^2\norm*{\norm{ \widetilde \phi_j f}_{F^{p,q}_\alpha(\beta)} \norm{\psi(x_j^{-1}\,\cdot\,)g}_{F^{p,q}_\alpha(\beta)}}_{\ell^p_j(J)}\\
 		&\meg C_1^2 \norm*{\norm{ \widetilde \phi_j f}_{F^{p,q}_\alpha(\beta)}}_{\ell^p_j(J)}\sup_j \norm{\psi(x_j^{-1}\,\cdot\,)g}_{F^{p,q}_\alpha(\beta)}\\
 		&\meg C_1^3 \norm{f}_{F^{p,q}_\alpha(\beta)} \norm{g}_{F^{p,q}_{\alpha,\unif}(\beta)},
 	\end{split}
 	\]
 	whence the last assertion and the continuous inclusion $F^{p,q}_{\alpha,\unif}(\beta)\subseteq \Mc(F^{p,q}_\alpha(\beta))$. For what concerns the converse inclusion, take   $u\in \Mc(F^{p,q}_\alpha(\beta))$ and a non-zero $\phi\in C^\infty_c(G)$. Then,   for every $x\in G$,
 	\[
 	\begin{split}
 		\norm{\phi(x\,\cdot\,) u  }_{F^{p,q}_\alpha(\beta)}& \meg  \norm{u}_{\Mc(F^{p,q}_\alpha(\beta))} \norm{\phi(x\,\cdot\,)  }_{F^{p,q}_\alpha(\beta)}\\
 		&=\norm{u}_{\Mc(F^{p,q}_\alpha(\beta))}   \norm{ \phi   }_{F^{p,q}_\alpha(\beta)},
 	\end{split}
 	\]
 	whence the conclusion.
 \end{proof}

\section{Pointwise Multipliers of Besov Spaces}\label{sec:6}

The space of pointwise multipliers of the Besov space $B^{p,q}_\alpha(\beta)$ looks like its counterpart for the Triebel--Lizorkin space $F^{p,q}_\alpha(\beta)$ only when $p\meg q$. When $p=\infty$, it is particularly simple to describe, since it coincides with the space $B^{\infty,q}_\alpha(\beta)$ itself (for $\alpha>0$). For $p<q$, we may only provide a slightly simplified description, showing that one may `get away' with testing the continuity of pointwise multiplication on a relatively simple subspace.

\begin{deff}
	Take $p,q\in [1,\infty]$ and $\alpha\in \R$. We define $\Mc(B^{p,q}_\alpha(\beta))$ as the space of $u\in \Sc'(G)$ such that the mapping $\Sc(G)\ni f \mapsto u f\in \Sc'(G)$ induces a continuous linear mapping $\mathring B^{p,q}_\alpha(\beta)\to B^{p,q}_\alpha(\beta)$, endowed with the corresponding topology.
\end{deff}

\begin{prop}\label{prop:21}
	Take $q\in [1,\infty]$ and $\alpha> 0$. Then, $\Mc(B^{\infty,q}_\alpha(\beta))=B^{\infty,q}_\alpha(\beta)$.
\end{prop}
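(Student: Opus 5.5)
We prove the two inclusions separately, in both directions reducing to the algebra property of Theorem~\ref{teo:6} with $p=p_1=p_2=p_3=p_4=\infty$, which is allowed in part (i).

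For the inclusion $B^{\infty,q}_\alpha(\beta)\subseteq \Mc(B^{\infty,q}_\alpha(\beta))$, we first recall that $B^{\infty,q}_\alpha(\beta)\subseteq L^\infty(\beta)$ continuously for $\alpha>0$ (cf.~\cite[Proposition 6.9]{BCP} and Proposition~\ref{prop:2}). Hence Theorem~\ref{teo:6}(i), applied with all exponents equal to $\infty$, gives
\[
\norm{uf}_{B^{\infty,q}_\alpha(\beta)}\meg 2C\norm{u}_{B^{\infty,q}_\alpha(\beta)}\norm{f}_{B^{\infty,q}_\alpha(\beta)}
\]
for $u,f\in B^{\infty,q}_\alpha(\beta)$. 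In particular this holds for $f\in\Sc(G)$, so the multiplication map extends continuously from $\mathring B^{\infty,q}_\alpha(\beta)$. We also need the product $uf$ in $\Sc'(G)$ to agree with the pointwise product of functions. This holds because $u\in L^\infty$ and $f\in \Sc(G)$.

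For the converse inclusion, take $u\in \Mc(B^{\infty,q}_\alpha(\beta))$. The natural test function would be the constant $1$, but $1\notin\mathring B^{\infty,q}_\alpha(\beta)$ in general, so we localize instead.
\begin{itemize}
\item Fix $\eta\in C^\infty_c(G)$ with $\eta=1$ on $B(e,1)$.
\item The translates $\eta(x^{-1}\,\cdot\,)$, $x\in G$, lie in $C^\infty_c(G)\subseteq \mathring B^{\infty,q}_\alpha(\beta)$.
\item They are uniformly bounded in $B^{\infty,q}_\alpha(\beta)$. This follows from Proposition~\ref{prop:7}, since they are uniformly bounded in $W^{\infty,\infty}(\beta)$: left-invariant operators commute with left translations, and the $L^\infty$ norm is translation invariant.
\end{itemize}
Therefore $\norm{u\,\eta(x^{-1}\,\cdot\,)}_{B^{\infty,q}_\alpha(\beta)}\meg C\norm{u}_{\Mc}$ uniformly in $x$.

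The main step is to pass from these uniform local bounds to a global bound on $u$ in $B^{\infty,q}_\alpha(\beta)$. For this we use a $(1,2)$-lattice $(x_j)$ and a bounded family of cutoffs. Concretely, choose $\phi_j$ supported in $B(e,R)$ with $\sum_j\phi_j(x_j^{-1}\,\cdot\,)=1$, and take $\eta=1$ on $B(e,R)$. Then $u=\sum_j \phi_j(x_j^{-1}\,\cdot\,)\,u\,\eta(x_j^{-1}\,\cdot\,)$.

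We use the localized norm of Proposition~\ref{prop:34}(i) with a compactly supported $\phi$, say supported in $B(e,1)$. Then the value of $u*(\phi\Lc^mh_t)$ at a point $x$ depends only on $u$ restricted to $xB(e,1)^{-1}$. That set meets the supports of at most $N$ of the pieces (Lemma~\ref{lem:50}). So for each $x$ and $t\meg 1$ we get
\[
\abs{u*(\phi\Lc^mh_t)(x)}\meg \sum_{j\in J_x}\abs{u_j*(\phi\Lc^mh_t)(x)}, \qquad \card(J_x)\meg N,
\]
with $u_j=\widetilde\phi_j u\eta(x_j^{-1}\cdot)$. Since the $L^\infty_x$ norm is a supremum, the left-hand side is controlled by $N\sup_j\norm{t^{m-\alpha/\grado}u_j*(\phi\Lc^mh_t)}$ in the $L^{\infty,q}$-type norm. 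Care is needed because the $L^q_t$ norm sits outside the $\sup_x$. We handle this by taking the $L^\infty_x$ norm first, which is how the Besov norm is arranged, and bounding it by $\sum_{h=1}^N\sup_{j\in J_h}$ as in the proof of Proposition~\ref{prop:30}.

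It remains to bound each $u_j$. We have $\norm{u_j}_{B^{\infty,q}_\alpha}\meg C\norm{u\eta(x_j^{-1}\cdot)}_{B^{\infty,q}_\alpha}$, by the algebra property applied with the uniformly bounded $\widetilde\phi_j$. The $L^\infty$ part of the norm is similar. Combining these gives $\norm{u}_{B^{\infty,q}_\alpha}\meg C\norm{u}_{\Mc}$, which proves the converse inclusion.
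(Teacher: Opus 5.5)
Your first inclusion is fine: it is the algebra property, Theorem~\ref{teo:6}(i) with all exponents equal to $\infty$, as in the paper. The converse inclusion, however, has a genuine gap, exactly at the step you flagged.

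After localizing, for each $t$ you get $t^{m-\alpha/\grado}\norm{u*(\phi\Lc^m h_t)}_{L^\infty(\beta)}\meg\sum_{h=1}^N\sup_{j\in J_h}F_j(t)$, where $F_j(t)\coloneqq t^{m-\alpha/\grado}\norm{u_j*(\phi\Lc^m h_t)}_{L^\infty(\beta)}$. The multiplier property only bounds $\sup_j\norm{F_j}_{L^q(\mi)}$. To conclude you would need $\norm{\sup_j F_j}_{L^q(\mi)}\meg C\sup_j\norm{F_j}_{L^q(\mi)}$, which is false for $q<\infty$; only the reverse inequality holds. Proposition~\ref{prop:30} does not help: for Besov spaces it only treats $B^{q,q}_\alpha$, where the $\ell^q$-sum in $j$ commutes with the $L^q$-norm in $t$. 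Here you have $\ell^\infty$ in $j$ against $L^q$ in $t$.

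In fact the intermediate statement you are proving, $B^{\infty,q}_{\alpha,\unif}(\beta)\subseteq B^{\infty,q}_\alpha(\beta)$, is false for $q<\infty$. Already for $G=\R^n$ and $\Lc=-\sum_k\partial_k^2$ (so $\grado=2$), take a bump $\theta\in C^\infty_c(\R^n)$, points $x_j$ sufficiently far apart, and $u_j\coloneqq 2^{-j\alpha}\theta(\,\cdot\,-x_j)\ee^{i2^jx_1}$. Each $u_j$ lives at the single scale $t\approx 4^{-j}$ and has $B^{\infty,q}_\alpha$-norm $\approx 1$. Hence $u\coloneqq\sum_j u_j$ has bounded uniform local norm. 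But $t^{-\alpha/2}\norm{W^{(m)}_t u}_{L^\infty}\gtrsim 1$ on every block $(4^{-j-1},4^{-j}]$, so $\Bs^{\infty,q}_{\alpha,m,\mi_1}(u)=\infty$. This is consistent with the paper, where $\Mc(B^{p,q}_\alpha(\beta))$ is the uniform local space only for $p\meg q$ (Theorem~\ref{teo:20}). Your argument does work for $q=\infty$. Incidentally, Proposition~\ref{prop:34} also requires $\phi=1$ near $e$, not merely compact support.

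The missing idea is to test the multiplier on functions that are \emph{not} localized, so that all scales and all locations of $u$ are seen simultaneously. Take a positive $\phi\in C^\infty_c(G)$ supported in $B(e,1)$ and normalized so that $\int_G\phi\,\dd\beta_R=1$, and set $\tau_k\coloneqq\chi_{B(e,k+1)}*\phi$. Then:
\begin{itemize}
\item $\tau_k\in C^\infty_c(G)\subseteq\mathring B^{\infty,q}_\alpha(\beta)$ and $\chi_{B(e,k)}\meg\tau_k\meg\chi_{B(e,k+2)}$;
\item $X\tau_k=\chi_{B(e,k+1)}*X\phi$ shows that the $\tau_k$ are bounded in $W^{\infty,\infty}(\beta)$, hence in $B^{\infty,q}_\alpha(\beta)$ by Proposition~\ref{prop:7};
\item consequently $\sup_k\norm{\tau_k u}_{B^{\infty,q}_\alpha(\beta)}\meg C\norm{u}_{\Mc(B^{\infty,q}_\alpha(\beta))}$.
\end{itemize}
Since $\tau_k u\to u$ in $\Sc'(G)$, the functions $\ee^{-\Lc}(\tau_k u)$ and $W^{(m)}_t(\tau_k u)$ converge pointwise to $\ee^{-\Lc}u$ and $W^{(m)}_t u$. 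Fatou's lemma then gives $\norm{u}_{B^{\infty,q}_\alpha(\beta)}\meg\liminf_k\norm{\tau_k u}_{B^{\infty,q}_\alpha(\beta)}<\infty$. This is the paper's proof, and it makes the lattice decomposition and Proposition~\ref{prop:34} unnecessary.
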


This result extends~\cite[Theorem 4.14]{BPV3} to the case of general filtrations.

\begin{proof}
	By Proposition~\ref{prop:5} we may reduce to the case $\beta=\beta_L$. Then, Corollary~\ref{cor:20} shows that $B^{\infty,q}_\alpha(\beta)\subseteq \Mc(B^{\infty,q}_\alpha(\beta))$ continuously. The converse inclusion essentially follows from the fact that the function identically equal to $1$ belongs to $B^{\infty,q}_\alpha(\beta)$. Since, however, that function need not belong to $\mathring B^{\infty,q}_\alpha(\beta)$, we need to proceed a little more carefully. Take a positive $\phi \in C^\infty_c(G)$ with $\supp \phi\subseteq B(e,1)$ and $\int_G \phi\,\dd \beta=1$, and define $\tau_k\coloneqq \chi_{B(e,k+1)}*\phi$, so that $\chi_{B(e,k)}\meg \tau_k \meg \chi_{B(e,k+2)}$ for every $k\in\N$, and the $\tau_k$ are uniformly bounded in $W^{\infty,\infty}(\beta)$, hence in $B^{\infty,q}_\alpha(\beta)$ by Proposition~\ref{prop:7}. Therefore, if $f\in \Mc(B^{\infty,q}_\alpha(\beta))$, then the $\tau_k f$ must be bounded in $B^{\infty,q}_\alpha(\beta)$. Letting $k\to \infty$, we then get $f\in B^{\infty,q}_\alpha(\beta)$. 
\end{proof}

\begin{lem}\label{lem:51b}
	Take  $p,q\in [1,\infty]$ and $\alpha> 0$. Take two   $\eta_1,\eta_2\in C^\infty_c(G)$ with $\eta_2\neq 0$. Then, there is a constant $C>0$ such that 
	\[
	\norm{\eta_1(x\,\cdot\,) f}_{B^{p,q}_\alpha(\beta)}\meg C  \norm{f}_{B^{p,q}_\alpha(\beta)}
	\]
	for every $x\in G$ and for every $f\in \Sc'(G)$, and such that
	\[
	\sup_{x\in G}\norm{\eta_1(x\,\cdot\,) f}_{B^{p,q}_\alpha(\beta)}\meg C\sup_{x\in G} \norm{\eta_2(x\,\cdot\,) f}_{B^{p,q}_\alpha(\beta)}
	\]
	for every $f\in \Sc'(G)$.
\end{lem}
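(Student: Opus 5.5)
The plan is to mirror the proof of Lemma~\ref{lem:51}, with Theorem~\ref{teo:6}(i) replacing Remark~\ref{oss:4}. First we reduce to the case $\beta=\beta_L$ by Proposition~\ref{prop:5}. This is legitimate because multiplication by $\Delta_L^{-1/p}$ commutes with multiplication by $\eta_1(x\,\cdot\,)$. Both assertions are preserved, and the constants do not depend on $x$, since the isomorphism $B^{p,q}_\alpha(\beta)= \Delta_L^{-1/p} B^{p,q}_\alpha(\beta_L)$ is a fixed isomorphism independent of $x$.

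\textsc{Step I} (uniform multiplier bound). We apply Theorem~\ref{teo:6}(i) with $p_1=p_4=p$ and $p_2=p_3=\infty$, playing the role of $f$ and $g$ there with $f$ and $\eta_1(x\,\cdot\,)$. This gives
\[
\norm{\eta_1(x\,\cdot\,) f}_{B^{p,q}_\alpha(\beta)}\meg C\norm{f}_{B^{p,q}_\alpha(\beta)}\norm{\eta_1}_{L^\infty(\beta)}+C\norm{f}_{L^\infty(\beta)}\norm{\eta_1(x\,\cdot\,)}_{B^{p,q}_\alpha(\beta)}.
\]
This inequality is not quite what we want, since it involves $\norm{f}_{L^\infty}$. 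So instead I would use the roles $p_1=p_3=p$ and $p_2=p_4=\infty$, which give
\[
\norm{f g}_{B^{p,q}_\alpha}\meg C\norm{f}_{B^{p,q}_\alpha}\norm{g}_{L^\infty}+C\norm{f}_{L^p}\norm{g}_{B^{\infty,q}_\alpha}
\]
with $g=\eta_1(x\,\cdot\,)$. Since $\alpha>0$, we have $\norm{f}_{L^p}\meg C\norm{f}_{B^{p,q}_\alpha}$ (cf.~\cite[Proposition 6.9]{BCP} and Proposition~\ref{prop:2}). It therefore suffices that the left translates $\eta_1(x\,\cdot\,)$ are uniformly bounded in $B^{\infty,q}_\alpha(\beta)$. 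They are: since the $X$ are left-invariant, $\norm{X(\eta_1(x\,\cdot\,))}_{L^\infty}=\norm{X\eta_1}_{L^\infty}$, so the translates are uniformly bounded in $W^{\infty,\infty}(\beta)$, and Proposition~\ref{prop:7}(iii) gives the claim. For $f\in\Sc'(G)$ not a priori in $B^{p,q}_\alpha$ there is nothing to prove.

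The one point to double-check is that Theorem~\ref{teo:6}(i) indeed allows $p_2=p_4=\infty$ together with $p_1=p_3=p\in[1,\infty]$. The statement of (i) allows all exponents in $[1,\infty]$, unlike (ii), so this is fine. I expect this to be the only delicate step: the Besov case does not need the substitute Remark~\ref{oss:4} that was required for Triebel--Lizorkin spaces.

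\textsc{Step II} (change of cut-off). This step is identical to Step II of Lemma~\ref{lem:51}. Choose $x_0$, $r$ and $c>0$ with $\abs{\eta_2}\Meg c$ on $B(x_0,r)$. Cover the compact set $\supp\eta_1$ by finitely many left translates $jB(x_0,r)$, $j\in J$. Using a partition of unity $(\chi_j)$ subordinate to this cover, set $\psi_j\coloneqq \chi_j\eta_1/\eta_2(j^{-1}\,\cdot\,)\in C^\infty_c(G)$, so that $\eta_1=\sum_j\psi_j\,\eta_2(j^{-1}\,\cdot\,)$. Then
\[
\eta_1(x\,\cdot\,)f=\sum_{j\in J}\psi_j(x\,\cdot\,)\,\eta_2(j^{-1}x\,\cdot\,)f,
\]
and applying Step I to each $\psi_j$, with the functions $\eta_2(j^{-1}x\,\cdot\,)f$ in place of $f$, yields
\[
\norm{\eta_1(x\,\cdot\,)f}_{B^{p,q}_\alpha}\meg C\card(J)\sup_{x'}\norm{\eta_2(x'\,\cdot\,)f}_{B^{p,q}_\alpha}.
\]
Taking the supremum over $x$ concludes.
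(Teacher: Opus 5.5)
Your proposal is correct and is essentially the paper's (omitted) argument: it repeats the proof of Lemma~\ref{lem:51}, using Theorem~\ref{teo:6}(i) with $p_1=p_3=p$, $p_2=p_4=\infty$ in place of Remark~\ref{oss:4}, together with the uniform boundedness of the translates $\eta_1(x\,\cdot\,)$ in $W^{\infty,\infty}(\beta)\subseteq B^{\infty,q}_\alpha(\beta)$, and then the same change-of-cut-off step. The initial attempt with $p_1=p_4=p$ can simply be deleted, and the reduction to $\beta_L$ is harmless but unnecessary, since Theorem~\ref{teo:6} already holds for a general $\beta$.
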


The proof is similar to that of Lemma~\ref{lem:51} and is omitted.

\begin{deff}
	Take  $p,q\in [1,\infty]$ and $\alpha> 0$.   We define $B^{p,q}_{\alpha,\unif}(\beta)$ as the space of $f\in \Sc'(G)$ such that $\sup_{x\in G} \norm{\eta(x\,\cdot\,) f}_{B^{p,q}_\alpha(\beta)}$ is finite 
	for some non-zero $\eta\in C^\infty_c(G)$, endowed with the corresponding topology.
\end{deff}

\begin{lem}\label{lem:53}
	Take $p,q\in [1,\infty]$ and $\alpha> 0$.  Then, $\Mc(B^{p,q}_\alpha(\beta))\subseteq B^{p,q}_{\alpha,\unif}(\beta_L)$ continuously. If, in addition, $\alpha\Meg Q_*/p$ and either $q=1$ or $\alpha>Q_*/p$, then $B^{p,q}_{\alpha,\unif}(\beta_L)\subseteq L^\infty(\beta)$ continuously.
\end{lem}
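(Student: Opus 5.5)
The plan follows the second half of the proof of Theorem~\ref{teo:14}. We first reduce to the case $\beta=\beta_L$ via Proposition~\ref{prop:5}. Multiplication by $\Delta_L^{-1/p}$ identifies $B^{p,q}_\alpha(\beta)$ with $B^{p,q}_\alpha(\beta_L)$, and also identifies the closures $\mathring B^{p,q}_\alpha$ of $\Sc(G)$, since $\Delta_L^{\pm 1/p}$ multiplies $\Sc(G)$ into itself: the derivatives of a character are multiples of it by Proposition~\ref{prop:9}, and its growth is controlled by Lemma~\ref{lem:31}. Multiplication by $u$ commutes with multiplication by $\Delta_L^{-1/p}$. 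Hence $\Mc(B^{p,q}_\alpha(\beta))=\Mc(B^{p,q}_\alpha(\beta_L))$ with equivalent norms, and it suffices to treat $\beta=\beta_L$.

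For the first inclusion, take $u\in\Mc(B^{p,q}_\alpha(\beta))$ and a non-zero $\phi\in C^\infty_c(G)$. Since $\phi(x\,\cdot\,)\in C^\infty_c(G)\subseteq\mathring B^{p,q}_\alpha(\beta)$, we have
\[
\norm{\phi(x\,\cdot\,)u}_{B^{p,q}_\alpha(\beta)}\meg \norm{u}_{\Mc(B^{p,q}_\alpha(\beta))}\norm{\phi(x\,\cdot\,)}_{B^{p,q}_\alpha(\beta)}.
\]
When $\beta=\beta_L$ is left invariant, the operators $\Lc$ and $\ee^{-t\Lc}$ commute with left translations, which are isometries of $L^p(\beta_L)$. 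Therefore $\norm{\phi(x\,\cdot\,)}_{B^{p,q}_\alpha(\beta_L)}=\norm{\phi}_{B^{p,q}_\alpha(\beta_L)}$, which is finite by Proposition~\ref{prop:7}. Taking the supremum over $x$ gives $u\in B^{p,q}_{\alpha,\unif}(\beta_L)$ with a norm bound. By Lemma~\ref{lem:51b} this does not depend on the choice of $\phi$.

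For the second inclusion, assume $\alpha\Meg Q_*/p$, and either $q=1$ or $\alpha>Q_*/p$. Fix $\eta\in C^\infty_c(G)$ with $\eta=1$ on $B(e,1)$ and $0\meg\eta\meg 1$. Proposition~\ref{prop:2}~(3) with $p_2=\infty$ (or, for $p=\infty$, the inclusion $B^{\infty,q}_\alpha\subseteq L^\infty$ for $\alpha>0$) gives a constant $C$ with $\norm{g}_{L^\infty}\meg C\norm{g}_{B^{p,q}_\alpha(\beta_L)}$. In the borderline case $q=1$, $\alpha=Q_*/p$, one combines (2) with $\alpha_2=0$ and the inclusion $B^{\infty,1}_0\subseteq L^\infty$ from (3). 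Since $\beta_L$ is left invariant, $\Delta_L$ is trivial and no weight appears in these embeddings. Then for $f\in B^{p,q}_{\alpha,\unif}(\beta_L)$ and every $x\in G$, the function $f$ coincides on $x^{-1}B(e,1)$ with $\eta(x\,\cdot\,)f$, so $\abs{f}\meg C\sup_{x}\norm{\eta(x\,\cdot\,)f}_{B^{p,q}_\alpha(\beta_L)}$ almost everywhere there. These sets cover $G$, so the bound holds almost everywhere on $G$. Since $\beta$ and $\beta_L$ have the same null sets, $L^\infty(\beta)=L^\infty(\beta_L)$, and we conclude.

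The main obstacle is the bookkeeping for the weights in the reduction to $\beta_L$. In particular, we must check that $\Mc$ is compatible with Proposition~\ref{prop:5}: multiplication by $\Delta_L^{\pm1/p}$ has to preserve both $\Sc(G)$ and the closures $\mathring B^{p,q}_\alpha$. Checking the exact form of the $q=1$ endpoint embedding in Proposition~\ref{prop:2} is a secondary point.
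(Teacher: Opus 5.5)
Your proposal is correct and follows the paper's proof. You reduce to $\beta=\beta_L$ via Proposition~\ref{prop:5}, bound $\norm{\phi(x\,\cdot\,)u}_{B^{p,q}_\alpha(\beta_L)}$ by $\norm{u}_{\Mc(B^{p,q}_\alpha(\beta))}\norm{\phi}_{B^{p,q}_\alpha(\beta_L)}$ using left-translation invariance, and then combine the embedding into $L^\infty$ from Proposition~\ref{prop:2} with $f=\eta(x\,\cdot\,)f$ on $x^{-1}B(e,1)$; your extra details (why translates have equal norm, and the case $q=1$, $\alpha=Q_*/p$ via (2) with $\alpha_2=0$ followed by $B^{\infty,1}_0\subseteq L^\infty$ from (3)) correctly fill in what the paper leaves implicit.
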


\begin{proof}
	By Proposition~\ref{prop:5} we may reduce to the case $\beta=\beta_L$.
	Take $u\in \Mc(B^{p,q}_\alpha(\beta))$, and a non-zero $\phi\in C^\infty_c(G)$. Then,  for every $x\in G$,
	\[
	\norm{\phi(x\,\cdot\,) u  }_{B^{p,q}_\alpha(\beta)}\meg \norm{u}_{\Mc(B^{p,q}_\alpha(\beta))} \norm{\phi(x\,\cdot\,)  }_{B^{p,q}_\alpha(\beta)}=\norm{u}_{\Mc(B^{p,q}_\alpha(\beta))}   \norm{ \phi   }_{B^{p,q}_\alpha(\beta )},
	\]
	whence the continuous inclusion $\Mc(B^{p,q}_\alpha(\beta))\subseteq B^{p,q}_{\alpha,\unif}(\beta)$.
	
	Next, assume that $\alpha\Meg Q_*/p$ and either $q=1$ or $\alpha>Q_*/p$. Take $f\in B^{p,q}_{\alpha,\unif}(\beta)$ and assume that $\phi=1$ on $B(e,1)$. Then, by Proposition~\ref{prop:2}, there is a constant $C>0$ such that
	\[
	\norm{f}_{L^\infty(\beta)}\meg \sup_{x\in G} \norm{\phi(x\,\cdot\,) f}_{L^\infty(\beta)}\meg C \sup_{x\in G} \norm{\phi(x\,\cdot\,) f}_{B^{p,q}_{\alpha}(\beta)}.
	\]
	The assertion follows.
\end{proof}

\begin{teo}\label{teo:20}
	Take $p,q\in [1,\infty]$ and $\alpha \Meg Q_*/p$ such that $p\meg q$ and either $p=q=1$ or $\alpha>Q_*/p$. Then, $\Mc(B^{p,q}_\alpha(\beta))=B^{p,q}_{\alpha,\unif}(\beta_L)$. 
	In addition, the canonical   bilinear mapping 
	\[
	B^{p,q}_\alpha(\beta)\times B^{p,q}_{\alpha,\unif}(\beta_L)\ni (f,u)\mapsto u f\in B^{p,q}_\alpha(\beta)
	\] 
	is continuous.
\end{teo}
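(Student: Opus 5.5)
We reduce to $\beta=\beta_L$ by Proposition~\ref{prop:5}, exactly as in the proof of Theorem~\ref{teo:14}. By Lemma~\ref{lem:53} we already have the continuous inclusion $\Mc(B^{p,q}_\alpha(\beta))\subseteq B^{p,q}_{\alpha,\unif}(\beta_L)$. It therefore suffices to prove the norm inequality
\[
\norm{u f}_{B^{p,q}_\alpha(\beta)}\meg C\norm{f}_{B^{p,q}_\alpha(\beta)}\norm{u}_{B^{p,q}_{\alpha,\unif}(\beta)}
\]
for $f\in B^{p,q}_\alpha(\beta)$ and $u\in B^{p,q}_{\alpha,\unif}(\beta)$. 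This gives both the continuity of the bilinear map and the reverse inclusion. Note that $uf$ is unambiguous, since $u\in L^\infty$ by Lemma~\ref{lem:53} and $f\in L^p$.

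We mimic the chain of estimates in the proof of Theorem~\ref{teo:14}. We fix a $(1,2)$-lattice $(x_j)$ and a bounded family $(\phi_j)$ in $C^\infty_c(G)$ with $\sum_j\widetilde\phi_j=1$, supported in $B(e,R)$. We also take $\psi\in C^\infty_c(G)$ with $\psi=1$ on $B(e,R+1)$. The steps are then the following.
\begin{enumerate}
\item Reconstruction: $\norm{uf}_{B^{p,q}_\alpha}\meg C\norm{(\widetilde\phi_j u f)}_{\ell^p(J;B^{p,q}_\alpha)}$.
\item Algebra property (Corollary~\ref{cor:20}, using $\alpha>Q_*/p$, or $p=q=1$, $\alpha=Q_*$): $\norm{\widetilde\phi_j f\,\psi(x_j^{-1}\,\cdot\,)u}_{B^{p,q}_\alpha}\meg C\norm{\widetilde\phi_j f}_{B^{p,q}_\alpha}\norm{\psi(x_j^{-1}\,\cdot\,)u}_{B^{p,q}_\alpha}$.
\item Take the supremum over $j$ of the $u$-factor, which is bounded by $\norm{u}_{B^{p,q}_{\alpha,\unif}}$ by Lemma~\ref{lem:51b}.
\item Decomposition: $\norm{(\widetilde\phi_j f)}_{\ell^p(J;B^{p,q}_\alpha)}\meg C\norm{f}_{B^{p,q}_\alpha}$.
\end{enumerate}

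The main obstacle is that Proposition~\ref{prop:30} provides the two localization maps for Besov spaces only in the diagonal case $B^{q,q}_\alpha$, while here we need an $\ell^p$ decomposition with $p\meg q$. The two directions are not symmetric, and the hypothesis $p\meg q$ is exactly what makes both go through once one looks at the localized norm of Proposition~\ref{prop:34}. I would use that norm with $\eta_t=\eta(t\Lc)^m h_t$, $\eta$ supported in $B(e,1/2)$, partitioning $J$ into $J_1,\dots,J_N$ by Lemma~\ref{lem:50} so that, for each fixed $t$, the functions $(\widetilde\phi_j g)*\eta_t$ with $j\in J_h$ have disjoint supports.

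For the reconstruction direction, fix $t$ and $h$. Disjointness gives the identity
\[
\norm*{\sum_{j\in J_h}(\widetilde\phi_j f_j)*\eta_t}_{L^p}=\norm*{\norm{(\widetilde\phi_j f_j)*\eta_t}_{L^p}}_{\ell^p_j(J_h)}.
\]
Multiplying by $t^{-\alpha/\grado}$ and taking the $L^q(\mi_1)$ norm, the Minkowski inequality $\norm{\norm{\,\cdot\,}_{\ell^p_j}}_{L^q_t}\meg\norm{\norm{\,\cdot\,}_{L^q_t}}_{\ell^p_j}$ (valid since $q\Meg p$) yields $\ell^p(J;B^{p,q}_\alpha)\to B^{p,q}_\alpha$. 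The $L^p$ part is handled as in Proposition~\ref{prop:30}, and the multiplication by the $\widetilde\psi_j$ via Proposition~\ref{prop:7} and Corollary~\ref{cor:20}.

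The decomposition direction needs the opposite Minkowski inequality, so the functional-analytic argument does not transfer. This is the step I expect to be hardest. Here I would exploit the fact that $\widetilde\phi_j f$ is \emph{not} an arbitrary family: it is the restriction of a single $f$. I would write $\widetilde\phi_j f*\eta_t=\widetilde\phi_j(f*\eta_t)+[\widetilde\phi_j,\,\cdot*\eta_t]f$. For the first term, summing over $j$ with finite overlap gives $\sum_j\norm{\widetilde\phi_j(f*\eta_t)}_{L^p}^p\meg N\norm{f*\eta_t}_{L^p}^p$ for each $t$; taking the $p$-th root and then the $L^q_t$ norm still leaves the $\ell^p_j$ norm inside, so I would instead apply this with exponent $q$ in $t$. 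If that fails, I would first treat $p=q$ directly (Proposition~\ref{prop:30}) and then pass to $q>p$ via the embedding $\ell^p\subseteq\ell^q$ together with a direct estimate $\norm{\,\cdot\,}_{L^q_t(\ell^p_j)}$, using the exact disjointness in $x$. The commutator term gains a factor $t^{1/\grado}$ through Taylor expansion of $\widetilde\phi_j$ on $B(x,t^{1/\grado})$ and the Gaussian decay. This reduces it to lower-order quantities controlled by $\norm{f}_{B^{p,q}_{\alpha-1}}\lesssim\norm f_{B^{p,q}_\alpha}$ (iterating if $\alpha$ is large), handled via Lemma~\ref{lem:2} and Lemma~\ref{lem:25}.
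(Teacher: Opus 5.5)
\textbf{The case $p=q$.} Here your four-step chain is correct, and it is the argument the paper has in mind. The paper omits this case as similar to Theorem~\ref{teo:14}, because Proposition~\ref{prop:30} supplies both localization maps on $B^{q,q}_\alpha(\beta)$.

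\textbf{The case $p<q$: step (4) fails.} Step (4) is not merely the hardest step here. The inequality $\norm{(\widetilde\phi_j f)}_{\ell^p(J;B^{p,q}_\alpha)}\meg C\norm{f}_{B^{p,q}_\alpha}$ is false in general. This is the classical failure of the localization principle for Besov spaces with $p\neq q$, already for $G=\R^n$ with $\gf_1=\gf$. Take $f=\sum_j c_j g_j(x_j^{-1}\,\cdot\,)$, with far-apart centres and with the $g_j$ normalized in $L^p$, supported near $e$, and concentrated at the scale $t\approx 2^{-j\grado}$ (for instance, suitable atoms). By disjointness of supports and the localized norm of Proposition~\ref{prop:34}, $\norm{f}_{B^{p,q}_\alpha}$ is of the order of $\norm{(2^{j\alpha}c_j)}_{\ell^q_j}$. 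The left-hand side of (4), on the other hand, is of the order of $\norm{(2^{j\alpha}c_j)}_{\ell^p_j}$; choosing $2^{j\alpha}c_j=j^{-1/p}$ makes the first finite and the second infinite.

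Several consequences follow:
\begin{itemize}
\item Your claim that $p\meg q$ makes \emph{both} directions work is wrong. It is exactly what makes reconstruction work and decomposition fail.
\item Your fallbacks cannot help. The obstruction lies in the main term $\widetilde\phi_j(f*\eta_t)$, not in the commutator, and the embedding $\ell^p\subseteq\ell^q$ only yields $\ell^q_j$-control.
\item Step (2), in the product form $\norm{\widetilde\phi_jf}_{B^{p,q}_\alpha}\norm{\psi(x_j^{-1}\,\cdot\,)u}_{B^{p,q}_\alpha}$, forces you to pay for $f$ in $\ell^p(J;B^{p,q}_\alpha)$. So the chain cannot be repaired as it stands.
\end{itemize}

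\textbf{What is missing.} You need to open up the product with the paraproducts of Corollary~\ref{lem:28}, as in Theorem~\ref{teo:6}. Then $f$ is never decomposed in $\ell^p(J;B^{p,q}_\alpha)$, only in $\ell^p(J;L^\infty)$. First replace $W^{(k)}_t$ by the localized $\widetilde W^{(k)}_t u=u*(\eta(t\Lc)^kh_t)$, at a cost of $O(t^m)\norm{f}_{L^p}\norm{u}_{L^\infty}$.
\begin{itemize}
\item In the term where $f$ carries the high frequency, just use Hölder: $\norm{(\widetilde W^{(m)}_tf)(\widetilde W^{(k)}_tu)}_{L^p}\meg C\norm{\widetilde W^{(m)}_tf}_{L^p}\norm{u}_{L^\infty}$, with $u\in L^\infty$ by Lemma~\ref{lem:53}. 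Thus $f$ is measured globally in $B^{p,q}_\alpha$.
\item In the term where $u$ carries the high frequency, localize $u$ through $\chi_{B(x_j,R+1)}\widetilde W^{(m)}_tu=\chi_{B(x_j,R+1)}\widetilde W^{(m)}_t(\psi(x_j^{-1}\,\cdot\,)u)$. Minkowski in the direction permitted by $p\meg q$ then gives $\norm*{\norm{\widetilde\phi_jf}_{L^\infty}\norm{\psi(x_j^{-1}\,\cdot\,)u}_{B^{p,q}_\alpha}}_{\ell^p_j}$.
\item Control the $f$-factor by $\norm*{\norm{\widetilde\phi_jf}_{L^\infty}}_{\ell^p_j}\meg C\norm*{\norm{\widetilde\phi_jf}_{B^{p,p}_{Q_*/p+\eps}}}_{\ell^p_j}\meg C'\norm{f}_{B^{p,p}_{Q_*/p+\eps}}\meg C''\norm{f}_{B^{p,q}_\alpha}$. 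This uses Proposition~\ref{prop:2} and only the \emph{diagonal} case of Proposition~\ref{prop:30}. The strict inequality $\alpha>Q_*/p$ is what lets you trade $q$ for $p$ here.
\item For the remaining paraproduct, distribute $\Lc^m$ by the Leibniz rule. Treat each summand like one of the two terms above, according to which factor receives at most half of the derivatives.
\end{itemize}
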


This result extends~\cite[Theorems 4.1 and 4.2]{BPV3} to the case of general filtrations and general $\alpha>Q_*/p$. The proof is different.

\begin{proof}
	By Proposition~\ref{prop:5} we may reduce to the case $\beta=\beta_L$.
	The proof when $p=q$ is similar to the proof of Theorem~\ref{teo:14} and is omitted. We may therefore assume that $p<q$. In fact, we shall only assume that $\alpha>Q_*/p$ (and $p\meg q$). Since the inclusion $\Mc(B^{p,q}_\alpha(\beta))\subseteq B^{p,q}_{\alpha,\unif}(\beta)$ follows from Lemma~\ref{lem:53}, we may reduce to proving the continuity of $B^{p,q}_\alpha(\beta)\times B^{p,q}_{\alpha,\unif}(\beta) \ni (f,g)\mapsto fg\in B^{p,q}_\alpha(\beta)$. 
	We proceed as in the proof of Theorem~\ref{teo:6}.
	
	\textsc{Step I}  Set $m\coloneqq 2[\alpha/\grado]+2$. By the Gaussian estimates  (cf.~Theorem~\ref{teo:7}), there are $b,C_1>0$ such that $\abs{X\Lc^k h_t}\meg C_1\abs{X} t^{-k-\deg(X)/\grado} p_{b,t}$ for every $k=0,\dots, 2m$, for every $X\in U_m$, and for every $t\in (0,3]$. 
	Let us first estimate $\Bs^{p,q}_{\alpha,m}(\Pi_f^{(t')} g)$. Observe that
	\[
	\begin{split}
		\abs{W^{(m)}_s W^{(h)}_t[(W^{(m)}_t f)(W^{(k)}_t g)]}&= s^m\abs{(t \Lc)^h \ee^{-(t/2) \Lc} \Lc^m  \ee^{-(s+t/2)\Lc} [(W^{(m)}_t f)(W^{(k)}_t g)] }  \\
		&\meg 2^h C_1 s^{m}  T_{b,t/2} \Lc^{m} \ee^{-(t/2+s)\Lc}[(W^{(m)}_t f)(W^{(k)}_t g)]\\
		&\meg 2^h C_1^2 s^{m}(s+t/2)^{-m}  T_{b, t/2} T_{b,t/2+s}   [(W^{(m)}_t f)(W^{(k)}_t g)].
	\end{split}
	\]
	Then, by Young's inequality and Lemmas~\ref{lem:3} and~\ref{lem:25} there is a constant $C_2>0$ such that
	\[
	\begin{split}
		\Bs^{p,q}_{\alpha,m}(\Pi_f^{(t')} g)&\meg \sum_{h,k=0}^{m-1} \norm*{s^{-\alpha/\grado}\norm*{  \int_0^1 \abs{(W^{(m)}_s W^{(h)}_t [(W^{(m)}_t f)(W^{(k)}_t g)])(x)}\,\frac{\dd t}{t} }_{L^p_x(\beta) }}_{L^q_s(\mi_1)}  \\
		&\meg  C_2\sum_{ k=0}^{m-1} \norm*{s^{m-\alpha/\grado}  \int_0^1  (s+t)^{-m}  \norm*{(W^{(m)}_t f)(W^{(k)}_t g)}_{L^p(\beta)}\,\dd\mi_1(t) }_{L^q_s(\mi_1)}  \\
		&\meg C_2^2\sum_{ k=0}^{m-1} \norm*{t^{ -\alpha/\grado}   \norm*{(W^{(m)}_t f)(W^{(k)}_t g)}_{L^p(\beta)} }_{L^q_t(\mi_1)} .
	\end{split}
	\]
	Now, take $\eta\in C^\infty_c(G)$ so that $\chi_{B(e,1/2)}\meg \eta \meg \chi_{B(e,1 )}$, and define $\widetilde W^{(k)}_t u\coloneqq u*(\eta (t\Lc)^k h_t)$ for every $t>0$, for every $k\in\N$, and for every $u\in \Sc'(G)$. Observe that, by Lemmas~\ref{lem:3} and~\ref{lem:53}, and Proposition~\ref{prop:2}, there is a constant $C_3>0$ such that
	\[
	\begin{split}
		&\norm{(W^{(m)}_t f-\widetilde W^{(m)}_t f)(W^{(k)}_t g)}_{L^p(\beta)}\meg \norm{f*((1-\eta)(t \Lc)^m h_t)}_{L^p(\beta)} \norm{W^{(k)}_t g}_{L^\infty(\beta)}\\
		&\qquad \qquad\qquad\qquad\qquad\meg C_1^2 \ee^{-(b/2) (2^{\grado} t)^{-1/(\grado-1)}}\norm{ p_{b/2,t} \Delta_R^{-1/p'}}_{L^1(\beta)}\norm{f}_{L^p(\beta)} \norm{g}_{L^\infty(\beta)} \norm{p_{b,t}\Delta_R^{-1}}_{L^1(\beta)}\\
		&\qquad\qquad\qquad\qquad\qquad\meg C_3 t^m \norm{f}_{L^p\beta)} \norm{g}_{B^{p,q}_{\alpha,\unif}(\beta)}
	\end{split}
	\]
	and, analogously,
	\[
	\norm{(\widetilde W^{(m)}_t f)(W^{(k)}_t g-\widetilde W^{(k)}_t g)}_{L^p(\beta)}\meg C_3 t^m \norm{f}_{L^p(\beta)} \norm{g}_{B^{p,q}_{\alpha,\unif}(\beta)}.
	\]
	Consequently, we may reduce to estimating
	\[
	\sum_{ k=0}^{m-1} \norm*{t^{ -\alpha/\grado}   \norm*{(\widetilde W^{(m)}_t f)(\widetilde W^{(k)}_t g)}_{L^p(\beta)} }_{L^q_t(\mi_1)} .
	\]	
	Then, take a $(\delta,2)$-lattice $(x_j)_{j\in J}$ for some $\delta>0$, and let $(\phi_j)$ be a bounded family of elements of $C^\infty_c(G)$ such that, setting $\widetilde \phi_j\coloneqq \phi_j(x_j^{-1}\,\cdot\,)$ for every $j\in J$, one has $\sum_j \widetilde \phi_j=1$ on $G$. Take $R>0$ so that each $\phi_j$ is supported in $B(e,R)$, and take $\psi\in C^\infty_c(G)$ such that $\chi_{B(e,R+2)}\meg \psi\meg \chi_{B(e,R+3)}$. Observe that, by Lemma~\ref{lem:50}, there is a partition $J_1,\dots, J_N$ of $J$ such that $d(x_j, x_{j'})\Meg 2R+6$ for every two distinct $j,j'\in J_h$, for every $h=1,\dots, N$.
	Observe that, since $\widetilde W^{(m)}_t (\widetilde \phi_j f)$ is supported in $B(x_j, R+1)$, and since the $B(x_j,R+1)$, for $j\in J_h$ ($h=1,\dots, N$), are pairwise disjoint,
	\[
	\begin{split}
		\norm{(\widetilde W^{(m)}_t f)(\widetilde W^{(k)}_t g)}_{L^p(\beta)}&= \norm*{ \sum_j\widetilde W^{(m)}_t (\widetilde \phi_j f)(\widetilde W^{(k)}_t g)  }_{L^p(\beta)}\\
		&\meg \sum_{h=1}^N\norm*{ \sum_{j\in J_h}\widetilde W^{(m)}_t (\widetilde \phi_j f)(\widetilde W^{(k)}_t g)  }_{L^p(\beta)}\\
		&=\sum_{h=1}^N\norm*{\norm{  \widetilde W^{(m)}_t (\widetilde \phi_j f)(\widetilde W^{(k)}_t g)  }_{L^p(\beta)}}_{\ell^p_{j}(J_h)}\\
		&\meg \sum_{h=1}^N\norm*{\norm{  \widetilde W^{(m)}_t (\widetilde \phi_j f)}_{L^p(\beta)} }_{\ell^p_{j}(J_h)}\norm{\widetilde W^{(k)}_t g}_{L^\infty(\beta)} \\
		&\meg  \sum_{h=1}^N\norm{  \widetilde W^{(m)}_t (\widetilde \phi^{(h)} f)}_{L^p(\beta)} \norm{\widetilde W^{(k)}_t g}_{L^\infty(\beta)},
	\end{split} 
	\]
	where $\widetilde \phi^{(h)}=\sum_{j\in J_h} \widetilde \phi_j$. Using Proposition~\ref{prop:34} and the uniform boundedness of the $\widetilde W^{(k)}_t$ on $L^\infty(\beta)$, one may then conclude that there is a constant $C_4>0$ such that
	\[
	\Bs^{p,q}_{\alpha,m}(\Pi_f g) \meg C_4\bigg(\norm{f}_{L^p(\beta)}+\sum_{j\in J_h}\norm{\widetilde \phi^{(h)} f }_{B^{p,q}_\alpha(\beta)}\bigg) \norm{g}_{B^{p,q}_{\alpha,\unif}(\beta)}.
	\]
	Since clearly $\widetilde \phi^{(h)}\in W^{\infty,\infty}(\beta)$ for every $h=1,\dots ,N$, the desired estimate follows from Theorem~\ref{teo:6}.
	
	Let us now estimate $\Bs^{p,q}_{\alpha,m}(\Pi_g f)$. Arguing as above one may reduce to estimating
	\[
	\sum_{ k=0}^{m-1} \norm*{t^{ -\alpha/\grado}   \norm*{(\widetilde W^{(k)}_t f)(\widetilde W^{(m)}_t g)}_{L^p(\beta)} }_{L^q_t(\mi_1)}. 
	\]
	In this case, one may observe that $\chi_{B(x_j,R+1)}\widetilde W^{(m)}_t g=\chi_{B(x_j,R+1)}\widetilde W^{(m)}_t (\widetilde \psi_j g)$ for every $j\in J$, thanks to our choice of $\psi$, so that by Lemma~\ref{lem:3} there is a constant $C_5>0$ such that
	\[
	\begin{split}
		\norm*{(\widetilde W^{(k)}_t f)(\widetilde W^{(m)}_t g)}_{L^p(\beta)}&= \norm*{\sum_{j\in J} \widetilde W^{(k)}_t (\widetilde \phi_j f) \widetilde W^{(m)}_t g  }_{L^p(\beta)}\\
		&\meg \sum_{h=1}^N\norm*{ \sum_{j\in J_h}\widetilde W^{(k)}_t (\widetilde \phi_j f) \widetilde W^{(m)}_t (\widetilde \psi_j g)  }_{L^p(\beta)}\\
		&=\sum_{h=1}^N\norm*{\norm{ \widetilde W^{(k)}_t (\widetilde \phi_j f) \widetilde W^{(m)}_t (\widetilde \psi_j g)  }_{L^p(\beta)}}_{\ell^p_{j}(J_h)}\\
		&\meg \sum_{h=1}^N\norm*{\norm{\widetilde W^{(k)}_t (\widetilde \phi_j f)}_{L^\infty(\beta)} \norm{ \widetilde W^{(m)}_t (\widetilde \psi_j g)  }_{L^p(\beta)}}_{\ell^p_{j}(J_h)}\\
		&\meg C_5\sum_{h=1}^N\norm*{\norm{\widetilde \phi_j f}_{L^\infty(\beta)} \norm{ \widetilde W^{(m)}_t (\widetilde \psi_j g)  }_{L^p(\beta)}}_{\ell^p_{j}(J_h)}.
	\end{split} 
	\]
	Using the fact that $p\meg q$, by Proposition~\ref{prop:34} we see that there is a constant $C_6>1$ such that
	\[
	\begin{split}
		\norm*{t^{-\alpha/\grado}\norm*{(\widetilde W^{(k)}_t f)(\widetilde W^{(m)}_t g)}_{L^p(\beta)}}_{L^q_t(\mi_1)}&\meg C_5 \sum_{h=1}^N \norm*{\norm{\widetilde \phi_j  f}_{L^\infty(\beta)}\norm*{t^{-\alpha/\grado} \norm{ \widetilde W^{(m)}_t (\widetilde \psi_j g)  }_{L^p(\beta)}}_{L^q_t(\mi_1)}}_{\ell^p_{j}(J_h)}\\
		&\meg C_6 \sum_{h=1}^N \norm*{\norm{\widetilde \phi_j  f}_{L^\infty(\beta)}\norm{\widetilde \psi_j g}_{B^{p,q}_{\alpha}(\beta)}}_{\ell^p_{j}(J_h)}\\
		&\meg C_6^2 N^{1/p'} \norm*{\norm{\widetilde \phi_j  f}_{L^\infty(\beta)}}_{\ell^p_{j}(J)}\norm{ g}_{B^{p,q}_{\alpha,\unif}(\beta)}.
	\end{split}
	\]
	Since, by Propositions~\ref{prop:2} and~\ref{prop:30}, there is a constant $C_7>1$ such that
	\[
	\begin{split}
		\norm*{\norm{\widetilde \phi_j  f}_{L^\infty(\beta)}}_{\ell^p_{j}(J)}&\meg C_7 \norm*{\norm{\widetilde \phi_j  f}_{B^{p,p}_{Q_*/p+\eps}(\beta)}}_{\ell^p_{j}(J)} \\
		&\meg C_7^2 \norm{   f}_{B^{p,p}_{Q_*/p+\eps}(\beta)}\\
		&\meg C_7^3 \norm{f}_{B^{p,q}_\alpha(\beta)}
	\end{split}
	\] 
	for any fixed $\eps\in (0,\alpha-Q_*/p)$, the desired estimates follow also in this case.

	\textsc{Step II} We now estimate $\Bs^{p,q}_{\alpha,m}(\Pi(f,g))$. Observe first that there are two families $(X_\ell)_{\ell\in L}$ and $(Y_\ell)_{\ell\in L}$ of left-invariant differential operators on $G$ such that
	\[
	\Lc^{m}(\phi \psi)=\sum_{\ell \in L} (X_\ell \phi)(Y_\ell \psi)
	\]
	for every $\phi,\psi\in C^\infty(G)$, and
	such that $\deg(X_\ell)+\deg(Y_\ell)\meg m\grado $ for every $\ell\in L$.
	Then, observe that
	\[
	\begin{split}
		&\abs{W_s^{(m)} W^{(m)}_t[(W^{(h)}_t f)(W^{(k)}_t g)]}= (s t)^{m} \abs{ \Lc^{m}\ee^{-(s+t)\Lc} \Lc^m[(W^{(h)}_t f)(W^{(k)}_t g)]}\\
		&\qquad \meg C_1 s^{m}(s+t)^{-m} t^{m } \sum_{\ell\in L}T_{b,s+t} [ (X_\ell W^{(h)}_t f)(Y_\ell W^{(k)}_t g)].
	\end{split}
	\]
	Consequently, by Lemma~\ref{lem:3} there is a constant $C_8>0$ such that
	\[
	\begin{split}
		\norm{W_s^{(m)} W^{(m)}_t[(W^{(h)}_t f)(W^{(k)}_t g)]}_{L^p(\beta)}&\meg C_1 s^{m}(s+t)^{-m} t^m \sum_{\ell\in L}\norm{T_{b,s+t} [ (X_\ell W^{(h)}_t f)(Y_\ell W^{(k)}_t g)]}_{L^p(\beta)}\\
		&\meg C_{8} s^{m}(s+t)^{-m} t^m \sum_{\ell\in L}\norm{(X_\ell W^{(h)}_t f)(Y_\ell W^{(k)}_t g)}_{L^p(\beta)},
	\end{split}
	\]
	so that by Lemma~\ref{lem:25} there is a constant $C_9>0$ such that
	\[
	\begin{split}
		&\norm*{s^{-\alpha/\grado} \int_0^1\norm{W_s^{(m)} W^{(m)}_t[(W^{(h)}_t f)(W^{(k)}_t g)]}_{L^p(\beta)}\,\dd \mi_1(t)  }_{L^q_s(\mi_1)}\\
		&\qquad\meg C_9\sum_{\ell\in L}  \norm*{t^{m-\alpha/\grado} \norm{(X_\ell W^{(h)}_t f)(Y_\ell W^{(k)}_t g)}_{L^p(\beta)} }_{L^q_t(\mi_1)}.
	\end{split}
	\]
	In addition, arguing as in~\textsc{step I}, we see that there is a constant $C_{10}>0$ such that
	\[
	\norm{(X_\ell W^{(h)}_t f)(Y_\ell W^{(k)}_t g)}_{L^p(\beta)}\meg \norm{(\widetilde W^{(h,1,\ell)}_t f)(\widetilde  W^{(k,2,\ell)}_t g)}_{L^p(\beta)}+C_{10}  \norm{f}_{L^p(\beta)} \norm{g}_{B^{p,q}_{\alpha,\unif}(\beta)},
	\]
	where $\widetilde W^{(h,1,\ell)}_t f= f*(\eta X_\ell (t \Lc)^h h_t)$ and $\widetilde W^{(k,2,\ell)}_t g= g*(\eta Y_\ell (t \Lc)^k h_t)$. Now, if $\deg(Y_\ell)\meg m \grado/2=[\alpha/\grado]+1$, then arguing as above we see that there is a constant $C_{11}>1$ such that
	\[
	\begin{split}
		\norm{(\widetilde W^{(h,1,\ell)}_t f)(\widetilde  W^{(k,2,\ell)}_t g)}_{L^p(\beta)}&\meg \norm*{\sum_{j\in J}  \widetilde W^{(h,1,\ell)}_t (\widetilde \phi_j f)(\widetilde  W^{(k,2,\ell)}_t g)  }_{L^p(\beta)}\\
		&\meg \sum_{r=1}^N\norm*{\sum_{j\in J_r}  \widetilde W^{(h,1,\ell)}_t (\widetilde \phi_j f)(\widetilde  W^{(k,2,\ell)}_t g)  }_{L^p(\beta)}\\
		&\meg \sum_{r=1}^N\norm*{\norm*{  \widetilde W^{(h,1,\ell)}_t (\widetilde \phi_j f)(\widetilde  W^{(k,2,\ell)}_t g)  }_{L^p(\beta)}}_{\ell^p_j(J_r)}\\
		&\meg  \sum_{r=1}^N\norm*{\norm*{  \widetilde W^{(h,1,\ell)}_t (\widetilde \phi_j f)}_{L^p(\beta)}}_{\ell^p_j(J_r)} \norm{\widetilde  W^{(k,2,\ell)}_t g}_{L^\infty(\beta)}\\
		&\meg C_{11}t^{-\deg(Y_\ell)/\grado}\sum_{r=1}^N \norm*{  \widetilde W^{(h,1,\ell)}_t (\widetilde \phi^{(r)} f)}_{L^p(\beta)}  \norm{g}_{L^\infty(\beta)}\\
		&\meg C_{11}^2 t^{-\deg(Y_\ell)/\grado}\sum_{r=1}^N\bigg(   \norm{ X_\ell W^{(h)}_t (\widetilde \phi^{(r)} f)}_{L^p(\beta)}  + \norm{\widetilde \phi^{(r)} f}_{L^p(\beta)} \bigg)\norm{g}_{B^{p,q}_{\alpha,\unif}(\beta)},
	\end{split}
	\]
	so that by~\cite[Propositions 6.9]{BCP} we may find a constant $C_{12}>0$ such that
	\[
	\norm*{t^{m-\alpha/\grado} \norm{(X_\ell W^{(h)}_t f)(Y_\ell W^{(k)}_t g)}_{L^p(\beta)} }_{L^q_t(\mi_1)}\meg C_{12} \sum_{r=1}^N\norm{\widetilde \phi^{(r)} f}_{B^{p,q}_{\alpha }(\beta)}  \norm{g}_{B^{p,q}_{\alpha,\unif}(\beta)},
	\] 
	whence the desired estimate by Theorem~\ref{teo:6} as before.
	If, otherwise, $\deg(Y_\ell)> m \grado/2$, so that $\deg(X_\ell)\meg m\grado/2$, then there is a constant $C_{13}>0$ such that
	\[
	\begin{split}
		\norm{(\widetilde W^{(h,1,\ell)}_t f)(\widetilde  W^{(k,2,\ell)}_t g)}_{L^p(\beta)}&\meg \norm*{\sum_{j\in J}  \widetilde W^{(h,1,\ell)}_t (\widetilde \phi_j  f) \widetilde  W^{(k,2,\ell)}_t(\widetilde \psi_j g)  }_{L^p(\beta)}\\
		&\meg \sum_{r=1}^N\norm*{\sum_{j\in J_r} \widetilde W^{(h,1,\ell)}_t (\widetilde \phi_j  f) \widetilde  W^{(k,2,\ell)}_t(\widetilde \psi_j g)   }_{L^p(\beta)}\\
		&= \sum_{r=1}^N\norm*{\norm*{ \widetilde W^{(h,1,\ell)}_t (\widetilde \phi_j  f) \widetilde  W^{(k,2,\ell)}_t(\widetilde \psi_j g)  }_{L^p(\beta)}}_{\ell^p_j(J_r)}\\
		&\meg  \sum_{r=1}^N\norm*{\norm{\widetilde W^{(h,1,\ell)}_t (\widetilde \phi_j  f) }_{L^\infty(\beta)} \norm{\widetilde  W^{(k,2,\ell)}_t( \widetilde \psi_j g)}_{L^p(\beta)}}_{\ell^p_j(J_r)}\\
		&\meg C_{13} t^{-\deg(X_\ell)/\grado}\sum_{r=1}^N\norm*{\norm{\widetilde \phi_j f }_{L^\infty(\beta)} \norm{\widetilde  W^{(k,2,\ell)}_t( \widetilde \psi_j g)}_{L^p(\beta)}}_{\ell^p_j(J_r)}.
	\end{split}
	\]
	Using the fact that $p\meg q$, by Proposition~\ref{prop:2} we see that there is a constant $C_{14}>0$ such that
	\[
	\begin{split}
		&\norm*{ t^{m-\deg(X_\ell)/\grado-\alpha/\grado} \norm*{\norm{\widetilde \phi_j f }_{L^\infty(\beta)} \norm{\widetilde  W^{(k,2,\ell)}_t( \widetilde \psi_j g)}_{L^p(\beta)}}_{\ell^p_j(J_r)} }_{L^q_t(\mi_1)} \\
			&\qquad\meg   \norm*{\norm{\widetilde \phi_j f }_{L^\infty(\beta)} \norm*{t^{m-\deg(X_\ell)/\grado-\alpha/\grado}\norm{\widetilde  W^{(k,2,\ell)}_t( \widetilde \psi_j g)}_{L^p(\beta)} }_{L^q_t(\mi_1)}}_{\ell^p_j(J_r)}\\
		&\qquad\meg C_{14} \norm*{ \norm{\widetilde \phi_j f}_{B^{p,p}_{Q_*/p+\eps}(\beta)}  }_{\ell^p_j(J_r)} \sup_j\norm*{t^{m-\deg(X_\ell)/\grado-\alpha/\grado}\norm{\widetilde  W^{(k,2,\ell)}_t( \widetilde \psi_j g)}_{L^p(\beta)} }_{L^q_t(\mi_1)}
	\end{split}
	\]
	with $\eps$ as in~\textsc{step I}. Arguing as in~\textsc{step I} we then deduce the desired estimate.
	
	\textsc{Step III}   We now complete the proof. Observe first that there is a constant $C_{15}>0$ such that
	\[
	\norm{f g}_{L^p(\beta)}\meg \norm{f}_{L^{p }(\beta)}\norm{g}_{L^{\infty}(\beta)}\meg C_{15} \norm{f}_{B^{p ,q}_\alpha(\beta)} \norm{g}_{B^{p,q}_{\alpha,\unif}(\beta)}.
	\]
	By~\textsc{steps I} and~\textsc{II}, it only remains to estimate $\Bs^{p,q}_{\alpha,m}( W^{(h)}_1[ (W^{(k)}_1 f)(W^{(n)}_1 g) ] )$ for every $h,k,n=0,\dots, m-1$. Then, observe that
	\[
	\abs{W^{(m)}_t W^{(h)}_1[ (W^{(k)}_1 f)(W^{(n)}_1 g) ]}\meg C_1 t^m T_{b,1+t} [ (W^{(k)}_1 f)(W^{(n)}_1 g) ]\meg C_1 t^m 2^{Q_*/\grado}T_{b,2} [ (W^{(k)}_1 f)(W^{(n)}_1 g) ]
	\]
	for every $t\in (0,1]$, so that by Lemma~\ref{lem:3} there is a constant $C_{16}>1$ such that
	\[
	\begin{split}
		\norm{W^{(m)}_t W^{(h)}_1[ (W^{(k)}_1 f)(W^{(n)}_1 g) ]}_{L^p(\beta)}&\meg C_{16}\norm{(W^{(k)}_1 f)(W^{(n)}_1 g)}_{L^p(\beta)}\\
		&\meg C_{16}\norm{ W^{(k)}_1 f}_{L^p(\beta)}\norm{W^{(n)}_1 g}_{L^\infty(\beta)}\\
		&\meg C_{16}^2 \norm{f}_{L^p(\beta)}\norm{g}_{L^\infty(\beta)}\\
		&\meg C_{16}^3\norm{f}_{B^{p ,q}_\alpha(\beta)} \norm{g}_{B^{p,q}_{\alpha,\unif}(\beta)}.
	\end{split}
	\]
	The assertion follows.	 
\end{proof}

We now pass to the case $p<q$. We begin with a lemma that proves the consistency of the definition of the spaces $M^{p,q}_\alpha$ introduced below.

\begin{lem}\label{lem:57}
	Take $p,q\in [1,\infty]$ and $\alpha>0$.
	In addition, take $\delta>0$, $R\Meg 2$, a $(\delta,R)$-lattice  $(x_j)_{j\in J}$  on $G$, and two bounded families $(\phi_j)_{j\in J}$ and $(\psi_{j'})_{j'\in J'}$ in $C^\infty_c(G)$. Set $\widetilde \phi_j\coloneqq \phi_j(x_j^{-1}\,\cdot\,)$   for every $j\in J$, and assume that $\sum_j  \widetilde \phi_j=1$. Then, there is a constant $C>0$ such that 
	\[
	\norm*{ \sum_{j'} a'_{j'} \widetilde \psi_{j'} f }_{B^{p,q}_\alpha(\beta_L)}\meg C \norm{a'_{j'}}_{\ell^p_{j'}(J')} \sup_{\norm{a_j}_{\ell^p_j(J)}\meg 1} \norm*{  \sum_j a_j \widetilde \phi_j f }_{B^{p,q}_\alpha(\beta_L)}
	\]
	for every $(\delta,R)$-lattice  $(x'_{j'})_{j'\in J'}$ on $G$, for every $f\in \Sc'(G)$, and for every $(a'_{j'})\in \ell^p(J')$, where  $\widetilde \psi_{j'}\coloneqq \psi_{j'}(x'^{-1}_{j'}\,\cdot\,)$  for every $j'\in J'$.
\end{lem}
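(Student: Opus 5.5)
The plan is to write each $\widetilde\psi_{j'}$ in terms of the $\widetilde\phi_j$, which reduces the statement to a finite-overlap bookkeeping argument together with uniform boundedness of multiplication by smooth bounded functions. First I would insert the partition of unity:
\[
\sum_{j'} a'_{j'}\widetilde\psi_{j'} f=\sum_{j'} a'_{j'}\widetilde\psi_{j'}\sum_j \widetilde\phi_j f .
\]
Take $R'>0$ such that all $\phi_j$ and $\psi_{j'}$ are supported in $B(e,R')$. For each $j'$, only those $j$ with $d(x_j,x'_{j'})<2R'$ contribute. By the counting argument in the proof of Lemma~\ref{lem:50}, which uses disjointness of the balls $B(x_j,\delta)$ and $B(x'_{j'},\delta)$ together with local doubling of $\beta_L$, each $j'$ meets at most $N$ indices $j$, and each $j$ is met by at most $N$ indices $j'$. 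Here $N$ depends only on $\delta,R,R'$.

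Next I would split $J'$ into $N'$ classes $J'_1,\dots,J'_{N'}$ in which any two distinct centres satisfy $d(x'_{j'},x'_{j''})\Meg 4R'+2$; Lemma~\ref{lem:50} provides such a partition. Fix a class $J'_r$. For each $j$ there is then at most one $j'=:\sigma_r(j)\in J'_r$ with $d(x_j,x'_{j'})<2R'$, so
\[
\sum_{j'\in J'_r} a'_{j'}\widetilde\psi_{j'} f=\Psi_r\sum_{j} b^{(r)}_j\widetilde\phi_j f,\qquad \Psi_r\coloneqq\sum_{j'\in J'_r}\widetilde\psi_{j'},\quad b^{(r)}_j\coloneqq a'_{\sigma_r(j)},
\]
with $b^{(r)}_j=0$ when no such $j'$ exists. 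The identity holds because the supports of the $\widetilde\psi_{j'}$, $j'\in J'_r$, are pairwise disjoint. Each $\sigma_r(j)$ has at most $N$ preimages, so
\[
\norm{b^{(r)}}_{\ell^p(J)}\meg N^{1/p}\norm{a'}_{\ell^p(J')}.
\]

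It remains to remove the multiplier $\Psi_r$. The functions $\Psi_r$ are uniformly bounded in $W^{\infty,\infty}(\beta)$, because the $\psi_{j'}$ form a bounded family in $C^\infty_c(G)$ and the translated supports do not overlap. By Proposition~\ref{prop:7} they are therefore uniformly bounded in $B^{\infty,q}_\alpha(\beta_L)$ and in $L^\infty$. Theorem~\ref{teo:6}(i) with $p_1=p_3=p$, $p_2=p_4=\infty$ then gives
\[
\norm{\Psi_r u}_{B^{p,q}_\alpha(\beta_L)}\meg C\norm{u}_{B^{p,q}_\alpha(\beta_L)}\bigl(\norm{\Psi_r}_{L^\infty}+\norm{\Psi_r}_{B^{\infty,q}_\alpha(\beta_L)}\bigr)+C\norm{u}_{L^p}\norm{\Psi_r}_{B^{\infty,q}_\alpha},
\]
and the last term is controlled because $B^{p,q}_\alpha\subseteq L^p$ for $\alpha>0$. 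Applying this with $u=\sum_j b^{(r)}_j\widetilde\phi_j f$, normalising $b^{(r)}$ to have $\ell^p$ norm at most $1$, and summing over the $N'$ classes yields the claim.

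The main obstacle is the multiplier step. Theorem~\ref{teo:6}(i) requires $u$ to lie in $B^{p,q}_\alpha\cap L^p$ a priori. If the supremum on the right-hand side is finite this holds, since $\widetilde\phi_j f$ is then in $B^{p,q}_\alpha$; otherwise there is nothing to prove. The other point to watch is that every constant is uniform in the lattice $(x'_{j'})$; this holds because all counting bounds depend only on $\delta$, $R$, and $R'$.
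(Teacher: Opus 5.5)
Your proposal is correct and follows essentially the same route as the paper. The paper also uses Lemma~\ref{lem:50} to split $J'$ into finitely many classes on which the sets $\Set{j\colon \widetilde\psi_{j'}\widetilde\phi_j\neq 0}$ are pairwise disjoint, rewrites each class sum as $\bigl(\sum_{j'\in J'_h}\widetilde\psi_{j'}\bigr)\sum_j a^{(h)}_j\widetilde\phi_j f$ with $\norm{a^{(h)}}_{\ell^p(J)}\meg N^{1/p}\norm{a'}_{\ell^p(J')}$, and concludes via Theorem~\ref{teo:6} and Proposition~\ref{prop:7}. One minor correction: your factorisation identity rests on the uniqueness of $\sigma_r(j)$ rather than on the disjointness of the supports of the $\widetilde\psi_{j'}$, and your separation condition does guarantee that uniqueness.
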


\begin{proof}
	Observe that, by Lemma~\ref{lem:50}, there is $N\in\N$ such that $\card(J_{j'})\meg N$ for every $j'\in J'$, where $J_{j'}=\Set{j\in J\colon \widetilde \psi_{j'} \widetilde \phi_j\neq 0}$. In addition, by the same reference, we may partition $J'$ in subsets $J'_1,\dots, J'_N$ such that the $J_{j'}$, $j'\in J'_h$, are pairwise disjoint for every $h=1,\dots, N$. Then, take $(a'_{j'})\in \ell^p(J')$ and set $a^{(h)}_j\coloneqq \sum_{j'\in J'_h} \chi_{J_{j'}}(j) a'_{j'}$, so that\footnote{Notice that, for every $h=1,\dots, N$ and for every $j\in J$, there is at most one $j'\in J'$ such that $j\in J_{j'}$, so that $a^{(h)}_j=a'_{j'}$ if such a $j'$ exists, and $a^{(h)}_j=0$ otherwise.}
	\[
	\begin{split}
		\sum_{j'} a'_{j'} \widetilde \psi_{j'}&=\sum_{h=1}^N \sum_{j'\in J'_h}\sum_{j\in J_{j'}} a'_{j'}\widetilde \psi_{j'}  \widetilde \phi_{j}\\
		&= \sum_{h=1}^N \sum_{j'\in J'_h}\sum_{j\in J_{j'}} \widetilde \psi_{j'}  a^{(h)}_j\widetilde \phi_{j}\\
		&=  \sum_{h=1}^N \sum_{j'\in J'_h}  \widetilde \psi_{j'}  \sum_{j\in J} a^{(h)}_j\widetilde \phi_{j}.
	\end{split}
	\]
	Consequently, by Theorem~\ref{teo:6} there is a constant $C_1>0$ such that
	\[
	\begin{split} 
		\norm*{ \sum_{j'} a'_{j'} \widetilde \psi_{j'} f }_{B^{p,q}_\alpha(\beta_L)}\meg \sum_{h=1}^N \norm*{  \sum_{j'\in J'_h}  \widetilde \psi_{j'}  \sum_{j\in J} a^{(h)}_j\widetilde \phi_{j} f }_{B^{p,q}_\alpha(\beta_L)} \meg \sum_{h=1}^N \norm*{  \sum_{j'\in J'_h}  \widetilde \psi_{j'}  }_{B^{\infty,q}_\alpha(\beta_L)}\norm*{  \sum_{j\in J} a^{(h)}_j\widetilde \phi_{j} f }_{B^{p,q}_\alpha(\beta_L)}.
	\end{split}
	\]
	Since clearly $ \sum_{j'\in J'_h}  \widetilde \psi_{j'} \in W^{\infty,\infty}(\beta_L)\subseteq B^{\infty,q}_\alpha(\beta_L)$ (cf.~Proposition~\ref{prop:7}) and   $\norm{a^{(h)}_j}_{\ell^p_j(J)}\meg N^{1/p} \norm{a'_{j'}}_{\ell^p_{j'}(J')}$ for every $h=1,\dots,N$,  the assertion follows.
\end{proof}

\begin{deff}\label{def:4}
	Take $p,q\in [1,\infty]$ and $\alpha>0$. We define $M^{p,q}_\alpha$ as the space of $f\in \Sc'(G)$ such that
	\[
	\norm{f}_{M^{p,q}_\alpha}\coloneqq \sup_{\norm{a_j}_{\ell^p_j(J)}\meg 1} \norm*{ \sum_j a_j \widetilde \phi_j f }_{B^{p,q}_\alpha(\beta_L)}<\infty,
	\]
	endowed with the corresponding topology, where the $\widetilde \phi_j$ are as in Lemma~\ref{lem:57}.
\end{deff}

Observe that, if $N\coloneqq\max \sum_j \chi_{B(x_j,  (R+1)\delta)}$, then we may take $(\phi_j)$ such that $\phi_j\Meg 1/N$ on $B(e,\delta)$ for every $j\in J$.\footnote{For example, one may take $\psi\in C^\infty_c(G)$ in such a way that $\chi_{B(e,R\delta)}\meg\psi\meg \chi_{B(e,(R+1)\delta)}$, so that $\Psi\coloneqq \sum_j \psi(x_j^{-1}\,\cdot\,)$ takes values in $[1,N]$, and then define $\phi_j\coloneqq \psi/\Psi(x_j\,\cdot\,)$ for every $j\in J$. } In this case, it is readily seen that there is a constant $c>0$ such that  
\[
\frac 1 c\norm{a_j}_{\ell^p_j(J)}\meg \norm{a_j \widetilde \phi_j}_{B^{p,q}_\alpha(\beta_L)} \meg c\norm{a_j}_{\ell^p_j(J)}.
\]
Consequently, the space $M^{p,q}_\alpha$ is defined essentially in the same way as $\Mc(B^{p,q}_\alpha(\beta_L))$, except for the fact that, instead of testing the continuity of pointwise multiplication on the whole of $B^{p,q}_\alpha(\beta_L)$, we restrict to a particularly simple closed vector subspace (namely, the one generated by the $\widetilde \phi_j$).

\begin{lem}\label{lem:58}
	Take $p,q\in [1,\infty]$ and $\alpha >0$. Then, $\Mc(B^{p,q}_\alpha(\beta)) \subseteq M^{p,q}_\alpha\subseteq B^{p,q}_{\alpha,\unif}(\beta_L)$ continuously.
\end{lem}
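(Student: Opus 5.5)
By Proposition~\ref{prop:5}, we may reduce to the case $\beta=\beta_L$, since multiplication by a power of $\Delta_L$ commutes with pointwise multiplication. The lemma then consists of two inclusions, which we treat separately; each is a direct test of the relevant norm against a suitable family of test functions.

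\emph{First inclusion, $\Mc(B^{p,q}_\alpha(\beta))\subseteq M^{p,q}_\alpha$.} Take $u\in\Mc(B^{p,q}_\alpha(\beta))$ and a finitely supported $(a_j)$ with $\norm{a_j}_{\ell^p_j(J)}\meg 1$. Then $\sum_j a_j\widetilde\phi_j\in C^\infty_c(G)\subseteq\mathring B^{p,q}_\alpha(\beta)$, so
\[
\norm*{\sum_j a_j\widetilde\phi_j u}_{B^{p,q}_\alpha(\beta)}\meg \norm{u}_{\Mc(B^{p,q}_\alpha(\beta))}\norm*{\sum_j a_j \widetilde\phi_j}_{B^{p,q}_\alpha(\beta)}.
\]
It remains to show that $\norm{\sum_j a_j\widetilde\phi_j}_{B^{p,q}_\alpha(\beta)}\meg C\norm{a_j}_{\ell^p_j(J)}$. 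Partition $J=J_1\cup\dots\cup J_N$ as in Lemma~\ref{lem:50}, so that within each $J_h$ the supports of the $\widetilde\phi_j$ are far apart. For each $h$ we have $\sum_{j\in J_h}a_j\widetilde\phi_j=\big(\sum_{j\in J_h}\widetilde\phi_j\big)\cdot A_h$, where $A_h$ is a $W^{\infty,p}$-function equal to $a_j$ times a fixed bump translated near $x_j$. A concrete choice is $A_h=\sum_{j\in J_h}a_j\widetilde\psi_j$ with $\psi=1$ on the supports of the $\phi_j$.

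Since the supports are disjoint, $\norm{A_h}_{W^{k,p}}\lesssim\norm{a}_{\ell^p}$. Proposition~\ref{prop:7} then gives $\norm{A_h}_{B^{p,q}_\alpha}\lesssim\norm{a}_{\ell^p}$, and Theorem~\ref{teo:6} (case $p_2=\infty$) together with $\sum_{j\in J_h}\widetilde\phi_j\in W^{\infty,\infty}$ gives the bound. Alternatively, and more simply, we can apply Proposition~\ref{prop:7} directly to $\sum_j a_j\widetilde\phi_j$: its $X$-derivatives are $\sum_j a_j X\widetilde\phi_j$, whose $L^p$ norm is $\lesssim N^{1/p'}\norm{a}_{\ell^p}$ by bounded overlap. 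The general case then follows by monotone approximation in the supremum, so we may drop the finite-support assumption.

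\emph{Second inclusion, $M^{p,q}_\alpha\subseteq B^{p,q}_{\alpha,\unif}(\beta_L)$.} Fix a nonzero $\eta\in C^\infty_c(G)$ and $x\in G$. The finitely many $j$ with $\widetilde\phi_j\neq0$ on $\supp\eta(x\,\cdot\,)$ number at most $N$, independently of $x$. For these $j$ we have $\sum_{j}\widetilde\phi_j=1$ on that support, so
\[
\eta(x\,\cdot\,)f=\eta(x\,\cdot\,)\sum_{j\in J_x}a_j\widetilde\phi_j f \qquad\text{with } a_j=1 \text{ on } J_x,
\]
where $\norm{a}_{\ell^p}\meg N^{1/p}$. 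By Theorem~\ref{teo:6}(i) with $p_1=p$, $p_2=\infty$, $p_3=\infty$, $p_4=p$, or more simply the first estimate of Lemma~\ref{lem:51b}, multiplication by $\eta(x\,\cdot\,)$ is bounded on $B^{p,q}_\alpha(\beta_L)$ uniformly in $x$. Hence $\norm{\eta(x\,\cdot\,)f}_{B^{p,q}_\alpha(\beta_L)}\meg CN^{1/p}\norm{f}_{M^{p,q}_\alpha}$.

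Both steps are routine; the only point requiring care is the uniform $\ell^p$ bound $\norm{\sum_ja_j\widetilde\phi_j}_{B^{p,q}_\alpha}\lesssim\norm{a}_{\ell^p}$, for which we rely on the bounded overlap of Lemma~\ref{lem:50} and Proposition~\ref{prop:7}.
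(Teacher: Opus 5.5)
Your proof is correct and is essentially the paper's. The first inclusion is proved there exactly as in your ``more simply'' variant: the map $(a_j)\mapsto\sum_j a_j\widetilde\phi_j$ is bounded from $\ell^p(J)$ into $W^{\infty,p}(\beta_L)$, and Proposition~\ref{prop:7} then gives the Besov bound. For the second inclusion the paper applies Lemma~\ref{lem:57} to the translated lattice $(xx_j)_{j\in J}$ with $a'=\chi_{\Set{j_0}}$, which is just a packaged form of your argument: localize to the boundedly many $\widetilde\phi_j$ meeting $\supp\eta(x\,\cdot\,)$, then use that multiplication by $\eta(x\,\cdot\,)$ is bounded uniformly in $x$.

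There is one small slip in the second inclusion. Theorem~\ref{teo:6}(i) with $p_3=\infty$, $p_4=p$ would require an $L^\infty$ bound on $\sum_{j\in J_x}\widetilde\phi_j f$, and no such bound is available when $\alpha$ is small. The right choice is $p_1=p_3=p$ and $p_2=p_4=\infty$, which is what underlies Lemma~\ref{lem:51b}, so your fallback citation of that lemma is the correct one.
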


\begin{proof}
	The continuity of the inclusion $M^{p,q}_\alpha\subseteq B^{p,q}_{\alpha,\unif}(\beta_L)$ follows from Lemma~\ref{lem:57}, choosing $J'=J$, $x'_j\coloneqq x x_j$, $a'_j=\chi_{\Set{j_0}}(j)$ for some $j_0\in J$, and any non-zero $\psi_{j_0}$.  
	For the other inclusion, by Proposition~\ref{prop:5}  it will suffice to show that the mapping
	\[
	\ell^p(J)\ni(a_j)\mapsto \sum_j a_j \widetilde \phi_j \in B^{p,q}_\alpha(\beta_L)
	\]
	is continuous. In fact, it is sufficient to observe that the same mapping is continuous from $\ell^p(J)$ into $W^{p,\infty}(\beta_L)$, and to apply Proposition~\ref{prop:7} (with the aid of Proposition~\ref{prop:5}).
\end{proof}

\begin{teo}\label{teo:21}
	Take $p,q\in [1,\infty]$ and take $\alpha\Meg Q_*/p$ such that either $q=1$ and $\alpha>0$, or $\alpha>Q_*/p$. 
	Then, $\Mc(B^{p,q}_\alpha(\beta))=M^{p,q}_\alpha$. In addition, the canonical bilinear mapping
	\[
	B^{p,q}_\alpha(\beta)\times M^{p,q}_\alpha\ni (f,g)\mapsto f g \in B^{p,q}_\alpha(\beta)
	\]
	is continuous.
\end{teo}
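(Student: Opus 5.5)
By Proposition~\ref{prop:5} we first reduce to $\beta=\beta_L$. Lemma~\ref{lem:58} already gives $\Mc(B^{p,q}_\alpha(\beta))\subseteq M^{p,q}_\alpha$ continuously. It therefore suffices to prove the continuity of the bilinear map $(f,g)\mapsto fg$ from $B^{p,q}_\alpha(\beta)\times M^{p,q}_\alpha$ into $B^{p,q}_\alpha(\beta)$. We shall also use $M^{p,q}_\alpha\subseteq B^{p,q}_{\alpha,\unif}(\beta_L)\subseteq L^\infty(\beta)$, which follows from Lemmas~\ref{lem:58} and~\ref{lem:53} under the stated assumptions on $\alpha$ and $q$. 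The strategy is to rerun the proof of Theorem~\ref{teo:20}, with $m=2[\alpha/\grado]+2$ and the decomposition of Corollary~\ref{lem:28} into $\Pi_f g$, $\Pi_g f$, $\Pi(f,g)$ and the low-frequency remainder.

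The remainder in \textsc{step III} and the term $L^p$ are handled exactly as before, using only $\norm{g}_{L^\infty}\lesssim\norm{g}_{M^{p,q}_\alpha}$. The terms in which the high-frequency part falls on $f$ are also treated as in Theorem~\ref{teo:20}. These are $\Pi_f g$, and the part of $\Pi(f,g)$ with $\deg(Y_\ell)\meg m\grado/2$. One localizes the convolutors, splits $J$ into the classes $J_1,\dots,J_N$, and bounds everything by $\sum_h\norm{\widetilde\phi^{(h)}f}_{B^{p,q}_\alpha}\norm{g}_{L^\infty}$. That sum is then controlled by Theorem~\ref{teo:6}, using $\widetilde\phi^{(h)}\in W^{\infty,\infty}$. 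None of this uses $p\meg q$.

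The main obstacle is the pair of terms where the high frequency falls on $g$: $\Pi_g f$, and the part of $\Pi(f,g)$ with $\deg(X_\ell)\meg m\grado/2$. In Theorem~\ref{teo:20}, $p\meg q$ was used there to move $\norm{\cdot}_{L^q_t}$ inside $\ell^p_j$. Instead, I keep $t$ fixed. After localizing, the quantity to control is
\[
\norm*{t^{-\alpha/\grado}\norm*{\norm{\widetilde\phi_jf}_{L^\infty}\,\norm{\widetilde W^{(m)}_t(\widetilde\psi_j g)}_{L^p}}_{\ell^p_j(J_h)}}_{L^q_t(\mi_1)}.
\]
Set $a_j\coloneqq\norm{\widetilde\phi_jf}_{L^\infty}$. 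Since the supports $B(x_j,R+3)$ for $j\in J_h$ are disjoint, for each fixed $t$ the inner $\ell^p$ sum equals
\[
\norm*{\widetilde W^{(m)}_t\Big(\sum_{j\in J_h}a_j\widetilde\psi_jg\Big)}_{L^p}.
\]
This holds up to the usual harmless error from replacing $\widetilde W$ by $W$, which is $O(t^m)$. Hence the whole expression is bounded by $\norm{\sum_{j\in J_h}a_j\widetilde\psi_jg}_{B^{p,q}_\alpha}$, via Proposition~\ref{prop:34}. By Lemma~\ref{lem:57}, this is at most $C\norm{(a_j)}_{\ell^p}\norm{g}_{M^{p,q}_\alpha}$.

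Finally, $\norm{(a_j)}_{\ell^p}\lesssim\norm{f}_{B^{p,q}_\alpha}$ when $\alpha>Q_*/p$, exactly as at the end of \textsc{step I} of Theorem~\ref{teo:20}. This uses Propositions~\ref{prop:2} and~\ref{prop:30}, through $B^{p,q}_\alpha\subseteq B^{p,p}_{Q_*/p+\eps}$. In the endpoint case $q=1$, $\alpha=Q_*/p$, one uses instead the embedding $B^{p,p}_{Q_*/p}\supseteq B^{p,1}_{Q_*/p}\subseteq L^\infty$ localized through Proposition~\ref{prop:30}. The analogous computation, with the extra factor $t^{-\deg(X_\ell)/\grado}$ absorbed into the $t$-weight, handles the corresponding part of $\Pi(f,g)$ via the variant of Proposition~\ref{prop:34} with $X_\ell h_t$. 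Note that $m-\deg(X_\ell)/\grado>\alpha/\grado$ there, so the required equivalence of norms holds.
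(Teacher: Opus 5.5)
Your overall route is the paper's. You reduce via Lemma~\ref{lem:58} and treat the terms where the high frequency falls on $f$ exactly as in Theorem~\ref{teo:20}, using only $\norm{g}_{L^\infty}\lesssim\norm{g}_{M^{p,q}_\alpha}$. For $\Pi_g f$ and its $\Pi(f,g)$ counterpart you keep $t$ fixed, reassemble the $\ell^p_j(J_h)$-sum into $\widetilde W^{(m)}_t\big(\sum_{j\in J_h}a_j\widetilde\psi_j g\big)$, and conclude with Proposition~\ref{prop:34} and Lemma~\ref{lem:57}. You then bound $\norm{(a_j)}_{\ell^p}\lesssim\norm{f}_{B^{p,q}_\alpha}$ for $\alpha>Q_*/p$.

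There are two small slips. First, $\widetilde W^{(m)}_t(\widetilde\psi_j g)$ is supported in $B(x_j,R+4)$, not $B(x_j,R+3)$, so with separation $2R+6$ the $\ell^p$-sum is not literally equal to that norm. The fix is to keep the cut-off $\chi_{B(x_j,R+1)}$ inherited from the $f$-factor. On $B(x_j,R+1)$ only the $j$-th summand of $\widetilde W^{(m)}_t(\sum_{j'\in J_h}a_{j'}\widetilde\psi_{j'}g)$ survives, which gives the needed inequality. Alternatively, swap roles as the paper does, taking $a_j=\norm{\widetilde\psi_j f}_{L^\infty}$ and $\widetilde W^{(m)}_t(\widetilde\phi_j g)$. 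Second, in the $\Pi(f,g)$ term the operator acting on $g$ is $Y_\ell$, not $X_\ell$.

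The genuine gap is the endpoint $q=1$, $\alpha=Q_*/p$ with $1<p<\infty$. There you need $\norm{(\norm{\widetilde\phi_j f}_{L^\infty})_j}_{\ell^p}\lesssim\norm{f}_{B^{p,1}_{Q_*/p}}$, and Proposition~\ref{prop:30} does not provide it. It localizes only the Triebel--Lizorkin spaces and the diagonal Besov spaces $B^{p,p}_\alpha$. Moreover, $B^{p,p}_{Q_*/p}\not\subseteq L^\infty$ for $p>1$ (already on $\R^n$). So your chain $B^{p,p}_{Q_*/p}\supseteq B^{p,1}_{Q_*/p}\subseteq L^\infty$ gives you either the localization or the $L^\infty$ bound, never both.

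What is missing is an $\ell^p$-localization of $B^{p,1}_{Q_*/p}$ itself, which the paper proves by hand:
\begin{itemize}
\item bound $\norm{\widetilde\phi_j f}_{L^\infty}$ by $\norm{\widetilde\phi_j f}_{B^{p,1}_{Q_*/p}}$ (Proposition~\ref{prop:2});
\item express the latter through the localized convolutors of Proposition~\ref{prop:34}, and split $J$ into the classes $J_r$;
\item use Minkowski's inequality to move $\ell^p_j(J_r)$ inside $L^1_t(\mi_1)$; this is precisely where $q=1\meg p$ enters, as the mirror image of the use of $p\meg q$ in Theorem~\ref{teo:20};
\item use disjointness of supports to replace $\norm{(\norm{\widetilde W^{(m)}_t(\widetilde\phi_j f)}_{L^p})_j}_{\ell^p(J_r)}$ by $\norm{\widetilde W^{(m)}_t(\widetilde\phi^{(r)}f)}_{L^p}$;
\item finally, apply Theorem~\ref{teo:6} with $p_2=p_4=\infty$ to bound $\norm{\widetilde\phi^{(r)}f}_{B^{p,1}_{Q_*/p}}$ by $\norm{f}_{B^{p,1}_{Q_*/p}}$.
\end{itemize}
For $p=1$ your argument does go through, since $B^{1,1}$ is covered by Proposition~\ref{prop:30}.
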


This result extends~\cite[Theorem 4.3]{BPV3} to the case of general filtrations and general $\alpha$. The proof is different.
 
Observe that this result, combined with Theorem~\ref{teo:20} and Proposition~\ref{prop:21}, shows that $M^{p,q}_\alpha=B^{p,q}_{\alpha,\unif}(\beta_L)$ when $p\meg q$, and that $M^{\infty,q}_\alpha=B^{\infty,q}_\alpha(\beta)$.

\begin{proof}
	By means  of Proposition~\ref{prop:5}, we may reduce to the case $\beta=\beta_L$. Take $(x_j)$, $(\phi_j)$, and $(\widetilde \phi_j)_{j\in J}$ as in Lemma~\ref{lem:57}. In addition, by Lemma~\ref{lem:50} we may find $N\in\N$ and a partition $J_1,\dots, J_N$ of $J$ such that the $d(x_j,x_{j'})\Meg 2R'+6$  for every two distinct $j,j'\in J_h$, $h=1,\dots, N$, where $R'>0$ is such that each $\phi_j$ is supported in $B(e, R')$. Take $\psi\in C^\infty_c(G)$ such that $\chi_{B(e,R'+2)}\meg \psi\meg \chi_{B(e,R'+3)}$, and define $\widetilde \psi_j\coloneqq \psi_j(x_j^{-1}\,\cdot\,)$ for every $j\in J$.
	
	Observe that the continuous  inclusion $\Mc(B^{p,q}_\alpha(\beta))\subseteq M^{p,q}_\alpha$ follows from Lemma~\ref{lem:58}, so that we may reduce to proving that the bilinear mapping $B^{p,q}_\alpha(\beta)\times M^{p,q}_\alpha \ni(f,g)\mapsto f g\in  B^{p,q}_\alpha(\beta)$ is continuous. Since $M^{p,q}_\alpha\subseteq B^{p,q}_{\alpha,\unif}(\beta)$ by Lemma~\ref{lem:58}, arguing as in the proof of Theorem~\ref{teo:20}, we may reduce to estimating $\Bs^{p,q}_{\alpha,m}(\Pi_g ^{(t')}f)$ and  $\Bs^{p,q}_{\alpha,m}((\widetilde W^{(h,1,\ell)}_t f) (\widetilde W^{(k,2,\ell)}_t g)  )$ for $h,k=0,\dots, m-1$ and $\ell\in L$ such that $\deg(Y_\ell)> m\grado/2$, with the notation of the proof of Theorem~\ref{teo:20}.   
	
	Concerning $\Bs^{p,q}_{\alpha,m}(\Pi_g^{(t')} f)$, arguing as in the proof of Theorem~\ref{teo:20} we see that there is a constant $C_1>0$ such that
	\[
	\begin{split}
		\Bs^{p,q}_{\alpha,m}(\Pi_g^{(t')} f)&\meg C_1 \norm{f}_{B^{p,q}_\alpha(\beta)} \norm{g}_{B^{p,q}_{\alpha,\unif}(\beta)}+ C_1\sum_{h=1}^N \norm*{t^{-\alpha/\grado} \norm*{\norm{\widetilde \psi_j f}_{L^\infty(\beta)}  \norm{\widetilde W^{(m)}_t(\widetilde \phi_j g)}_{L^p(\beta)}  }_{\ell^p_j(J_h)}   }_{L^q_t(\mi_1)}\\
			&=C_1 \norm{f}_{B^{p,q}_\alpha(\beta)} \norm{g}_{B^{p,q}_{\alpha,\unif}(\beta)}+ C_1\sum_{h=1}^N \norm*{t^{-\alpha/\grado} \norm*{ \widetilde W^{(m)}_t\bigg(\sum_{j\in J_h}a_j \widetilde \phi_j g\bigg)}_{L^p(\beta)}     }_{L^q_t(\mi_1)}\\
	\end{split}
	\]
	where $a_j\coloneqq \norm{\widetilde \psi_j f}_{L^\infty(\beta)} $ for every $j\in J$. Now, by Proposition~\ref{prop:34} and Lemma~\ref{lem:57} we see that there is a constant $C_2>0$ such that
	\[
	\begin{split}
	\norm*{t^{-\alpha/\grado} \norm*{ \widetilde W^{(m)}_t\bigg(\sum_{j\in J_h}a_j \widetilde \phi_j g\bigg)}_{L^p(\beta)}     }_{L^q_t(\mi_1)}\meg C_2 \norm{a_j}_{\ell^p_j(J_h)} \norm{g}_{M^{p,q}_\alpha}. 
	\end{split}
	\]
	If $\alpha >Q_*/p$, then the desired estimate   follows  from the fact that, by Propositions~\ref{prop:2} and~\ref{prop:30}, there is a constant $C_3>1$ such that
	\[
	\begin{split}
	\norm{a_j}_{\ell^p_j(J_h)}&\meg C_3 \norm*{\norm{\widetilde \phi_j f}_{B^{p,p}_{Q_*/p+\eps}(\beta)} }_{\ell^p_j(J)}\\
		&\meg C_3^2 \norm{f}_{B^{p,p}_{Q_*/p+\eps}(\beta)}\\
		&\meg C_3^3 \norm{f}_{B^{p,q}_\alpha(\beta)}
	\end{split}
	\]
	for any fixed $\eps\in (0,\alpha-Q_*/p)$. If, otherwise, $\alpha=Q_*/p$, then $q=1$ and $p<\infty$,  so that by Proposition~\ref{prop:2} there is a constant $C_4>1$ such that
	\[
	\begin{split}
		\norm{a_j}_{\ell^p_j(J_h)}&\meg C_4 \norm*{\norm{\widetilde \phi_j f}_{B^{p,1}_{Q_*/p}(\beta)} }_{\ell^p_j(J)}\\
			&\meg C_4^2 \norm*{ t^{-\alpha/\grado}\norm{\widetilde \phi_j f}_{L^p(\beta)} }_{\ell^p_j(J)}+ C_4^2\norm*{\norm*{ t^{-\alpha/\grado} \norm{\widetilde W^{(m)}_t(\widetilde \phi_j f)}_{L^p(\beta)}}_{L^1_t(\mi_1)} }_{\ell^p_j(J)}\\
			&\meg C_4^2 \norm{f}_{L^p(\beta)}+ C_4^2\sum_{r=1}^N \norm*{\norm*{ t^{-\alpha/\grado} \norm{\widetilde W^{(m)}_t(\widetilde \phi_j f)}_{L^p(\beta)}}_{L^1_t(\mi_1)} }_{\ell^p_j(J_r)}\\
			&\meg C_4^2 \norm{f}_{L^p(\beta)}+ C_4^2\sum_{r=1}^N \norm*{ t^{-\alpha/\grado}\norm*{ \norm{\widetilde W^{(m)}_t(\widetilde \phi_j f)}_{L^p(\beta)}}_{\ell^p_j(J_r)}}_{L^1_t(\mi_1)} \\
			&=C_4^2 \norm{f}_{L^p(\beta)}+ C_4^2\sum_{r=1}^N \norm*{   t^{-\alpha/\grado}\norm{  \widetilde W^{(m)}_t(\widetilde \phi^{(r)} f)}_{L^p(\beta)} }_{L^1_t(\mi_1)},
	\end{split}
	\]
	so that the desired estimate follows from Corollary~\ref{cor:20}.
	
	The other term may be estimated in a similar way.
\end{proof}

\section{Characterization by Differences}\label{sec:7}

\begin{deff}
	Define $\Delta_y f(x) \coloneqq f(x y)-f(x)$ for every function $f$ on $G$ and for every $x,y\in G$.  Define inductively $\Delta_y^{(h)}$, $h\in\N$, so that $\Delta_y^{(1)}=\Delta_y$ and $\Delta_y^{(h+1)}=\Delta_y \Delta_y^{(h)}$ for every $h\in\N$.
	
	In addition, set $\dd_0\coloneqq \min\Set{\lambda>0\colon \gf_\lambda \neq \Set{0}}$ and define 
	\[
	(S^q_{\alpha,h} f)(x)\coloneqq \norm*{t^{-\alpha-Q_*} \int_{B(e,t)} \abs{(\Delta_y^{(h)} f)(x)}\,\dd \beta(y)  }_{L^q_t(\mi_1)}
	\]
	for every $q\in [1,\infty]$, for every $\alpha>0$, for every $h\in\N$, for every $x\in G$, and for every $\beta$-measurable function $f$ on $G$. We simply write $S^q_{\alpha}$ instead of $S^q_{\alpha,1}$.
\end{deff}

Notice that the functional $S^q_{\alpha,h}  $ is equivalent to the somewhat more natural functional
\[
(\tilde S^q_{\alpha,h} f)(x)\coloneqq \norm*{t^{-\alpha } \dashint_{B(e,t)} \abs{(\Delta^{(h)}_y f)(x)}\,\dd \beta(y)  }_{L^q_t(\mi_1)}.
\]
We prefer to use $S^q_{\alpha,h}$ for technical reasons.  

\begin{teo}\label{teo:11}
	Take $p,q\in [1,\infty]$ and $\alpha\in (0,\dd_0)$. Then, there is a constant $C>1$ such that the following hold:
	\begin{enumerate}
		\item[\textnormal{(i)}] for every $f\in \Sc'(G)$,
		\begin{equation}\label{eq:2}
		\frac{1}{C}\norm{f}_{B^{p,q}_\alpha(\beta)}\meg \norm{f}_{L^p(\beta)}+\norm*{ \abs{y}_*^{-\alpha-Q_*/q} \norm{\Delta_y f}_{L^p(\beta)} }_{L^q_y(B(e,1),\beta)} \meg C\norm{f}_{B^{p,q}_\alpha(\beta)}; 
		\end{equation}
		
		\item[\textnormal{(ii)}] if $p\in (1,\infty)$ and $q\in [1,\infty]$, then for every $f\in \Sc'(G)$,
		\begin{equation}\label{eq:3}
		\frac{1}{C}\norm{f}_{F^{p,q}_\alpha(\beta)}\meg \norm{f}_{L^p(\beta)}+\norm{S^{q}_\alpha f }_{L^p(\beta)} \meg C\norm{f}_{F^{p,q}_\alpha(\beta)}.
		\end{equation}
		\end{enumerate}
\end{teo}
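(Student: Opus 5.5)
By Proposition~\ref{prop:5} we may assume that $\beta=\beta_L$ is left invariant. We work with $m=1$, which is allowed since $\alpha<\dd_0\meg\grado$. By~\cite[Proposition 6.9]{BCP} we may replace $W^{(m)}_t$ with $W^{(1),*}_{t,\eps}$ or with $t\Lc h_t$. The two inequalities in~\eqref{eq:2} and~\eqref{eq:3} need different arguments.

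\textbf{Differences controlled by the $B$/$F$ norm.} Take $f$ with finite $B^{p,q}_\alpha$ (resp.\ $F^{p,q}_\alpha$) norm; then $f\in L^p$, since $\alpha>0$. Fix $y$ with $\abs{y}_*<1$ and set $s=\abs{y}_*^{\grado}$. By Lemma~\ref{lem:9},
\[
f=\ee^{-s\Lc}f+\int_0^s W^{(1)}_t f\,\frac{\dd t}{t}.
\]
\emph{Small-scale part.} We have $\abs{\Delta_y W^{(1)}_t f}\meg \abs{W^{(1)}_t f}(\,\cdot\, y)+\abs{W^{(1)}_t f}$. Using the domination $\abs{W^{(1)}_t f}(xy)\lesssim T_{b,t}(\dots)(x)$ (Lemma~\ref{lem:4}, valid for $\abs{y}_*^{\grado}\gtrsim t$) and the bounds of Lemma~\ref{lem:2}, this part is bounded by
\[
\int_0^s \abs{W^{(1)}_tf}\,\frac{\dd t}{t}.
\]
\emph{Large-scale part.} We use a mean value estimate along a curve of content $\abs{y}_*$: $\abs{\Delta_y u(x)}$ is dominated by $\sum_\lambda \abs{y}_*^\lambda\sup\abs{X_\lambda u}$ over a neighbourhood, where $\deg X_\lambda=\lambda\Meg\dd_0$. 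Applied to $u=\ee^{-t\Lc}f$, we write
\[
X\ee^{-s\Lc}f=X\ee^{-s\Lc}\ee^{-\Lc}f+\int_s^1 X W^{(1)}_t f\,\frac{\dd t}{t}.
\]
Theorem~\ref{teo:7} gives $\abs{X W^{(1)}_{t}f}\lesssim t^{-\lambda/\grado}T_{b,t}(W^{(1)}_{t/2}f)$. Hence this part is bounded by
\[
\sum_\lambda \int_s^1 (s/t)^{\lambda/\grado}\,T_{b,t}\bigl(W^{(1)}_{t/2}f\bigr)\,\frac{\dd t}{t}+\text{lower order terms}.
\]
Since $\lambda\Meg\dd_0>\alpha$, Lemma~\ref{lem:25} (with $\eta=\alpha/\grado$ and $\gamma=(\lambda-\alpha)/\grado$) applies to both parts. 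Combined with Lemma~\ref{lem:2}, and with Lemma~\ref{lem:5} for $F$, this yields the bound by $\Fs^{p,q}_{\alpha,1}$ or $\Bs^{p,q}_{\alpha,1}$. Finally, integrating in $y$ over shells $\abs{y}_*\sim 2^{-k}$, using $\beta(B(e,r))\asymp r^{Q_*}$, converts the $L^q_y(\abs{y}_*^{-\alpha q-Q_*}\dd\beta)$ norm into $L^q(\mi_1)$ in $s$. For $F$ we use $\int_{B(e,t)}\cdots\,\dd\beta(y)$ inside $S^q_\alpha$ in the same way.

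\textbf{$B$/$F$ norm controlled by the differences.} The key identity is that $\int_G \Lc h_t\,\dd\beta_R=0$, i.e.\ $\Lc h_t$ has vanishing integral (it is $\Lc$ applied to an integrable function, so this follows by integration by parts). Hence
\[
t\Lc h_t * f(x)=\int_G (t\Lc h_t)(y)\,\bigl(f(xy^{-1}\dots)-f(x)\bigr)\,\dd\beta,
\]
that is, $W^{(1)}_tf(x)=\int t\Lc h_t(y)\,\Delta_{y^{-1}}f(x)\,\Delta_R(y^{-1})\,\dd\beta(y)$, possibly after rewriting in terms of $\Delta_y$ through the change of variables $y\mapsto y^{-1}$, which is harmless since $\Delta_L,\Delta_R$ are controlled by Lemma~\ref{lem:31}. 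The Gaussian bound $\abs{t\Lc h_t}\lesssim p_{b,t}$ then gives
\[
\abs{W^{(1)}_tf(x)}\lesssim \sum_{k\Meg0} 2^{-k}\,t^{-Q_*/\grado}\int_{\abs{y}_*\le 2^k t^{1/\grado}}\abs{\Delta_y f(x)}\,\dd\beta(y).
\]
We split $y$ into dyadic shells of radius $r=2^kt^{1/\grado}$, with Gaussian decay in $k$. Shells with $r\meg1$ are controlled by $r^{Q_*}\,t^{-Q_*/\grado}$ times the average over $B(e,r)$. Shells with $r>1$ are controlled crudely by $\abs{f}(xy)+\abs{f}(x)$, using the exponential decay and the $L^p$ norm. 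Summing in $k$ with weights $2^{k(\alpha+Q_*)}\ee^{-c2^{k\grado/(\grado-1)}}$ and changing variables $r=2^kt^{1/\grado}$ in $L^q(\mi_1)$ gives $\Fs^{p,q}_{\alpha,1}(f)\lesssim\norm{S^q_\alpha f}_{L^p}+\norm f_{L^p}$, and the analogous bound for $\Bs$ after Minkowski's inequality. Together with $\norm{\ee^{-\Lc}f}_{L^p}\lesssim\norm f_{L^p}$, this proves both left-hand inequalities.

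\textbf{Main obstacle.} I expect the main obstacle to be the large-scale part of the first direction. The mean value theorem for non-horizontal curves must produce $\abs{y}_*^{\lambda}$ factors with $\lambda\Meg\dd_0$, and these must be uniform over a neighbourhood so that they can be dominated by $T_{b,t}$. This is exactly where the restriction $\alpha<\dd_0$ enters. I would first try to write $y$ as a product of exponentials $\exp(Z_i)$ with $\abs{Z_i}\lesssim\abs{y}_*^{\deg Z_i}$, via~\cite{ElstRobinson}, and telescope $\Delta_y$ along this product, with each factor handled by the fundamental theorem of calculus.
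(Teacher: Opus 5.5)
Your architecture is the paper's. The lower bounds rest on $\int_G\Lc h_t\,\dd\beta_R=0$, Gaussian bounds and dyadic shells (Propositions~\ref{prop:32a} and~\ref{prop:32c}; for Besov the paper uses the Schur-type Lemma~\ref{lem:29}, but your H\"older/Hardy variant also works). The upper bounds rest on the reproducing formula, a mean value estimate and Lemma~\ref{lem:25} with $\eta=\alpha/\grado$, $\gamma=(d_j-\alpha)/\grado$ (Propositions~\ref{prop:32b} and~\ref{prop:32d} with $h=1$). Where you split at $s=\abs{y}_*^{\grado}$, the paper takes the minimum of two bounds, which is equivalent.

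However, the small-scale part fails as written in the Triebel--Lizorkin case; for Besov it needs no domination at all. Lemma~\ref{lem:4} bounds $(T_{b,t}u)(xy)$ by $(T_{b',t}u)(x)$ only up to the factor $\ee^{c(\abs{y}_*^{\grado}/t)^{1/(\grado-1)}}$. That factor is bounded only when $\abs{y}_*^{\grado}\lesssim t$, which is the large-scale regime, where you use it correctly. For $t\meg\abs{y}_*^{\grado}$ it blows up as $t\to0^+$. Averaging instead at the coarser scale $\abs{y}_*^{\grado}$ loses $(\abs{y}_*^{\grado}/t)^{Q_*/\grado}$ pointwise, which destroys the Hardy kernel. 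The missing idea is to use the $y$-average built into $S^q_\alpha$. Split at $r^{\grado}$, where $r$ is the radius variable of $S^q_\alpha$, uniformly in $y\in B(e,r)$. Then use the regularity-free bound $r^{-Q_*}\int_{B(e,r)}\abs{u(xy)}\,\dd\beta(y)\lesssim (T_{b,r^{\grado}}u)(x)$. It holds because $p_{b,r^{\grado}}\Meg \ee^{-b}r^{-Q_*}$ on $B(e,r)$ and the modular factors are bounded there. Since $T_{b,r^{\grado}}$ does not depend on the inner variable $t$, it comes out of $\int_0^{r^{\grado}}\cdots\,\frac{\dd t}{t}$. You then remove it in $L^{q,p}_{r,x}$ by Lemma~\ref{lem:2} with $\kappa(r)=r^{\grado}$, and finish with the Hardy inequality in $t$, using $\alpha>0$. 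This is exactly the treatment of $\int_0^{t^{\grado}}$ in the proof of Proposition~\ref{prop:32d}.

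There are three smaller points.

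\begin{enumerate}
\item The identity $\int_G\Lc h_t\,\dd\beta_R=0$ does not follow from integration by parts alone. That gives $\int_G h_t\,\Lc^+1\,\dd\beta_R$, and $\Lc^+1=\Lc 1$ is the zeroth-order coefficient of $\Lc$, which is nonzero for instance for $\Lc_\omega$. You must first replace $\Lc$ by an operator without constant term, such as the one in Proposition~\ref{prop:4}, as the paper does via \cite[Proposition 6.9]{BCP}.
\item The step you flag as the main obstacle is routine. By Definition~\ref{def:3}, for every $\eps>\abs{x}_*$ there is a curve from $e$ to $x$ with $\gamma'(t)=\sum_j c_j(t)(X_j)_{\gamma(t)}$ and $\abs{c_j}\meg\min(\eps^{d_j},\eps)$. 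The fundamental theorem of calculus for $t\mapsto f(y\gamma(t))$ then gives both the $L^p$ and the pointwise (supremum over $B(y,\abs{x}_*)$) mean value estimates of Lemma~\ref{lem:30}. No factorization into exponentials is needed.
\item In your formula for $X\ee^{-s\Lc}f$, the first term should be $X\ee^{-\Lc}f$, not $X\ee^{-s\Lc}\ee^{-\Lc}f$.
\end{enumerate}
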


This result extends~\cite[Theorems 8 and 9]{BPV2} to the case of general filtrations. The proof is similar. 
Let us now make some remarks on the statement. First, one would expect the above result to hold for $\alpha\in (0,1)$ instead of $\alpha\in (0,\dd_0)$.  In fact, this result suggests that we should have chosen a different `parametrization' of Besov and Triebel--Lizorkin spaces, and defined the Besov space corresponding to $p,q,\alpha$   as $B^{p,q}_{\alpha \dd_0}(\beta)$, rather than $B^{p,q}_\alpha(\beta)$ (analogously for Triebel--Lizorkin spaces). However, this would have lead to unnatural relations with the action of differential operators. In order to solve this issue one may also require $\gf_1\neq \Set{0}$, that is, $\dd_0=1$; this requirement does no harm to the generality of the construction, since one may simply replace the filtration $(\gf_\lambda)_{\lambda\Meg 0}$ with $(\gf_{\lambda \dd_0})_{\lambda \Meg 0}$: apart from some reparametrizations, this only alters the control modulus $\abs{\,\cdot\,}_*$ near $e$, and this only leads to an improvement of the Gaussian estimates of the heat kernels.

Secondly, one may expect a similar result for higher order differences to hold, as in the classical case. In fact, such generalization is known to hold in the Riemannian case (that is, $\gf_1=\gf$), cf., e.g.,~\cite[Theorem 7.6.3]{TriebelFS2}. Unfortunately, even though we are able to prove one of the two inequalities (and the remaining one for the case of second order differences, for Besov spaces, following~\cite{Saka}), our techniques do not seem to allow us to provide a complete result. In order to better explain this fact, observe that in the classical case (to which the Riemannian case essentially reduces through a clever, yet not effortless, localization procedure) the proof essentially proceeds by finding an inverse of the operator $\Delta_y^{(k)}$ on the space of functions whose Fourier transform is supported in a fixed ball (of arbitrarily large radius). This is relatively easy to do using the Fourier transform (in the classical case), but appears to be substantially more difficult in the general case. 
When dealing with homogeneous groups, using a completely different system of higher order differences (based on the group dilations, in a certain sense), one may in fact follow the classical footsteps  (cf.~\cite{Giulini} for the case of stratified Lie groups). Since filtered groups are essentially modelled on homogeneous groups (for what concerns the analysis of weighted subcoercive operators and their heat semigroups, at least), one might hope to `transfer' a characterization by higher order differences from homogeneous groups to general filtered Lie groups, exactly as one `transfers' the classical result on $\R^n$ to the Riemannian case. However, this does not seem to be possible, since the local geometry of a filtered Lie group and the corresponding contraction do not seem to be sufficiently `comparable' for this programme to work (in contrast to what happens in the Riemannian case).

For these reasons, we shall actually state as separate results all the best statements we can prove on the various inequalities which constitute the content of Theorem~\ref{teo:11}. We begin with a technical lemma.

\begin{lem}	\label{lem:29}
	Take $b,c,\alpha>0$ and $d>1$. Take $\omega>0$ so that $\sup_{r\Meg 1} \ee^{-(\omega/2)r}\beta(B(e,r)) $ is finite, and let $\nu$ be the measure on $G\setminus \Set{e}$ such that $\dd \nu(x)= \abs{x}_*^{-Q_*} \ee^{-\omega \abs{x}_*}\,\dd \beta(x)$. Then, there is a constant $C>0$ such that, for every $q\in [1,\infty]$,
	\[
	\norm*{\int_{G\setminus \Set{e}} \ee^{c\abs{x}_*} (\abs{x}_*/t^{1/d})^\alpha \ee^{-b(\abs{x}_*/t^{1/d})^{d/(d-1)}}   \abs{f(x)}\,\dd \nu(x)}_{L^q_t(\mi_1)}\meg C \norm{f}_{L^q( \nu)}
	\]
	for every $\nu$-measurable function $f$ on $G \setminus \Set{e}$.
\end{lem}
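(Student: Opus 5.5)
The plan is to treat the operator
\[
Tf(t)\coloneqq \int_{G\setminus\Set{e}} K(t,x)\abs{f(x)}\,\dd\nu(x),\qquad K(t,x)\coloneqq \ee^{c\abs{x}_*}(\abs{x}_*/t^{1/d})^\alpha \ee^{-b(\abs{x}_*/t^{1/d})^{d/(d-1)}},
\]
as an integral operator from $L^q(\nu)$ to $L^q(\mi_1)$. I will prove the two endpoint bounds $q=1$ and $q=\infty$ and conclude for $1<q<\infty$ by Riesz--Thorin interpolation, or directly by Schur's test, since $T$ is positive and linear in $\abs{f}$. By Schur's test it suffices to show
\[
A\coloneqq\sup_{t\in(0,1]}\int_{G\setminus\Set{e}} K(t,x)\,\dd\nu(x)<\infty,\qquad B\coloneqq\sup_{x\neq e}\int_0^1 K(t,x)\,\frac{\dd t}{t}<\infty .
\]
Then $\norm{Tf}_{L^q(\mi_1)}\meg A^{1/q'}B^{1/q}\norm{f}_{L^q(\nu)}$, which gives a single constant $C=\max(A,B,1)$ for all $q$.

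For $B$, fix $x$, set $r=\abs{x}_*$ and $u=r/t^{1/d}$, so that $\frac{\dd t}{t}=d\,\frac{\dd u}{u}$ and $t\in(0,1]$ corresponds to $u\Meg r$. This gives
\[
\int_0^1 K(t,x)\,\frac{\dd t}{t}=d\,\ee^{cr}\int_r^\infty u^{\alpha}\ee^{-bu^{d/(d-1)}}\,\frac{\dd u}{u}.
\]
For $r\meg 1$ this is bounded by $d\,\ee^{c}\int_0^\infty u^{\alpha-1}\ee^{-bu^{d/(d-1)}}\,\dd u<\infty$, using $\alpha>0$. For $r\Meg1$ the integral is at most $C'\ee^{-(b/2)r^{d/(d-1)}}$, which beats $\ee^{cr}$ because $d/(d-1)>1$. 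So $B<\infty$.

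For $A$, fix $t\in(0,1]$ and use $t^{-1/d}\Meg1$, which gives
\[
\ee^{-b(r/t^{1/d})^{d/(d-1)}}\meg \ee^{-(b/2)(r/t^{1/d})^{d/(d-1)}}\ee^{-(b/2)r^{d/(d-1)}}.
\]
The factor $\ee^{-(b/2)r^{d/(d-1)}}$ absorbs $\ee^{cr}$ uniformly, so with $\phi(u)\coloneqq u^\alpha \ee^{-(b/2)u^{d/(d-1)}}$ (a bounded function) it remains to bound $\int \phi(\abs{x}_*/t^{1/d})\abs{x}_*^{-Q_*}\ee^{-\omega\abs{x}_*}\,\dd\beta(x)$ uniformly in $t$. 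I will decompose $G\setminus\Set{e}$ into dyadic shells $\Set{2^{k}<\abs{x}_*\meg 2^{k+1}}$, $k\in\Z$. For $k<0$, the estimate $\beta(B(e,r))\asymp r^{Q_*}$ as $r\to0^+$ gives $\nu$-mass of each shell $\lesssim 1$. The value of $\phi$ on the shell is $\lesssim \min\bigl((2^k/t^{1/d})^\alpha,\ \ee^{-(b/4)(2^k/t^{1/d})^{d/(d-1)}}\bigr)$, and summing over $k$ gives a geometric-type sum bounded independently of $t$. For $k\Meg0$, I use $\phi\meg\norm{\phi}_\infty$ and the exponential growth bound $\beta(B(e,r))\meg C\ee^{(\omega/2)r}$ from the choice of $\omega$, so the total is $\lesssim \sum_k \ee^{-(\omega/2)2^k}<\infty$.

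The main obstacle is the uniformity of $A$ in $t$ near the identity, where $\nu$ has the nonintegrable density $\abs{x}_*^{-Q_*}$. The dyadic scale-invariance argument above is what handles it: the factor $u^\alpha$ with $\alpha>0$ gives summability at small scales and the Gaussian factor gives summability at large scales relative to $t^{1/d}$. A slightly more delicate point is that the local volume asymptotics must hold uniformly on shells, which follows from $\beta(B(e,r))\asymp r^{Q_*}$ for $r\meg1$ (with constants depending only on the upper bound $1$).
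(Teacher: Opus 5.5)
Your proposal is correct and follows essentially the same route as the paper. Both use Schur's test with the two uniform bounds (in $t\in(0,1]$ and in $x$), a change of variables for the $\mi_1$-integral, and a dyadic shell decomposition for the $\nu$-integral, with $\ee^{c\abs{x}_*}$ absorbed by part of the Gaussian factor since $t\meg 1$. The paper does that absorption once at the outset to reduce to $c=0$, and uses shells scaled by $t^{1/d}$ together with the bound $\nu(B(e,2r)\setminus B(e,r))\lesssim 1$ for all $r>0$.

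There is one small arithmetic slip, which does not affect the conclusion. For $k\Meg 0$, the bound $\beta(B(e,2^{k+1}))\lesssim \ee^{(\omega/2)2^{k+1}}=\ee^{\omega 2^k}$ exactly cancels the factor $\ee^{-\omega 2^k}$. So the $\nu$-mass of the $k$-th shell is only $\lesssim 2^{-kQ_*}$, not $\lesssim \ee^{-(\omega/2)2^k}$. This is still summable since $Q_*>0$, and you could also retain part of the factor $\ee^{-(b/2)\abs{x}_*^{d/(d-1)}}$ to get extra decay.
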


\begin{proof} 	
	Observe first that, by Lemma~\ref{lem:3}, there is a constant $C_1>0$ such that
	\[
	 \ee^{c\abs{x}_*}  \ee^{-b(\abs{x}_*/t^{1/d})^{d/(d-1)}}  \meg  C_1  \ee^{-(b/2)(\abs{x}_*/t^{1/d})^{d/(d-1)}} 
	\]	
	for every $t\in (0,1]$,	so that we may assume that $c=0$, up to replacing $b$ with $b/2$.
	By Schur's lemma (cf., e.g.,~\cite[Lemma I.1]{GrafakosClassical}), it will then suffice to show that
	\[
	\sup_{t\in (0,1]} \int_{G\setminus \Set{e}}   (\abs{x}_*/t^{1/d})^{\alpha } \ee^{-(b/2)(\abs{x}_*/t^{1/d})^{d/(d-1)}} \,\dd \nu(x)
	\]
	and
	\[
	\sup_{x\in G\setminus \Set{e}} \int_0^1    (\abs{x}_*^d/t)^{\alpha/d} \ee^{-(b/2)(\abs{x}_*^d/t)^{1/(d-1)}}   \,\dd \mi_1(t)
	\]
	are finite. The second assertion follows easily, since for every $x\in G\setminus \Set{e}$
	\[
	\int_0^1    (\abs{x}_*^d/t)^{\alpha/d} \ee^{-(b/2)(\abs{x}_*^d/t)^{1/(d-1)}}   \,\dd \mi_1(t)\meg\int_0^{\infty} t^{-\alpha/d} \ee^{- (b/2) t^{-1/(d-1)}}\,\frac{\dd t}{t},
	\]
	which is finite.  
	Concerning the first assertion, observe that there is a constant $C_2>0$ such that $\nu(B(e,2r)\setminus B(e,r))\meg C_2 $ for every $r>0$, thanks to our choice of $\omega$. Then,
	\[
	\begin{split}
		&\int_{G\setminus \Set{e}}   (\abs{x}_*/t^{1/d})^{\alpha } \ee^{-(b/2)(\abs{x}_*/t^{1/d})^{d/(d-1)}} \,\dd \nu(x)\\
			&\qquad\meg   \sum_{j\in \Z} \int_{B(e, 2^{j+1} t^{1/d})\setminus B(e, 2^j t^{1/d})}   (\abs{x}_*/t^{1/d})^{\alpha } \ee^{-(b/2)(\abs{x}_*/t^{1/d})^{d/(d-1)}} \,\dd \nu(x)\\
			&\qquad\meg  C_2  \sum_{j\in \Z} 2^{(j+1)\alpha} \ee^{-(b/2) 2^{j d/(d-1)} },
	\end{split}
	\]
	which is finite.
\end{proof}

\begin{prop}\label{prop:32a}
	Take $p,q\in [1,\infty]$, $k\in \Set{1,2}$, and $\alpha>0$. Then, there is a constant $C>1$ such that  
		\[
			\frac{1}{C}\norm{f}_{B^{p,q}_\alpha(\beta)}\meg \norm{f}_{L^p(\beta)}+\norm*{ \abs{y}_*^{-\alpha-Q_*/q} \norm{\Delta_y^{(k)} f}_{L^p(\beta)} }_{L^q_y(B(e,1),\beta)}  
		\]
		for every $f\in \Sc'(G)$.
\end{prop}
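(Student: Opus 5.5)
The plan is to bound each term of the Besov norm directly in terms of the differences, exploiting the cancellation of the kernels $(t\Lc)^m h_t$. By Proposition~\ref{prop:5} I reduce to $\beta=\beta_L$, so that \eqref{eq:1} reads $(f*K)(x)=\int_G f(xy^{-1})K(y)\Delta_R(y^{-1})\,\dd\beta(y)$. The term $\norm{\ee^{-\Lc}f}_{L^p(\beta)}$ is bounded by $C\norm{f}_{L^p(\beta)}$ by Young's inequality, Theorem~\ref{teo:7} and Lemma~\ref{lem:3}. By~\cite[Proposition 6.9]{BCP} I may use $\Bs^{p,q}_{\alpha,m,\mi_1}$ with $m$ as large as I like, $m>\alpha/\grado$. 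The key cancellation is $1*\Lc^m h_t=\Lc^m(1*h_t)=0$ for $m\Meg 1$, that is, $\int_G \Lc^m h_t(y)\Delta_R(y^{-1})\,\dd\beta(y)=0$.

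\emph{Case $k=1$.} Set $K_t\coloneqq(t\Lc)^m h_t$. The cancellation gives $W^{(m)}_t f(x)=\int_G (\Delta_{y^{-1}}f)(x)K_t(y)\Delta_R(y^{-1})\,\dd\beta(y)$. Minkowski's inequality and the Gaussian bound $\abs{K_t}\meg C p_{b,t}$ then yield
\[
t^{-\alpha/\grado}\norm{W^{(m)}_t f}_{L^p(\beta)}\meg C\int_G t^{-\alpha/\grado}\norm{\Delta_{y}f}_{L^p(\beta)}\,p_{b,t}(y)\,\ee^{c\abs{y}_*}\,\dd\beta(y),
\]
after the substitution $y\mapsto y^{-1}$; the characters produced by this substitution are absorbed into $\ee^{c\abs{y}_*}$ via Lemma~\ref{lem:31}. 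I split the integral into $B(e,1)$ and its complement.

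Outside $B(e,1)$, I use $\norm{\Delta_y f}_{L^p}\meg(1+\Delta_R(y)^{1/p})\norm{f}_{L^p}$. Since $t\meg1$, the factor $p_{b,t}(y)$ decays like $\ee^{-b'(\abs{y}_*^{\grado}/t)^{1/(\grado-1)}}$ and so beats every $t^{-N}$ and every exponential in $\abs{y}_*$. This part is therefore $\meg C\norm{f}_{L^p}$ uniformly in $t$, and in $L^q(\mi_1)$.

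On $B(e,1)$, I rewrite $p_{b,t}(y)=\abs{y}_*^{-Q_*}(\abs{y}_*/t^{1/\grado})^{Q_*}\ee^{-b(\ldots)}$ and
\[
t^{-\alpha/\grado}=\abs{y}_*^{-\alpha}(\abs{y}_*/t^{1/\grado})^{\alpha}.
\]
This puts the integral exactly into the form of Lemma~\ref{lem:29}, with exponent $\alpha+Q_*$ and $\abs{f(y)}=\abs{y}_*^{-\alpha}\norm{\Delta_y f}_{L^p}\chi_{B(e,1)}(y)$, since $\ee^{-\omega\abs{y}_*}\asymp1$ there. Lemma~\ref{lem:29} bounds the $L^q_t(\mi_1)$ norm by $C\norm{\abs{y}_*^{-\alpha}\norm{\Delta_yf}_{L^p}}_{L^q(B(e,1),\nu)}$. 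Because $\dd\nu=\abs{y}_*^{-Q_*}\ee^{-\omega\abs{y}_*}\dd\beta$, this equals, up to constants, the term $\norm{\abs{y}_*^{-\alpha-Q_*/q}\norm{\Delta_yf}_{L^p(\beta)}}_{L^q_y(B(e,1),\beta)}$ in the statement. This proves the case $k=1$.

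\emph{Case $k=2$.} I use the identity $\Delta^{(2)}_y f(x)=\Delta_{y^2}f(x)-2\Delta_yf(x)$. Iterating it $n$ times gives
\[
\Delta_y f=2^{-n}\Delta_{y^{2^n}}f-\sum_{j=0}^{n-1}2^{-j-1}\Delta^{(2)}_{y^{2^j}}f,
\]
a Marchaud-type decomposition. I insert this into the representation of $W^{(m)}_tf$ from the case $k=1$, and choose $n=n(y)$ so that $\abs{y^{2^n}}_*\sim1$. The first term then contributes at most $C2^{-n}\norm{f}_{L^p}$. For the other terms I change variables $z=y^{2^j}$ and sum the resulting geometric series in $j$ against the weights $t^{-\alpha/\grado}p_{b,t}$, following~\cite{Saka}. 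The reduction then goes through Lemma~\ref{lem:29} as before, now applied to second differences.

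The main obstacle is this change of variables. I need the squaring maps $y\mapsto y^{2^j}$ to have Jacobian bounded above and below with respect to $\beta$ on the relevant sets. I also need $\abs{y^{2^j}}_*$ to be comparable to a power of $2^j\abs{y}_*$ near $e$, so that the weights $2^{-j}$ beat the growth $(\abs{z}_*/\abs{y}_*)^{\alpha+Q_*}$ coming from the kernel. On a filtered group this comparison is only available with the exponent between $1$ and $\max\Lambda/\dd_0$. To get it, I would first try the contraction $G_*$, where $\exp(X)^{2}=\exp(2X)$, together with~\cite[Corollary 6.5]{ElstRobinson} to compare $\abs{\,\cdot\,}_*$ with the homogeneous norm of $\log y$. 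If the geometric series then fails to converge for large $\alpha$, I would take $m$ larger and extract extra factors of $t$ from $\Lc^m$ to compensate.
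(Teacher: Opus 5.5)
Your case $k=1$ is essentially the paper's argument, but your case $k=2$ has a genuine gap. Once you insert the Marchaud decomposition into the first-difference representation of $W^{(m)}_t f$ and take norms term by term, the remainder $2^{-n(y)}\norm{\Delta_{y^{2^{n(y)}}}f}_{L^p(\beta)}$ really is of size $2^{-n(y)}\norm{f}_{L^p(\beta)}$: nothing makes $\norm{\Delta_z f}_{L^p(\beta)}$ small when $\abs{z}_*\sim 1$ (take a bump function $f$).

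The condition $\abs{y^{2^{n(y)}}}_*\sim 1$ makes $2^{-n(y)}$ comparable to the Euclidean size of $\log y$. Hence $2^{-n(y)}\asymp r^{\dd_0}$ on a subset of $B(e,r)$ of measure $\asymp r^{Q_*}$. Integrated against $p_{b,t}$, this term alone contributes $\asymp t^{(\dd_0-\alpha)/\grado}\norm{f}_{L^p(\beta)}$ to your bound for $t^{-\alpha/\grado}\norm{W^{(m)}_t f}_{L^p(\beta)}$. That is not in $L^q(\mi_1)$ once $\alpha>\dd_0$.

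So your route cannot reach the range $\dd_0<\alpha<2\dd_0$. There the statement is not vacuous: by Lemma~\ref{lem:30bis}, $\norm{\Delta^{(2)}_y f}_{L^p(\beta)}=O(\abs{y}_*^{2\dd_0})$ for test functions. This is the classical limitation of Marchaud-type arguments to smoothness below the order of the lower difference.

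Taking $m$ larger does not help. For each $m$ you only have $\abs{(t\Lc)^m h_t}\meg C p_{b,t}$ for $t\in(0,1]$, and your representation uses nothing but the vanishing of the zeroth moment of the kernel. So there is no extra power of $t$ to extract.

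Even for small $\alpha$, the change of variables $y\mapsto y^{2^j}$ that you leave open is delicate. The map $\log y\mapsto 2^j\log y$ does not preserve the balls of $\abs{\,\cdot\,}_*$ when the degrees differ. The natural bounds $\abs{y^{2^j}}_*\meg C2^{j/\dd_0}\abs{y}_*$ and Jacobian $2^{j\dim G}$ only give $j$-th terms of size $2^{j(\alpha/\dd_0-1+(Q_*/\dd_0-\dim G)/q)}$. These are not summable, for instance, on the Heisenberg group with $q=1$, for any $\alpha>0$.

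The missing idea is to symmetrize the kernel, which produces second differences directly. By Proposition~\ref{prop:4} (with $\beta_R$ in place of $\beta$) and~\cite[Proposition 6.9]{BCP}, you may assume $\Lc$ is real, has no constant terms, and is formally self-adjoint with respect to $\beta_R$. Some such reduction is already needed in your case $k=1$: the cancellation $\int_G\Lc^m h_t\,\dd\beta_R=0$ fails if the fixed $\Lc$ has a zero-order term, e.g.\ an operator of the form $\Lc+\omega I$.

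Self-adjointness and reality give $\Lc h_t=\Delta_L^{-1}\Delta_R(\Lc h_t)\check{\,}$. Hence $(\Lc\ee^{-t\Lc}f)(x)$ equals both $\int_G f(xy)(\Lc h_t)(y^{-1})\,\dd\beta_L(y)$ and $\int_G f(xy^{-1})(\Lc h_t)(y^{-1})\,\dd\beta_L(y)$. Adding these and subtracting $2f(x)\int_G\Lc h_t\,\dd\beta_R=0$, you get
\[
2(\Lc\ee^{-t\Lc}f)(x)=\int_G(\Delta_y^{(2)}f)(xy^{-1})\,(\Lc h_t)(y^{-1})\,\dd\beta_L(y).
\]
Since $\norm{(\Delta^{(2)}_y f)(\,\cdot\,y^{-1})}_{L^p(\beta)}=\Delta_R(y)^{1/p}\norm{\Delta^{(2)}_y f}_{L^p(\beta)}$, your $k=1$ argument (Minkowski, Gaussian bounds, Lemma~\ref{lem:29}) then applies verbatim, for every $\alpha>0$, with no Marchaud step. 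Taking $m=1$ suffices, because $\Bs^{p,q}_{\alpha,m,\mi_1}\meg C\,\Bs^{p,q}_{\alpha,1,\mi_1}$ for every $m$.

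Finally, your reduction to $\beta_L$ is unnecessary, since~\eqref{eq:1} holds for general $\beta$. It is also not free, because multiplication by $\Delta_L^{-1/p}$ does not commute with $\Delta_y$.
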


The proof is inspired by the proof of~\cite[Theorem 12]{Saka}.

\begin{proof}
	By means of~\cite[Propositions 6.9]{BCP} and Proposition~\ref{prop:4}, we may assume that $\Lc$ is real, has no constant terms, and is formally self-adjoint with respect to  $ \beta_R$ (that is, $\Lc=\Lc^+$). Then,  $\int_G \Lc h_t\,\dd \beta_R=0$ for every $t>0$. Consequently,
	\[
	\begin{split}
		(\Lc \ee^{-t \Lc}f) (x)&= \int_G f(x y^{-1}) (\Lc h_t)(y) \,\dd \beta_R(y)\\
		&=\int_G [f(x y^{-1}) -f(x)](\Lc h_t)(y) \,\dd \beta_R(y)\\
		&=\int_G (\Delta_{y^{-1}} f)(x) (\Lc h_t)(y) \,\dd \beta_R(y)\\
		&=\int_G (\Delta_{y } f)(x) (\Lc h_t)(y^{-1}) \,\dd \beta_L(y).
	\end{split}
	\]
	In addition, for every $f,g\in C^\infty_c(G)$,\footnote{Notice that $\langle\,\cdot\,\vert \,\cdot\,\rangle$ is the sesquilinear pairing associated with the measure $\beta$, so that $\langle\,\cdot\,\vert \,\cdot\,\Delta_R^{-1}\rangle$ is the sesquilinear pairing associated with the measure $\beta_R$. Observe, in addition, that here we are using the fact that $h_t$ is real since $\Lc$ is.}
	\[
	\begin{split}
		\langle f*(\Lc h_t)\vert g \Delta_R^{-1}\rangle&= \langle \Lc \ee^{-t \Lc} f\vert g \Delta_R^{-1} \rangle\\
			&= \langle f \vert (\Lc \ee^{-t \Lc}g )\Delta_R^{-1}\rangle\\
			&=\langle f \vert (\Delta_R^{-1}g)*(\Delta_R^{-1}\Lc h_t)\rangle\\
			&=\langle f* (\Delta_R \Rc(\Delta_R^{-1} \Lc h_t))\vert g \Delta_R^{-1}\rangle \\
			&=\langle f*(\Delta_L^{-1} \Delta_R  (\Lc h_t)\check{\,})\vert g \Delta_R^{-1}\rangle,
	\end{split}
	\]
	so that $\Lc h_t=\Delta_L^{-1} \Delta_R  (\Lc h_t)\check{\,} $. Consequently, 
	\[
	\begin{split}
	(\Lc \ee^{-t \Lc}f) (x)&= \int_G f(x y^{-1}) (\Lc h_t)(y) \,\dd \beta_R(y)\\
		&= \int_G f(x y^{-1}) (\Delta_L^{-1} \Delta_R)(y)(\Lc h_t)(y^{-1}) (\Delta_R^{-1}\Delta_L)(y) \,\dd \beta_L(y)\\
		&=\int_G f(x y^{-1}) (\Lc h_t)(y^{-1})  \,\dd \beta_L(y),
		\end{split}
	\]
	so that, arguing as above,
	\[
	\begin{split}
		2(\Lc \ee^{-t \Lc}f) (x)&= \int_G [f(x y ) -2f(x)+f(x y^{-1})](\Lc h_t)(y^{-1})  \,\dd \beta_L(y)\\
			&= \int_G (\Delta_y^{(2)} f)(x y^{-1}) (\Lc h_t)(y^{-1})  \,\dd \beta_L(y)
	\end{split}
	\]
	for every $x\in G$ and for every $t>0$.	
	
	Now, take $C_1,b>0$ so that $\abs{\Lc^j h_t}\meg C_1 t^{-j}  p_{b,t}$ for every $t\in (0,1]$ and for every $j=0,1$ (cf.~Theorem~\ref{teo:7}). Choose $c>1$ so that $\Delta_L(x),\Delta_R(x)\meg c\ee^{c\abs{x}_*}$ for every $x\in G$ (cf.~Lemma~\ref{lem:31}) and so that $\beta(B(e,r))\meg \ee^{r c/2}$ for every $r\Meg 1$, and define $\nu$ as the measure on $G\setminus\Set{e}$ such that $\dd \nu(x)= \abs{x}_*^{-Q_*} \ee^{-c \abs{x}_*} \dd \beta(x)$.	
	Then,
	\[
	\begin{split}
		t^{-\alpha/\grado}\norm{W_t^{(1)} f}_{L^p(\beta)}&\meg C_1  t^{-\alpha/\grado}\int_G \norm{\Delta_y f}_{L^p(\beta)} p_{b,t}(y) \,\dd \beta_L(y)\\
		&\meg c C_1   \int_{G\setminus \Set{e}} \abs{y}_*^{-\alpha} \ee^{-c\abs{y}_*}\norm{\Delta_y f}_{L^p(\beta)} \ee^{3 c \abs{y}_*} (\abs{y}_*/t^{1/\grado})^{\alpha+Q_*}\ee^{-b(\abs{y}_*/t^{1/\grado})^{\grado/(\grado-1)}} \,\dd \nu(y)
	\end{split}
	\]
	for every $t\in (0,1]$.  Then, Lemma~\ref{lem:29} shows that there is a constant $C_2>0$ such that
	\[
	\begin{split}
		\Bs^{p,q}_{\alpha,1}(f)&\meg C_2 \norm*{ \abs{y}_*^{-\alpha } \ee^{- c \abs{y}_*} \norm{\Delta_y f}_{L^p(\beta)}   }_{L^q_y(\nu)}\\
		& = C_2 \norm*{ \abs{y}_*^{-\alpha-Q_*/q} \ee^{-(1+1/q)c \abs{y}_*} \norm{\Delta_y f}_{L^p(\beta)}   }_{L^q_y(\beta)}\\
		&\meg C_2 \norm*{ \abs{y}_*^{-\alpha-Q_*/q}   \norm{\Delta_y f}_{L^p(\beta)}   }_{L^q_y(B(e,1),\beta)}\\
		&\qquad+C_2\norm{f}_{L^p(\beta)} \norm*{ \abs{y}_*^{-\alpha-Q_*/q} \ee^{-(1+1/q)c \abs{y}_*}(1+ \Delta_R(y)^{1/p})  }_{L^q_y(G\setminus B(e,1),\beta)},
	\end{split}
	\]
	whence the conclusion when $k=1$ by means of~\cite[Propositions 6.9]{BCP}, since $\Delta_R(y)^{1/p}\meg c^{1/p}\ee^{c\abs{y}_*/p}$ by our choice of $c$. Analogously,
	\[
	\begin{split}
	t^{-\alpha/\grado}\norm{W_t^{(1)} f}_{L^p(\beta)}&\meg C_1  t^{-\alpha/\grado}\int_G \norm{(\Delta_y^{(2)} f)(\,\cdot\, y^{-1})}_{L^p(\beta)} p_{b,t}(y) \,\dd \beta_L(y)\\
		&= C_1  t^{-\alpha/\grado}\int_G \norm{\Delta_y^{(2)} f }_{L^p(\beta)} p_{b,t}(y) \Delta_R(y)^{1/p}\Delta_L(y^{-1}) \,\dd \beta(y),
	\end{split}
	\]
	from which one may proceed as in the previous case, and complete the proof also when $k=2$.
\end{proof}

In order to deal with the reverse inequality, we need a technical lemma. 
Even though we only need Lemma~\ref{lem:30bis}, we state and prove separately Lemma~\ref{lem:30} since its statement  is more precise and its proof is more elementary (and provides more `global' information). We state both results for vector-valued functions for future reference.

\begin{lem}\label{lem:30}
	Let $(X_j)_{j\in J}$ be an orthonormal basis of $\gf$ compatible with the filtration, and let $B$ be a Banach space. Take $c\Meg 1$ so that $\Delta_R(x)\meg c\ee^{c \abs{x}_*}$ for every $x\in G$ (cf.~Lemma~\ref{lem:31}). Then, 
	\[
	\norm{\Delta_x f}_{L^p(\beta;B)}\meg c\ee^{c\abs{x}_*/p}\sum_{j\in J} \min(\abs{x}_*^{d_j},\abs{x}_*) \norm{X_j f}_{L^p(\beta;B)}
	\]
	for every $p\in [1,\infty]$, for every $x\in G$, and for every $f\in L^1_\loc(\beta;B)$, where $d_j=\deg X_j$ for every $j\in J$.
	In addition,
	\[
	\abs{(\Delta_x f)(y)}\meg \sum_{j\in J} \min(\abs{x}_*^{d_j},\abs{x}_*) \sup_{B(y,\abs{x}_*)}\abs{X_j f}
	\]
	for every $x,y\in G$ and for every $f\in C^1(G;B)$.
\end{lem}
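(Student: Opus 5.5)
We will prove Lemma~\ref{lem:30} by a fundamental theorem of calculus argument along a curve almost realising $\abs{x}_*$. The pointwise bound (second assertion) comes first; the $L^p$ bound then follows by integrating a pointwise representation rather than the supremum bound.

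\textbf{Choice of curve.} Fix $x\in G$ and $\eps>\abs{x}_*$. By Definition~\ref{def:3} there is an absolutely continuous $\gamma\colon[0,1]\to G$ with $\gamma(0)=e$, $\gamma(1)=x$, and
\[
\abs{P_\lambda \dd L_{\gamma(t)}^{-1}\gamma'(t)}\meg \min(\eps,\eps^\lambda)
\]
for almost every $t$ and every $\lambda$. Expand $\dd L_{\gamma(t)}^{-1}\gamma'(t)=\sum_j a_j(t) X_j$. Since $(X_j)$ is an orthonormal basis compatible with the filtration, $\gf_\lambda\ominus\gf_{\lambda^-}$ is spanned by the $X_j$ with $d_j=\lambda$. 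Hence $\abs{a_j(t)}\meg \min(\eps,\eps^{d_j})$.

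\textbf{Pointwise estimate.} For $f\in C^1(G;B)$, the function $t\mapsto f(y\gamma(t))$ is absolutely continuous, with derivative $\sum_j a_j(t)(X_j f)(y\gamma(t))$ by left invariance of the $X_j$. Therefore
\[
(\Delta_x f)(y)=\int_0^1 \sum_j a_j(t)(X_jf)(y\gamma(t))\,\dd t .
\]
We must check that the points $y\gamma(t)$ stay in a ball of radius about $\eps$ around $y$. Reparametrising $\gamma|_{[0,t]}$ to $[0,1]$ multiplies the velocity by $t\meg1$, and $\min(\eps,\eps^\lambda)$ is increasing in $\eps$. So the reparametrised curve has content at most $\eps$, which gives $\abs{\gamma(t)}_*\meg\eps$. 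We then let $\eps\to\abs{x}_*^+$. This uses continuity of $X_jf$ and only yields the supremum over the closed ball, or over $B(y,\eps)$ for every $\eps>\abs{x}_*$. That is the main minor technical point, and it can be handled by interpreting the ball accordingly or by the limiting argument.

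\textbf{$L^p$ estimate.} Consider first $f\in C^\infty$ with $X_jf\in L^p$. Take the $L^p(\beta;B)$ norm in $y$ of the integral representation and apply Minkowski's inequality. Right translation by $\gamma(t)$ changes the $L^p(\beta)$ norm by the factor $\Delta_R(\gamma(t))^{1/p}$. Since $\Delta_R(z)\meg c\ee^{c\abs{z}_*}$ and $\abs{\gamma(t)}_*\meg\eps$, this factor is at most $c^{1/p}\ee^{c\eps/p}\meg c\,\ee^{c\eps/p}$ (using $c\Meg1$). Letting $\eps\to\abs{x}_*$ gives the bound.

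\textbf{Extension to $f\in L^1_\loc$.} If some $X_jf\notin L^p$, the right-hand side is infinite and there is nothing to prove. Otherwise, regularise: replace $f$ by $f*\phi_n$ for an approximate identity $\phi_n$, since right convolution commutes with left-invariant $X_j$. Alternatively, argue distributionally by pairing with test functions. Then pass to the limit using lower semicontinuity of the norm, with local $L^1$ convergence for the left-hand side. The case $p=\infty$ is handled the same way, via duality or a.e.\ limits.
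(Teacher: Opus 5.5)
Your argument is essentially the paper's: the fundamental theorem of calculus along a curve from $e$ to $x$ of content $<\eps$, left-trivialised coefficients bounded by $\min(\eps^{d_j},\eps)$, Minkowski plus the modular factor for right translations, and then $\eps\to\abs{x}_*^+$ (the paper simply does the $L^p$ bound before the pointwise one). Two small corrections are needed. First, the regularisation must be $\phi_n*f$, convolution on the \emph{left}: left-invariant $X_j$ commute with left translations, whereas $X_j(f*\phi_n)=f*(X_j\phi_n)$, so your stated reason for using $f*\phi_n$ is false. Second, the right-translation factor is $\Delta_R(\gamma(t))^{-1/p}$, not $\Delta_R(\gamma(t))^{1/p}$; it is bounded by the same quantity via $\Delta_R(z)^{-1}=\Delta_R(z^{-1})$ and $\abs{z^{-1}}_*=\abs{z}_*$.
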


\begin{proof}
	Up to convolving (on the left) $f$ with an approximate identity, we may reduce to the case $f\in C^1(G;B)$ also for the first assertion.
	Take $\eps>\abs{x}_*$. Then, there is an absolutely continuous curve $\gamma\colon [0,1]\to G$ such that $\gamma(0)=e$, $\gamma(1)=x $, and $\gamma'(t)= \sum_{j\in J} c_j(t) (X_j)_{\gamma(t)}$ for almost every $t\in [0,1]$, with $\norm{c_j}_{L^\infty([0,1])}\meg \min(\eps^{d_j},\eps)$ for every $j\in J$. Then,
	\[
	\abs{\Delta_x f(y)}\meg \sum_{j\in J}\int_0^1 \abs{c_j(t) (X_j f)(y \gamma(t))}\,\dd t\meg   \sum_{j\in J}\min(\eps^{d_j},\eps)\int_0^1 \abs{  (X_j f)(y \gamma(t))}\,\dd t
	\]
	for every $y\in G$,
	whence
	\[
	\norm{\Delta_x f}_{L^p(\beta)}\meg \sum_{j\in J}\min(\eps^{d_j},\eps) \int_0^1 \Delta_R^{-1/p}(\gamma(t))\norm{  X_j f}_{L^p(\beta)}\,\dd t\meg c\ee^{c\eps/p}  \sum_{j\in J}\min(\eps^{d_j},\eps) \norm{  X_j f}_{L^p(\beta)},
	\]
	so that the first assertion follows by the arbitrariness of $\eps>\abs{x}_*$. The second assertion follows since $y\gamma(t)\in B(y,\eps)$ for every $t\in [0,1]$ and for every $y\in G$, and since $X_j f$ is continuous for every $j\in J$.
\end{proof}

\begin{lem}\label{lem:30bis}
	Let $(X_j)_{j\in J}$ be an orthonormal basis of $\gf$ compatible with the filtration, and let $B$ be a Banach space. Then, there are $\eps,C>0$ such that
	\[
	\norm{\Delta_x^{(k)}f}_{L^p(\beta;B)}\meg C^k \sum_{j\in J^k} \abs{x}_*^{d_j} \norm{X_{j_1}\cdots X_{j_k}f}_{L^p(\beta;B)}
	\]
	for every $p\in [1,\infty]$, for every $k\Meg 1$, for every $x\in B(e,\eps)$, and for every $f\in L^1_\loc(\beta;B)$, where $d_j=\deg(X_{j_1})+\cdots+\deg(X_{j_k})$ for every $j\in J^k$.
	
	In addition, there is a constant $c>0$ such that
	\[
	\abs{(\Delta^{(k)}_x f)(y)}\meg C^k\sum_{j\in J^k}  \abs{x}_*^{d_j}  \sup_{B(y,ck\abs{x}_*)}\abs{X_{j_1}\cdots X_{j_k}f}
	\]
	for every $x\in B(e,\eps)$, for every $y\in G$, and for every $f\in C^k(G;B)$.
\end{lem}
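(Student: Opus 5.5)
The plan is an induction on $k$ built on Lemma~\ref{lem:30}. The point is that $\Delta_x$ commutes with left-invariant operators only up to commutators, so we use instead the identity $\Delta^{(k)}_x=\Delta_x\Delta^{(k-1)}_x$ and write each difference as an integral along a curve. Fix $\eps\in(0,1]$ so small that, for $\abs{x}_*<\eps$, the curve $\gamma$ used in the proof of Lemma~\ref{lem:30} has $\abs{\gamma(t)}_*<\eps'$ with $\eps'\meg 1$. For $\abs{x}_*<1$ we then have $\min(\abs{x}_*^{d_j},\abs{x}_*)=\abs{x}_*^{d_j}$, since $d_j\Meg 1$. As in Lemma~\ref{lem:30}, we first reduce to $f\in C^\infty(G;B)$ by left convolution with an approximate identity, which commutes with right translations, with $\Delta_x$, and with left-invariant $X_j$.

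The key step is to represent $\Delta_x^{(k)}f(y)$ iteratively. Write
\[
\Delta_x g(y)=\sum_{j}\int_0^1 c_j(t)(X_jg)(y\gamma(t))\,\dd t,
\]
and note that $\Delta^{(k)}_x f(y)=\sum_{i}\binom{k}{i}(-1)^{k-i}f(yx^i)$. The difficulty is that $X_j$ does not commute with $\Delta_x$. Indeed, $X_j(g(\,\cdot\,x))=(\mathrm{Ad}(x)X_j\, g)(\,\cdot\,x)$ for the right translate, so $X_j\Delta_x g=\Delta_x X_j g+((\mathrm{Ad}(x)X_j-X_j) g)(\,\cdot\,x)$.

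I would avoid commutators altogether by choosing a $k$-parameter path. Set $\Gamma(t_1,\dots,t_k)=\gamma(t_1)\cdots\gamma(t_k)$. A direct computation, inducting on $k$ and using only left-invariance, gives
\[
\Delta^{(k)}_xf(y)=\int_{[0,1]^k}\partial_{t_1}\cdots\partial_{t_k}\,f(y\gamma(t_1)\cdots\gamma(t_k))\,\dd t .
\]
This holds because $f(yx^i)$ arises by setting $i$ of the $t_l$ equal to $1$ and the rest equal to $0$, and the mixed difference over the cube equals the iterated integral of mixed partials. The derivative in $t_l$ of $f(y\gamma(t_1)\cdots\gamma(t_k))$ equals $\sum_j c_j(t_l)\,(\mathrm{Ad}(\gamma(t_{l+1})\cdots\gamma(t_k))^{-1}X_j\,f)$ evaluated at the point. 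Expanding $\mathrm{Ad}(z)^{-1}X_j=\sum_{j'}a_{jj'}(z)X_{j'}$ with $a$ smooth, we get $a_{jj'}(z)=O(\abs{z}_*^{\,d_{j'}-d_j})$ for $d_{j'}>d_j$ near $e$, because $\mathrm{Ad}$ respects the filtration. Lower-degree components vanish at the level of the associated graded, up to corrections that are higher order in $\abs{z}_*$ and are compatible with $\min(\eps^{d_j},\eps)$ weights via \cite[Corollary 6.5]{ElstRobinson}-type comparisons.

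This expansion is the main obstacle. One must check that each resulting coefficient is bounded by $C\,\abs{x}_*^{d_{j'}}$, i.e. that the gain $\abs{c_j}\meg\abs{x}_*^{d_j}$ combined with $\abs{a_{jj'}}$ is still at least $\abs{x}_*^{d_{j'}}$. Here the factor $C^k$ comes from the uniform bound on the $a_{jj'}$ over the compact set $\abs{z}_*\meg k\eps$. Care is needed because $\abs{z}_*$ may be as large as $k\abs{x}_*$, which is handled by the geometric constant $c$.

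Given the pointwise formula, the second assertion follows immediately, since every evaluation point $y\gamma(t_1)\cdots\gamma(t_k)$ lies in $B(y,ck\abs{x}_*)$. For the first assertion, take $L^p(\beta;B)$ norms inside the integral using Minkowski's inequality. Right translation by $z$ multiplies the $L^p$ norm by $\Delta_R(z)^{-1/p}$, which by Lemma~\ref{lem:31} is at most $c\ee^{c k\eps}$, and this is absorbed into $C^k$.
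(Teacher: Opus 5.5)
\textbf{The gap: the factor $C^k$.} Your cube identity is correct. Differentiate in the order $t_k,t_{k-1},\dots,t_1$, so that no derivatives of the $\mathrm{Ad}$-coefficients appear. The integrand is then $(V_1\cdots V_kf)(y\gamma(t_1)\cdots\gamma(t_k))$, with $V_l=\mathrm{Ad}(z_l)^{-1}\sum_j c_j(t_l)X_j$ and $z_l=\gamma(t_{l+1})\cdots\gamma(t_k)$. For each \emph{fixed} $k$ your coefficient analysis does give both inequalities with some constant $C_k$. (The components with $d_{j'}\meg d_j$ are harmless simply because $\abs{x}_*^{d_j}\meg\abs{x}_*^{d_{j'}}$ and $a_{jj'}$ is bounded on compacta.) This fixed-$k$ version is all that Propositions~\ref{prop:32b} and~\ref{prop:32d} use.

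What fails is the factor $C^k$ with $C$ independent of $k$, which is part of the statement. It does not follow from ``the uniform bound on the $a_{jj'}$ over $\abs{z}_*\meg k\eps$'', because that set, and hence that bound, depends on $k$. You would need $\abs{\sum_j c_j(t_l)a_{jj'}(z_l)}\meg C\abs{x}_*^{d_{j'}}$ for each of the $k$ factors, with $C$ independent of $k$ and $l$. This is false, since $\abs{z_l}_*$ is of order $(k-l)\abs{x}_*$. The components with $d_{j'}>d_j$ are of size $\abs{x}_*^{d_j}((k-l)\abs{x}_*)^{d_{j'}-d_j}$. On non-nilpotent groups even the other components are only bounded by $\ee^{C(k-l)\abs{x}_*}$. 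Multiplying over $l$ therefore loses factors like $(k-1)!$ or $\ee^{Ck^2\eps}$.

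This loss is intrinsic to your method, not just a sloppy estimate. Take the Heisenberg group with $X_3=[X_1,X_2]$, $x=\exp_G(\tau X_3)$, and $\gamma$ the commutator loop along $X_1,X_2,-X_1,-X_2$ with speed $4\tau^{1/2}$; its content is comparable to $\abs{x}_*$. Then $\mathrm{Ad}(z)^{-1}X_2=X_2-z_1X_3$, where $z_1$ is the additive $X_1$-coordinate of $z$. On the set where every $t_l$ lies in the $X_2$-leg, which has measure $4^{-k}$, the $X_3$-component of $V_l$ is $-4(k-l)\tau$. After taking absolute values, as Minkowski's inequality forces you to, $\norm{X_3^{k-1}X_2f}_{L^p(\beta;B)}$ acquires a coefficient of order $c^k(k-1)!\,\abs{x}_*^{2k-1}$.

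\textbf{The fix: use the one-parameter subgroup through $x$.} The missing idea is to exploit that the same $x$ is repeated. For $x$ near $e$, write $x=\exp_G(X)$ with $X=\sum_j a_jX_j$ and $\abs{a_j}\meg (C'\abs{x}_*)^{d_j}$ (cf.~\cite[Proposition 6.1]{ElstRobinson}), and take $\gamma(t)=\exp_G(tX)$.
\begin{itemize}
\item Then $\gamma(t_1)\cdots\gamma(t_k)=\exp_G((t_1+\cdots+t_k)X)$ and $\mathrm{Ad}(\exp_G(sX))X=X$. So every $V_l$ is just a multiple of $X$, and your identity becomes $(\Delta^{(k)}_xf)(y)=\int_{[0,1]^k}(X^kf)(y\exp_G((t_1+\cdots+t_k)X))\,\dd t$.
\item Expanding $X^k=\sum_{j\in J^k}a_{j_1}\cdots a_{j_k}X_{j_1}\cdots X_{j_k}$ gives coefficients bounded by $(C'\abs{x}_*)^{d_j}\meg \max(1,C')^{k\max_i d_i}\abs{x}_*^{d_j}$, which is a genuine $C^k$.
\item For the second assertion, $\abs{\exp_G(sX)}_*\meg kC'^2\abs{x}_*$ for $s\in[0,k]$. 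This follows by subadditivity, writing $\exp_G(sX)$ as a product of at most $k$ factors $\exp_G(s'X)$ with $s'\in[0,1]$.
\item The $\Delta_R$ factors are absorbed into $C^k$ exactly as you did.
\end{itemize}
This is precisely the paper's proof (written there with $h_k=\chi_{[0,1]}^{*k}$). It is also the reason the paper remarks that its argument works for $\Delta_x^{(k)}$ but not for $\Delta_{x_1}\cdots\Delta_{x_k}$, whereas your general-curve cube formula would apply to mixed differences as well.
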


We observe explicitly that the proof below only works for higher-order differences such as $\Delta_x^{(k)}$ (with $x$ small), and would fail for  higher order differences  such as  $\Delta_{x_1}\cdots \Delta_{x_k}$  with  $\card(\Set{x_1,\dots, x_k})\Meg 2$.

\begin{proof}
	Up to convolving (on the left) $f$ with an approximate identity, we may reduce to the case $f\in C^k(G;B)$ also for the first assertion.
	Take $\eps'>0$ and $C'>1$ so that the exponential map $\exp_G$ induces a diffeomorphism of $U\coloneqq \sum_{j\in J} [-\eps'^{d_j},\eps'^{d_j}] X_j$ onto a neighbourhood of $e$ in $G$, and so that 
	\[
	\frac{1}{C'} \abs{\exp_G(X)}_*\meg \max_{j\in J} \abs{\langle X\vert X_j\rangle}^{1/d_j}\meg C' \abs{\exp_G(X)}_* 
	\]
	for every $X\in \overline U$ (cf.~\cite[Proposition 6.1]{ElstRobinson}).  Fix $\eps\in (0,\eps'/C')$ so that $B(e,\eps)\subseteq \exp_G(U )$. 
	Take $x\in B(e, \eps)$, and observe that $x=\exp_G(X)$ for some $X=\sum_{j\in J} a_jX_j$ such that $\abs{a_j}\meg (C'\abs{x}_*)^{d_j}$ for every $j\in J$. For every $y\in G$, define $g_{x,y}\colon \R \ni t \mapsto f(y\exp_G(tX))\in B$, and observe that $g_{x,y}\in C^k(\R;B)$ and that
	\[
	(\Delta_{1}^{(k)} g_{x,y})(0)=\int_0^1 (\Delta_{1}^{(k-1)}g'_{x,y})(t)\,\dd t= \cdots =\int_{[0,1]^k} g^{(k)}_{x,y}(t_1+\cdots+t_k)\,\dd (t_1,\dots, t_k)=\int_0^k g_{x,y}^{(k)}(t) h_k(t)\,\dd t,
	\]
	where $h_k=\chi_{[0,1]}^{*k}$, that is, the convolution of $k$ functions all equal to $\chi_{[0,1]}$. 
	In particular, $h_k$ is a bounded continuous (unless $k=1$) function, and $\int_0^k h_k(t)\,\dd t=1$. Now, observe that
	\[
	(\Delta_{x}^{(k)} f)(y)= (\Delta_{1}^{(k)} g_{x,y})(0)= \int_0^k g_{x,y}^{(k)}(t) h_k(t)\,\dd t=\int_0^k (X^k f)(y \exp_G(t X)) h_k(t)\,\dd t,
	\]
	so that
	\[
	\norm{\Delta_x^{(k)}f}_{L^p(\beta)} \meg C''\int_0^k \norm{X^k f}_{L^p(\beta)} h_k(t)\,\dd t=C''\norm{X^k f}_{L^p(\beta)} ,
	\]
	where $C''=\max_{B(0,\eps)} \Delta_R$. The first assertion follows, observing that
	\[
	\norm{X^k f}_{L^p(\beta)}\meg \sum_{j\in J^k} \abs{a_{j_1}\cdots a_{j_k}} \norm{X_{j_1}\cdots X_{j_k} f}_{L^p(\beta)}
	\]
	and applying the previous estimates of the $a_j$. For what concerns the second assertion, observe first that, if $k C' \abs{x}_*\meg \eps'$, then $\abs{\exp_G(tX)}_*\meg C'^2 k \abs{x}_*$ for every $t\in [0,k]$. If, otherwise, $k C' \abs{x}_*>\eps'$, then there is $t_0\in (1,k)$ such that $t_0C' \abs{x}_*=\eps'$. Consequently, $\abs{\exp_G(t X)}_*\meg C'\eps'$ for every $t\in [0,t_0]$, so that $\abs{\exp_G(tX)}_*\meg C'\eps'([k/t_0]+1)\meg 2 C'^2 k\abs{x}_*$ for every $t\in [0,k]$. The assertion then follows easily.
\end{proof}

\begin{prop}\label{prop:32b}
	Take $p,q\in [1,\infty]$, $h\in \N$, and $\alpha\in (0,h\dd_0)$. Then, there is a constant $C>1$ such that  
	\[
	\norm{f}_{L^p(\beta)}+\norm*{ \abs{y}_*^{-\alpha-Q_*/q} \norm{\Delta_y^{(h)} f}_{L^p(\beta)} }_{L^q_y(B(e,1),\beta)}  \meg C \norm{f}_{B^{p,q}_\alpha(\beta)}
	\]
	for every $f\in \Sc'(G)$.
\end{prop}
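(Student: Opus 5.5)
The plan is to combine the reproducing formula of Lemma~\ref{lem:9} with the mean value estimate of Lemma~\ref{lem:30bis}, in the classical way: split the heat-semigroup decomposition of $f$ into a `high-frequency' part, handled crudely, and a `low-frequency' part, where the differences fall on the heat kernel.

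\emph{Setup.} Note first that $\norm{f}_{L^p(\beta)}\meg C\norm{f}_{B^{p,q}_\alpha(\beta)}$ for $\alpha>0$, by Proposition~\ref{prop:2} and~\cite[Proposition 6.9]{BCP}. So only the difference term needs estimating.

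Fix the $\eps$ of Lemma~\ref{lem:30bis}. On $B(e,1)\setminus B(e,\eps)$ use the trivial bound $\norm{\Delta^{(h)}_y f}_{L^p(\beta)}\meg C\norm{f}_{L^p(\beta)}$, since right translations act boundedly on $L^p(\beta)$ with norm $\Delta_R^{\pm1/p}$, which is bounded there. This contributes at most $C\norm{f}_{L^p(\beta)}$.

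For $y\in B(e,\eps)$, take $m$ large (say $m>\alpha/\grado+h$) and apply Lemma~\ref{lem:9} with $t=1$:
\[
f=\sum_{k=0}^m \tfrac{1}{k!}W^{(k)}_1 f+\tfrac{1}{m!}\int_0^1 W^{(m+1)}_s f\,\tfrac{\dd s}{s}.
\]
Split the integral at $s=\abs{y}_*^{\grado}$.

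\emph{The pieces.}
\begin{itemize}
\item Finitely many terms $W^{(k)}_1 f$: apply Lemma~\ref{lem:30bis}. Each $X_{j_1}\cdots X_{j_h}W^{(k)}_1 f$ is a convolution of $f$ with a kernel bounded by a multiple of $p_{b,1}$ (Theorem~\ref{teo:7}). By Young's inequality and Lemma~\ref{lem:3} this gives $\norm{\Delta^{(h)}_y W^{(k)}_1 f}_{L^p}\lesssim \abs{y}_*^{h\dd_0}\norm{f}_{L^p}$, because every multi-index $j\in J^h$ has $d_j\Meg h\dd_0$ and $\abs{y}_*<1$.
\item The range $s<\abs{y}_*^{\grado}$: use the trivial bound $\norm{\Delta^{(h)}_y g}_{L^p}\lesssim\norm{g}_{L^p}$.
\item The range $s\Meg\abs{y}_*^{\grado}$: write $W^{(m+1)}_s=2^{m+1}\ee^{-(s/2)\Lc}W^{(m+1)}_{s/2}$ and let the derivatives from Lemma~\ref{lem:30bis} fall on $h_{s/2}$. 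This gives
\[
\norm{X_{j_1}\cdots X_{j_h}W^{(m+1)}_s f}_{L^p}\lesssim s^{-d_j/\grado}\norm{W^{(m+1)}_{s/2}f}_{L^p}.
\]
Since $\abs{y}_*\meg s^{1/\grado}$, one gets $\abs{y}_*^{d_j}s^{-d_j/\grado}\meg (\abs{y}_*^{\grado}/s)^{h\dd_0/\grado}$.
\end{itemize}

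Writing $F(s)\coloneqq\norm{W^{(m+1)}_{s/2}f}_{L^p}+\norm{W^{(m+1)}_s f}_{L^p}$ and $r\coloneqq\abs{y}_*^{\grado}$, altogether
\[
\norm{\Delta^{(h)}_y f}_{L^p}\lesssim r^{h\dd_0/\grado}\norm{f}_{L^p}+\int_0^1 \min\bigl(1,(r/s)^{h\dd_0/\grado}\bigr)F(s)\,\frac{\dd s}{s}.
\]

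\emph{Conversion to $\mi_1$.} Pass from the $L^q(B(e,\eps),\abs{y}_*^{-\alpha q-Q_*}\dd\beta)$-norm to an $L^q(\mi_1)$-norm in $r$. Decompose into dyadic shells $B(e,2^{-n+1})\setminus B(e,2^{-n})$, on which $\beta\asymp 2^{-nQ_*}$. Then
\[
\norm*{\abs{y}_*^{-\alpha-Q_*/q}G(\abs{y}_*^{\grado})}_{L^q_y(B(e,\eps),\beta)}\lesssim\norm*{r^{-\alpha/\grado}\sup_{[r,2^{\grado}r]}G}_{L^q_r(\mi_1)}
\]
for $G$ monotone-like, or one simply averages (the kernels above are comparable on dyadic ranges).

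\emph{Main obstacle: the Hardy-type estimate.} The key point is to bound
\[
\norm*{r^{-\alpha/\grado}\int_0^1\min\bigl(1,(r/s)^{h\dd_0/\grado}\bigr)F(s)\,\tfrac{\dd s}{s}}_{L^q_r(\mi_1)}\lesssim\norm{s^{-\alpha/\grado}F(s)}_{L^q_s(\mi_1)}.
\]
The kernel $r^{-\alpha/\grado}\min(1,(r/s)^{h\dd_0/\grado})s^{\alpha/\grado}$ is bounded by $C\,\frac{r^{\gamma}s^{\eta}}{(r+s)^{\gamma+\eta}}$ with $\gamma=(h\dd_0-\alpha)/\grado>0$ and $\eta=\alpha/\grado>0$. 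This is exactly where the hypothesis $\alpha\in(0,h\dd_0)$ enters, and it lets me apply Lemma~\ref{lem:25}. The term with $\norm{f}_{L^p}$ is integrable because $h\dd_0>\alpha$.

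Finally, $\norm{s^{-\alpha/\grado}F(s)}_{L^q(\mi_1)}$ is controlled by $\Bs^{p,q}_{\alpha,m+1,\mi}(f)$ for suitable sampling measures (the $s/2$ rescaling is harmless). Hence it is bounded by $C\norm{f}_{B^{p,q}_\alpha(\beta)}$ by~\cite[Proposition 6.9]{BCP}. I expect the care needed to be in this last chain: the dyadic conversion from the $y$-integral to the $r$-integral together with the Schur/Hardy step.
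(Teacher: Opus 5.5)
Your proposal is correct and follows essentially the paper's proof. It uses the same ingredients: the reproducing formula of Lemma~\ref{lem:9}, the mean value estimate of Lemma~\ref{lem:30bis} with the derivatives moved onto a heat kernel at time $s/2$, the kernel $\min\bigl(1,(\abs{y}_*^{\grado}/s)^{h\dd_0/\grado}\bigr)$, and the Hardy-type Lemma~\ref{lem:25}, where $\alpha<h\dd_0$ enters. The differences are cosmetic: the paper takes $m=h-1$, merges the two bounds into $\frac{\abs{y}_*^{d_j}}{t^{d_j/\grado}+\abs{y}_*^{d_j}}$ instead of splitting the integral at $s=\abs{y}_*^{\grado}$, and replaces your dyadic conversion step (the one you flagged as delicate) by observing that the image of $\chi_{B(e,1)}\abs{\,\cdot\,}_*^{-Q_*}\cdot\beta$ under $\abs{\,\cdot\,}_*$ is a Carleson measure, so Lemma~\ref{lem:25} applies directly with no monotonicity argument.
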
 

\begin{proof}
	Observe that $f= \sum_{k=0}^{h-1} \frac{1}{k!} \Lc^k\ee^{-\Lc }f+\frac{1}{(h-1)!} \int_0^1 t^{h-1}\Lc^h \ee^{-t \Lc} f\,\dd t$, thanks to Lemma~\ref{lem:9}. In addition, by Lemma~\ref{lem:30bis} there are  $\eps>0$ and $C_1>0$ such that, for every $k=0,\dots, h$, for every $t\in (0,1]$, and for every $y\in G$,
	\[
	\norm{\Delta_y^{(h)}(\Lc^k \ee^{-t \Lc} f)}_{L^p(\beta)}\meg (1+\Delta_R(y)^{1/p})^h \norm{\Lc^k\ee^{-t \Lc}f}_{L^p(\beta)},
	\]
	but also, for $y\in B(e,\eps)$,
	\[
	\begin{split}
		\norm{\Delta_y^{(h)}(\Lc^k \ee^{-t \Lc} f)}_{L^p(\beta)}&\meg C_1\sum_{j\in J^h} \abs{y}_*^{d_j}\norm{X_{j_1}\cdots X_{j_h} \Lc^k\ee^{-t \Lc}f}_{L^p(\beta)}, 
	\end{split}
	\]
	where $(X_j)_{j\in J}$ denotes an orthonormal basis of $\gf$ compatible with the filtration and $d_j=\deg (X_{j_1})+\cdots+ \deg(X_{j_h})$ for every $j\in J^h$.
	By means of Lemma~\ref{lem:18}, we then see that there is a constant $C_2>0$ such that
	\[
	\norm{\Delta_y^{(h)}(\Lc^k \ee^{-t \Lc} f)}_{L^p(\beta)}\meg C_2\sum_{j\in J^h}\frac{\abs{y}_*^{d_j}}{t^{d_j/\grado}+\abs{y}_*^{d_j}} \norm{  \Lc^k \ee^{-(t/2) \Lc}f}_{L^p(\beta)}
	\]
	for every $k=0,\dots, h$, for every $t\in (0,1]$, and for every $y\in B(e,\eps)$.  In fact, it is readily seen that the same inequality may be assumed to hold for every $y\in B(e,1)$. 
	Observe that, by Lemma~\ref{lem:18}, there is a constant $C_3>0$ such that, for every $k=0,\dots,h-1$,
	\[
	\norm*{\abs{y}_*^{-\alpha-Q_*/q} \norm{\Delta^{(h)}_y(\Lc^k\ee^{- \Lc} f)}_{L^p(\beta)}}_{L^q_y(B(e,1),\beta)}\meg C_3 \norm{f}_{L^p(\beta)}\sum_{j\in J^h}\norm*{\frac{\abs{y}_*^{d_j-\alpha-Q_*/q}}{(1+\abs{y}_*)^{d_j}}}_{L^q_y(B(e,1),\beta)},
	\]
	where the $L^q$ norms are finite since $d_j\Meg h \dd_0>\alpha$ for every $j\in J^h$.
	
	Now, let $\nu$ be the image of the measure $\chi_{B(e,1)} \abs{\,\cdot\,}_*^{-Q_*}\cdot \beta$ (on $G\setminus \Set{e}$) through the map $\abs{\,\cdot\,}_*$. Observe that $\nu\in \Mc_\Car$, so that by means of Lemma~\ref{lem:25} we see that there is a constant $C_4>0$ such that
	\[
	\begin{split}
		&\norm*{\int_0^1 \abs{y}_*^{-\alpha-Q_*/q} t^{h}\norm{\Delta_y^{(h)}(\Lc^h\ee^{-t \Lc} f)}_{L^p(\beta)}\,\dd\mi_1(t)}_{L^q_y(B(e,1),\beta)}\\
		&\qquad\meg C_2 2^h \sum_{j\in J^h}\norm*{ \int_0^1    \frac{r^{d_j-\alpha}}{t^{d_j/\grado} +r^{d_j}} \norm{ W^{(h)}_{t/2}f}_{L^p(\beta)}\,\dd \mi_1(t)  }_{L^q_r(\nu)}\\ 
		&\qquad\meg C_4  \norm*{ t^{-\alpha/\grado} \norm{ W^{(h)}_{t/2}f}_{L^p(\beta)}   }_{L^q_t(\mi_1)}
	\end{split}
	\]
	The assertion follows since $h>\alpha/\dd_0\Meg \alpha/\grado$.
\end{proof}

We now pass to Triebel--Lizorkin spaces.

\begin{prop}\label{prop:32c}
	Take $p\in (1,\infty)$, $q\in [1,\infty]$, and $\alpha>0$. Then, there is a constant $C>1$ such that 
		\[
			\frac{1}{C}\norm{f}_{F^{p,q}_\alpha(\beta)}\meg \norm{f}_{L^p(\beta)}+\norm{S^{q}_\alpha f }_{L^p(\beta)}  
		\]
		for every $f\in \Sc'(G)$.
\end{prop}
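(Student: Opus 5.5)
We follow the strategy of Proposition~\ref{prop:32a} (case $k=1$), working pointwise in $x$ rather than in $L^p$ norm. We first reduce, by~\cite[Proposition 6.9]{BCP} and Proposition~\ref{prop:4}, to the case in which $\Lc$ is real, has no constant term, and satisfies $\int_G \Lc h_t\,\dd\beta_R=0$. Then $\Fs^{p,q}_{\alpha,1,\mi_1}$ together with $\norm{\ee^{-\Lc}f}_{L^p(\beta)}\meg C\norm{f}_{L^p(\beta)}$ controls the $F^{p,q}_\alpha(\beta)$ norm. Exactly as in the proof of Proposition~\ref{prop:32a}, we have the pointwise identity
\[
(W^{(1)}_t f)(x)=t\int_G (\Delta_y f)(x)\,(\Lc h_t)(y^{-1})\,\dd\beta_L(y),
\]
so that, by Theorem~\ref{teo:7} and Lemma~\ref{lem:31},
\[
\abs{W^{(1)}_t f(x)}\meg C\int_G \abs{(\Delta_y f)(x)}\,p_{b,t}(y^{-1})\,\ee^{c\abs{y}_*}\,\dd\beta(y).
\]

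Next we split the integral into $y\in B(e,1)$ and $y\notin B(e,1)$. For $\abs{y}_*\Meg 1$ and $t\in(0,1]$, we bound $\abs{\Delta_yf(x)}\meg \abs{f(xy)}+\abs{f(x)}$ and use the Gaussian factor $\ee^{-b(1/t)^{1/(\grado-1)}}$. This gives a contribution of at most $C t^{N}\big(\abs{f}(x)+T_{b',1}\abs{f}(x)\big)$ for any fixed $N$, possibly after absorbing $\ee^{c\abs{y}_*}$ via Lemma~\ref{lem:3} and switching from right to left convolution. Since $N>\alpha/\grado$, its $L^q(\mi_1)$ norm in $t$ multiplied by $t^{-\alpha/\grado}$ is bounded by $C(\abs{f}+T_{b',1}\abs f)(x)$, and the $L^p$ norm of this is bounded by $C\norm{f}_{L^p(\beta)}$ (Young's inequality and Lemma~\ref{lem:3}).

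For $y\in B(e,1)$ we decompose dyadically: $B(e,1)=\bigcup_{j\Meg0}\big(B(e,2^{-j})\setminus B(e,2^{-j-1})\big)$. On the $j$-th annulus, $p_{b,t}(y^{-1})\meg t^{-Q_*/\grado}\ee^{-b(2^{-j\grado}/t)^{1/(\grado-1)}}$. Set $A(x,r)\coloneqq r^{-Q_*}\int_{B(e,r)}\abs{\Delta_yf(x)}\,\dd\beta(y)$, so that $S^q_\alpha f(x)\asymp\norm{r^{-\alpha}A(x,r)}_{L^q_r(\mi_1)}$. We then obtain
\[
t^{-\alpha/\grado}\abs{W^{(1)}_tf(x)}\meg C\sum_{j\Meg0}\Big(\frac{2^{-j}}{t^{1/\grado}}\Big)^{Q_*+\alpha}\ee^{-b(2^{-j}/t^{1/\grado})^{\grado/(\grado-1)}}\,(2^{-j})^{-\alpha}A(x,2^{-j}).
\]
Replacing the sum by an integral in $r$ (using monotonicity of $r^{Q_*}A(x,r)$, so that $A(x,2^{-j})\meg 2^{Q_*}A(x,r)$ for $r\in[2^{-j},2^{-j+1}]$), we get a kernel bound $\int_0^1 K(r/t^{1/\grado})\,r^{-\alpha}A(x,r)\,\frac{\dd r}{r}$ with $K(u)=u^{Q_*+\alpha}\ee^{-bu^{\grado/(\grado-1)}}$. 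We then apply Schur's lemma in the variables $(t,r)$ with measures $\mi_1$, exactly as in Lemma~\ref{lem:29}: both $\sup_t\int K(r/t^{1/\grado})\frac{\dd r}{r}$ and $\sup_r\int_0^1 K(r/t^{1/\grado})\frac{\dd t}{t}$ are finite. This yields, pointwise in $x$,
\[
\norm{t^{-\alpha/\grado}W^{(1)}_tf(x)}_{L^q_t(\mi_1)}\meg C\,S^q_\alpha f(x)+C(\abs f+T_{b',1}\abs f)(x),
\]
and taking $L^p$ norms concludes the proof.

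We expect the main obstacle to be the careful handling of the modular factors $\Delta_L,\Delta_R$ and of the measure $\beta$ versus $\beta_L$ in the change of variables $y\mapsto y^{-1}$. These must be absorbed into $\ee^{c\abs y_*}$ and then into the Gaussian (Lemma~\ref{lem:3}) uniformly for $t\in(0,1]$. The local comparison $\beta(B(e,r))\asymp r^{Q_*}$ for $r\meg 1$ is what makes the dyadic-to-continuous replacement and the normalization $r^{-Q_*}$ in $S^q_\alpha$ match.
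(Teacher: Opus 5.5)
Your proof is correct. It starts exactly as the paper's does: the first-order identity coming from $\int_G \Lc h_t\,\dd\beta_R=0$, and the pointwise bound of $\abs{(W^{(1)}_tf)(x)}$ by $\int_G\abs{(\Delta_yf)(x)}\,p_{b,t}(y)\,\ee^{c\abs{y}_*}\,\dd\beta(y)$. Where you differ is in how this Gaussian average is converted into ball averages.

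\textbf{The paper's route.} It substitutes $t\mapsto t^{\grado}$, so that $t$ becomes a radius. The part of the integral over $B(e,t)$ is then bounded by $S^q_\alpha f(x)$ at once, since $p_{b,t^{\grado}}\meg t^{-Q_*}$. The complement is cut into shells $B(e,2^{j+1}t)\setminus B(e,2^jt)$ \emph{relative to $t$}, and on each shell the Gaussian contributes $\ee^{-(b/2)2^{j\gamma}}$, where $\gamma=\grado/(\grado-1)$. Enlarging the shell to $B(e,2^{j+1}t)$ and rescaling $t$ gives $2^{(j+1)(\alpha+Q_*)}$ times the same ball-average functional in $L^q(\mi_{2^{j+1}})$. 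Its part over $(0,1]$ is $S^q_\alpha f(x)$, and its part over $[1,2^{j+1}]$ is at most $C[(\abs{f}*\ee^{-3c\abs{\,\cdot\,}_*})(x)+\abs{f(x)}]$. The super-exponential weights make the series in $j$ converge.

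\textbf{Your route.} You use absolute shells at scales $2^{-j}$, and the monotonicity of $r\mapsto r^{Q_*}A(x,r)$ to replace the sum by the kernel $K(r/t^{1/\grado})$ on $(0,1]$. You then apply Schur's test as in Lemma~\ref{lem:29}, and treat $\abs{y}_*\Meg 1$ separately.

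\textbf{Comparison.} The paper's route needs neither Schur's test nor the sum-to-integral comparison, and it handles large $\abs{y}_*$ within the same scheme. Yours is a literal pointwise-in-$x$ transcription of the Besov argument of Proposition~\ref{prop:32a}, and it exhibits the estimate transparently as a convolution on the multiplicative group $(0,+\infty)$.

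\textbf{Small repairs.}
\begin{enumerate}
\item For $j=0$ your comparison interval $[1,2]$ lies outside $\supp\mi_1$. Treat the shell $B(e,1)\setminus B(e,1/2)$ together with the far region; there the Gaussian still yields a factor $\ee^{-b't^{-1/(\grado-1)}}$.
\item On the $j$-th shell the lower bound is $\abs{y}_*\Meg 2^{-j-1}$, not $2^{-j}$. This only changes $b$.
\item For $\alpha\Meg\grado$, the claim that $\norm{f}_{L^p(\beta)}+\Fs^{p,q}_{\alpha,1,\mi_1}(f)$ controls $\norm{f}_{F^{p,q}_\alpha(\beta)}$ is not covered by the equivalence of~\cite[Proposition 6.9]{BCP} as quoted, which requires $m>\alpha/\grado$. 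It does hold, by the following three facts:
\begin{enumerate}
\item $W^{(m)}_t=2^mW^{(m-1)}_{t/2}W^{(1)}_{t/2}$;
\item $\abs{W^{(m-1)}_{t/2}u}\meg C\,T_{b,t/2}u$ for $t\in(0,1]$;
\item Lemma~\ref{lem:2}.
\end{enumerate}
The paper's proof passes over this point in the same way.
\end{enumerate}
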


One may also adapt the latter part of the proof of Proposition~\ref{prop:32a} in order to prove a similar assertion where $(\Delta_y f)(x)$ is replaced by the symmetric difference $f(x y^{-1})+f(x y)-2 f(x)$. However, it does not seem that the formulation with symmetric differences may be easily `corrected' to a formulation with second order differences as in the case of Besov spaces.

\begin{proof}
	By means of~\cite[Proposition 6.9]{BCP} and Proposition~\ref{prop:4}, we may assume that $\Lc$ has no constant terms. Consequently, also its formal transpose $\Lc^+$ with respect to $\beta_R$ has no constant terms, so that $\int_G \Lc h_t\,\dd \beta_R=0$ for every $t>0$.
	In addition, by Theorem~\ref{teo:7} we may find $b,C_1>0$ such that $\abs{(t\Lc)^j h_t}\meg C_1 p_{b,t}$ for every $t\in (0,1]$ and for every $j=0,1$, while by Lemma~\ref{lem:31} we may find $c\Meg 1$ such that $\Delta_R(x), \Delta_L(x)\meg c \ee^{c \abs{x}_*}$ for every $x\in G$, and such that $\beta(B(e,r))\meg \ee^{c r}$ for every $r\Meg 1$.
	We set $\gamma\coloneqq \grado/(\grado-1)$ to simplify the notation. Arguing as in the proof of Proposition~\ref{prop:32a} and using Lemma~\ref{lem:3}, we see that there is a constant $C_2>0$ such that
	\[
	\begin{split}
		&\norm{ t^{-\alpha/\grado}\abs{(W^{(1)}_t f)(x)} }_{L^q_t(\mi_1)} =\grado^{-1/q}\norm{ t^{-\alpha }\abs{(W^{(1)}_{t^{\grado}} f)(x)} }_{L^q_t(\mi_1)}\\
		& \qquad\meg c\grado^{-1/q} C_1 \norm*{t^{-\alpha}\int_G \abs{(\Delta_y f)(x)} p_{b,t^{\grado}}(y) \ee^{c\abs{y}_*} \,\dd \beta(y) }_{L^q_t(\mi_1)}\\
		&\qquad\meg C_2( S^q_\alpha f)(x)+C_2 \norm*{t^{-\alpha-Q_*}\int_{G\setminus B(e,t)} \abs{(\Delta_y f)(x)} \ee^{-(b/2) (\abs{y}_*/t)^{\gamma} } \ee^{-4c\abs{y}_*} \,\dd \beta(y) }_{L^q_t(\mi_1)}.
	\end{split}
	\]
	Now, observe that there is a constant $C_3>0$ such that
	\[
	\begin{split}
		&  \norm*{t^{-\alpha-Q_*}\int_{G\setminus B(e,t)} \abs{(\Delta_y f)(x)} \ee^{-(b/2) (\abs{y}_*/t)^{\gamma} } \ee^{-4c\abs{y}_*} \,\dd \beta(y) }_{L^q_t(\mi_1)}   \\
		& \meg \sum_{j\in \N}   \norm*{t^{-\alpha-Q_*}\int_{B(e, 2^{j+1}t)\setminus B(e,2^j t)} \abs{(\Delta_y f)(x)} \ee^{-(b/2) (\abs{y}_*/t)^{\gamma} } \ee^{-4c\abs{y}_*} \,\dd \beta(y) }_{L^q_t(\mi_1)}  \\
		&\meg \sum_{j\in \N} \ee^{ -(b/2)2^{j\gamma}}  \norm*{t^{-\alpha-Q_*}\int_{B(e, 2^{j+1}t) } \abs{(\Delta_y f)(x)}   \ee^{-4c\abs{y}_*} \,\dd \beta(y) }_{L^q_t(\mi_1)}   \\
		& \meg  \sum_{j\in \N} 2^{(j+1)(\alpha+Q_*)}\ee^{ -(b/2)2^{j\gamma}}  \norm*{t^{-\alpha-Q_*}\int_{B(e, t) } \abs{(\Delta_y f)(x)}   \ee^{-4c\abs{y}_*} \,\dd \beta(y) }_{L^q_t(\mi_{2^{j+1}})}  \\
		&\meg C_3   (S^q_\alpha f)(x)+ \sum_{j\in \N} 2^{(j+1)(\alpha+Q_*)}\ee^{ -(b/2)2^{j\gamma}}  \norm*{t^{-\alpha-Q_*}\int_{B(e, t) } \abs{(\Delta_y f)(x)}   \ee^{-4c\abs{y}_*} \,\dd \beta(y) }_{L^q_t([1,2^{j+1}],\mi_{2^{j+1}})}   .
	\end{split}
	\]
	Now, observe that there is a constant $C_4>0$ such that
	\[
	\begin{split}
		&\norm*{t^{-\alpha-Q_*}\int_{B(e, t) } \abs{(\Delta_y f)(x)}   \ee^{-4c\abs{y}_*} \,\dd \beta(y) }_{L^q_t([1,2^{j+1}],\mi_{2^{j+1}})}\\
		 &\qquad\meg \norm*{t^{-\alpha-Q_*}\int_{G} (\abs{f(xy)} +\abs{f(x)} ) \ee^{-4c\abs{y}_*} \,\dd \beta(y) }_{L^q_t([1,2^{j+1}],\mi_{2^{j+1}})}\\
		&\qquad\meg \norm*{t^{-\alpha-Q_*} }_{L^q_t([1,2^{j+1}],\mi_{2^{j+1}})} \left(c\int_G \abs{f(x y^{-1})} \ee^{-3 c\abs{y}_*}\,\dd \beta_R(y) + \abs{f(x)} \int_G \ee^{-4 c\abs{y}_*}\,\dd \beta(y)   \right)\\
		&\qquad\meg C_4 \big[\big(\abs{f}* \ee^{-3 c\abs{\,\cdot\,}_*}\big)(x)+\abs{f(x)}  \big],
	\end{split}
	\]
	so that by Young's inequality and the fact that $\ee^{-2 c\abs{\,\cdot\,}_*}\in L^1(\beta)$, we see that there is a constant $C_5>0$ such that
	\[
	\begin{split}
		&\norm*{\sum_{j\in \N} 2^{(j+1)(\alpha+Q_*)}\ee^{ -(b/2)2^{j\gamma}}  \norm*{t^{-\alpha-Q_*}\int_{B(e, t) } \abs{(\Delta_y f)(x)}   \ee^{-4c\abs{y}_*} \,\dd \beta(y) }_{L^q_t([1,2^{j+1}],\mi_{2^{j+1}})}  }_{L^p_x(\beta)}\\
		&\qquad \meg C_4\norm*{\big(\abs{f}* \ee^{-3 c\abs{\,\cdot\,}_*}\big) +\abs{f } }_{L^p(\beta)}\sum_{j\in \N} 2^{(j+1)(\alpha+Q_*)}\ee^{ -(b/2)2^{j\gamma}}\\
		&\qquad\meg C_5\norm{f}_{L^p(\beta)},
	\end{split}
	\]
	whence the conclusion.
\end{proof}

\begin{prop}\label{prop:32d}
	Take $p\in (1,\infty)$, $q\in [1,\infty]$, $h\in \N$, and $\alpha\in (0,h\dd_0)$. Then, there is a constant $C>1$ such that 
	\[
	\norm{f}_{L^p(\beta)}+\norm{S^{q}_{\alpha,h} f }_{L^p(\beta)}  \meg C\norm{f}_{F^{p,q}_\alpha(\beta)}
	\]
	for every $f\in \Sc'(G)$.
\end{prop}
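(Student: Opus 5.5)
We argue as in Proposition~\ref{prop:32b}, replacing $L^p$-norms in $x$ by pointwise estimates and then controlling maximal functions. By Lemma~\ref{lem:9},
\[
f=\sum_{k=0}^{h-1}\frac{1}{k!}\Lc^k\ee^{-\Lc}f+\frac{1}{(h-1)!}\int_0^1 W^{(h)}_t f\,\frac{\dd t}{t}.
\]
Since $F^{p,q}_\alpha(\beta)\subseteq L^p(\beta)$ for $\alpha>0$, it suffices to bound $\norm{S^q_{\alpha,h}}_{L^p}$ applied to each piece. For the first $h$ terms, use the pointwise version of Lemma~\ref{lem:30bis} for $y\in B(e,\eps)$ and the trivial bound $\abs{\Delta^{(h)}_y u(x)}\meg \sum_{i\meg h}\binom{h}{i}\abs{u(xy^i)}$ otherwise. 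Together with the Gaussian estimates of Theorem~\ref{teo:7} for $X_{j_1}\cdots X_{j_h}\Lc^k h_1$, this gives $\abs{\Delta^{(h)}_y(\Lc^k\ee^{-\Lc}f)(x)}\meg C\abs{y}_*^{h\dd_0}\,T_{b',1}f(x)$ for $\abs{y}_*\meg 1$ (shifting centres inside $B(x,ch)$ via Lemma~\ref{lem:4}). Since $h\dd_0>\alpha$, the $L^q(\mi_1)$ norm of $t^{-\alpha-Q_*}\int_{B(e,t)}\abs{y}_*^{h\dd_0}\,\dd\beta(y)\asymp t^{h\dd_0-\alpha}$ is finite. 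These terms are therefore bounded by $C\,T_{b',1}f$, whose $L^p$ norm is at most $C\norm{f}_{L^p}$ by Young's inequality and Lemma~\ref{lem:3}.

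For the integral term we bound, for $y\in B(e,t)$ and $s\in(0,1]$,
\[
\abs{\Delta^{(h)}_y W^{(h)}_s f(x)}\meg C\sum_{j\in J^h}\min\Bigl(1,\frac{\abs{y}_*^{d_j}}{s^{d_j/\grado}}\Bigr)\,T_{b'',s/2}\bigl(W^{(h)}_{s/2}f\bigr)(x).
\]
When $\abs{y}_*\meg s^{1/\grado}$, this follows from the second assertion of Lemma~\ref{lem:30bis}, writing $X_{j_1}\cdots X_{j_h}W^{(h)}_s f=(W^{(h)}_{s/2}f)*(X_{j_1}\cdots X_{j_h}\,2^h\ee^{-(s/2)\Lc}h)$-type kernels bounded by $s^{-d_j/\grado}p_{b,s}$, and using Lemma~\ref{lem:4} to move the supremum over $B(x,c h\abs{y}_*)$ back to $x$ at the cost of a constant, since $\abs{y}_*^{\grado}/s\meg 1$. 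When $\abs{y}_*>s^{1/\grado}$ we use the trivial bound, with $\abs{W^{(h)}_s f(xy^i)}\meg C\,T_{b'',s/2}(W^{(h)}_{s/2}f)(x)$ by Lemma~\ref{lem:4}. The cost here is a factor $\ee^{C(\abs{y}_*^{\grado}/s)^{1/(\grado-1)}}$, which the weight $\min(1,\cdot)$ does not absorb. This is the main obstacle. I would try first to shrink the Gaussian constant in $T$ (as in Lemma~\ref{lem:4}) and accept $T_{b'',s}$ with a smaller $b''$ evaluated at $x$ anyway, since $T_{b,s}u(xy^i)$ with $\abs{y}_*\meg 1$ is controlled by $\ee^{C\abs{y}_*^{\grado/(\grado-1)}s^{-1/(\grado-1)}}T$. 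If that blow-up cannot be absorbed, I would instead keep the term as $\abs{W^{(h)}_sf}(xy^i)$ and integrate it directly over $y\in B(e,t)$ against $t^{-\alpha-Q_*}$, controlling it by a local average (a Hardy--Littlewood-type maximal function of $W^{(h)}_s f$ at scale $t$ in $x$) and then applying a vector-valued maximal inequality in the spirit of Lemma~\ref{lem:5}.

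Assuming this bound, average over $y\in B(e,t)$. Since $\beta(B(e,t))\asymp t^{Q_*}$ for $t\meg 1$, this yields
\[
S^q_{\alpha,h}\Bigl(\int_0^1 W^{(h)}_s f\,\tfrac{\dd s}{s}\Bigr)(x)\meg C\Bigl\|\int_0^1 t^{-\alpha}\min\bigl(1,(t^{\grado}/s)^{h\dd_0/\grado}\bigr)\,F_s(x)\,\dd\mi_1(s)\Bigr\|_{L^q_t(\mi_1)},
\]
where $F_s=T_{b'',s/2}(W^{(h)}_{s/2}f)$. After the change of variables $t^{\grado}\mapsto t$, the kernel is of the form $\frac{t^{\gamma}s^{\eta}}{(s+t)^{\gamma+\eta}}\cdot s^{-\alpha/\grado}$ with $\gamma=h\dd_0/\grado-\alpha/\grado>0$ and $\eta=\alpha/\grado>0$. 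Lemma~\ref{lem:25} then bounds the right-hand side by $C\norm{s^{-\alpha/\grado}F_s(x)}_{L^q_s(\mi_1)}$. Finally, taking $L^p_x$ norms and applying Lemma~\ref{lem:2} to remove $T_{b'',s/2}$ gives $C\,\Fs^{p,q}_{\alpha,h}(f)$, which is bounded by $C\norm{f}_{F^{p,q}_\alpha(\beta)}$ by \cite[Proposition 6.9]{BCP}, since $h>\alpha/\dd_0\Meg\alpha/\grado$.
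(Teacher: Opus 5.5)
There is a genuine gap. Your treatment of the terms $\Lc^k\ee^{-\Lc}f$ with $k<h$ is sound. So is your treatment of the integral term in the regime $\abs{y}_*^{\grado}\meg s$, provided you first discard $t\in[\eps,1]$ (this costs only $C\norm{f}_{L^p(\beta)}$) so that Lemma~\ref{lem:30bis} applies to every $y\in B(e,t)$.

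\textbf{The gap.} Everything after ``Assuming this bound'' rests on your displayed pointwise estimate in the regime $\abs{y}_*^{\grado}>s$, and that estimate is false, not merely unproved. Take the Laplacian on $\R^n$, $h=1$, and $f$ a Dirac mass at $x+y$. Then $\abs{\Delta_y W^{(1)}_s f(x)}\asymp s^{-n/2}$ once $\abs{y}^2\gg s$. On the other hand, $T_{b'',s/2}(W^{(1)}_{s/2}f)(x)=O(s^{-n/2}\ee^{-c\abs{y}^2/s})$. Averaging over $y\in B(e,t)$ does not rescue it: take the Dirac mass at $x+y_0$ with $\abs{y_0}=t/2$ and let $s\to 0$. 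The recentring factor $\ee^{C(\abs{y}_*^{\grado}/s)^{1/(\grado-1)}}$ you identified is real, and shrinking $b$ cannot absorb it. So your final Lemma~\ref{lem:25} computation with $F_s=T_{b'',s/2}(W^{(h)}_{s/2}f)$ is unjustified for $s<t^{\grado}$.

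\textbf{The fix.} Your fallback is the right idea, but it has to be organised so that the averaging operator has the scale of the ball, not the scale $s$. This is what the paper does.

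\emph{Split in $s$, not in $y$.} Cut the $s$-integral at $s=t^{\grado}$.

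\emph{The part $s\Meg t^{\grado}$.} Here every $y\in B(e,t)$ has $\abs{y}_*^{\grado}<s$, so your Lemma~\ref{lem:30bis} bound holds with bounded recentring cost. It produces the kernel $(t^{\grado}/s)^{(d_j-\alpha)/\grado}$ on $\{t^{\grado}\meg s\}$, which is integrable for $\dd s/s$ because $d_j\Meg h\dd_0>\alpha$. Young's inequality on $(0,+\infty)$, or your Lemma~\ref{lem:25} argument, followed by Lemma~\ref{lem:2} at scale $s$, finishes this part.

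\emph{The part $s<t^{\grado}$.} Give up smoothness entirely. The map $y\mapsto y^i$ pushes $\chi_{B(e,t)}\cdot\beta$ forward to a measure bounded by $C\chi_{B(e,it)}\cdot\beta$ (area formula). Hence $t^{-Q_*}\int_{B(e,t)}\abs{u(xy^i)}\,\dd\beta(y)\meg C\abs{u(x)}+C(T_{b,t^{\grado}}u)(x)$, where the Gaussian scale $t^{\grado}$ no longer depends on $s$. Since $T_{b,t^{\grado}}$ is positive and linear, you can take it outside $\int_0^{t^{\grado}}\cdots\,\dd\mi_1(s)$ and change variables $t^{\grado}\mapsto t$. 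Lemma~\ref{lem:2}, with $\kappa$ equal to the $L^q$-variable, then removes it for every $q\in[1,\infty]$. A Fefferman--Stein-type argument via Lemma~\ref{lem:5} would only cover $q\in(1,\infty)$, so it would miss $q=1$.

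\emph{The remaining Hardy term.} What is left is $\norm{t^{-\alpha/\grado}\int_0^t\abs{(W^{(h)}_s f)(x)}\,\dd\mi_1(s)}_{L^q_t(\mi_1)}$. By Young's inequality on $(0,+\infty)$ it is at most $\norm{\tau^{\alpha/\grado}}_{L^1_\tau(\mi_1)}\norm{s^{-\alpha/\grado}(W^{(h)}_sf)(x)}_{L^q_s(\mi_1)}$, and this uses $\alpha>0$.
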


\begin{proof}
	Take $\eps\in (0,1)$ so that Lemma~\ref{lem:30bis} applies, and observe first that
	\[
	\begin{split}
	\norm{S^q_{\alpha,h} f}_{L^p(\beta)}&\meg\norm*{ \norm*{ t^{-\alpha-Q_* } \int_{B(e,t)} \abs{(\Delta^{(h)}_y f)(x)}\,\dd \beta(y)  }_{L^q_t(\mi_\eps)}}_{L^p_x(\beta)}\\
		&\qquad+ \eps^{-\alpha-Q_*} \abs{\log \eps}^{1/q} \beta(B(e,1))\bigg(1+ \sup_{B(e,1)} \Delta_R^{1/p}\bigg)^h \norm{f}_{L^p(\beta)} .
	\end{split}
	\]
	By some abuse of notation, we may thus assume that $S^q_{\alpha,h} f=\norm*{ t^{-\alpha-Q_* } \int_{B(e,t)} \abs{(\Delta^{(h)}_y f)(x)}\,\dd \beta(y)  }_{L^q_t(\mi_\eps)}$. 
	In addition, observe that, by the area formula, there is a constant $C_1>0$ such that $(\Phi_j)_*(\chi_{B(e,t)}\cdot \beta)\meg C_1 \chi_{B(e, j t)}\cdot \beta$ for every $t\in (0,\eps]$ and for every $j=1,\dots, h$, where $\Phi_j\colon G\ni y\mapsto y^j \in G$. By Theorem~\ref{teo:7}, we may also assume that there is $b>0$ such that $\abs{(t \Lc)^k h_t}\meg C_1 p_{b,t}$ for every $t\in (0,1]$ and for every $k=0,\dots, h$. We set $\gamma\coloneqq \grado/(\grado-1)$ to simplify the notation.

	Then, we proceed as in the proof of Proposition~\ref{prop:32b} and estimate $S^q_{\alpha,h} \Lc^k\ee^{-\Lc}f$ for $k=0,\dots, h-1$ and $S^q_{\alpha,h} \big(\int_0^1 s^{h}\Lc^h\ee^{-s\Lc}f\,\dd \mi_1(s)\big)$ separately. For technical reasons, we shall also split the integral in the second term as $\int_0^{t^{\grado}}+\int_{t^{\grado}}^1$, where $t$ is the variable in the $L^q(\mi_1)$-norm which appears in the definition of $S^q_{\alpha,h}$.
	Observe that, by Lemmas~\ref{lem:3} and~\ref{lem:30bis}, there are $c,C_2>1$ such that, for every $k=0,\dots, h$, for every $x\in G$, for every $t\in (0,\eps)$, and for every $s\in (0,1]$,  
	\[
	\begin{split}
		t^{-Q_*}\int_{B(e,t)} \abs{\Delta_y^{(h)}(\Lc^k\ee^{-s\Lc} f)(x)} \,\dd \beta(y) &\meg t^{-Q_*}\sum_{j=0}^h \binom{h}{j} \int_{B(e,t)} \abs{ (\Lc^k\ee^{-s\Lc} f)(x y^j)} \,\dd \beta(y) \\
			&\meg t^{-Q_*}\int_{B(e,t)} \abs{(\Lc^k\ee^{-s\Lc} f)(x)}  \,\dd \beta(y) \\
			&\qquad+ 2^h C_1 t^{-Q_*} \int_{B(e, h t)} \abs{(\Lc^k\ee^{-s\Lc} f)(x y)}   \,\dd \beta(y)\\
			&\meg t^{-Q_*}\int_{B(e,t)} \abs{(\Lc^k\ee^{-s\Lc} f)(x)}  \,\dd \beta(y) \\
			&\qquad+ 2^h C_1  \ee^{ 2b h^{\gamma}  } \int_{G} \abs{(\Lc^k\ee^{-s\Lc} f)(x y^{-1})} p_{ 2 b, t^{\grado}}(y) \Delta_L^{-1}(y) \,\dd \beta_R(y)\\
		&\meg C_2  \abs{(\Lc^k\ee^{-s\Lc} f)(x)}+ C_2 (T_{b , t^{\grado}} \Lc^k\ee^{-s\Lc} f)(x)
	\end{split}
	\]
	and such that 
	\[
	\begin{split}
		t^{-Q_*}\int_{B(e,t)} \abs{\Delta_y^{(h)}(\Lc^h\ee^{-s\Lc} f)(x)} \,\dd \beta(y)&\meg C_2\sum_{j\in J^h} \max_{y\in B(e,t)}\abs{y}_*^{d_j} \max_{z\in B(x,c h\abs{y}_*)} \abs{(X_{j_1}\cdots X_{j_h} \Lc^h\ee^{-s\Lc} f)(z) } \\
		&\meg  C_2^2\sum_{j\in J^h} t^{d_j} (s/2)^{-d_j/\grado}   \max_{z\in B(x,c h t)} (T_{ b,s/2 }\Lc^h\ee^{-(s/2)\Lc} f)(z) \\
		&\meg C_2^2 \sum_{j\in J^h} t^{d_j} (s/2)^{-d_j/\grado} \ee^{2b (2c h t/s^{1/\grado})^{\gamma}}   (T_{ b/2,s/2 }\Lc^h\ee^{-(s/2)\Lc} f)(x).
	\end{split}
	\]
	Consequently, by Lemma~\ref{lem:3} and Young's inequality we see that there is a constant $C_{3}>0$ such that
	\[
	\norm{S^q_{\alpha,h}\Delta_y^{(h)} \Lc^k\ee^{-\Lc}f }_{L^p(\beta)}\meg C_3\norm{f}_{L^p(\beta)}
	\]
	for $k=0,\dots, h-1$.
	In addition,
	\[
	\begin{split} 
		&\norm*{t^{-\alpha-Q_*} \int_{B(e,t)}  \int_{t^{\grado}}^1  s^{h} \abs*{(\Delta_y^{(h)} \Lc^h \ee^{-s \Lc} f)(x)} \,\dd \mi_1(s) \,\dd \beta(y)  }_{L^q_t(\mi_\eps)}\\
		& \qquad  \meg\ee^{2b (2 c h)^\gamma }C_2^2 \sum_{j\in J^h}  2^{d_j/\grado}\norm*{t^{d_j-\alpha }    \int_{t^{\grado}}^1 s^{h-d_j/\grado}  (T_{  b/2,s/2 }\Lc^h\ee^{-(s/2)\Lc} f)(x) \,\dd \mi_1(s)   }_{L^q_t(\mi_\eps)}\\
		&\qquad \meg\grado^{-1/q} \ee^{2b (2 c h)^\gamma }C_2^2 \sum_{j\in J^h}  2^{d_j/\grado+h}\norm*{     \int_{t }^1 (t/s)^{(d_j-\alpha)/\grado}  [s^{-\alpha/\grado}(T_{  b/2,s/2 }W_{s/2}^{(h)} f)(x)]  \,\dd \mi_1(s)    }_{L^q_t(\mi_1)}\\
		&\qquad \meg  \grado^{-1/q} \ee^{2b (2 c h)^\gamma  }C_2^2 \sum_{j\in J^h} 2^{d_j/\grado+h} \norm{\tau^{(d_j-\alpha)/\grado}}_{L^1_\tau(\mi_1)}   \norm*{   s^{-\alpha/\grado}(T_{  b/2,s/2 }W_{s/2}^{(h)} f)(x)   }_{L^q_s(\mi_1)}
	\end{split}
	\]
	by Young's inequality (applied to the multiplicative group $(0,+\infty)$). Notice that $ \norm{\tau^{(d_j-\alpha)/\grado}}_{L^1_\tau(\mi_1)} $ is finite since $d_j\Meg h\dd_0>\alpha$ for every $j\in J^h$. Then, by Lemma~\ref{lem:2} we see that there is a constant $C_{4}>0$ such that
	\[
	\norm*{\norm*{t^{-\alpha-Q_*} \int_{B(e,t)}  \int_{t^{\grado}}^1  s^h \abs*{(\Delta_y^{(h)} \Lc^h \ee^{-s \Lc} f)(x)} \,\dd \mi_1(s) \,\dd \beta(y)  }_{L^q_t(\mi_\eps)}}_{L^p_x(\beta)}\meg C_{4} \norm*{ \norm{s^{-\alpha/\grado} (W_{s/2}^{(h)} f)(x)}_{L^q_s(\mi_1)} }_{L^p_x(\beta)}.
	\] 
	It remains to estimate (the $L^p_x(\beta)$-norm of)
	\[
	\begin{split}
		&\norm*{t^{-\alpha-Q_*} \int_{B(e,t)}  \int_0^{t^{\grado}} s^h  \abs*{(\Delta_y^{(h)} \Lc^h \ee^{-s \Lc} f)(x)} \,\dd\mi_1(s) \,\dd \beta(y)  }_{L^q_t(\mi_\eps)}\\
		& \qquad \meg C_2\grado^{-1/q}\norm*{t^{-\alpha/\grado } \int_0^t  [\abs{(W^{(h)}_s f)(x)}+ (T_{b,t }W^{(h)}_s f)(x)] \,\dd \mi_1(s)   }_{L^q_t(\mi_1)}. 
	\end{split}
	\]
	Observe first that
	\[
	\begin{split}
		\norm*{t^{-\alpha/\grado } \int_0^t  \abs{(W^{(h)}_s f)(x)} \,\dd \mi_1(s)  }_{L^q_t(\mi_1)}&=\norm*{  \int_0^t (t/s)^{-\alpha/\grado} [s^{-\alpha/\grado} \abs{(W^{(h)}_s f)(x)]} \,\dd \mi_1(s)  }_{L^q_t(\mi_1)}\\
		&\meg  \norm{\tau^{\alpha/\grado}}_{L^1_\tau(\mi_1)} \norm*{s^{-\alpha/\grado }   (W^{(h)}_s f)(x)}_{L^q_s(\mi_1)}
	\end{split}
	\]
	by Young's inequality again. Concerning the last term, by means of Lemma~\ref{lem:2} se wee that there is a constant $C_5>0$ such that
	\[
	\begin{split}
		 \norm*{t^{-\alpha/\grado } \int_0^t    (T_{b,t }W^{(h)}_s f)(x)  \,\dd \mi_1(s)   }_{L^{q,p}_{t,x}(\mi_1,\beta)}  &=\norm*{T_{b,t }\bigg( t^{-\alpha/\grado } \int_0^t    \abs{W^{(h)}_s f}  \,\dd \mi_1(s) \bigg)(x)  }_{L^{q,p}_{t,x}(\mi_1,\beta)}\\
		&\meg C_5\norm*{ t^{-\alpha/\grado } \int_0^t    \abs{(W^{(h)}_s f)(x)}  \,\dd \mi_1(s)    }_{L^{q,p}_{t,x}(\mi_1,\beta)},
	\end{split}
	\]
	so that one may conclude by the above arguments. 
\end{proof}

\end{document}